\newcommand{\enma}[1]   {\ensuremath{#1}}
\newcommand{\beq}{\begin{equation}}
\newcommand{\eeq}{\end{equation}}
\newcommand{\bseq}{\begin{subequations}}
\newcommand{\eseq}{\end{subequations}}
\newcommand{\beqn}{\begin{eqnarray}}
\newcommand{\eeqn}{\end{eqnarray}}
\newcommand{\ba}{\begin{array}}
\newcommand{\ea}{\end{array}}
\newcommand{\bct}{\begin{center}}
\newcommand{\ect}{\end{center}}
\newcommand{\btmz}{\begin{itemize}}
\newcommand{\etmz}{\end{itemize}}
\newcommand{\benum}{\begin{enumerate}}
\newcommand{\eenum}{\end{enumerate}}
\newcommand{\cF}{\enma{\mathcal F}}
\newcommand{\norm}[1]{\| #1 \|}                 
\newcommand{\trace}     {\enma{\mathrm{trace}}}
\newcommand{\inner}[2]{\left\langle #1,#2 \right\rangle}
\newcommand{\matbegin}{
        \left[
}
\newcommand{\matend}{
        \right]
}
\newcommand{\tbo}[2]{
  \matbegin \begin{array}{c}
       #1 \\ #2
       \end{array} \matend }
\newcommand{\tbt}[4]{
  \matbegin \begin{array}{cc}
       #1 & #2 \\ #3 & #4
       \end{array} \matend }
\newcommand{\be}{\begin{equation}}
\newcommand{\ee}{\end{equation}}
\newcommand{\cplxs}{ C\kern -.35em \rule{0.03 em}{.7 ex}~   }
\def\complex{\hbox{C\kern -.45em \rule{0.03 em}{1.5 ex}}~}
\newcommand{\bi}{\begin{itemize}}
\newcommand{\ei}{\end{itemize}}
\DeclareMathOperator{\EX}{\mathbb{E}}
\newcommand{\R}{\mathbb{R}}
\newcommand{\DefinedAs}[0]{\mathrel{\mathop:}=}
\newcommand{\AsDefined}[0]{=\mathrel{\mathop:}}
\DeclareMathOperator*{\minimize}{minimize}
\DeclareMathOperator*{\subject}{subject~to}
\newtheorem{mythm}{Theorem}
\newtheorem{myprop}{Proposition}
\newtheorem{mylem}{Lemma}
\newtheorem{myrem}{Remark}
\newtheorem{myass}{Assumption}
\newcommand{\one}{\mathds{1}}
\newcommand{\abs}[1]{\left|#1\right|}
\newcommand{\vect}     {\enma{\mathrm{vec}}}
\renewcommand{\baselinestretch}{1.37}
\newcommand{\cA}{{\cal A}}
\newcommand{\cD}{{\cal D}}
\newcommand{\cB}{{\cal B}}
\newcommand{\cS}{{\cal S}}
\newcommand{\cM}{{\cal M}}
\newcommand{\bbR}{\mathbb{R}}
\newcommand{\bbS}{\mathbb{S}}
\newcommand{\non}{\nonumber}
\newcommand{\ds}{\displaystyle}
\newcommand{\hP}{\hat{P}}
\newcommand{\hK}{\hat{K}}
\newcommand{\mrd}{\mathrm{d}}
\newcommand{\mre}{\mathrm{e}}
\newcommand{\tY}{\tilde{Y}}
\newcommand{\tX}{\tilde{X}}
\newcommand{\tK}{\tilde{K}}
\newcommand{\tP}{\tilde{P}}
\newcommand{\cSK}{\mathcal{S}_K}
\newcommand{\cSY}{\mathcal{S}_Y}
\newcommand{\bbP}{\mathbb{P}}
\newcommand{\sfA}{\mathsf{A}}
\newcommand{\sfB}{\mathsf{B}}
\newcommand{\sfC}{\mathsf{C}}
\newcommand{\sfD}{\mathsf{D}}
\newcommand{\sfM}{\mathsf{M}}
\newcommand*\rel@kern[1]{\kern#1\dimexpr\macc@kerna}
\newcommand*\widebar[1]{%
	\begingroup
	\def\mathaccent##1##2{%
		\rel@kern{0.8}%
		\overline{\rel@kern{-0.8}\macc@nucleus\rel@kern{0.2}}%
		\rel@kern{-0.2}%
	}%
	\macc@depth\@ne
	\let\math@bgroup\@empty \let\math@egroup\macc@set@skewchar
	\mathsurround\z@ \frozen@everymath{\mathgroup\macc@group\relax}%
	\macc@set@skewchar\relax
	\let\mathaccentV\macc@nested@a
	\macc@nested@a\relax111{#1}%
	\endgroup
}
\begin{document}

\title{\huge Convergence and sample complexity of gradient methods \\[-0.15cm] for the model-free linear quadratic regulator problem}

\author{Hesameddin Mohammadi, Armin Zare, Mahdi Soltanolkotabi, and Mihailo R.\ Jovanovi\'c
\thanks{The work of H.\ Mohammadi, A.\ Zare, and M.\ R.\ Jovanovi\'c is supported in part by the National Science Foundation (NSF) under Awards ECCS-1708906 and ECCS-1809833 and the Air Force Office of Scientific Research (AFOSR) under Award FA9550-16-1-0009. The work of M.\ Soltanolkotabi is supported in part by the Packard Fellowship in Science and Engineering, a Sloan Research Fellowship in Mathematics, a Google Faculty Research Award, as well as Awards from NSF, Darpa LwLL program, and AFOSR Young Investigator Program.}
\thanks{H.\ Mohammadi, M.\ Soltanolkotabi, and M.\ R.\ Jovanovi\'c are with the Ming Hsieh Department of Electrical and Computer Engineering, University of Southern California, Los Angeles, CA 90089. A.\ Zare is with the Department of Mechanical Engineering, University of Texas at Dallas, Richardson, TX 75080. Emails: \{hesamedm, soltanol, mihailo\}@usc.edu, armin.zare@utdallas.edu.}
}

\maketitle



	\vspace*{-4ex}
\begin{abstract} 
	Model-free reinforcement learning attempts to find an optimal control action for an unknown dynamical system by directly searching over the parameter space of controllers. The convergence behavior and statistical properties of these approaches are often poorly understood because of the nonconvex nature of the underlying optimization problems and the lack of exact gradient computation. In this paper, we take a step towards demystifying the performance and efficiency of such methods by focusing on the standard infinite-horizon linear quadratic regulator  problem for continuous-time systems with unknown state-space parameters. We establish exponential stability for the ordinary differential equation (ODE) that governs the gradient-flow dynamics over the set of stabilizing feedback gains and show that a similar result holds for the gradient descent method that arises from the forward Euler discretization of the corresponding ODE. We also provide theoretical bounds on the convergence rate and sample complexity of the random search method with two-point gradient estimates. We prove that the required simulation time for achieving $\epsilon$-accuracy in the model-free setup and the total number of function evaluations both scale as $\log \, (1/\epsilon)$.
\end{abstract}
 \vspace*{-2ex}
\begin{keywords}
	Data-driven control, gradient descent, gradient-flow dynamics, linear quadratic regulator, model-free control, nonconvex optimization, Polyak-Lojasiewicz inequality, random search method, reinforcement learning, sample complexity.
\end{keywords}

\vspace*{-4ex}
\section{Introduction}
\label{sec.intro}

In many emerging applications, control-oriented models are not readily available and classical approaches from optimal control may not be directly applicable. This challenge has led to the emergence of Reinforcement Learning (RL) approaches that often perform well in practice. Examples include learning complex locomotion tasks via neural network dynamics~\cite{nagkahfealev18} and playing Atari games based on images using deep-RL~\cite{mnikavsilgraantwierie13}. 

RL approaches can be broadly divided into model-based~\cite{deamanmatrectu17,simmantujorrec18} and model-free~\cite{ber11,abblazsze18}. While model-based RL uses data to obtain approximations of the underlying dynamics, its model-free counterpart prescribes control actions based on estimated values of a cost function without attempting to form a model. In spite of the empirical success of RL in a variety of domains, our mathematical understanding of it is still in its infancy and there are many open  questions surrounding convergence and sample complexity. In this paper, we take a step towards answering such questions with a focus on the infinite-horizon Linear Quadratic Regulator (LQR) for continuous-time systems.

The LQR problem is the cornerstone of control theory. The globally optimal solution can be obtained by solving the Riccati equation and efficient numerical schemes with provable convergence guarantees have been developed~\cite{andmoo90}. However, computing the optimal solution becomes challenging for large-scale problems, when prior knowledge is not available, or in the presence of structural constraints on the controller. This motivates the use of direct search methods for controller synthesis. Unfortunately, the nonconvex nature of this formulation complicates the analysis of first- and second-order optimization algorithms. To make matters worse, structural constraints on the feedback gain matrix may result in a disjoint search landscape limiting the utility of conventional descent-based methods~\cite{ack80}. Furthermore, in the model-free setting, the exact model (and hence the gradient of the objective function) is unknown so that only zeroth-order methods can be used.

In this paper,  we study convergence properties of gradient-based methods for the continuous-time LQR problem. In spite of the lack of convexity, we establish (a)  {\em exponential stability\/} of the ODE that governs the gradient-flow dynamics over the set of stabilizing feedback gains; and (b) {\em linear convergence\/} of the gradient descent algorithm with a suitable stepsize. We employ a standard convex reparameterization for the LQR problem~\cite{ferbalboyelg92,dulpag00} to establish the convergence properties of gradient-based methods for the nonconvex formulation. In the model-free setting, we also examine convergence and sample complexity of the random search method~\cite{manguyrec18} that attempts to emulate the behavior of gradient descent via gradient approximations resulting from objective function values. For the two-point gradient estimation setting, we prove linear convergence of the random search method and show that the total number of function evaluations and  the simulation time required in our results to achieve $\epsilon$-accuracy are proportional to $\log \, (1/\epsilon)$.

For the {\em discrete-time\/} LQR, global convergence guarantees were recently provided in~\cite{fazgekakmes18} for gradient decent and the random search method with one-point gradient estimates. The authors established a bound on the sample complexity for reaching the error tolerance $\epsilon$ that requires a number of function evaluations that is at least proportional to $(1/\epsilon^4) \log \, (1/\epsilon)$. If one has access to the infinite-horizon cost values, the number of function evaluations for the random search method with one-point gradient estimates can be improved to $1/\epsilon^2$~\cite{malpanbhakhabarwai19}. In contrast, we focus on the {\em continuous-time\/} LQR and  examine the two-point gradient estimation setting. The use of two-point gradient estimates reduces the required number of function evaluations to $1/\epsilon$~\cite{malpanbhakhabarwai19}. We significantly improve this result by showing that the required number of function evaluations is proportional to $\log \, (1/\epsilon)$. Similarly, the simulation time required in our results is proportional to $\log \, (1/\epsilon)$; this is in contrast to~\cite{fazgekakmes18} that requires $\mathrm{poly} \, (1/\epsilon)$ simulation time and~\cite{malpanbhakhabarwai19} that assumes an infinite simulation time. Furthermore, our convergence results hold both in terms of the error in the objective value and the optimization variable (i.e., the feedback gain matrix) whereas~\cite{fazgekakmes18} and~\cite{malpanbhakhabarwai19} only prove convergence in the objective value. We note that the literature on model-free RL is rapidly expanding and recent extensions to Markovian jump linear systems~\cite{janhudul20}, $\mathcal{H}_{\infty}$ robustness analysis through implicit regularization~\cite{zhahubas20}, learning distributed LQ problems~\cite{furzhekam20}, and output-feedback LQR~\cite{fatpol20} have been made.

Our presentation is structured as follows. In Section~\ref{sec.ProblemFormulation}, we revisit the LQR problem and present gradient-flow dynamics, gradient descent, and the random search algorithm. In Section~\ref{sec.MainResult}, we highlight the main results of the paper. In Section~\ref{sec.convex-char}, we utilize convex reparameterization of the LQR problem and establish exponential stability of the resulting gradient-flow dynamics and gradient descent method. In Section~\ref{sec.non-convex-char}, we extend our analysis to the nonconvex landscape of feedback gains. In Section~\ref{sec.BiasVariance}, we quantify the accuracy of two-point gradient estimates and, in Section~\ref{sec.rndSearch}, we discuss convergence and sample complexity of the random search method. In Section~\ref{sec.examples}, we provide an example to illustrate our theoretical developments and, in Section~\ref{sec.conclusions}, we offer concluding remarks. Most technical details are relegated to the appendices. 

\subsubsection*{Notation}
We use $\vect(M)\in\R^{mn}$ to denote the vectorized form of the matrix $M\in\R^{m \times n}$ obtained by concatenating the columns on top of each other. 
We use $\norm{M}_F^2 = \inner{M}{M}$ to denote the Frobenius norm, where $\inner{X}{Y}\DefinedAs \trace \, (X^TY)$ is the standard matricial inner product. We denote the largest singular value of linear operators and matrices by $\norm{\cdot}_2$ and the spectral induced norm of linear operators by $\norm{\cdot}_S$ 
\begin{align*}
\norm{\cM}_{2} 
\;\DefinedAs\; \sup_M\dfrac{\norm{\cM(M)}_F}{\norm{M}_F},
~~~
\norm{\cM}_S 
\;\DefinedAs\; \sup_M\dfrac{\norm{\cM(M)}_2}{\norm{M}_2}.
\end{align*}
We denote by $\bbS^n\subset\R^{n\times n}$ the set of symmetric matrices. For $ M\in\bbS^n$, $M\succ 0$ means $M$ is positive definite and $\lambda_{\min}(M)$ is the smallest eigenvalue. We use $S^{d-1}\subset\R^d$ to denote the unit sphere of dimension $d-1$. We  denote the expected value by $\EX[\cdot ]$ and probability by $\bbP(\cdot)$. To compare the asymptotic behavior of $f(\epsilon)$ and $g(\epsilon)$ as $\epsilon$ goes to $0$, we use $f=O(g)$ (or, equivalently, $g=\Omega(f)$) to denote $\limsup_{\epsilon \, \rightarrow \, 0}f(\epsilon)/g(\epsilon) < \infty$;  $f=\tilde{O}(g)$ to denote $f=O(g \log^k \! g)$ for some integer $k$; and $f=o(\epsilon)$ to signify $\lim_{\epsilon \, \rightarrow \, 0}f(\epsilon)/\epsilon = 0$.

	\vspace*{-1ex}
\section{Problem formulation}
\label{sec.ProblemFormulation}

The infinite-horizon LQR problem for continuous-time LTI systems is given by
\begin{subequations}
	\label{eq.LQR}
	\begin{align}
	\label{eq.lqrEXform}
	\minimize\limits_{x,\,u}
	& ~~
	\EX 
	\left[ \, \int_{0}^{\infty} (x^T (t) Q x(t) \, + \, u^T(t) R u (t)) \, \mrd t \, \right]
	\\[0.cm]
	\label{eq.linsys1}
	\subject 
	& ~~
	\dot{x} 
	\; = \; 
	A x \; + \; B u, 
	~~
	x(0) \, \sim \, \cD
	\end{align}
\end{subequations}
where $x(t)\in\R^n$ is the state, $u(t)\in\R^m$ is the control input, $A$ and $B$ are constant matrices of appropriate dimensions, $Q$ and $R$ are positive definite matrices, and the expectation is taken over a random initial condition $x(0)$ with distribution $\cD$. 
For a controllable pair $(A,B)$, the solution to~\eqref{eq.LQR} is given~by 
\begin{subequations}
	\label{eq.K-ARE}
	\begin{align}
	u (t) \; = \; -K^\star x (t) \; = \; - R^{-1} B^T P^\star x (t)
	\label{eq.u-star}	
	\end{align}
	where $P^\star$ is the unique positive definite solution to the Algebraic Riccati Equation (ARE)
	\begin{align}\label{eq.ARE}
	A^T P^\star\,+\, P^\star A \,+\, Q \,-\, P^\star B R^{-1}B^TP^\star \;=\; 0. 
	\end{align}
\end{subequations}

When the model is known, the LQR problem and the corresponding ARE can be solved efficiently via a variety of techniques~\cite{kle68,bitlauwil12,perger94,balvan03}. However, these methods are not directly applicable in the model-free setting, i.e., when the matrices $A$ and $B$ are unknown. Exploiting the linearity of the optimal controller, we can alternatively formulate the LQR problem as a direct search for the optimal linear feedback gain, namely
\begin{subequations}
	\label{eq.optProbK}
	\begin{align}\label{eq.optProbK1}
	\underset{K}{\minimize} ~ f(K)
	\end{align}
	where
	\begin{align}\label{eq.f}
	f(K) \, \DefinedAs \, 
	\left\{
	\ba{ll}
	\trace \left( (Q + K^TR\,K) X(K) \right), 
	& K\in\cSK
	\\[0.cm]
	\infty,
	&\text{otherwise.}
	\ea
	\right.
	\end{align}
	Here, the function $f(K)$ determines the LQR cost in~\eqref{eq.lqrEXform} associated with the linear state-feedback law
	$
	u=-Kx
	$,
	\begin{align}
	\cSK \;\DefinedAs\; \{ K\in\bbR^{m\times n} \, | \, A\,-\,BK\ \text{is Hurwitz}\}
	\end{align}
\end{subequations}
is the set of stabilizing feedback gains and, for any $K \in \cSK$,
\begin{subequations}
	\begin{align}
	\label{eq.X}
	X(K)
	\;\DefinedAs\;
		\int_{0}^{\infty}\EX\left[x(t)x^T(t)\right]
	\;=\; 
	\int_{0}^{\infty} \mre^{(A\,-\,BK) t}\,\Omega\, \mre^{(A\,-\,BK)^T t}\, \mrd t
	\end{align}
	is the unique solution to the Lyapunov equation
	\begin{align}
	\label{eq.lyapKX}
	(A \,-\, B K) X \;+\; X (A \,-\, B K)^T \, + \; \Omega  
	\; = \;
	0
	\end{align}
\end{subequations}
and $\Omega \DefinedAs \EX \, [ x(0)x^T(0) ]$. To ensure $f(K)=\infty$ for $K\notin\cSK$, we assume $\Omega\succ 0$. This assumption also guarantees  $K\in\cSK$ if and only if the solution $X$ to~\eqref{eq.lyapKX} is positive definite.

In problem~\eqref{eq.optProbK}, the matrix $K$ is the optimization variable, and ($A$, $B$, $Q\succ0$, $R\succ0$, $\Omega\succ0$) are the  problem parameters. This alternative formulation of the LQR problem has been studied for both continuous-time~\cite{andmoo90} and discrete-time systems~\cite{fazgekakmes18,bumesfazmes19} and it serves as a building block for several important control problems including optimal static-output feedback design~\cite{levath70}, optimal design of sparse feedback gain matrices~\cite{linfarjovTAC11al,farlinjovACC11,linfarjovTAC13admm,jovdhiEJC16}, and optimal sensor/actuator selection~\cite{polkhlshc13,dhijovluoCDC14,zarmohdhigeojovTAC20}.

For all stabilizing feedback gains $K\in\cSK$, the gradient of the objective function is determined by~\cite{levath70,linfarjovTAC11al}
\begin{align} 
\label{eq.nablaK}
\nabla f(K) \;=\; 2 (R \, K \, - \, B^T P(K) )X(K).
\end{align}
Here, $X(K)$ is given by~\eqref{eq.X} and 
	\begin{subequations}
	\be
	P(K)
	= 
	\int_{0}^{\infty} \mre^{(A \, - \, BK)^T t} \, (Q + K^TR\,K ) \, \mre^{(A \, - \, BK) t} \, \mrd t
	\label{eq.P}
	\ee
is the unique positive definite solution of 
	\be
	(A \,-\, B K)^T P \,+\, P (A \,-\, B K) 
	\,=\,
	-Q \,-\,K^TR\,K.   
	\label{eq.lyapKP}
	\ee
	\end{subequations}
	To simplify our presentation, for any $K\in\R^{m\times n}$, we define the closed-loop Lyapunov operator $\cA_K$: $\bbS^n \rightarrow \bbS^n$ as
\begin{subequations}\label{eq.LyapOper}
	\begin{align}\label{eq.LyapOperX}
	\cA_K(X) 
	&\;\DefinedAs\;
	(A\,-\,BK) X 
	\,+\,
	X(A\,-\,BK)^T.
	\end{align}
For $K\in\cSK$, both $\cA_K$ and its adjoint
	\begin{align}\label{eq.LyapOperP}
	\cA_K^*(P) 
	&\;=\;
	(A\,-\,BK)^TP 
	\,+\,
	P(A\,-\,BK)
	\end{align}
\end{subequations}
are invertible and $X(K)$, $P(K)$ are determined by
\[
X(K)
\, = \,
-\cA_K^{-1}(\Omega),
~~~
P(K)
\, = \,
-(\cA_K^{*})^{-1} (Q \, + \, K^TRK).
\]

In this paper, we first examine the global stability properties of the gradient-flow dynamics
\begin{align}
\label{eq.GDK-flow}
\tag{GF}
\dot{K}
\;=\; 
-\nabla f(K), 
~~
K(0) \, \in \, \cSK
\end{align}
associated with problem~\eqref{eq.optProbK} and its discretized variant, 
\begin{align}
\label{eq.GDK}
\tag{GD}
K^{k+1} 
\;\DefinedAs\; 
K^k \,-\, \alpha\,\nabla f(K^k), ~~ K^0 \,\in\,\cSK
\end{align}
where $\alpha>0$ is the stepsize. Next, we use this analysis as a building block to study the convergence of a search method based on random sampling~\cite{manguyrec18,rec19} for solving problem~\eqref{eq.optProbK}. As described in Algorithm~\ref{alg.GE}, at each iteration we form an empirical approximation $\widebar{\nabla} f (K)$ to the gradient of the objective function via simulation of system~\eqref{eq.linsys1} for randomly perturbed feedback gains $K\pm U_i$, $i=1,\ldots,N$, and update $K$ via,
\begin{align}
\label{eq.RS}
\tag{RS}
K^{k+1} 
\;\DefinedAs\; 
K^k \,-\, \alpha\,\widebar{\nabla} f(K^k), \quad K^0 \,\in\,\cSK.
\end{align} 
We note that the gradient estimation scheme in Algorithm~\ref{alg.GE} does not require knowledge of system matrices $A$ and $B$ in~\eqref{eq.linsys1} but only access to a simulation engine. 

\begin{algorithm}
	\caption{Two-point gradient estimation}
	\label{alg.GE}
	\begin{algorithmic}
		\REQUIRE Feedback gain $K\in\R^{m\times n}$, state and control weight matrices $Q$ and $R$, distribution $\cD$, smoothing constant $r$, simulation time $\tau$, number of random samples $N$.
		\FOR{$i \, = \, 1, \ldots, N$}
		\STATE -- Define perturbed feedback gains $K_{i,1} \DefinedAs K+ r U_i$ and $K_{i,2} \DefinedAs K- rU_i$, where $\vect({U_i})$ is a random vector uniformly  distributed on the sphere $\sqrt{mn} \, S^{mn-1}$.
		\STATE -- Sample an initial condition $x_{i}$ from distribution $\cD$.
		\STATE -- For $j \in \{ 1,2 \}$, simulate system~\eqref{eq.linsys1}  up to time $\tau$ with the feedback gain $K_{i,j}$ and initial condition $x_{i}$ to form
		\begin{center}
			$\ds \hat{f}_{i,j} \, = \, \ds{ \int_{0}^{\tau} (x^T (t) Q x(t) \, + \, u^T(t) R u (t)) \, \mrd t}$.
		\end{center}	
		\ENDFOR
		\ENSURE The gradient estimate 
		\begin{center}
			$\ds \widebar{\nabla} f(K) \, = \, \dfrac{1}{2r N} \, \sum_{i \, = \, 1}^{N} \left(\hat{f}_{i,1} \, - \, \hat{f}_{i,2}\right) U_i$.
		\end{center}
	\end{algorithmic}
\end{algorithm}

	\vspace*{-4ex}
\section{Main results}
\label{sec.MainResult}
Optimization problem~\eqref{eq.optProbK} is not convex~\cite{ack80}; see Appendix~\ref{app.non-convex-SK} for an example. The function $f(K)$, however, has two important properties: {\em uniqueness of the critical points and the compactness of sublevel sets\/}~\cite{toi85,rausac97}. Based on these, the LQR objective error $f(K)-f(K^\star)$ can be used as a maximal Lyapunov function (see~\cite{vanvid85} for a definition) to prove asymptotic stability of gradient-flow dynamics~\eqref{eq.GDK-flow} over the set of stabilizing feedback gains $\cSK$. However, this approach does not provide any guarantee on the rate of convergence and additional analysis is necessary to establish exponential stability; see Section~\ref{sec.non-convex-char} for details.  

\vspace*{-2ex}
\subsection{Known model}
\label{sec.known}

We first summarize our results for the case when the model is known. In spite of the nonconvex optimization landscape, we establish the exponential stability of gradient-flow dynamics~\eqref{eq.GDK-flow} for any stabilizing initial feedback gain $K(0)$. This result also provides an explicit bound on the rate of convergence to the LQR solution~$K^\star$.

\begin{mythm}
	\label{thm.main-cts}
	For any initial stabilizing feedback gain $K(0)\in\cSK$, the solution $K(t)$ to gradient-flow dynamics~\eqref{eq.GDK-flow} satisfies
	\begin{align*}
		f(K(t)) \,-\, f(K^\star)
		&\; \le \;  
		\mre^{-\rho\, t} \left(f(K(0)) \,-\, f(K^\star)\right)
		\\[-0.1cm]
		\norm{K(t) \,-\, K^\star}^2_F
		&\; \le \; 
		b \, \mre^{-\rho\, t} \, \norm{K(0) \,-\, K^\star}_F^2 
	\end{align*}
	where the convergence rate $\rho$ and constant $b$ depend on $K(0)$ and the parameters of the LQR problem~\eqref{eq.optProbK}. 
\end{mythm}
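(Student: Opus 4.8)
The plan is to establish a Polyak--{\L}ojasiewicz (PL) inequality for the nonconvex objective $f$ and then close the argument with an elementary Lyapunov/Gr\"onwall estimate. The starting point is the exact cost-difference identity
\begin{equation*}
f(K) \,-\, f(K^\star) \;=\; \trace\!\left( X(K)\,(K-K^\star)^T R\,(K-K^\star)\right),
\end{equation*}
which I would obtain by subtracting the Lyapunov equation~\eqref{eq.lyapKP} for $P(K)$ from the Riccati equation~\eqref{eq.ARE} (rewritten via $K^\star = R^{-1}B^T P^\star$) and then integrating the resulting residual $(K-K^\star)^T R (K-K^\star)$ against the closed-loop covariance $X(K)$ from~\eqref{eq.X}. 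Since $\Omega\succ 0$ forces $X(K)\succ 0$ and $R\succ 0$, this identity immediately yields the two-sided estimate $\lambda_{\min}(X(K))\,\lambda_{\min}(R)\,\norm{K-K^\star}_F^2 \,\le\, f(K)-f(K^\star) \,\le\, \norm{X(K)}_2\,\lambda_{\max}(R)\,\norm{K-K^\star}_F^2$, i.e.\ quadratic growth together with an upper bound on the objective error in terms of the gain error.

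Next I would derive the gradient-dominance inequality from the companion value-difference identity obtained by integrating the Lyapunov residual of $P(K)$ \emph{along the optimal closed loop} $A-BK^\star$ against the optimal covariance $X^\star \DefinedAs X(K^\star)$:
\begin{equation*}
f(K)-f(K^\star) \;=\; 2\,\trace\!\left(X^\star (K-K^\star)^T E_K\right) \,-\, \trace\!\left(X^\star (K-K^\star)^T R (K-K^\star)\right),
\end{equation*}
where $E_K \DefinedAs R K - B^T P(K)$, so that $\nabla f(K) = 2\,E_K\,X(K)$ by~\eqref{eq.nablaK}. The right-hand side is a concave quadratic in $K-K^\star$, and maximizing it over the gain (the maximizer is $R^{-1}E_K$) gives $f(K)-f(K^\star) \le \trace(X^\star E_K^T R^{-1}E_K) \le (\lambda_{\max}(X^\star)/\lambda_{\min}(R))\,\norm{E_K}_F^2$. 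Writing $E_K = \tfrac12\,\nabla f(K)\,X(K)^{-1}$ and using $\norm{X(K)^{-1}}_2 = 1/\lambda_{\min}(X(K))$ then produces the PL inequality $f(K)-f(K^\star) \,\le\, \tfrac{1}{2\mu}\,\norm{\nabla f(K)}_F^2$ with $\mu$ proportional to $\lambda_{\min}(X(K))^2\,\lambda_{\min}(R)/\lambda_{\max}(X^\star)$.

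The first bound of the theorem is then a one-line computation: along~\eqref{eq.GDK-flow} we have $\tfrac{\mrd}{\mrd t}\big(f(K(t))-f(K^\star)\big) = \inner{\nabla f(K(t))}{\dot K(t)} = -\norm{\nabla f(K(t))}_F^2 \le -2\mu\big(f(K(t))-f(K^\star)\big)$, so Gr\"onwall's inequality gives exponential decay at rate $\rho = 2\mu$. The second bound follows by sandwiching the objective error: the quadratic-growth lower bound converts the decay into $\norm{K(t)-K^\star}_F^2 \le (f(K(t))-f(K^\star))/(\lambda_{\min}(X(K(t)))\,\lambda_{\min}(R))$, while the upper bound evaluated at $t=0$ gives $f(K(0))-f(K^\star)\le \norm{X(K(0))}_2\,\lambda_{\max}(R)\,\norm{K(0)-K^\star}_F^2$; taking $b$ to be the ratio of these two constants yields the claim.

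The main obstacle is that the rate $\mu$ depends on $\lambda_{\min}(X(K))$, which must be bounded away from zero \emph{uniformly along the whole trajectory} for $\rho$ to be a genuine constant rather than a pointwise quantity. Here the two structural properties of $f$ enter: since $f$ is nonincreasing along~\eqref{eq.GDK-flow}, the solution remains inside the sublevel set $\{K : f(K)\le f(K(0))\}$, which is compact and contained in the open set $\cSK$. On this compact set $\norm{A-BK}_2$ is bounded, so the integral $X(K)=\int_0^\infty \mre^{(A-BK)t}\,\Omega\,\mre^{(A-BK)^T t}\,\mrd t$ is uniformly positive definite, $\lambda_{\min}(X(K))\ge\underline{\lambda}>0$, and substituting $\underline{\lambda}$ into $\mu$ makes $\rho$ uniform. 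I expect this uniform nondegeneracy of the closed-loop covariance over the sublevel set to be the delicate step, and it is precisely what the convex reparameterization of~\eqref{eq.optProbK} used in Section~\ref{sec.convex-char} is designed to furnish cleanly.
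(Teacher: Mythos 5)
Your proposal is correct, but it reaches the crucial Polyak--{\L}ojasiewicz inequality by a genuinely different route than the paper. The paper never uses your companion identity $f(K)-f(K^\star) = 2\trace\!\left(X^\star(K-K^\star)^TE_K\right) - \trace\!\left(X^\star(K-K^\star)^TR(K-K^\star)\right)$ with $E_K \DefinedAs RK - B^TP(K)$; instead it routes through the convex reparameterization $K = YX^{-1}$ of Section~\ref{sec.convex-char}: it establishes $\mu$-strong convexity of $h(Y)$ over sublevel sets (Proposition~\ref{prop.LMu}), proves the gradient-comparison bound $\norm{\nabla f(K)}_F \ge c\,\norm{\nabla h(Y)}_F$ (Lemma~\ref{lem.comparison}), and chains the two to obtain $\norm{\nabla f(K)}_F^2 \ge 2\mu c^2\left(f(K)-f(K^\star)\right)$, i.e.\ PL with $\mu_f = \mu c^2$. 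Your derivation is instead the continuous-time analogue of the gradient-domination argument of~\cite{fazgekakmes18}: two value-difference identities (both are instances of Toivonen's cost-difference formula~\cite[Eq.~(2.10)]{toi85}, once weighted by $X(K)$ and once by $X^\star$) followed by completion of squares in $K-K^\star$. Both identities check out --- subtracting the ARE, rewritten in closed-loop form, from~\eqref{eq.lyapKP} and integrating against the appropriate covariance gives exactly your two displays --- and maximizing the concave quadratic indeed yields $f(K)-f(K^\star) \le \trace\!\left(X^\star E_K^TR^{-1}E_K\right)$, hence PL with $\mu$ proportional to $\lambda^2_{\min}(X(K))\,\lambda_{\min}(R)/\lambda_{\max}(X^\star)$; your part (b) coincides with the paper's, since your first identity is precisely Lemma~\ref{lem.errorRelation}. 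As for what each approach buys: yours is more elementary and self-contained (no convexity of $h$, no Lemma~\ref{lem.comparison}), but its constants are only semi-explicit --- they involve $\lambda_{\max}(X^\star)$ and a uniform lower bound on $\lambda_{\min}(X(K))$ over the sublevel set, which you obtain qualitatively from compactness (your quantitative remark, which amounts to $\lambda_{\min}(X(K)) \ge \lambda_{\min}(\Omega)/(2\norm{A-BK}_2)$, would make this explicit and is all that is needed). The paper's route produces constants expressed purely in terms of $a = f(K(0))$ and problem data (via Lemma~\ref{lem.bounds}, which gives $\lambda_{\min}(X(K)) \ge \nu/a$), and the $h(Y)$ machinery it builds is reused heavily afterwards --- for gradient descent, the geometric interpretation, and the model-free analysis --- which is why the paper invests in it even though your shorter argument suffices for Theorem~\ref{thm.main-cts} as stated.
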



The proof of Theorem~\ref{thm.main-cts} along with explicit expressions for the convergence rate $\rho$ and constant $b$ are provided in Section~\ref{subsec.flowAnalysis}. Moreover, for a sufficiently small stepsize $\alpha$, we show that gradient descent method~\eqref{eq.GDK} also converges over $\cSK$ at a linear rate.

\begin{mythm}
	\label{thm.main-dsc}
	For any initial stabilizing feedback gain $K^0\in\cSK$, the iterates of gradient descent~\eqref{eq.GDK} satisfy
	\begin{align*}
	f(K^k) \,-\, f(K^\star)
	&\; \le \;
	\gamma^k \left(f(K^0) \,-\, f(K^\star)\right) 
		\\[-0.1cm]
	\norm{K^k \,-\, K^\star}^2_F
	&\; \le \;
	b \, \gamma^k \,\norm{K^0 \,-\, K^\star}_F^2 
	\end{align*}
	where the rate of convergence $\gamma$, stepsize $\alpha$, and constant $b$ depend on $K^0$ and the parameters of the LQR problem~\eqref{eq.optProbK}. 
\end{mythm}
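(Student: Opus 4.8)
The plan is to mirror the continuous-time argument behind Theorem~\ref{thm.main-cts}, replacing the infinitesimal decrease of the objective along the gradient flow~\eqref{eq.GDK-flow} by a one-step decrease along the forward-Euler update~\eqref{eq.GDK}, at the cost of a stepsize-dependent contraction factor. The two structural facts that drive the proof are exactly the ones that yield exponential stability of~\eqref{eq.GDK-flow}: a Polyak-\L ojasiewicz (gradient dominance) inequality
\[
\norm{\nabla f(K)}_F^2 \;\ge\; 2\mu\,\big(f(K)-f(K^\star)\big),
\]
valid on the sublevel set $\cS \DefinedAs \{K \,|\, f(K)\le f(K^0)\}$, together with Lipschitz continuity of $\nabla f$ on $\cS$ with some constant $L$. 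Since $f$ has compact sublevel sets and a unique critical point, $\cS$ is a compact subset of the \emph{open} set $\cSK$, so both $\mu$ and $L$ are finite and depend only on $K^0$ and the LQR data.

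First I would establish a descent inequality. Because $\cS$ is compact and contained in $\cSK$, one can enlarge it slightly to a compact set on which $f$ remains $L$-smooth; for $\alpha$ small enough the segment joining $K^k$ to $K^{k+1}=K^k-\alpha\nabla f(K^k)$ stays inside this set, so the standard descent lemma gives
\[
f(K^{k+1}) \;\le\; f(K^k) \,-\, \alpha\Big(1-\tfrac{L\alpha}{2}\Big)\norm{\nabla f(K^k)}_F^2.
\]
An induction then shows the iterates never leave $\cS$: each step strictly decreases $f$, so $K^{k+1}\in\cS\subset\cSK$ and the same smoothness and PL constants apply at the next step. This is the step I expect to be the \textbf{main obstacle}, because $f\equiv\infty$ off $\cSK$ means the usual globally-smooth descent lemma is unavailable; one must certify feasibility of the \emph{entire} segment $[K^k,K^{k+1}]$ before invoking smoothness, and this is precisely what forces an explicit upper bound on the admissible stepsize $\alpha$.

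Combining the descent inequality with the PL inequality yields
\[
f(K^{k+1})-f(K^\star) \;\le\; \Big(1-2\mu\alpha\big(1-\tfrac{L\alpha}{2}\big)\Big)\big(f(K^k)-f(K^\star)\big),
\]
so choosing, e.g., $\alpha=1/L$ makes the contraction factor $\gamma = 1-\mu/L<1$ and delivers the geometric decay of the objective error claimed in the theorem. Finally, to pass to the bound on $\norm{K^k-K^\star}_F^2$ I would use two inequalities valid on $\cS$: a quadratic-growth lower bound $f(K)-f(K^\star)\ge (\mu'/2)\,\norm{K-K^\star}_F^2$, which follows from the PL inequality together with uniqueness of the minimizer, and a local quadratic upper bound $f(K^0)-f(K^\star)\le (L'/2)\,\norm{K^0-K^\star}_F^2$ near the optimum. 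Chaining these with the objective-error estimate produces the stated bound, with the constant $b$ expressible through $\mu'$, $L'$, $\mu$, and $L$, all of which depend only on $K^0$ and the problem parameters.
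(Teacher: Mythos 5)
Your part (a) follows essentially the paper's strategy: a descent inequality plus the PL condition~\eqref{eq.GradDom}, with an induction that keeps every iterate in a fixed sublevel set where both constants apply. The paper resolves the feasibility issue you correctly single out as the main obstacle in a slightly different way: rather than enlarging the sublevel set and restricting $\alpha$ through a bound on $\norm{\nabla f}_F$ and an enlargement margin, it shows by a continuity/contradiction argument that the entire ray $K-\alpha\nabla f(K)$, $\alpha\in[0,2/L_f]$, stays inside $\cSK(a)$ itself, where $L_f$ is the explicit smoothness constant of Lemma~\ref{lem.Lf}; this gives the clean admissible range $\alpha\in(0,1/L_f]$. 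Both mechanisms work. One caveat in your setup: positivity of the PL constant $\mu$ does \emph{not} follow from compact sublevel sets plus uniqueness of the critical point (the scalar function $x\mapsto x^4$ has both properties and is not gradient dominated at the origin); in this paper it is supplied by the convex reparameterization, i.e., Lemma~\ref{lem.comparison} combined with Proposition~\ref{prop.LMu}, and you must invoke that machinery rather than compactness.

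The genuine gap is in your part (b). The quadratic-growth lower bound $f(K)-f(K^\star)\ge(\mu'/2)\norm{K-K^\star}_F^2$ can indeed be extracted from PL by the standard gradient-flow length argument, since the sublevel set is invariant under the flow. But your upper bound $f(K^0)-f(K^\star)\le(L'/2)\norm{K^0-K^\star}_F^2$ cannot be obtained from smoothness ``near the optimum'': the theorem covers an \emph{arbitrary} $K^0\in\cSK$, and deriving this inequality from Taylor's theorem with $\nabla f(K^\star)=0$ requires the segment joining $K^0$ to $K^\star$ to lie in a region where $f$ is smooth. Since $\cSK$ and its sublevel sets are nonconvex (Appendix~\ref{app.non-convex-SK}), that segment can leave the domain, where $f$ is identically $+\infty$, so there is no smooth path along which to apply the bound. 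The paper closes exactly this hole with the LQR-specific identity of Lemma~\ref{lem.errorRelation},
\begin{align*}
f(K)\,-\,f(K^\star)
\;=\;
\trace\left((K-K^\star)^T R\,(K-K^\star)\,X(K)\right)
\end{align*}
which yields both directions of the comparison with no path or convexity argument: the lower bound with constant $\lambda_{\min}(R)\lambda_{\min}(X(K^k))$ and the upper bound with $\norm{R}_2\norm{X(K^0)}_2$, made uniform over $\cSK(a)$ via Lemma~\ref{lem.bounds} to produce $b$ in~\eqref{eq.bDef}. (Alternatively, since $b$ may depend on $K^0$, you could absorb the ratio $(f(K^0)-f(K^\star))/\norm{K^0-K^\star}_F^2$ directly into $b$ and skip the upper bound entirely; but as written, your derivation of that bound is not valid.)
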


\vspace*{-2ex}
\subsection{Unknown model}

We now turn our attention to the model-free setting. We use Theorem~\ref{thm.main-dsc} to carry out the convergence analysis of the random search method~\eqref{eq.RS} under the following assumption on the distribution of initial condition.
\begin{myass}\label{ass.initialCond}
	Let the distribution $\mathcal{D}$ of the initial conditions have i.i.d.\ zero-mean unit-variance entries with bounded sub-Gaussian norm, i.e.,~for a random vector $v\in\R^n$ distributed according to $\mathcal{D}$,
	$
	\mathbb{E}[v_i]=0$
	and
	$
	\norm{v_i}_{\psi_2}
	\le
	\kappa
	$,
	for some constant $\kappa$ and $i=1,\ldots,n$; see Appendix~\ref{app.probabilistic} for the definition of  $\norm{\cdot}_{\psi_2}$.
\end{myass}
	
Our main convergence result holds under Assumption~\ref{ass.initialCond}. Specifically, for a desired accuracy level $\epsilon>0$, in Theorem~\ref{thm.rndSearchInformal} we establish that iterates of~\eqref{eq.RS} with constant stepsize (that does not depend on $\epsilon$) reach accuracy level $\epsilon$ at a linear rate (i.e., in at most $O(\log \, (1/\epsilon))$ iterations) with high probability. Furthermore, the total number of function evaluations and the simulation time required to achieve an accuracy level $\epsilon$ are proportional to $\log \, (1/\epsilon)$. This significantly improves the existing results for discrete-time LQR~\cite{fazgekakmes18,malpanbhakhabarwai19} that require $O (1/\epsilon)$ function evaluations and $\mathrm{poly} (1/\epsilon)$ simulation time. 

\begin{mythm}[Informal]\label{thm.rndSearchInformal}
	 Let the initial condition $x_0\sim\cD$ of system~\eqref{eq.linsys1} obey Assumption~\ref{ass.initialCond}. Also let the simulation time $\tau$ and the number of samples $N$ in Algorithm~\ref{alg.GE} satisfy
	\begin{align*}
	\tau 
	\, \ge \, 
	\theta_1 \log \, (1/\epsilon) 
	~\text{and}~
	N
	\, \ge \,
	c \left(1 \, + \, \beta^4 \kappa^4\, \theta_1 \log^6 \! n \right) n
	\end{align*}
	for some  $\beta>0$ and  desired accuracy $\epsilon>0$. Then, we can choose the smoothing parameter $r< \theta_3 \sqrt{\epsilon}$ in Algorithm~\ref{alg.GE} and the constant stepsize $\alpha$ such that the random search method~\eqref{eq.RS} that starts from any initial stabilizing feedback gain $K^0\in\cSK$ achieves
$
	f(K^k)-f(K^\star)
	\le
	\epsilon
$
in at most
	\[
	k
	\;\le\;\theta_4
	\log \left( (f(K^0) \, - \, f(K^\star))/\epsilon \right)
	\]
iterations with probability not smaller than $1-c'k(n^{-\beta} + N^{-\beta} + N \mre^{-\frac{n}{8}} + \mre ^{-c' N})$. Here, the positive scalars $c$ and $c'$ are absolute constants and $\theta_1,\ldots,\theta_4>0$ depend on $K^0$ and the parameters of the LQR problem~\eqref{eq.optProbK}.
\end{mythm}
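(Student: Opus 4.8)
The plan is to treat the random search recursion~\eqref{eq.RS} as a \emph{perturbed} version of gradient descent~\eqref{eq.GDK} and to leverage the linear convergence of~\eqref{eq.GDK} established in Theorem~\ref{thm.main-dsc}. Writing $\widebar{\nabla} f(K^k) = \nabla f(K^k) + E^k$, where $E^k \DefinedAs \widebar{\nabla} f(K^k) - \nabla f(K^k)$ is the gradient-estimation error, each step of~\eqref{eq.RS} coincides with a step of~\eqref{eq.GDK} up to the additive term $-\alpha E^k$. The analysis underlying Theorem~\ref{thm.main-dsc} rests on local smoothness of $f$ together with a Polyak--\L ojasiewicz (gradient-dominance) inequality that is uniform on any sublevel set of $f$. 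I would reuse exactly this machinery to derive a one-step \emph{perturbed} descent inequality of the form
\[
f(K^{k+1}) - f(K^\star)
\;\le\;
\gamma\, \big( f(K^k) - f(K^\star) \big)
\;+\;
c_1 \, \norm{E^k}_F^2,
\]
valid with some $\gamma\in(0,1)$ as long as $K^k$ lies in the fixed sublevel set $\{K : f(K)\le f(K^0)\}$ on which the smoothness and PL constants are controlled.

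The second step is to bound $\norm{E^k}_F$ by decomposing it into three sources of error, using the bias/variance estimates of Section~\ref{sec.BiasVariance}: (i) the \emph{smoothing bias} $\EX_{U}[\widebar{\nabla}_{\infty} f(K)] - \nabla f(K)$ incurred by replacing the true gradient with the spherical-smoothing estimate built from exact infinite-horizon costs, which scales as $O(r)$ by a Taylor expansion of $f$ along the perturbation directions $U_i$; (ii) the \emph{truncation bias} caused by using the finite-horizon costs $\hat f_{i,j}$ in place of the infinite-horizon values, which decays as $O(\mre^{-c\tau})$ because the closed-loop response $\mre^{(A-BK)t}$ decays exponentially, uniformly over the sublevel set; and (iii) the \emph{statistical fluctuation} of the $N$-sample average about its mean, which concentrates under Assumption~\ref{ass.initialCond} via sub-Gaussian/sub-exponential tail bounds, contributing a term that is small with probability at least $1 - c'(n^{-\beta} + N^{-\beta} + N\mre^{-n/8} + \mre^{-c'N})$ once $N \gtrsim (1+\beta^4\kappa^4\theta_1\log^6 n)\,n$. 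The $\log^6 n$ factor is expected to emerge from covering-number arguments over the $mn$-dimensional sphere on which the $U_i$ are drawn.

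Given these bounds, I would choose $\tau \ge \theta_1\log(1/\epsilon)$ so the truncation bias is $O(\epsilon)$, and $r<\theta_3\sqrt{\epsilon}$ so the smoothing bias is $O(\sqrt{\epsilon})$; combined with the $N$-dependent concentration this yields $\norm{E^k}_F^2 \le c_2\,\epsilon$ on each iteration with the stated high probability. Substituting into the perturbed descent inequality and unrolling the recursion gives
\[
f(K^k) - f(K^\star)
\;\le\;
\gamma^k\,\big(f(K^0) - f(K^\star)\big)
\;+\;
\frac{c_1 c_2}{1-\gamma}\,\epsilon ,
\]
so that after $k \le \theta_4 \log\big((f(K^0)-f(K^\star))/\epsilon\big)$ iterations the objective error falls below a constant multiple of $\epsilon$ (the constant is absorbed by rescaling $\epsilon$). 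A union bound over the $k$ iterations produces the factor $k$ in the overall failure probability.

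The main obstacle is turning the above into a genuine \emph{induction} rather than a per-iteration statement: the constants $\gamma$, $c_1$, $c_2$, the decay rate $c$ in the truncation bias, and the admissible smoothing radius all depend on the sublevel set containing the current iterate, so I must certify that the iterates $K^k$ never leave $\{K:f(K)\le f(K^0)\}$ and---crucially---that every perturbed gain $K^k \pm r U_i$ fed into Algorithm~\ref{alg.GE} remains in $\cSK$, since otherwise the finite-horizon cost $\hat f_{i,j}$ need not approximate any finite quantity. Establishing that $r<\theta_3\sqrt{\epsilon}$ keeps the perturbed gains stabilizing uniformly over the sublevel set, and that the perturbed step cannot push $f$ above $f(K^0)$ with high probability, is the delicate part; it is precisely what forces the bound on $r$ and ties the sample complexity $N$ to the geometry of $\cSK$ near the sublevel-set boundary.
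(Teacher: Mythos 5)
There is a genuine gap, and it is the central one: your step (iii) asserts that with $N=\tilde{O}(n)$ samples (independent of $\epsilon$) the statistical fluctuation of the $N$-sample average around its mean can be made small enough that $\norm{E^k}_F^2 \le c_2\,\epsilon$. This is exactly what the paper argues is \emph{not} achievable and deliberately avoids: with a number of samples that does not grow with $1/\epsilon$, the fluctuation of $\tfrac{1}{N}\sum_i \inner{\nabla f_{x_i}(K)}{U_i}U_i$ about $\nabla f(K)$ is proportional to $\norm{\nabla f(K)}_F$ (with a factor that can be as large as $O(\sqrt{mn}\log n)$; see the size of $\mu_2$ in Proposition~\ref{prop.M2}), not to $\sqrt{\epsilon}$. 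Driving the estimation error down to $O(\sqrt{\epsilon})$ would force $N=\mathrm{poly}(1/\epsilon)$, which is precisely the sample complexity of~\cite{fazgekakmes18,malpanbhakhabarwai19} that the theorem improves upon. Nor can you salvage the argument by weakening the requirement to $\norm{E^k}_F \le C\norm{\nabla f(K^k)}_F$ with a large constant $C$: plugging such a bound into your perturbed descent inequality (whose error term $c_1\norm{E^k}_F^2$ arises from Young's inequality, which discards all correlation information) gives a contraction factor $\gamma + O(\alpha C^2 L_f)$ that exceeds one whenever $C^2 \gtrsim \mu_f/L_f$, so descent cannot be concluded. A direction whose norm dominates the gradient's norm can point the wrong way; a norm bound alone is insufficient.

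The paper's route is structurally different on this point: it gives up on controlling $\norm{\widebar{\nabla} f(K)-\nabla f(K)}_F$ altogether and instead proves that the unbiased estimate $\widehat{\nabla} f(K)$ is \emph{correlated} with the true gradient --- the two events $\sfM_1$ (inner product lower bound $\inner{\widehat{\nabla} f(K)}{\nabla f(K)} \ge \mu_1\norm{\nabla f(K)}_F^2$) and $\sfM_2$ (norm upper bound with a possibly large $\mu_2$) of~\eqref{eq.megaEvents}, established in Propositions~\ref{prop.M1} and~\ref{prop.M2} with $N=\tilde{O}(n)$. Descent then follows from the approximate-GD Proposition~\ref{prop.linearConvG}, in which the stepsize is scaled by $\mu_1/(\mu_2 L_f)$ so that the inner-product term, not smallness of the error, produces the geometric decrease. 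Only the \emph{bias} terms (your (i) and (ii), Propositions~\ref{prop.finiteTime} and~\ref{prop.thirdOrder}) are made small in absolute terms, and they are compared against the lower bound $\norm{\nabla f(K^k)}_F \ge \sqrt{2\mu_f\epsilon}$, valid while the error exceeds $\epsilon$; this is what produces the conditions $r=O(\sqrt{\epsilon})$ and $\tau=\Omega(\log(1/(r\epsilon)))$. Two smaller inaccuracies in your decomposition: the two-point smoothing bias is $O(r^2)$ (Proposition~\ref{prop.thirdOrder}), not $O(r)$, and the truncation bias carries a $1/r$ amplification (Proposition~\ref{prop.finiteTime}), which is why the simulation-time condition involves $\log\tfrac{1}{r\epsilon}$ rather than $\log\tfrac{1}{\epsilon}$. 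Your final paragraph's concern about keeping iterates in $\cSK(a)$ and the perturbed gains $K^k\pm rU_i$ stabilizing is legitimate and is handled in the paper by Lemma~\ref{lem.cSK2a} and the invariance argument inside Proposition~\ref{prop.linearConvG}, but resolving it does not repair the sample-complexity gap above.
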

	
	The formal version of Theorem~\ref{thm.rndSearchInformal} along with a discussion of parameters $\theta_i$ and stepsize $\alpha$ is presented in Section~\ref{sec.rndSearch}.

	\vspace*{-2ex}
\section{Convex reparameterization}
\label{sec.convex-char}	

The main challenge in establishing the exponential stability of~\eqref{eq.GDK-flow} arises from nonconvexity of problem~\eqref{eq.optProbK}. Herein, we use a standard change of variables to reparameterize~\eqref{eq.optProbK} into a convex problem, for which we can provide exponential stability guarantees for gradient-flow dynamics. We then connect the gradient flow on this convex reparameterization to its nonconvex counterpart and establish the exponential stability of~\eqref{eq.GDK-flow}.

\vspace*{-2ex}
\subsection{Change of variables}

The stability of  the closed-loop system with the feedback gain $K\in\cSK$ in problem~\eqref{eq.optProbK} is equivalent to  the positive definiteness of the matrix $X(K)$ given by~\eqref{eq.X}. This condition  allows for a standard change of variables $K = Y X^{-1}$, for some $Y\in\R^{m\times n}$, to reformulate the LQR design as a convex optimization problem~\cite{ferbalboyelg92,dulpag00}. In particular, for any $K\in\cSK$ and the corresponding matrix $X$, we have
\begin{align*}
f(K) 
\;=\;
h(X,Y)
\; \DefinedAs \; 
\trace\, ( Q\,X + Y^T R\,Y X^{-1} )
\end{align*}
where 
$
h(X,Y)
$
is a jointly convex function of $(X,Y)$ for $X\succ0$. In the new variables, Lyapunov equation~\eqref{eq.lyapKX} takes the affine form
\begin{subequations}
\be
\label{eq.lyapunovXY}
\cA(X) \; - \; \cB(Y) \; + \; \Omega
\; = \;
0
\ee
where $\cA$ and $\cB$ are the linear maps
\be
\cA(X) 
\, \DefinedAs \, 
A\,X \,+\, X A^T,
~~~
\cB(Y) 
\, \DefinedAs \, 
B\,Y \,+\, Y^T B^T.
\ee
For an invertible map $\cA$, we can express the matrix $X$ as an affine function of $Y$
\be
	\label{eq.XofY}
X(Y) 
\, = \,
\cA^{-1} (\cB(Y) \,-\, \Omega)
\ee
\end{subequations}
and bring the LQR problem into the convex form
	\label{eq.lqrY}
	\begin{align}
	\underset{Y}{\minimize} \quad h(Y)
	\end{align}
	where
	\begin{align*}
	h(Y)
	\;\DefinedAs\;
	\left\{
	\ba{ll}
	h(X(Y),Y),
	& 
	Y\,\in\,\cSY
	\\[0cm]
	\infty,
	& 
	\text{otherwise}
	\ea
	\right.
	\end{align*}
and $\cSY\DefinedAs \{Y\in\bbR^{m\times n}\, | \, X(Y)\succ 0\}$ is the set of matrices $Y$ that correspond to stabilizing feedback gains $K = Y X^{-1}$. The set $\cSY$ is open and convex because it is defined via a positive definite condition imposed on the affine map $X(Y)$ in~\eqref{eq.XofY}.  This positive definite condition in $\cSY$ is equivalent to  the closed-loop matrix $A - B \, Y (X(Y))^{-1}$ being Hurwitz. 
\begin{myrem}
	Although our presentation assumes invertibility of $\cA$, this assumption comes without loss of generality. As shown in Appendix~\ref{app.K0}, all results  carry over to noninvertible $\cA$ with an alternative change of variables $A = \hat{A} + B K^0$, $K =  \hat{K}+K^0$, and $\hat{K} = \hat{Y}X^{-1}$, for some $K^0\in\cSK$. 
\end{myrem}

\vspace*{-2ex}
\subsection{Smoothness and strong convexity of $h(Y)$}

Our convergence analysis of gradient methods for problem~\eqref{eq.lqrY} relies on the $L$-smoothness and $\mu$-strong convexity of the function $h(Y)$ over its sublevel sets
$
\cSY(a)
\DefinedAs 
\{ Y \in \cSY \, | \, h(Y) \le a \}.
$
These two properties were recently established in~\cite{zarmohdhigeojovTAC20} where it was shown that over any sublevel set $\cSY(a)$, the second-order term  $\inner{\tY}{\nabla^2 h(Y; \tY)}$ in the Taylor series expansion of $h(Y+\tY)$ around $Y\in\cSY(a)$  can be upper and lower bounded  by quadratic forms $L \norm{\tY}_F^2$ and $\mu \norm{\tY}_F^2$ for some positive scalars $L$ and $\mu$. While an explicit form for the smoothness parameter $L$ along with an existence proof for the strong convexity modulus $\mu$ were presented in~\cite{zarmohdhigeojovTAC20}, in Proposition~\ref{prop.LMu} we establish an explicit expression for $\mu$ in terms of $a$ and parameters of the LQR problem. This allows us to provide bounds on the convergence rate for gradient methods. 	
\begin{myprop}
	\label{prop.LMu}
	Over any non-empty sublevel set $\cSY(a)$,  the function $h(Y)$ is  $L$-smooth and $\mu$-strongly convex with
	\begin{subequations}
		\begin{align}
		L
		&\;=\;
		\dfrac{2 a \norm{R}_2}{\nu} \left( 1 \,+\, \dfrac{a \norm{\cA^{-1} \cB}_2}{ \sqrt{\nu \lambda_{\min}(R)}} \right)^2
		\label{eq.L}
		\\[0.15cm]
		\label{eq.muStrong}
		\mu 
		&\;=\; 
		\dfrac{2 \lambda_{\min}(R) \lambda_{\min}(Q)}{a \left(1\,+\,a^2 \eta\right)^2}
		\end{align}
		where the constants
		\begin{align}
		\label{eq.nuprime}
		\eta
		&\;\DefinedAs\;
		\dfrac{\norm{\cB}_2}{\lambda_{\min}(Q) \lambda_{\min}(\Omega) \sqrt{\nu\,\lambda_{\min}(R)}}
		\\[0.15cm]
		\label{eq.nuu}
		\nu
		&\;\DefinedAs\;
		\dfrac{\lambda_{\min}^2(\Omega)}{4} \left(\dfrac{\norm{A}_2 }{\sqrt{\lambda_{\min}(Q)}} \,+\, \dfrac{\norm{B}_2}		
		{\sqrt{\lambda_{\min}(R)}} \right)^{-2}
		\end{align}
	\end{subequations}
	only depend on the problem parameters.
\end{myprop}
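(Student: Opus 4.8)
The plan is to compute the Hessian bilinear form of $h$ along a direction $\tY$ in closed form and then bound it over $\cSY(a)$. Because $X(Y)$ in \eqref{eq.XofY} is affine, the perturbation of $X$ induced by $\tY$ is exactly $\tX = \cA^{-1}\cB(\tY)$, which is symmetric since $\cB(\tY)$ is. Expanding $h(Y+\tY)$ with $(X+\tX)^{-1} = X^{-1} - X^{-1}\tX X^{-1} + X^{-1}\tX X^{-1}\tX X^{-1} - \cdots$ and collecting the quadratic terms (the affine term $\trace(Q(X+\tX))$ contributes nothing), I would use $Y = KX$ with $K = YX^{-1}$ and the symmetry of $\tX$ to group everything into the compact form
\begin{align*}
\inner{\tY}{\nabla^2 h(Y;\tY)} \;=\; 2\,\trace\!\left(E^T R\, E\, X^{-1}\right), \qquad E \,\DefinedAs\, \tY \,-\, K\tX .
\end{align*}
Since $E = \tK X$ with $\tK$ the perturbation induced on the gain $K = YX^{-1}$, this form is nonnegative, confirming convexity; the $L$-smoothness bound \eqref{eq.L} then follows as in \cite{zarmohdhigeojovTAC20} from $\norm{E}_F \le (1 + \norm{K}_2\norm{\cA^{-1}\cB}_2)\norm{\tY}_F$ together with $\trace(E^TREX^{-1}) \le (\norm{R}_2/\lambda_{\min}(X))\norm{E}_F^2$. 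The new content is the lower bound giving $\mu$ in \eqref{eq.muStrong}.

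For the lower bound I would first write
\begin{align*}
\inner{\tY}{\nabla^2 h(Y;\tY)} \;\ge\; \frac{2\,\lambda_{\min}(R)}{\norm{X}_2}\,\norm{E}_F^2,
\end{align*}
and supply two sublevel-set estimates. The upper bound $\norm{X}_2 \le \trace(X) \le f(K)/\lambda_{\min}(Q) \le a/\lambda_{\min}(Q)$ is immediate. The crucial new estimate is a uniform lower bound $\lambda_{\min}(X) \ge \nu/a$. Testing the Lyapunov identity $\Omega = -(A-BK)X - X(A-BK)^T$ against the unit eigenvector $v$ of $X$ at $\lambda_{\min}(X)$ gives $\lambda_{\min}(\Omega) \le 2\,\lambda_{\min}(X)\, v^T(BK-A)v$. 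I would then bound $|v^TAv| \le (\norm{A}_2/\sqrt{\lambda_{\min}(Q)})\norm{Q^{1/2}v}$ and $|v^TBKv| \le (\norm{B}_2/\sqrt{\lambda_{\min}(R)})\norm{R^{1/2}Kv}$ by Cauchy-Schwarz, and use the eigenvector relation $v = \lambda_{\min}(X)^{-1/2}X^{1/2}v$ to get $\norm{Q^{1/2}v} \le \lambda_{\min}(X)^{-1/2}\norm{Q^{1/2}X^{1/2}}_2 \le \lambda_{\min}(X)^{-1/2}\sqrt{a}$ and likewise $\norm{R^{1/2}Kv} \le \lambda_{\min}(X)^{-1/2}\sqrt{a}$, since $\norm{Q^{1/2}X^{1/2}}_2^2 \le \trace(QX) \le a$ and $\norm{R^{1/2}KX^{1/2}}_2^2 \le \trace(K^TRKX) \le a$. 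This produces the self-referential inequality $\lambda_{\min}(X) \ge \lambda_{\min}(\Omega)\sqrt{\lambda_{\min}(X)}/(2\sqrt{a}\,D)$ with $D = \norm{A}_2/\sqrt{\lambda_{\min}(Q)} + \norm{B}_2/\sqrt{\lambda_{\min}(R)}$, which rearranges to $\lambda_{\min}(X) \ge \lambda_{\min}^2(\Omega)/(4aD^2) = \nu/a$, recovering \eqref{eq.nuu}. The same bounds give $\norm{K}_2 \le \norm{R^{1/2}KX^{1/2}}_2/(\sqrt{\lambda_{\min}(R)}\sqrt{\lambda_{\min}(X)}) \le a/\sqrt{\nu\lambda_{\min}(R)}$.

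It remains to bound $\norm{E}_F$ below by $\norm{\tY}_F$. Writing $\tY = E + K\dot X$, where the induced perturbation $\dot X = \cA_K^{-1}\cB(E)$ solves the sensitivity of \eqref{eq.lyapKX} to the gain perturbation $\tK$, I would estimate $\norm{\dot X}_F \le (\norm{X}_2/\lambda_{\min}(\Omega))\norm{\cB}_2\norm{E}_F$, where the operator bound $\norm{\cA_K^{-1}} \le \norm{X}_2/\lambda_{\min}(\Omega)$ follows from monotonicity of the positive map $-\cA_K^{-1}$ and the identity $X = -\cA_K^{-1}(\Omega)$. Combining with the bounds on $\norm{K}_2$ and $\norm{X}_2$ yields
\begin{align*}
\norm{\tY}_F \;\le\; \left(1 + \frac{\norm{K}_2\norm{X}_2\norm{\cB}_2}{\lambda_{\min}(\Omega)}\right)\norm{E}_F \;\le\; (1 + a^2\eta)\,\norm{E}_F,
\end{align*}
with $\eta$ as in \eqref{eq.nuprime}. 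Substituting $\norm{E}_F \ge \norm{\tY}_F/(1+a^2\eta)$ and $\norm{X}_2 \le a/\lambda_{\min}(Q)$ into the reduced form gives $\inner{\tY}{\nabla^2 h(Y;\tY)} \ge (2\lambda_{\min}(R)\lambda_{\min}(Q)/(a(1+a^2\eta)^2))\norm{\tY}_F^2$, which is \eqref{eq.muStrong}. The main obstacle is the uniform lower bound $\lambda_{\min}(X) \ge \nu/a$: obtaining a parameter-explicit estimate with clean $a$-dependence is what forces the weighted Cauchy-Schwarz splitting of $BK - A$ against $R^{1/2}$ and $Q^{1/2}$ and the ensuing self-referential argument, and this bound also controls $\norm{K}_2$ everywhere else. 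A secondary technical point is the norm-consistent control of $\cA_K^{-1}$ needed for $\norm{\dot X}_F$: the monotonicity argument bounds its spectral-induced norm directly, so some care is required to transfer the estimate to the Frobenius norm appearing in $\norm{E}_F$.
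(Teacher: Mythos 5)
Your route is essentially the paper's own: the Hessian formula $\inner{\tY}{\nabla^2 h(Y;\tY)} = 2\,\trace\,(E^T R\, E\, X^{-1})$ with $E = \tY - K\tX$ is exactly Eq.~\eqref{eq.hessianY}, and the subsequent steps (lower-bounding by $2\lambda_{\min}(R)\norm{E}_F^2/\norm{X}_2$, the decomposition $\tY = E + K\tX$ with $\tX = \cA_K^{-1}(\cB(E))$, the sublevel-set bounds $\norm{X}_2 \le a/\lambda_{\min}(Q)$, $\lambda_{\min}(X) \ge \nu/a$, $\norm{K} \le a/\sqrt{\nu\lambda_{\min}(R)}$, and the final assembly of $\mu$ and $L$) coincide with the paper's proof; your eigenvector/weighted Cauchy--Schwarz derivation of $\lambda_{\min}(X) \ge \nu/a$ is an equivalent rewriting of the argument with $D = AX^{1/2} - BYX^{-1/2}$ used for~\eqref{eq.lX} in Lemma~\ref{lem.bounds}.

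The one genuine gap is the step you dismiss as a ``secondary technical point'': the estimate $\norm{\tX}_F \le (\norm{X}_2/\lambda_{\min}(\Omega))\norm{\cB}_2\norm{E}_F$. Monotonicity of the positive map $-\cA_K^{-1}$ together with $X = -\cA_K^{-1}(\Omega)$ yields only the spectral-induced bound $\norm{\cA_K^{-1}}_S \le \norm{X}_2/\lambda_{\min}(\Omega)$; it does \emph{not} control the Frobenius-induced norm you actually invoke, and for positive maps no such implication holds in general: the positive map $\Phi(W) = W_{11} I$ on $\bbS^n$ has $\norm{\Phi(I)}_2 = 1$ but Frobenius-induced norm $\sqrt{n}$ (test $W = \mre_1\mre_1^T$). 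Patching via $\norm{M}_F \le \sqrt{n}\,\norm{M}_2$ would inject a factor $\sqrt{n}$ into $\eta$, so the dimension-free constant in~\eqref{eq.muStrong} would not be recovered. The correct repair is the paper's Lemma~\ref{lem.normLyap}/Lemma~\ref{lem.ucAinv}: from the integral representation of the closed-loop Lyapunov solution, $\norm{\cA_K^{-1}(W)}_F \le \norm{W}_F \int_0^\infty \norm{\mre^{(A-BK)t}}_F^2\,\mrd t = \norm{W}_F \, \trace\,(-\cA_K^{-1}(I))$, and monotonicity of the trace then gives $\trace\,(-\cA_K^{-1}(I)) \le \trace\,(X)/\lambda_{\min}(\Omega) \le a/(\lambda_{\min}(\Omega)\lambda_{\min}(Q))$. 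Since your own chain bounds $\norm{X}_2$ through $\trace\,(X) \le a/\lambda_{\min}(Q)$ anyway, substituting $\trace\,(X)$ for $\norm{X}_2$ in this one step changes nothing downstream and yields exactly the $\eta$ in~\eqref{eq.nuprime}; with that replacement your argument is complete.
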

\begin{proof}
	See Appendix~\ref{app.convex-char}.
\end{proof}

\vspace*{-2ex}
\subsection{Gradient methods over $\cSY$}


The LQR problem can be solved by minimizing the convex function $h(Y)$ whose gradient is given by~\cite[Appendix C]{zarmohdhigeojovTAC20}
\begin{subequations}
	\label{eq.nablaY-W}
	\begin{align}
	\label{eq.nablaY}
	\nabla h(Y)
	\; = \;
	2\, R\,Y ( X (Y) )^{-1}
	\, - \,
	2\, B^T W (Y)
	\end{align} 
	where $W (Y)$ is the solution to
	\begin{align}
	\label{eq.W}
	A^T \, W \, +\, W A 
	\; = \;
	( X (Y) )^{-1} \, Y^{T} R\, Y ( X (Y) )^{-1} \, - \, Q.
	\end{align}
\end{subequations}
Using the strong convexity and smoothness properties of $h(Y)$ established in Proposition~\ref{prop.LMu}, we next show that the unique minimizer $Y^\star$ of the function $h(Y)$ is the exponentially stable equilibrium point of the gradient-flow dynamics over $\cSY$,
\begin{align}\label{eq.GDY-flow}
\tag{GFY}
\dot{Y} \;=\; -\nabla h(Y), \quad Y(0) \, \in \, \cSY.
\end{align}
	\vspace*{-4ex}	
\begin{myprop}
	For any $Y(0) \in \cSY$, the gradient-flow dynamics~\eqref{eq.GDY-flow} are exponentially stable, i.e., 
	\[
	\norm{Y(t) \, - \, Y^\star}_F^2 
	\; \le \; 
	(L/\mu) \, \mre^{-2\,\mu\,t} \, \norm{Y(0) \, - \, Y^\star}_F^2
	\]
	where $\mu$ and $L$ are the strong convexity and smoothness parameters of the function $h(Y)$ over the sublevel set $\cSY(h(Y(0)))$.
	\label{prop.Yexp} 
\end{myprop}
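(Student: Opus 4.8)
The plan is to use the LQR objective error $V(t) \DefinedAs h(Y(t)) - h(Y^\star)$ as a Lyapunov function and to combine the two properties furnished by Proposition~\ref{prop.LMu}---$\mu$-strong convexity and $L$-smoothness over the sublevel set $\cSY(h(Y(0)))$---to certify exponential decay at rate $2\mu$. Three ingredients drive the argument. First, along trajectories of~\eqref{eq.GDY-flow} the chain rule gives the dissipation identity $\dot V = \inner{\nabla h(Y)}{\dot Y} = -\norm{\nabla h(Y)}_F^2$, so $h$ is nonincreasing and $Y(t)$ never leaves $\cSY(h(Y(0)))$. Second, $\mu$-strong convexity yields the Polyak--Lojasiewicz inequality $\norm{\nabla h(Y)}_F^2 \ge 2\mu\,(h(Y) - h(Y^\star))$, obtained by minimizing the lower bound $h(Y') \ge h(Y) + \inner{\nabla h(Y)}{Y'-Y} + \tfrac{\mu}{2}\norm{Y'-Y}_F^2$ over $Y'$ and evaluating the left side at $Y^\star$. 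Third, the same lower bound taken at $Y=Y^\star$ (where $\nabla h(Y^\star)=0$) gives the quadratic-growth estimate $h(Y) - h(Y^\star) \ge \tfrac{\mu}{2}\norm{Y - Y^\star}_F^2$, while $L$-smoothness gives the companion upper bound $h(Y(0)) - h(Y^\star) \le \tfrac{L}{2}\norm{Y(0)-Y^\star}_F^2$.

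Combining the dissipation identity with the Polyak--Lojasiewicz inequality yields $\dot V \le -2\mu V$, and integrating this differential inequality (Gr\"onwall) gives $V(t) \le \mre^{-2\mu t} V(0)$, i.e., exponential decay of the objective error. To pass to the stated bound on $\norm{Y(t)-Y^\star}_F^2$, I would sandwich the three estimates: the quadratic-growth bound at $Y(t)$ gives $\tfrac{\mu}{2}\norm{Y(t)-Y^\star}_F^2 \le V(t)$, the decay gives $V(t) \le \mre^{-2\mu t} V(0)$, and the smoothness bound gives $V(0) \le \tfrac{L}{2}\norm{Y(0)-Y^\star}_F^2$. Chaining these and multiplying through by $2/\mu$ produces exactly $\norm{Y(t)-Y^\star}_F^2 \le (L/\mu)\,\mre^{-2\mu t}\,\norm{Y(0)-Y^\star}_F^2$.

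The main obstacle is justifying that the constants $\mu$ and $L$ from Proposition~\ref{prop.LMu}---valid only over the fixed sublevel set $\cSY(h(Y(0)))$---may legitimately be used all along the trajectory and in the two-point inequalities above. Two points need care. \emph{(i) Invariance and well-posedness.} Because $\dot V = -\norm{\nabla h(Y)}_F^2 \le 0$, the trajectory stays in $\cSY(h(Y(0)))$; since this sublevel set is compact and contained in the open set $\cSY$, the solution of~\eqref{eq.GDY-flow} exists for all $t\ge 0$ and cannot approach the boundary of $\cSY$ (where $h\to\infty$). Moreover $Y^\star$ lies in the same sublevel set, as $h(Y^\star)\le h(Y(0))$. \emph{(ii) From Hessian bounds to two-point inequalities.} Proposition~\ref{prop.LMu} bounds the second-order term in the Taylor expansion pointwise over $\cSY(h(Y(0)))$; to integrate these Hessian bounds into the global two-point inequalities used above, the line segment joining the two points must remain in the sublevel set. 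This is guaranteed by convexity of $h$ together with convexity of $\cSY$, which make $\cSY(h(Y(0)))$ convex, so every segment between its points stays inside. With these two observations in place, the Lyapunov computation is rigorous and delivers the claimed rate $2\mu$.
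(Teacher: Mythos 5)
Your proposal is correct and follows essentially the same route as the paper: the Lyapunov candidate $V = h(Y) - h(Y^\star)$, the dissipation identity combined with the strong-convexity (PL) bound to get $\dot V \le -2\mu V$, and then quadratic upper/lower bounds from $L$-smoothness and $\mu$-strong convexity to convert the decay of $V$ into the stated bound on $\norm{Y(t)-Y^\star}_F^2$. The only difference is presentational: the paper compresses the final sandwich step into a citation of Lyapunov theory (Khalil, Theorem~4.10), whereas you carry it out explicitly, along with the invariance/convexity caveats that the paper leaves implicit.
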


\begin{proof}
	The derivative of the Lyapunov function candidate
	$
	V(Y)
	\DefinedAs 
	h(Y) - h(Y^\star)
	$
	along the flow in~\eqref{eq.GDY-flow} satisfies
	\begin{align}
	\label{eq.comparison}
	\dot{V}
	\; = \;
	\inner{\nabla h(Y)}{\dot{Y}}
	\;=\; 
	-\norm{\nabla h(Y)}_F^2
	\;\le\; 
	-2\,\mu V.
	\end{align}
	Inequality~\eqref{eq.comparison} is a consequence of the strong convexity of $h(Y)$ and it yields~\cite[Lemma~3.4]{kha96}
	\begin{align}\label{eq.expConvLyapV}
	V(Y(t))
	\;\le\; 
	\mre^{-2\,\mu\,t} \, V(Y(0)).
	\end{align}
	Thus, for any $Y(0) \in \cSY$, $h(Y(t))$ converges exponentially to $h(Y^\star)$. Moreover, since $h(Y)$ is $\mu$-strongly convex and $L$-smooth, $V(Y)$ can be upper and lower bounded by quadratic functions and the exponential stability of~\eqref{eq.GDY-flow} over $\cSY$ follows from Lyapunov theory~\cite[Theorem~4.10]{kha96}.
\end{proof} 
In Section~\ref{sec.non-convex-char}, we use the above result to prove exponential/linear convergence of gradient flow/descent for the nonconvex optimization problem~\eqref{eq.optProbK}. Before we proceed, we note that similar convergence guarantees can be established for the gradient descent method with a sufficiently small stepsize~$\alpha$,
\begin{align}
\label{eq.GDY}
\tag{GY}
Y^{k+1}
\; \DefinedAs \; 
Y^k \; - \; \alpha\,\nabla h(Y^k), \quad Y^0 \, \in \, \cSY
\end{align}
Since the function $h(Y)$ is $L$-smooth over the sublevel set $\cSY(h(Y^{0}))$, for any $\alpha\in [ 0,1/L ]$ the iterates $Y^k$ remain within $\cSY(h(Y^{0}))$. This property in conjunction with the $\mu$-strong convexity of $h(Y)$ imply that $Y^k$ converges to the optimal solution $Y^\star$ at a linear rate of $\gamma = 1-\alpha \mu$.

\vspace*{-2ex}
\section{Control design with a known model }
\label{sec.non-convex-char}

The asymptotic stability of~\eqref{eq.GDK-flow} is a consequence of the following properties of the LQR objective function~\cite{toi85,rausac97}:
\begin{enumerate}
	\item The function $f(K)$ is twice continuously differentiable over its open domain $\cSK$ and $f(K)\rightarrow\infty$ as $K\rightarrow\infty$ and/or $K\rightarrow\partial\cSK$. 
	
	\item The optimal solution $K^\star$ is the unique equilibrium point over ${\cal S}_K$, i.e., $\nabla f(K) = 0$ if and only if $K=K^\star$.
\end{enumerate}
In particular, the derivative of the maximal Lyapunov function candidate 
$
V(K) \DefinedAs f(K) - f(K^\star)
$
along the trajectories of~\eqref{eq.GDK-flow} satisfies
\begin{align*}
\dot{V} 
\;=\;
\inner{\nabla f(K)}{\dot{K}}
\;=\;
- \norm{\nabla f (K)}_F^2
\;\le\; 
0
\end{align*}
where the inequality is strict for all $K\neq K^\star$. Thus, Lyapunov theory~\cite{vanvid85} implies that, starting from any stabilizing initial condition $K(0)$, the trajectories of~\eqref{eq.GDK-flow} remain within the sublevel set $\cSK(f(K(0)))$ and asymptotically converge to $K^\star$. 

Similar arguments were employed for the convergence analysis of the Anderson-Moore algorithm for output-feedback synthesis~\cite{toi85}. While~\cite{toi85} shows global asymptotic stability, it does not provide any information on the rate of convergence. In this section, we first demonstrate exponential stability of~\eqref{eq.GDK-flow} and prove Theorem~\ref{thm.main-cts}. Then, we  establish linear convergence of the gradient descent method~\eqref{eq.GDK} and prove Theorem~\ref{thm.main-dsc}.

	\vspace*{-2ex}
\subsection{Gradient-flow dynamics: proof of Theorem~\ref{thm.main-cts}}\label{subsec.flowAnalysis}

We start our proof of Theorem~\ref{thm.main-cts} by relating the convex and nonconvex formulations of the LQR objective function. Specifically, in Lemma~\ref{lem.comparison}, we establish a relation between the gradients $\nabla f(K)$ and $\nabla h(Y)$ over the sublevel sets of the objective function
$
\cSK(a)
\DefinedAs
\{ K \in \cSK \, | \, f(K) \le a \}.
$

\begin{mylem}\label{lem.comparison}
	For any stabilizing feedback gain $K\in\cSK(a)$ and $Y \DefinedAs K X(K)$, we have
	\begin{subequations}
		\begin{align}
		\label{eq.c1Ineq}
		\norm{\nabla f(K)}_F 
		&
		\;\ge\;
		c \, \norm{\nabla h(Y)}_F
		\end{align}
		where $X(K)$ is given by~\eqref{eq.X}, the constant $c$ is determined by
		\begin{align}\label{eq.cComparison}
		c 
		&\; = \;
		\dfrac{\nu\sqrt{\nu\,\lambda_{\min}(R)}}{2\, a^2 \,\norm{\cA^{-1}}_2 \,\norm{B}_2 \,+\, a\,\sqrt{\nu\,\lambda_{\min}(R)}}
		\end{align}
	\end{subequations}
	and the scalar $\nu$ given by Eq.~\eqref{eq.nuu} depends on the problem parameters.
\end{mylem}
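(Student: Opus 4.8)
The plan is to transfer the gradient of the \emph{convex} objective to that of the \emph{nonconvex} one through the change of variables $Y \DefinedAs KX(K)$, and then to control the resulting linear map by sublevel-set estimates. First I would record the key consequence of this substitution: with $X \DefinedAs X(K)$, the matrix $X$ is also the one associated with $Y$ in the convex parametrization, i.e.\ $X(Y)=X$. This follows by inserting $Y=KX$ into the affine Lyapunov equation~\eqref{eq.lyapunovXY}, which collapses to the closed-loop equation~\eqref{eq.lyapKX}. Hence formula~\eqref{eq.nablaY} simplifies to $\nabla h(Y) = 2\,(RK - B^T W)$, where $W=W(Y)$ solves the open-loop equation $\cA^*(W) = K^T R K - Q$ obtained by reducing~\eqref{eq.W}; here $\cA^*(\cdot)=A^T(\cdot)+(\cdot)A$ is the adjoint of the open-loop operator $\cA$.

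Next I would relate $W$ to the matrix $P=P(K)$ of~\eqref{eq.lyapKP}. Since $\cA_K^* = \cA^* - K^T B^T(\cdot) - (\cdot)\,BK$, subtracting the two Lyapunov equations yields $\cA^*(P-W) = -2\,K^TRK + K^TB^TP + PBK$. Recognizing from~\eqref{eq.nablaK} that $RK - B^T P = \tfrac12\,\nabla f(K)\,X^{-1}$, the right-hand side equals $-\tfrac12\big(K^T\nabla f(K)X^{-1} + X^{-1}\nabla f(K)^T K\big)$, so that, after inverting $\cA^*$ and substituting into $\nabla h(Y) = \nabla f(K)X^{-1} + 2B^T(P-W)$, I obtain the exact linear identity
\[
\nabla h(Y) \;=\; \nabla f(K)\,X^{-1} \;-\; B^T(\cA^*)^{-1}\!\left(K^T\nabla f(K)\,X^{-1} + X^{-1}\nabla f(K)^T K\right).
\]
As a sanity check, at $K=K^\star$ one has $\nabla f(K^\star)=0$, whence $P=W$ and $\nabla h(Y^\star)=0$, consistent with both critical points coinciding.

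I would then take Frobenius norms and apply the triangle inequality and submultiplicativity, using $\norm{(\cA^*)^{-1}}_2 = \norm{\cA^{-1}}_2$, to get $\norm{\nabla h(Y)}_F \le \norm{X^{-1}}_2\big(1 + 2\,\norm{B}_2\,\norm{\cA^{-1}}_2\,\norm{K}_2\big)\norm{\nabla f(K)}_F$. Inverting this estimate gives~\eqref{eq.c1Ineq}, and the explicit constant~\eqref{eq.cComparison} follows once the two sublevel-set bounds are inserted: a lower bound on $\lambda_{\min}(X(K))$ controlling $\norm{X^{-1}}_2$, and an upper bound on $\norm{K}_2$. Both are driven by $f(K)\le a$ together with the inequality $f(K) \ge \lambda_{\min}(R)\,\lambda_{\min}(X)\,\norm{K}_F^2$, and by the Gramian-type lower bound $\lambda_{\min}(X(K))\ge \nu$ underlying the definition~\eqref{eq.nuu} of $\nu$.

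The algebraic identity is routine once one notices $X(Y)=X(K)$; the genuine difficulty is the \emph{uniform} sublevel-set control of $\norm{X^{-1}}_2$ and $\norm{K}_2$ over $\cSK(a)$. These two quantities are coupled, since the natural bound $\norm{K}_F^2 \le a/(\lambda_{\min}(R)\,\lambda_{\min}(X))$ already involves $\lambda_{\min}(X)$; decoupling them to arrive at clean constants expressed through $a$, $\nu$, and $\lambda_{\min}(R)$ — and thereby producing the factor $\nu\sqrt{\nu\,\lambda_{\min}(R)}$ in~\eqref{eq.cComparison} — is where the careful estimation sits, and it rests on the uniform lower bound on $X(K)$ supplied by the convex-reparametrization analysis.
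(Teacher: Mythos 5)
Your proposal is correct and takes essentially the same route as the paper's own proof: both use $X(Y)=X(K)$ to reduce $\nabla h(Y)$ to $2(RK-B^TW)$, subtract the Lyapunov equations for $P$ and $W$ to obtain $P-W=-\tfrac{1}{2}(\cA^*)^{-1}\!\left(K^TE+E^TK\right)$ with $E\DefinedAs\nabla f(K)X^{-1}=2(RK-B^TP)$, and then combine $\norm{\cA^{-1}}_2$ with the sublevel-set bounds on $\norm{K}$ and on $\lambda_{\min}(X)$ to arrive at~\eqref{eq.cComparison}. One small correction to your closing paragraph: the Gramian-type bound from Lemma~\ref{lem.bounds} is $\lambda_{\min}(X(K))\ge\nu/a$ (Eq.~\eqref{eq.lX}), not $\lambda_{\min}(X(K))\ge\nu$; it is precisely this extra factor of $1/a$ that generates the powers of $a$ appearing in the constant~\eqref{eq.cComparison}.
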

\begin{proof}
	See Appendix~\ref{app.non-convex-char}.
\end{proof}

Using Lemma~\ref{lem.comparison} and the exponential stability of gradient-flow dynamics~\eqref{eq.GDY-flow} over $\cSY$,  established in Proposition~\ref{prop.Yexp}, we next show that~\eqref{eq.GDK-flow} is also exponentially stable. In particular, for any stabilizing $K\in\cSK(a)$, the derivative of 
$
V(K)
\DefinedAs
f(K) - f(K^\star)
$
along the gradient flow in~\eqref{eq.GDK-flow}
satisfies
\begin{align}
\dot{V}
\; = \,
-\norm{\nabla f(K)}_F^2
\; \le \,
-c^2 \, \norm{\nabla h(Y)}_F^2
\; \le \, 
-2\,\mu\, c^2\,V
\label{eq.VdotVGD}
\end{align}
where $Y = K X(K)$ and the constants $c$ and $\mu$ are provided in Lemma~\ref{lem.comparison} and Proposition~\ref{prop.LMu}, respectively. The first inequality in~\eqref{eq.VdotVGD} follows from~\eqref{eq.c1Ineq} and the second follows from $f(K)=h(Y)$ combined with $\norm{\nabla h(Y)}_F^2\ge2\mu V$ (which in turn is a consequence of the strong convexity of $h(Y)$ established in Proposition~\ref{prop.LMu}).

Now, since the sublevel set $\cSK(a)$ is invariant with respect to~\eqref{eq.GDK-flow}, following~\cite[Lemma~3.4]{kha96},  inequality~\eqref{eq.VdotVGD} guarantees
that system~\eqref{eq.GDK-flow} converges exponentially in the objective value with rate
$
\rho = 2 \mu c^2.
$
This concludes the proof of part~(a) in Theorem~\ref{thm.main-cts}. In order to prove part~(b), we use the following lemma which connects the errors in the objective value and the optimization variable. 
\begin{mylem}\label{lem.errorRelation}
	For any stabilizing feedback gain $K$, the objective function $f(K)$ in problem~\eqref{eq.optProbK} satisfies
	\begin{align*}
	f(K)
	\,-\,
	f(K^\star)
	\; = \;
	\trace \left((K-K^\star)^T R \,(K-K^\star)\, X(K) \right)
	\end{align*}
	where $K^\star$ is the optimal solution and $X(K)$ is given by~\eqref{eq.X}.
\end{mylem}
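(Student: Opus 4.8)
The plan is to work with the dual ($P$-based) representation of the LQR cost and to reduce the claim to a completion-of-squares identity for the Riccati equation. First I would note that, by duality between the $X$- and $P$-representations, the cost admits the equivalent form $f(K) = \trace(P(K)\,\Omega) = \inner{P(K)}{\Omega}$, where $P(K)$ solves Lyapunov equation~\eqref{eq.lyapKP}, i.e.\ $\cA_K^*(P(K)) = -(Q + K^T R K)$. This follows from $X(K) = -\cA_K^{-1}(\Omega)$, $P(K) = -(\cA_K^*)^{-1}(Q + K^T R K)$, and the self-adjointness relation $(\cA_K^*)^* = \cA_K$. In particular, $f(K) - f(K^\star) = \inner{P(K) - P^\star}{\Omega}$, where $P^\star = P(K^\star)$ is the solution of ARE~\eqref{eq.ARE}.

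Next I would derive the key completion-of-squares identity. Substituting $K^\star = R^{-1} B^T P^\star$ into~\eqref{eq.ARE} gives $A^T P^\star + P^\star A + Q - (K^\star)^T R K^\star = 0$. Using $B^T P^\star = R K^\star$ to expand $\cA_K^*(P^\star) = (A - BK)^T P^\star + P^\star (A - BK)$, replacing the terms $A^T P^\star + P^\star A$ via the ARE, and adding $Q + K^T R K$, the quadratic terms in $K$ and $K^\star$ assemble into a perfect square:
\[
\cA_K^*(P^\star) \,+\, Q \,+\, K^T R K \;=\; (K - K^\star)^T R\,(K - K^\star).
\]
Subtracting this from the defining equation $\cA_K^*(P(K)) + Q + K^T R K = 0$ of $P(K)$ yields $\cA_K^*\big(P(K) - P^\star\big) = -(K - K^\star)^T R\,(K - K^\star)$.

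Finally I would convert this back into the stated trace expression. Applying $(\cA_K^*)^{-1}$ and pairing with $\Omega$,
\[
f(K) - f(K^\star) \,=\, \inner{P(K) - P^\star}{\Omega} \,=\, -\inner{(\cA_K^*)^{-1}\big((K - K^\star)^T R\,(K - K^\star)\big)}{\Omega}.
\]
Transferring the inverse operator onto the second argument via its adjoint and using $\cA_K^{-1}(\Omega) = -X(K)$ gives $f(K) - f(K^\star) = \inner{(K - K^\star)^T R\,(K - K^\star)}{X(K)}$, which is exactly $\trace\big((K - K^\star)^T R\,(K - K^\star)\,X(K)\big)$ since both factors are symmetric.

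The only delicate point is the completion of squares in the second step: one must carefully track the cross terms $K^T R K^\star$ and $(K^\star)^T R K$ produced by $\cA_K^*(P^\star)$ and verify that, together with $(K^\star)^T R K^\star$ supplied by the ARE and the added $K^T R K$, they combine precisely into $(K - K^\star)^T R\,(K - K^\star)$. The remainder is bookkeeping with the self-adjointness of $\cA_K$ and the sign convention in $X(K) = -\cA_K^{-1}(\Omega)$.
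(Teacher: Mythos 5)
Your proof is correct, but it takes a genuinely different route from the paper. The paper's proof is essentially two lines: it invokes Toivonen's cost-difference formula \cite[Eq.~(2.10)]{toi85}, which states that for any two stabilizing gains $K$ and $\hat{K} = K + \tilde{K}$ one has $f(\hat{K}) - f(K) = \trace\big(\tilde{K}^T(R(K + \hat{K}) - 2B^T P(\hat{K}))X(K)\big)$, and then simply sets $\hat{K} = K^\star$ and uses the optimality condition $B^T P^\star = R K^\star$. You instead build a self-contained argument from the dual representation $f(K) = \trace(P(K)\,\Omega)$ and a completion of squares on the ARE, arriving at the intermediate identity $\cA_K^*\big(P(K) - P^\star\big) = -(K - K^\star)^T R\,(K - K^\star)$ before transferring $(\cA_K^*)^{-1}$ onto $\Omega$ via adjointness to recover $X(K)$. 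I verified the delicate cross-term bookkeeping you flagged: with $K^T B^T P^\star = K^T R K^\star$, $P^\star B K = (K^\star)^T R K$, and $A^T P^\star + P^\star A = -Q + (K^\star)^T R K^\star$ from the ARE, the terms indeed assemble into $(K - K^\star)^T R (K - K^\star)$, and the final adjoint swap $\langle (\cA_K^*)^{-1}(M), \Omega\rangle = \langle M, \cA_K^{-1}(\Omega)\rangle = -\langle M, X(K)\rangle$ is sound. What each approach buys: the paper's is shorter but rests on an external citation; yours requires no outside reference, uses only operator facts already established in the paper, and exposes the classical policy-difference Lyapunov identity (of independent interest, e.g., it immediately gives $P(K) \succeq P^\star$), which the paper's citation hides. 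One terminological nit: $(\cA_K^*)^* = \cA_K$ is not ``self-adjointness'' (that would mean $\cA_K^* = \cA_K$, which is false here); it is simply the involutivity of the adjoint, and you do use it correctly.
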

\begin{proof}
	See Appendix~\ref{app.non-convex-char}.
\end{proof}

From Lemma~\ref{lem.errorRelation} and part~(a) of Theorem~\ref{thm.main-cts}, we have
\beq
\ba{rcl}
\norm{K(t) \,-\, K^\star}^2_F
& \!\!\! \le \!\!\! &
\dfrac{f(K(t)) \,-\, f(K^\star)}{\lambda_{\min}(R)\,\lambda_{\min}(X(K(t)))}
\;\le\;
\mre^{-\rho\, t} 
\,
\dfrac{f(K(0)) \,-\, f(K^\star)}{\lambda_{\min}(R)\,\lambda_{\min}(X(K(t)))}
\;\le\;
b' \, \mre^{-\rho\, t} \, \norm{K(0) \,-\, K^\star}^2_F 
\ea
\non
\eeq
where 
$
b'
\DefinedAs
{\norm{R}_2\norm{X(K(0))}_2}/( \lambda_{\min}(R) \lambda_{\min}(X(K(t))) ).
$
Here, the first and third inequalities follow form basic properties of the matrix trace combined with  Lemma~\ref{lem.errorRelation} applied with $K=K(t)$ and $K=K(0)$, respectively. The second inequality follows from part~(a) of Theorem~\ref{thm.main-cts}.

Finally, to upper bound parameter $b'$, we use Lemma~\ref{lem.bounds} presented in Appendix~\ref{app.bounds} that provides the lower and upper bounds
$
{\nu}/{a}
\le
\lambda_{\min}(X(K))
$
and 
$\norm{X(K)}_2 \le a/\lambda_{\min}(Q)$ 
on the matrix $X(K)$ for any $K\in\cSK(a)$,
where the constant $\nu$ is given by~\eqref{eq.nuu}. Using these bounds and the invariance of $\cSK(a)$ with respect to~\eqref{eq.GDK-flow}, we obtain
\begin{align}\label{eq.bDef}
b'
\;\le\;
b
\;\DefinedAs\;
\dfrac{a^2\,\norm{R}_2}{\nu\,\lambda_{\min}(R)\,\lambda_{\min}(Q)}
\end{align}
which completes the proof of part~(b).
\begin{myrem}[Gradient domination]	Expression~\eqref{eq.VdotVGD} implies that the objective function $f(K)$ over any given sublevel set $\cSK(a)$ satisfies the Polyak-{\L}ojasiewicz (PL) condition~\cite{karnutsch16}
	\begin{align}\label{eq.GradDom}
	\norm{\nabla f(K)}_F^2 
	\;\ge\; 
	{2\,\mu_f} \left( f(K)\,-\,f(K^\star)\right)
	\end{align}
with parameter $ \mu_f\DefinedAs\mu\,c^2 $, where $\mu$ and $c$ are  functions of $a$ that are given by~\eqref{eq.muStrong} and~\eqref{eq.cComparison}, respectively. This condition is also known as gradient dominance and it was recently used to show convergence of gradient-based methods for {\em discrete-time\/} LQR problem~\cite{fazgekakmes18}.
\end{myrem}

\vspace*{-2ex}
\subsection{Geometric interpretation}

The solution $Y(t)$ to gradient-flow dynamics~\eqref{eq.GDY-flow} over the set $\cSY$ induces the trajectory 
\begin{align}\label{eq.Kind}
K_{\mathrm{ind}}(t) \; \DefinedAs \; Y(t) ( X (Y (t)) )^{-1}
\end{align} 
over the set of stabilizing feedback gains $\cSK$, where  the affine function $X(Y)$ is given by~\eqref{eq.XofY}. The induced trajectory $K_{\mathrm{ind}}(t)$ can be viewed as the solution to the differential equation
\begin{subequations}
	\label{eq.Kind-eq}
	\begin{align}
	\dot{K}
	\;=\;
	g(K)
	\end{align}
	where $g$: $\cSK\rightarrow\R^{m \times n}$ is given by
	\be
	\label{eq.gK}
	g(K)
	\, \DefinedAs \,
	\left(
	K \cA^{-1}(\cB(\nabla h(Y (K)))) 	- \nabla h(Y (K)) 
	\right)
	(X (K) )^{-1}.
	\ee
\end{subequations}
Here, the matrix $X = X(K)$ is given by~\eqref{eq.X} and $Y (K) = K X (K)$. System~\eqref{eq.Kind-eq}  is obtained by differentiating both sides of Eq.~\eqref{eq.Kind} with respect to time $t$ and applying the chain rule. Figure~\ref{fig.landscape} illustrates an induced  trajectory $K_{\mathrm{ind}}(t)$ and a trajectory $K(t)$ resulting from gradient-flow dynamics~\eqref{eq.GDK-flow} that starts from the same initial condition.

Moreover, using the definition of $h(Y)$, we have
\begin{align}\label{eq.geoHelper}
h(Y(t)) \; = \; f(K_{\mathrm{ind}}(t)).
\end{align}
Thus, the exponential decay of $h(Y(t))$ established in Proposition~\ref{prop.Yexp} implies that $f$ decays exponentially along the vector field $g$, i.e., for $K_{\mathrm{ind}}(0) \neq K^\star$, we have
\begin{align*}
\dfrac{f(K_{\mathrm{ind}}(t)) \,-\, f(K^\star)}{f(K_{\mathrm{ind}}(0)) \,-\, f(K^\star)} 
\;=\;
\dfrac{h(Y(t)) \,-\, h(Y^\star)}{h(Y(0)) \,-\, h(Y^\star)} 
\;\le\;
\mre^{-2\,\mu\, t}.
\end{align*}
This inequality follows from inequality~\eqref{eq.expConvLyapV}, where $\mu$ denotes the strong-convexity modulus of the function $h(Y)$ over the sublevel set $\cSY(h(Y(0)))$; see Proposition~\ref{prop.LMu}. Herein, we provide a geometric interpretation of the exponential decay of $f$ under the trajectories of~\eqref{eq.GDK-flow} that is based on the relation between the vector fields $g$ and $-\nabla f$. 

Differentiating both sides of Eq.~\eqref{eq.geoHelper} with respect to $t$ yields
\begin{align}\label{eq.geoint1}
\norm{\nabla h(Y)}^2 = \inner{-\nabla f(K)}{g(K)}.
\end{align}
Thus, for each $K \in \cSK$, the inner product between the vector fields $-\nabla f (K)$ and $g (K)$ is nonnegative. However, this is not sufficient to ensure exponential decay of $f$ along~\eqref{eq.GDK-flow}. To address this challenge, our proof utilizes inequality~\eqref{eq.c1Ineq} in Lemma~\ref{lem.comparison}. Based on~\eqref{eq.geoint1},~\eqref{eq.c1Ineq} can be equivalently restated as
\begin{align*}
\dfrac{\norm{-\nabla f(K)}_F}{\norm{\mathrm{\Pi}_{-\nabla f(K)}( g(K) )}_F}
\;=\;
\dfrac{\norm{\nabla f(K)}_F^2}{\inner{-\nabla f(K)}{g(K)} }
\; \ge \;
c^2
\end{align*}
where $\Pi_b(a)$ denotes the projection of $a$ onto $b$. Thus, Lemma~\ref{lem.comparison} ensures that the ratio between the norm of the vector field $-\nabla f (K)$ associated with gradient-flow dynamics~\eqref{eq.GDK-flow} and the norm of the projection of $g (K)$ onto $- \nabla f (K)$ is uniformly lower bounded by a positive constant. This lower bound is the key geometric feature that allows us to deduce exponential decay of $f$ along the vector field $-\nabla f$ from the exponential decay of the vector field $g$.

\begin{figure}
	\centering
	\begin{tabular}{c}			
		\includegraphics[width=.25\textwidth]{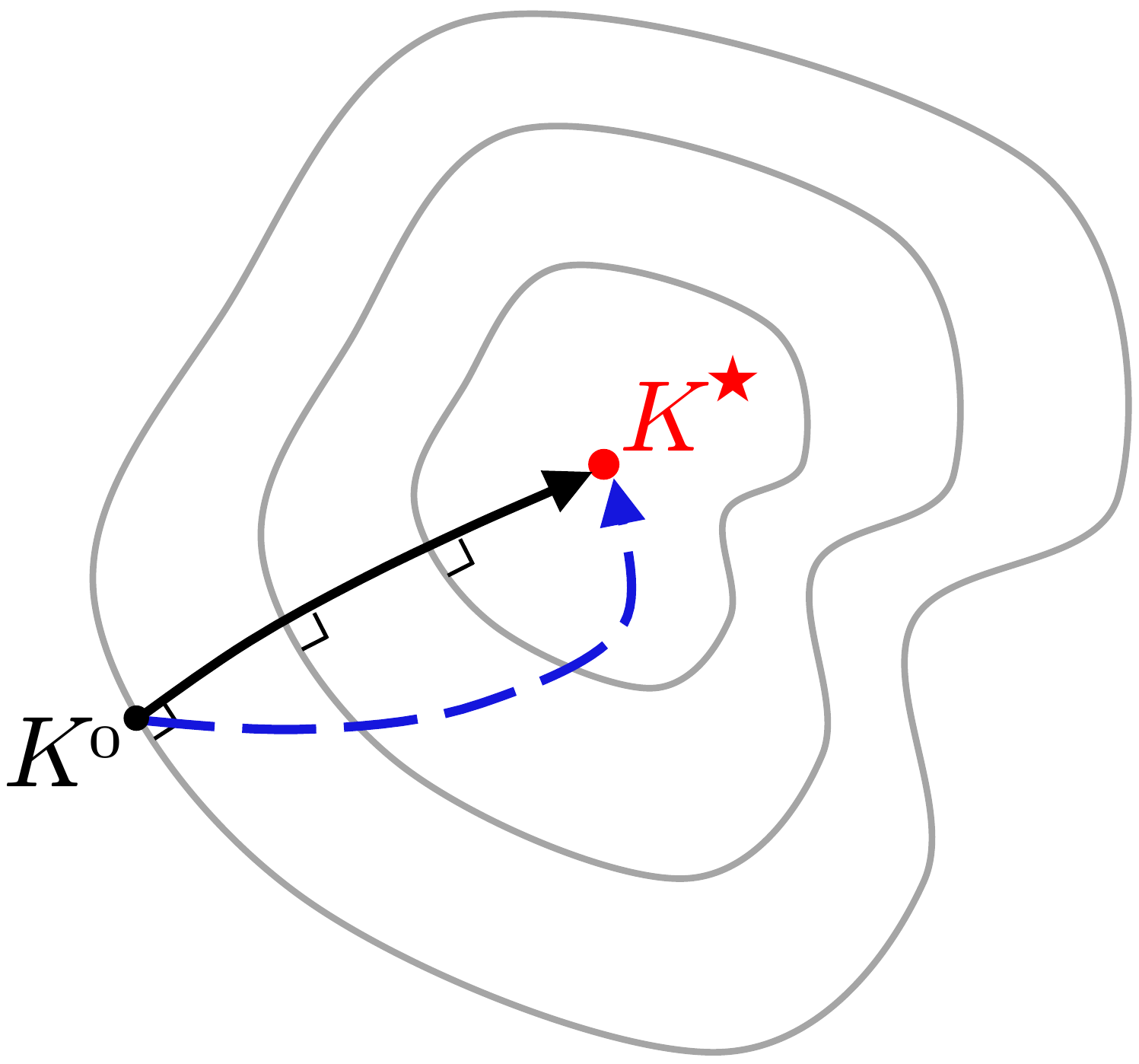}
	\end{tabular}
		\caption{Trajectories $K(t)$ of~\eqref{eq.GDK-flow} (solid black) and $K_{\mathrm{ind}}(t)$ resulting from Eq.~\eqref{eq.Kind} (dashed blue) along with the level sets of the function~$f(K)$.}
	\label{fig.landscape}
\end{figure}

\vspace*{-2ex}
\subsection{Gradient descent: proof of Theorem~\ref{thm.main-dsc}}\label{subsec.iterAnalysis}

Given the exponential stability of gradient-flow dynamics~\eqref{eq.GDK-flow} established in Theorem~\ref{thm.main-cts}, the convergence analysis of gradient descent~\eqref{eq.GDK} amounts to finding a suitable stepsize $\alpha$. Lemma~\ref{lem.Lf} provides a Lipschitz continuity parameter for $\nabla f(K)$, which facilitates finding such a stepsize. 

\begin{mylem}\label{lem.Lf}
	Over any non-empty sublevel set $\cSK(a)$, the  gradient $\nabla f(K)$ is Lipschitz continuous with parameter 
	\begin{align*}
	L_{f}
	\; \DefinedAs \;
	\dfrac{2 a\norm{R}_2}{\lambda_{\min}(Q)}
	\, + \,
	\dfrac{8 a^3 \norm{B}_2}{\lambda_{\min}^2(Q)\lambda_{\min}(\Omega)} 
	\big(
	\dfrac{\norm{B}_2}{\lambda_{\min}(\Omega)} 
	\, + \,
	\dfrac{\norm{R}_2}{\sqrt{\nu\lambda_{\min}(R)}}\!
	\big)
	\end{align*}
	where  $\nu$ given by~\eqref{eq.nuu} depends on the problem parameters.
\end{mylem}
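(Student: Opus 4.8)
The plan is to exploit the product structure of the gradient in \eqref{eq.nablaK}. Writing $E(K)\DefinedAs R\,K - B^T P(K)$, the gradient reads $\nabla f(K)=2\,E(K)\,X(K)$, so for any two points $K_1,K_2\in\cSK(a)$ the exact telescoping decomposition
\[
\nabla f(K_1)-\nabla f(K_2)
\;=\;
2\,(E(K_1)-E(K_2))\,X(K_1)
\,+\,
2\,E(K_2)\,(X(K_1)-X(K_2))
\]
reduces the claim to four ingredients: uniform upper bounds on $\norm{X(K)}_2$ and $\norm{E(K)}_2$ over $\cSK(a)$, together with Lipschitz estimates for the maps $K\mapsto X(K)$ and $K\mapsto E(K)$. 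I emphasize that no convexity of $\cSK(a)$ is required here, since this identity relates two \emph{arbitrary} points of the sublevel set directly, rather than integrating a Hessian along a path.

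First I would collect the uniform bounds. Lemma~\ref{lem.bounds} in Appendix~\ref{app.bounds} already supplies $\lambda_{\min}(X(K))\ge \nu/a$ and $\norm{X(K)}_2\le a/\lambda_{\min}(Q)$. The key auxiliary estimate is a bound on the closed-loop Lyapunov operator inverse: since $\cA_K^{-1}(-\,\cdot\,)$ is positivity preserving and $\Omega\succeq\lambda_{\min}(\Omega)\,I$, one has $\cA_K^{-1}(-I)\preceq X(K)/\lambda_{\min}(\Omega)$, hence $\norm{\cA_K^{-1}}_2\le a/(\lambda_{\min}(Q)\,\lambda_{\min}(\Omega))$ for every $K\in\cSK(a)$, and the same bound holds for $(\cA_K^*)^{-1}$. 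These estimates in turn yield uniform bounds on $\norm{P(K)}_2$, on $\norm{K}_2$ (via $K=Y\,X^{-1}$ together with $\lambda_{\min}(X)\ge\nu/a$), and therefore on $\norm{E(K)}_2$.

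Next I would derive the Lipschitz estimates by differencing the defining Lyapunov equations. Subtracting \eqref{eq.lyapKX} at $K_1$ and $K_2$ and setting $\tilde{K}\DefinedAs K_1-K_2$ gives
\[
\cA_{K_1}\!\left(X(K_1)-X(K_2)\right)
\;=\;
B\,\tilde{K}\,X(K_2)
\,+\,
X(K_2)\,\tilde{K}^T B^T,
\]
so $\norm{X(K_1)-X(K_2)}_F\le 2\,\norm{\cA_{K_1}^{-1}}_2\,\norm{B}_2\,\norm{X(K_2)}_2\,\norm{\tilde{K}}_F$. An analogous subtraction of \eqref{eq.lyapKP}, using $K_1^T R\,K_1 - K_2^T R\,K_2 = \tilde{K}^T R\,K_1 + K_2^T R\,\tilde{K}$ on the right-hand side, produces a matching estimate for $\norm{P(K_1)-P(K_2)}_F$ and, combined with the linear term $R\,\tilde{K}$ in $E$, for $\norm{E(K_1)-E(K_2)}_F$. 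Substituting all of these into the telescoping identity and inserting the uniform bounds yields the stated $L_f$: the term $2a\norm{R}_2/\lambda_{\min}(Q)$ comes from the linear contribution $R\,\tilde{K}$ multiplied by $\norm{X(K_1)}_2\le a/\lambda_{\min}(Q)$, while the second term collects the $X$- and $P$-perturbation contributions.

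The hard part will be the $P$-equation in the last step. Unlike $X$, the map $K\mapsto P(K)$ inherits the nonlinearity $K^T R\,K$, so its Lipschitz estimate couples the change in the right-hand side (controlled through the uniform bound on $\norm{K}_2$ over $\cSK(a)$) with the change in the operator $\cA_K^*$ itself. Keeping both contributions explicit, while propagating the operator-inverse bound $a/(\lambda_{\min}(Q)\lambda_{\min}(\Omega))$ through both Lyapunov equations, is precisely what produces the cubic dependence on $a$ and the parenthetical factor $\norm{B}_2/\lambda_{\min}(\Omega)+\norm{R}_2/\sqrt{\nu\lambda_{\min}(R)}$ appearing in $L_f$.
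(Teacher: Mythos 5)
Your proposal is correct and, remarkably, assembles to exactly the stated constant $L_f$, but it takes a genuinely different route from the paper. The paper's proof (Appendix~\ref{app.non-convex-char}) is infinitesimal: it invokes the second-order expansion formula $\inner{\tK}{\nabla^2 f(K;\tK)} = 2\,\trace\,(\tK^T R\,\tK X - 2\tK^TB^T\tP X)$ from~\cite{toimak87}, bounds $\tP=(\cA_K^*)^{-1}(C)$ via the inverse-Lyapunov-operator estimate of Lemma~\ref{lem.ucAinv}, and inserts the same uniform bounds of Lemma~\ref{lem.bounds} that you use, thereby obtaining the pointwise Hessian bound $\inner{\tK}{\nabla^2 f(K;\tK)}\le L_f\norm{\tK}_F^2$ on $\cSK(a)$. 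Your proof is the finite-difference counterpart: the telescoping identity $\nabla f(K_1)-\nabla f(K_2)=2(E(K_1)-E(K_2))X(K_1)+2E(K_2)(X(K_1)-X(K_2))$ together with differencing of the two Lyapunov equations --- your identities $\cA_{K_1}(X_1-X_2)=B\tilde{K}X_2+X_2\tilde{K}^TB^T$ and $\cA_{K_1}^*(P_1-P_2)=(B\tilde{K})^TP_2+P_2B\tilde{K}-\tilde{K}^TRK_1-K_2^TR\tilde{K}$ are exactly right, and mirror the paper's own local perturbation analysis in Lemma~\ref{lem.perturbX}, there used only in a small ball rather than over the whole sublevel set. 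Tracking the constants, your two cubic contributions each produce $4a^3\norm{B}_2/(\lambda_{\min}^2(Q)\lambda_{\min}(\Omega))$ times the parenthetical factor and sum to the coefficient $8a^3$ in $L_f$, so the two proofs give identical parameters. What your route buys: it establishes the literal two-point estimate $\norm{\nabla f(K_1)-\nabla f(K_2)}_F\le L_f\norm{K_1-K_2}_F$ for arbitrary $K_1,K_2\in\cSK(a)$, which on a possibly nonconvex sublevel set is strictly stronger than, and not implied by, a pointwise Hessian bound; what the paper's route buys is precisely the form used downstream, namely the descent lemma along line segments separately shown to remain in $\cSK(a)$ (proof of Theorem~\ref{thm.main-dsc} and Proposition~\ref{prop.linearConvG}) --- either form suffices there. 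One step you should tighten: your claim that $\cA_K^{-1}(-I)\preceq X(K)/\lambda_{\min}(\Omega)$ ``hence'' $\norm{\cA_K^{-1}}_2\le a/(\lambda_{\min}(Q)\lambda_{\min}(\Omega))$ skips the real content, since a positive-semidefinite bound on the value at $I$ does not by itself control the Frobenius-induced operator norm; you need the trace inequality $\norm{\cA_K^{-1}}_2\le\trace\,(-\cA_K^{-1}(I))$, which is exactly Lemma~\ref{lem.normLyap} (and its consequence Lemma~\ref{lem.ucAinv}), so you can simply cite it.
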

\begin{proof}
	See Appendix~\ref{app.non-convex-char}.
\end{proof}

Let  $K_\alpha \DefinedAs K-\alpha\,\nabla f(K)$, $\alpha\ge0$ parameterize the half-line starting from $K\in\cSK(a)$ with $K\neq K^\star$  along $-\nabla f(K)$ and let us define the scalar $\beta_m\DefinedAs\max \beta$ such that $K_\alpha\in\cSK(a)$, for all $\alpha\in[0,\beta]$. The existence of $\beta_m$ follows from the compactness of $\cSK(a)$~\cite{toi85}.  We next show that  $\beta_m\ge 2/L_f$.  

For the sake of contradiction, suppose $\beta_m<2/L_f$. From the continuity of $f(K_\alpha)$ with respect to $\alpha$, it follows that $f(K_{\beta_s})=a$. Moreover, since $-\nabla f(K)$ is a descent direction of the function $f(K)$, we have $\beta_m>0$.  Thus, for $\alpha\in(0,\beta_m]$,
\begin{align*}
f(K_\alpha) \, - \, f(K)
\; \le \;
- \dfrac{\alpha (2 \, - \, L_f \alpha )}{2} \, \norm{\nabla f(K)}_F^2
\; < \; 0.
\end{align*} 
Here, the first inequality follows from the $L_f$-smoothness of $f(K)$ over $\cSK(a)$ (Descent Lemma~\cite[Eq.~(9.17)]{boyvan04}) and the second inequality follows from $\nabla f(K)\neq 0$ in conjunction with $\beta_m\in(0,2/L_f)$. This implies $f(K_{\beta_m}) < f(K)\le a$, which contradicts $f(K_{\beta_m})=a$. Thus, $\beta_m\ge2/L_f$. 

We can now use induction on $k$ to show that, for any stabilizing initial condition $K^0\in\cSK(a)$, the iterates of~\eqref{eq.GDK} with $\alpha\in[0,2/L_f]$ remain in $\cSK(a)$ and satisfy
\begin{align}\label{eq.gdConv1}
f(K^{k+1}) \,-\, f(K^k) 
\; \le \;
- \dfrac{\alpha (2 \, - \, L_f \alpha )}{2} \, \norm{\nabla f(K^k)}_F^2.
\end{align} 
Inequality~\eqref{eq.gdConv1} in conjunctions with the PL condition~\eqref{eq.GradDom} evaluated at $K^k$
guarantee linear convergence for gradient descent~\eqref{eq.GDK} with the rate
$
\gamma 
\le 
1- \alpha\, 
\mu_f
$
for all $\alpha\in (0,1/L_f]$, where $\mu_f$ is the PL parameter of the function $f(K)$. This completes the proof of part~(a) of Theorem~\ref{thm.main-dsc}. 

Using part~(a) and Lemma~\ref{lem.errorRelation}, we can make a similar argument to what we used for the proof of Theorem~\ref{thm.main-cts} to establish  part~(b) with constant $b$ in~\eqref{eq.bDef}. We omit the details for brevity.

	\begin{myrem}
		Using our results, it is straightforward to show linear convergence of
			$
			K^{k+1} 
			=
			K^k - \alpha H_1^k \nabla f(K^k) H_2^k
			$
			with	 
			$
			K^0 \in \cSK
			$
			and small enough stepsize, where $H_1^k$ and $H_2^k$ are uniformly upper and lower bounded positive definite matrices. In particular, the Kleinman iteration~\cite{kle68} is recovered for $\alpha = 0.5$, $H_1^k=R^{-1}$, and $H_2^k=(X(K^k))^{-1}$. Similarly, convergence of gradient descent may be improved by choosing $H_1^k=I$ and $H_2^k=(X(K^k))^{-1}$. In this case, the corresponding update direction provides the continuous-time variant of the so-called {\em natural gradient\/} for discrete-time systems~\cite{ama98}.
	\end{myrem}

	\vspace*{-2ex}
\section{Bias and correlation in gradient estimation}
\label{sec.BiasVariance}

In the model-free setting, we do not have access to the gradient $\nabla f(K)$ and the random search method~\eqref{eq.RS} relies on the gradient estimate $\widebar{\nabla} f(K)$ resulting from Algorithm~\ref{alg.GE}. According to~\cite{fazgekakmes18}, achieving $\| \widebar{\nabla} f(K) - \nabla f(K) \|_F \leq \epsilon$ may take $N=\Omega(1/\epsilon^4)$ samples using one-point gradient estimates. Our computational experiments (not included in this paper) also suggest that to achieve $\| \widebar{\nabla} f(K) - \nabla f(K) \|_F \leq \epsilon$, $N$ must scale as $\mathrm{poly} \, (1/\epsilon)$ even when a two-point gradient estimate is used. To avoid this poor sample complexity, in our proof we take an alternative route and give up on the objective of controlling the gradient estimation error. By exploiting the problem structure, we show that with a linear number of samples $N=\tilde{O}(n)$, where $n$ is the number of states, the estimate $\widebar{\nabla} f(K)$ concentrates with {\em high probability\/} when projected to the direction of $\nabla f(K)$. 

Our proof strategy allows us to significantly improve upon the existing literature both in terms of the required function evaluations and simulation time. Specifically, using the random search method~\eqref{eq.RS}, the total number of function evaluations required in our results to achieve an accuracy level $\epsilon$ is proportional to $\log \, (1/\epsilon)$ compared to at least $( 1/\epsilon^4 ) \log \, (1/\epsilon)$ in~\cite{fazgekakmes18} and $1/\epsilon$ in~\cite{malpanbhakhabarwai19}. Similarly, the simulation time that we require to achieve an accuracy level $\epsilon$ is proportional to~$\log \, (1/\epsilon)$; this is in contrast to $\mathrm{poly} \, (1/\epsilon)$ simulation times in~\cite{fazgekakmes18} and infinite simulation time in~\cite{malpanbhakhabarwai19}. 

Algorithm~\ref{alg.GE} produces a biased estimate  $\widebar{\nabla} f(K)$ of the gradient $\nabla f(K)$. Herein, we first introduce an unbiased estimate $\widehat{\nabla} f(K)$ of $\nabla f(K)$ and establish that the distance $\norm{\widehat{\nabla } f(K) - \widebar{\nabla } f(K)}_F$ can be readily controlled by choosing a large simulation time $\tau$ and an appropriate smoothing parameter $r$ in Algorithm~\ref{alg.GE}; we call this distance the estimation bias. Next, we show that with $N=\tilde{O}(n)$ samples, the unbiased estimate $\widehat{\nabla } f(K)$ becomes highly correlated with $\nabla f(K)$. We exploit this fact in our convergence analysis.

 	\vspace*{-2ex}
\subsection{Bias in gradient estimation due to finite simulation time}
\label{sec.Bias}

We first introduce an unbiased estimate of the gradient that is used to quantify the bias.
 For any $\tau\ge0$ and $x_0\in\R^n$, let
\[
f_{x_0,\tau}(K) 
\; \DefinedAs \;
\int_{0}^{\tau} \left(x^T (t) Q x(t) \, + \, u^T(t) R u (t)\right) \mrd t
\]
denote the $\tau$-truncated version of the LQR objective function associated with system~\eqref{eq.linsys1} with the initial condition $x(0)=x_0$ and  feedback law $u=-Kx$
for all $K\in\R^{m\times n}$.
Note that for any $K\in \cSK$ and $x(0)=x_0\in\R^n$, the infinite-horizon cost
\begin{subequations}
	\begin{align}\label{eq.f_x}
	f_{x_0}(K) 
	\;\DefinedAs\;
	f_{x_0,\infty}(K)
	\end{align}
	exists and it satisfies
	$
	f(K)
	=
	\EX_{x_0}[f_{x_0}(K)].
	$
	Furthermore, the gradient of $f_{x_0}(K)$ is given by~(cf.~\eqref{eq.nablaK})
	\begin{align} 
	\label{eq.nablaKv}
	\nabla f_{x_0}(K) 
	\;=\;
	2 \, (R K \, - \, B^T P(K) )X_{x_0}(K)
	\end{align}
\end{subequations}
where $X_{x_0}(K) = -\cA_K^{-1}(x_0x_0^T)$ is determined by the closed-loop Lyapunov operator in~\eqref{eq.LyapOper}
and $P(K)=-(\cA_K^*)^{-1}(Q+K^TRK)$. Note that the gradients $\nabla f(K)$ and $\nabla f_{x_0}(K)$ are linear in  $X(K)=-\cA_K^{-1}(\Omega)$  and $X_{x_0}(K)$, respectively. Thus, for any zero-mean random initial condition $x(0)=x_0$ with covariance $\EX[x_0x_0^T] = \Omega$, the linearity of the closed-loop Lyapunov operator $\cA_K$ implies
\begin{align*}
\EX_{x_0} [X_{x_0}(K)]
\;=\;
X(K),
~
\EX_{x_0} [\nabla f_{x_0}(K)]
\;=\;
\nabla f(K).
\end{align*}
Let us define the following three estimates of the gradient
\be
\ba{rcl}
\widebar{\nabla} f(K) 
& \!\!\! \DefinedAs \!\!\! &
\dfrac{1}{2r N} \, \ds{\sum_{i \, = \, 1}^{N}} \left(f_{x_i,\tau}(K+rU_i) - f_{x_i,\tau}(K-rU_i)\right) U_i
\\[0.35cm]
\widetilde{\nabla} f(K) 
& \!\!\! \DefinedAs \!\!\! &
\dfrac{1}{2r N} \, \ds{\sum_{i \, = \, 1}^{N}} \left(f_{x_i}(K+rU_i) - f_{x_i}(K-rU_i)\right) U_i
\\[0.35cm]
\widehat{\nabla} f(K)
& \!\!\! \DefinedAs \!\!\! &
\dfrac{1}{N} \, \ds{\sum_{i \, = \, 1}^N} \inner{\nabla f_{x_i}(K)}{U_i} U_i	
\ea
\label{eq.megaGrad}
\ee	
where $U_i\in\R^{m\times n}$ are i.i.d.\ random matrices with $\vect(U_i)$ uniformly distributed on the sphere $\sqrt{mn} \, S^{mn-1}$ and $x_i\in\R^n$ are i.i.d.\ initial conditions sampled from distribution $\cD$. Here, $\widetilde{\nabla} f(K)$ is the infinite-horizon version of the output $\widebar{\nabla} f(K)$ of Algorithm~\ref{alg.GE} and $\widehat{\nabla} f(K)$ provides an unbiased estimate of $\nabla f(K)$. To see this, note that by the independence of $U_i$ and $x_i$ we~have
\begin{multline*}
\EX_{x_i,U_i} \! \left[\vect(\widehat{\nabla} f(K)) \right]
\;=\;
\EX_{U_1} \! \left[\inner{\nabla f(K)}{U_1} \vect(U_1)\right]
\;=\;
\EX_{U_1}[\vect(U_1)\vect(U_1)^T]\vect(\nabla f(K))
\;=\;
\vect(\nabla f(K))
\end{multline*}
and thus $\EX[\widehat{\nabla}f(K)] = \nabla f(K)$. Here, we have utilized the fact that for the uniformly distributed random variable  $\vect(U_1)$ over the sphere $\sqrt{mn} \, S^{mn-1}$, 
$
\EX_{U_1} [\vect(U_1)\vect(U_1)^T] 
=
I.
$

\subsubsection{Local boundedness of the function $f(K)$}
\label{subsec.localBoundedness}
An important requirement for the gradient estimation scheme in Algorithm~\ref{alg.GE} is the stability of the perturbed closed-loop systems, i.e., ${K}\pm r U_i\in\cSK$; violating this condition leads to an exponential growth of the state and control signals. Moreover, this condition is necessary and sufficient for $\widetilde{\nabla} f(K)$ to be well defined. In Proposition~\ref{prop.localdisk}, we establish a radius within which any perturbation of $K\in\cSK$ remains stabilizing. 

\begin{myprop}\label{prop.localdisk}
	For any stabilizing feedback gain $K\in\cSK$, we have 
	$
	\{ \hat{K}  \in \R^{m\times n}\,|\, \norm{\hat{K} - K}_2 < \zeta\}
	\subset
	\cSK
	$
	where
	\begin{align*}
	\zeta 
	\;\DefinedAs\; {\lambda_{\min}(\Omega)}/ \! \left( 2\,\norm{B}_2 \,\norm{X(K)}_2 \right)
	\end{align*}
	and $X(K)$ is given by~\eqref{eq.X}.
\end{myprop}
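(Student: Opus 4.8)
The plan is to use $X(K)$ itself as a common Lyapunov certificate for every feedback gain in the claimed disk. Since $K\in\cSK$, the matrix $A-BK$ is Hurwitz and, because $\Omega\succ0$, the associated solution $X\DefinedAs X(K)$ of the Lyapunov equation~\eqref{eq.lyapKX} is positive definite. I would exploit the fact that a single $X\succ0$ can certify Hurwitzness of a whole neighborhood of closed-loop matrices: by the standard Lyapunov theorem, $A-B\hat{K}$ is Hurwitz whenever $(A-B\hat{K})X+X(A-B\hat{K})^T\prec0$.

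First I would write $\hat{K}=K+\Delta$ with $\Delta\DefinedAs\hat{K}-K$ and substitute into the left-hand side above. Using~\eqref{eq.lyapKX} to replace $(A-BK)X+X(A-BK)^T$ by $-\Omega$, a direct computation gives
\[
(A-B\hat{K})X \,+\, X(A-B\hat{K})^T \;=\; -\Omega \,-\, \big(B\Delta X \,+\, X\Delta^T B^T\big).
\]
It therefore suffices to guarantee that the symmetric perturbation $S\DefinedAs B\Delta X+X\Delta^T B^T$ satisfies $\norm{S}_2<\lambda_{\min}(\Omega)$, since for symmetric $S$ one has $-S\preceq\norm{S}_2 I$ and hence $-\Omega-S\preceq(\norm{S}_2-\lambda_{\min}(\Omega))\,I\prec0$.

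To control $\norm{S}_2$, I would invoke submultiplicativity of the spectral norm together with the observation that the two summands in $S$ are transposes of one another and thus share the same spectral norm, so that $\norm{S}_2\le2\norm{B\Delta X}_2\le2\norm{B}_2\norm{\Delta}_2\norm{X}_2$. Combining this with $\norm{\Delta}_2=\norm{\hat{K}-K}_2<\zeta$ and the definition of $\zeta$ yields $\norm{S}_2<2\norm{B}_2\,\zeta\,\norm{X(K)}_2=\lambda_{\min}(\Omega)$, which certifies $A-B\hat{K}$ Hurwitz and hence places $\hat{K}$ in $\cSK$.

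This argument is short, and its real content is the choice of $X(K)$ as the certificate for the entire perturbed family rather than a fresh per-point analysis. I do not expect a genuine obstacle; the only points requiring care are purely bookkeeping, namely recognizing that $B\Delta X$ and $X\Delta^T B^T$ have equal spectral norm so that the factor of $2$ (and thus the precise form of $\zeta$) is reproduced exactly, and verifying that the strict inequality defining the disk propagates to strict negative definiteness at the last step.
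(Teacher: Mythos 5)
Your proof is correct, and it takes a genuinely different route from the paper's. The paper proceeds through two auxiliary results: Lemma~\ref{lem.KYP_SG}, which converts an LMI certificate into Hurwitzness of all perturbations $F+\delta\Delta$, $\norm{\Delta}_2\le 1$, via the small-gain theorem and the KYP lemma, and Lemma~\ref{lem.localDisk}, which massages the Lyapunov equation $FX+XF^T+\Omega=0$ (normalize by $\lambda_{\min}(\Omega)$, conjugate by the inverse of the normalized $X$, take a Schur complement) into exactly that LMI, concluding that $F+\Delta$ is Hurwitz whenever $\norm{\Delta}_2<\lambda_{\min}(\Omega)/(2\norm{X}_2)$; the proposition then follows by bounding the closed-loop perturbation as $\norm{B(\hat K - K)}_2\le\norm{B}_2\norm{\hat K - K}_2$. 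You instead keep $X(K)$ as a common Lyapunov certificate and verify directly that $(A-B\hat K)X+X(A-B\hat K)^T=-\Omega-S$ with $S\DefinedAs B\Delta X+X\Delta^TB^T$ and $\norm{S}_2\le 2\norm{B}_2\norm{\Delta}_2\norm{X}_2<\lambda_{\min}(\Omega)$, then invoke the elementary fact that $FX+XF^T\prec 0$ with $X\succ 0$ forces every eigenvalue of $F$ into the open left half-plane. Your argument is shorter, entirely linear-algebraic, avoids the frequency-domain machinery altogether, and recovers the identical radius $\zeta$ — in effect it re-derives Lemma~\ref{lem.localDisk} in two lines (just apply your computation with the closed-loop perturbation $-B\Delta$ replaced by a generic $\Delta$). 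What the paper's route buys is packaging: the small-gain/KYP formulation certifies stability against the entire unit ball of uncertainties (including dynamic ones) as a standalone LMI lemma, which is more than the proposition needs. For the statement as given, both proofs are equally rigorous and yield the same constant.
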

\begin{proof}
	See Appendix~\ref{app.localBoundedness}.
\end{proof}
	
If we choose the parameter $r$ in Algorithm~\ref{alg.GE} to be smaller than $\zeta$, then the sample feedback gains $K\pm r U_i$ are all stabilizing. In this paper, we further require that the parameter $r$ is small enough so that $K\pm rU_i\in\cSK(2a)$ for all $K\in\cSK(a)$. Such upper bound on $r$ is provided in the next lemma.
\begin{mylem}\label{lem.cSK2a}
	For any $U\in\R^{m\times n}$ with $\norm{U}_F\le\sqrt{mn}$ and $K\in\cSK(a)$, 
	$
	K+r(a)U\in\cSK(2a)
	$  
	where 
	$
	r(a) \DefinedAs \tilde{c}/a
	$
	for some positive constant $\tilde{c}$ that depends on the problem data.  
\end{mylem}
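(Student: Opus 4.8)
The plan is to reduce the claim to a uniform bound on the gradient over the \emph{larger} sublevel set $\cSK(2a)$ and then to propagate the growth of $f$ along the perturbation segment by a continuity argument. Concretely, I would parameterize the path $K_t \DefinedAs K + t\, r\, U$ for $t\in[0,1]$ and control the increase $f(K_t)-f(K)$ via $\tfrac{\mrd}{\mrd t} f(K_t) = \inner{\nabla f(K_t)}{rU} \le r\,\norm{U}_F\,\norm{\nabla f(K_t)}_F$, so that it suffices to upper bound $\norm{\nabla f(K_t)}_F$ along the path and to choose $r$ small enough that $f$ cannot climb past the level $2a$.

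First I would establish an explicit, $a$-dependent bound $\norm{\nabla f(K)}_F \le G(2a)$ valid for every $K\in\cSK(2a)$. Using the closed-form expression~\eqref{eq.nablaK}, namely $\nabla f(K) = 2(RK - B^T P(K))X(K)$, together with submultiplicativity one gets $\norm{\nabla f(K)}_F \le 2(\norm{R}_2\norm{K}_F + \norm{B}_2\norm{P(K)}_F)\,\norm{X(K)}_2$. The three factors are bounded over $\cSK(2a)$ by routine manipulations: Lemma~\ref{lem.bounds} (applied with level $2a$) gives $\norm{X(K)}_2 \le 2a/\lambda_{\min}(Q)$ and $\lambda_{\min}(X(K)) \ge \nu/(2a)$; the identity $f(K)=\trace(P(K)\Omega)$ yields $\norm{P(K)}_2 \le f(K)/\lambda_{\min}(\Omega) \le 2a/\lambda_{\min}(\Omega)$; and the lower bound $f(K) \ge \lambda_{\min}(R)\,\lambda_{\min}(X(K))\norm{K}_F^2$ (from~\eqref{eq.f}) gives $\norm{K}_F \le 2a/\sqrt{\lambda_{\min}(R)\,\nu}$, with $\nu$ as in~\eqref{eq.nuu}. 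Combining these, $G(2a)$ scales like $a^2$ times a constant depending only on the problem data.

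Next I would run the bootstrapping step. Since $f(K)\le a < 2a$, the point $K$ lies in the interior of the compact set $\cSK(2a)$, and because $f$ is continuous with $f\to\infty$ as $K\to\partial\cSK$, the path $K_t$ can leave $\cSK(2a)$ only by crossing the level set $\{f=2a\}$. Let $T \DefinedAs \sup\{t\in[0,1] : K_s\in\cSK(2a)\ \text{for all}\ s\le t\}$; then $T>0$, and for $t<T$ the gradient bound of the previous step gives $f(K_t) \le f(K) + t\, r\,\norm{U}_F\,G(2a) \le a + t\, r\sqrt{mn}\,G(2a)$, where I used $\norm{U}_F \le \sqrt{mn}$. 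Choosing $r = r(a)$ so that $r\sqrt{mn}\,G(2a) \le a$, i.e.\ $r(a) = \tilde c/a$ after absorbing the $a^2$ scaling of $G$ and the dimension factor $\sqrt{mn}$ into the data-dependent constant $\tilde c$, forces $f(K_t) \le 2a$ (strictly below $2a$ on the open segment), so $K_t$ never reaches the boundary and $T=1$. Evaluating at $t=1$ then yields $K + r(a)U \in \cSK(2a)$.

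The main obstacle is the apparent circularity in the continuity step: bounding $\norm{\nabla f(K_t)}_F$ requires $K_t\in\cSK(2a)$, yet establishing $K_t\in\cSK(2a)$ requires the gradient bound. The bootstrapping argument above, resting on the compactness of $\cSK(2a)$ and the blow-up of $f$ at $\partial\cSK$ (so that the only way to exit is through $\{f=2a\}$), is precisely what breaks this loop. The remaining work—assembling the norm bounds into the explicit $G(2a)$ and reading off the $1/a$ scaling of $r(a)$—is a routine computation, and one could alternatively quote Proposition~\ref{prop.localdisk} to guarantee that the segment stays stabilizing, though the level-set argument already subsumes this.
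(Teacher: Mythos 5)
Your proof is correct, but it follows a genuinely different route from the paper's. The paper does not use a gradient bound plus continuation at all: it first establishes a local perturbation result (Lemma~\ref{lem.perturbX}) giving Lipschitz-type bounds on $X(\cdot)$, $P(\cdot)$, $\nabla f(\cdot)$, and $f(\cdot)$ on a ball of explicit radius $\delta$ around each $K\in\cSK$, where stability of the entire ball is guaranteed \emph{a priori} by the small-gain argument of Proposition~\ref{prop.localdisk} (so no bootstrapping is needed to break the circularity you identify); it then uses the bounds of Lemma~\ref{lem.bounds} to show $\delta \ge c_1/a$ and that the Lipschitz constant of $f$ satisfies $\epsilon_4 \le c_2 a^2$ uniformly over $\cSK(a)$, and concludes in one shot via $f(K+r(a)U)-f(K) \le \epsilon_4\, r(a)\norm{U}_2 \le a$. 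Your argument instead bounds $\norm{\nabla f}_F$ by $G(2a)=O(a^2)$ directly on the target set $\cSK(2a)$ and propagates $f$ along the segment, using openness of $\cSK$, blow-up of $f$ at $\partial\cSK$, and closedness of sublevel sets to rule out escape except through the level $\{f=2a\}$; this continuation step is sound as written. What each approach buys: yours is shorter and more self-contained for this lemma alone, needing only the gradient formula~\eqref{eq.nablaK}, Lemma~\ref{lem.bounds} at level $2a$, and standard topological facts about $f$ (which the paper cites from the literature); the paper's route is heavier because Lemma~\ref{lem.perturbX} is doing double duty --- its quantitative perturbation bounds on $X$, $P$, and $\nabla f$ are reused in the bias analysis of the gradient estimates --- and because the small-gain radius gives stability of perturbed gains without appealing to compactness of sublevel sets. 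Both routes produce the same $r(a)\sim 1/a$ scaling for the same structural reason: a Lipschitz-type constant growing like $a^2$ measured against an allowed objective increase of $a$.
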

\begin{proof}
	See Appendix~\ref{app.localBoundedness}.
\end{proof}
Note that for any $K\in\cSK(a)$, and $r\le r(a)$ in Lemma~\ref{lem.cSK2a}, $\widetilde{\nabla} f(K)$ is well defined because $K+rU_i\in\cSK(2a)$ for all $i$.

\subsubsection{Bounding the bias}
\label{subsec.thirdOrder}

Herein, we establish an upper bound on the difference between the output $\widebar{\nabla} f(K)$ of Algorithm~\ref{alg.GE} and the unbiased estimate $\widehat{\nabla}f(K)$ of the gradient $\nabla f(K)$. This is accomplished by bounding the difference between these two quantities and $\widetilde{\nabla}f(K)$ through the use of the triangle inequality
\begin{align}\label{eq.megaTriangle}
\norm{\widehat{\nabla} f(K) \,-\, \widebar{\nabla} f(K)}_F
\;\le\;
\norm{\widetilde{\nabla} f(K) \,-\, \widebar{\nabla} f(K)}_F
\;+\;
\norm{\widehat{\nabla}  f(K) \,-\, \widetilde{\nabla} f(K)}_F.
\end{align}
The first term on the right-hand side of~\eqref{eq.megaTriangle} arises from a bias caused by the finite simulation time in Algorithm~\ref{alg.GE}. The next proposition quantifies an upper bound on this term.
\begin{myprop}\label{prop.finiteTime}
	For any $K\in\cSK(a)$, the output of Algorithm~\ref{alg.GE} with parameter $r\le r(a)$ (given by~Lemma~\ref{lem.cSK2a}) satisfies
	\begin{align*}
	\norm{\widetilde{\nabla}f(K) - \widebar{\nabla}f(K)}_F
	\, \le \,
	\dfrac{\sqrt{m n} \max_i\norm{x_i}^2}{r} 
	\,
	\kappa_1(2a)
	\,
	\mre^{-\kappa_2(2a)\tau}
	\end{align*}	
	where  
	$\kappa_1(a)>0$ is a degree $5$ polynomial and $\kappa_2(a)>0$ is inversely proportional to $a$ and they are given by~\eqref{eq.kappa12}.
\end{myprop}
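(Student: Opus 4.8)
The plan is to reduce the claim to a uniform bound on the \emph{tail} of the infinite-horizon cost and then control that tail through the exponential decay of the closed-loop semigroup. From the definitions in~\eqref{eq.megaGrad}, the estimates $\widetilde{\nabla}f$ and $\widebar{\nabla}f$ differ only in that the former uses the infinite-horizon costs $f_{x_i}$ whereas the latter uses the $\tau$-truncations $f_{x_i,\tau}$. Writing $K_{i,\pm}\DefinedAs K\pm rU_i$, applying the triangle inequality, and using $\norm{U_i}_F\le\sqrt{mn}$ gives
\begin{align*}
\norm{\widetilde{\nabla}f(K)-\widebar{\nabla}f(K)}_F
\;\le\;
\dfrac{\sqrt{mn}}{2rN}\sum_{i\,=\,1}^{N}\sum_{s\,\in\,\{+,-\}}
\abs{f_{x_i}(K_{i,s})-f_{x_i,\tau}(K_{i,s})}.
\end{align*}
By Lemma~\ref{lem.cSK2a}, the requirement $r\le r(a)$ ensures $K_{i,\pm}\in\cSK(2a)$ for every $i$, so it suffices to bound, uniformly over $K'\in\cSK(2a)$ and every initial condition $x_0$, the tail error $f_{x_0}(K')-f_{x_0,\tau}(K')=\int_\tau^\infty x^T(t)(Q+K'^TRK')x(t)\,\mrd t$, where $x(t)=\mre^{(A-BK')t}x_0$.

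The central step is a uniform exponential-decay estimate on the closed-loop semigroup. Setting $\bar A\DefinedAs A-BK'$ and $X'\DefinedAs X(K')\succ0$, the Lyapunov equation~\eqref{eq.lyapKX} reads $\bar AX'+X'\bar A^T=-\Omega$; pre- and post-multiplying by $(X')^{-1}$ shows that $V(z)=z^T(X')^{-1}z$ obeys $\dot V\le-(\lambda_{\min}(\Omega)/\lambda_{\max}(X'))\,V$ along $\dot z=\bar Az$. This yields $\norm{\mre^{\bar At}}_2^2\le(\lambda_{\max}(X')/\lambda_{\min}(X'))\,\mre^{-(\lambda_{\min}(\Omega)/\lambda_{\max}(X'))t}$, whence
\begin{align*}
f_{x_0}(K')-f_{x_0,\tau}(K')
\;\le\;
\norm{Q+K'^TRK'}_2\,\norm{x_0}^2
\int_\tau^\infty\norm{\mre^{\bar At}}_2^2\,\mrd t
\;\le\;
\dfrac{\lambda_{\max}^2(X')\,\norm{Q+K'^TRK'}_2}{\lambda_{\min}(X')\,\lambda_{\min}(\Omega)}\,\norm{x_0}^2\,\mre^{-\gamma\tau},
\end{align*}
with decay rate $\gamma=\lambda_{\min}(\Omega)/\lambda_{\max}(X')$.

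To convert these into explicit functions of $a$, I would invoke the sublevel-set bounds (Lemma~\ref{lem.bounds}, Appendix~\ref{app.bounds}): for $K'\in\cSK(2a)$ one has $\nu/(2a)\le\lambda_{\min}(X')$, $\norm{X'}_2\le2a/\lambda_{\min}(Q)$, and, from $f(K')\ge\lambda_{\min}(R)\lambda_{\min}(X')\norm{K'}_F^2$, the bound $\norm{K'}_F^2\le4a^2/(\nu\lambda_{\min}(R))$. Substituting, the factor $\norm{Q+K'^TRK'}_2\le\norm{Q}_2+\norm{R}_2\norm{K'}_F^2$ is degree two in $a$, while $\lambda_{\max}^2(X')/\lambda_{\min}(X')$ is degree three; their product is the degree-five polynomial $\kappa_1(2a)$, and $\gamma\ge\lambda_{\min}(\Omega)\lambda_{\min}(Q)/(2a)\AsDefined\kappa_2(2a)$ is inversely proportional to $a$. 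The per-sample bound $\abs{f_{x_i}(K_{i,s})-f_{x_i,\tau}(K_{i,s})}\le\kappa_1(2a)\,\norm{x_i}^2\,\mre^{-\kappa_2(2a)\tau}$, combined with the first display and $N^{-1}\sum_i\norm{x_i}^2\le\max_i\norm{x_i}^2$, then produces the claimed estimate.

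The main obstacle is the \emph{uniformity} of the decay estimate: the prefactor and rate governing $\norm{\mre^{(A-BK')t}}_2$ depend on the particular perturbed gain $K'$, yet the final bound must hold simultaneously for all $i$ through a single pair $(\kappa_1,\kappa_2)$. This is resolved precisely by Lemma~\ref{lem.cSK2a}, which confines every $K_{i,\pm}$ to the fixed sublevel set $\cSK(2a)$, so the Lyapunov-based bounds become uniform via the compactness-type estimates on $X(K')$. A secondary point requiring care is the monotonicity used when replacing $\lambda_{\max}(X')$ by its $a$-dependent upper bound inside the exponent: since $p\mapsto p^2\mre^{-c\tau/p}$ is increasing in $p>0$, enlarging the spectral bounds only weakens the estimate, so the substitution is valid.
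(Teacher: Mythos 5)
Your proposal is correct and follows essentially the same route as the paper's proof: the same triangle-inequality decomposition into $2N$ per-sample tail errors, the same use of Lemma~\ref{lem.cSK2a} to confine every perturbed gain to $\cSK(2a)$, and the same Lyapunov-based decay estimate $\norm{\mre^{(A-BK')t}}_2^2 \le (\norm{X'}_2/\lambda_{\min}(X'))\,\mre^{-(\lambda_{\min}(\Omega)/\norm{X'}_2)t}$ (the paper's Lemma~\ref{lem.matExpBound}, which you rederive using $V(z)=z^T(X')^{-1}z$ on the primal dynamics instead of $x^TXx$ on the transposed dynamics), made uniform via the sublevel-set bounds of Lemma~\ref{lem.bounds}. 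The remaining differences are cosmetic ($\norm{Q}_2$ in place of $\norm{Q}_F$ in the prefactor, and bounding the integrand directly rather than $\norm{X_v - X_{v,\tau}}_F$ first), and only tighten the bound, so the claim with the paper's $\kappa_1(2a)$, $\kappa_2(2a)$ follows.
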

\begin{proof}
	See Appendix~\ref{app.finiteTime}.
\end{proof}

Although small values of $r$ may result in a large error $\norm{\widetilde{\nabla}f(K) - \widebar{\nabla}f(K)}_F$, the exponential dependence of the upper bound in Proposition~\ref{prop.finiteTime} on the simulation time $\tau$ implies that this error can be readily controlled  by increasing  $\tau$. In the next  proposition, we handle the second term in~\eqref{eq.megaTriangle}.
\begin{myprop}\label{prop.thirdOrder}
	For any $K\in\cSK(a)$ and $r\le r(a)$ (given by Lemma~\ref{lem.cSK2a}), we have
	\begin{align*}
	\norm{\widehat{\nabla}f(K) \, - \, \widetilde{\nabla} f(K)}_F
	\;\le\;
	\dfrac{( r m n)^2}{2} \, \ell(2a) \, \max_i\norm{x_i}^2
	\end{align*}
	where the function $\ell(a)>0$ is a degree 4 polynomial and it is given by~\eqref{eq.ell}.
\end{myprop}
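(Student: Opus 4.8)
\textbf{Proof proposal for Proposition~\ref{prop.thirdOrder}.}

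The plan is to exploit the fact that $\widehat{\nabla} f(K)$ and $\widetilde{\nabla} f(K)$ are built from the \emph{same} random directions $U_i$ and \emph{same} initial conditions $x_i$, so that their difference can be controlled sample-by-sample. Comparing the summands in the definitions~\eqref{eq.megaGrad}, the $i$-th term of $\widetilde{\nabla} f(K)$ is $\frac{1}{2rN}(f_{x_i}(K+rU_i) - f_{x_i}(K-rU_i))U_i$, whereas the $i$-th term of $\widehat{\nabla} f(K)$ is $\frac{1}{N}\inner{\nabla f_{x_i}(K)}{U_i}U_i$. The quantity $\frac{1}{2r}(f_{x_i}(K+rU_i) - f_{x_i}(K-rU_i))$ is exactly the symmetric (central) finite-difference approximation of the directional derivative $\inner{\nabla f_{x_i}(K)}{U_i}$ along $U_i$. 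Thus the whole problem reduces to bounding the error of a central difference quotient, which by Taylor's theorem with the symmetric cancellation of even-order terms is governed by the \emph{third} directional derivative of $f_{x_i}$ along $U_i$.

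Concretely, I would first apply the triangle inequality to pass the norm inside the sum and use $\norm{U_i}_F \le \sqrt{mn}$ (since $\vect(U_i)$ lies on $\sqrt{mn}\,S^{mn-1}$) to obtain
\begin{align*}
\norm{\widehat{\nabla}f(K) - \widetilde{\nabla} f(K)}_F
\;\le\;
\frac{\sqrt{mn}}{N}\sum_{i=1}^N
\left| \inner{\nabla f_{x_i}(K)}{U_i} - \frac{f_{x_i}(K+rU_i)-f_{x_i}(K-rU_i)}{2r}\right|.
\end{align*}
Next, for each fixed $i$, define the scalar function $\phi_i(t) \DefinedAs f_{x_i}(K+tU_i)$ and expand $\phi_i(\pm r)$ via Taylor's theorem to third order about $t=0$. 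The zeroth- and second-order terms cancel in the symmetric difference, the first-order terms reproduce $\inner{\nabla f_{x_i}(K)}{U_i}=\phi_i'(0)$, and the residual is $\frac{r^2}{12}(\phi_i'''(\xi_+)+\phi_i'''(\xi_-))$ for some $\xi_\pm \in (-r,r)$ by the Lagrange remainder. Hence the per-sample error is bounded by $\frac{r^2}{12}\max_{|t|\le r}|\phi_i'''(t)|$, and the key remaining task is to bound the third directional derivative of the truncated-cost functional uniformly over the segment $\{K+tU_i : |t|\le r\}$.

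The main obstacle is therefore the uniform bound on $|\phi_i'''(t)|$. Because $r \le r(a)$, Lemma~\ref{lem.cSK2a} guarantees $K+tU_i \in \cSK(2a)$ for all $|t|\le r$, so the entire segment stays in a sublevel set on which $X(K)$, $P(K)$, and the closed-loop operators $\cA_K$, $\cA_K^*$ are uniformly bounded and invertible (via Lemma~\ref{lem.bounds}). Using the closed-form expression $\nabla f_{x_i}(K) = 2(RK - B^TP(K))X_{x_i}(K)$ with $X_{x_i}(K) = -\cA_K^{-1}(x_ix_i^T)$, I would differentiate three times in $t$, repeatedly applying the product rule and the derivative formula $\frac{\mrd}{\mrd t}\cA_{K+tU}^{-1} = -\cA_{K+tU}^{-1}(\frac{\mrd}{\mrd t}\cA_{K+tU})\cA_{K+tU}^{-1}$ (and similarly for $(\cA^*)^{-1}$). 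Each differentiation brings down a factor proportional to $\norm{B}_2\norm{U_i}_2 \le \norm{B}_2\sqrt{mn}$ together with operator norms of $\cA^{-1}$, $R$, etc.; collecting these yields a bound proportional to $(mn)^{3/2}\max_i\norm{x_i}^2$ times a polynomial in the problem data evaluated at $2a$. Factoring out $\frac{r^2(mn)^{3/2}}{12}\cdot\sqrt{mn}\cdot\frac{1}{12}$-type constants and absorbing the $t$-independent bounds into a single degree-$4$ polynomial $\ell(2a)$ (the degree count matching the product of the bounds on $X$, $P$, their derivatives, and $R$) delivers the claimed estimate $\frac{(rmn)^2}{2}\ell(2a)\max_i\norm{x_i}^2$. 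The careful bookkeeping of these operator-norm products to certify that the resulting coefficient is genuinely a degree-$4$ polynomial in $a$ is the most delicate part of the argument, and I would relegate the detailed constant-tracking to the appendix.
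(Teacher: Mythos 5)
Your proposal is correct and takes essentially the same approach as the paper: Lemma~\ref{lem.cSK2a} keeps the segment $K \pm s U_i$, $s\in[0,r]$, inside $\cSK(2a)$; a symmetric Taylor expansion of $f_{x_i}(K\pm rU_i)$ reduces the per-sample error to third-derivative information of $f_{x_i}$ along $U_i$; and the factors $\norm{U_i}_F=\sqrt{mn}$ are collected exactly as you describe to produce the $(rmn)^2/2$ prefactor. The only organizational difference is that the paper writes the remainder in second-order mean-value form and then applies a Lipschitz estimate on the Hessian form $\inner{U}{\nabla^2 f_v(\,\cdot\,;U)}$ (Lemmas~\ref{lem.hesnabla} and~\ref{eq.hesnablaBound}, which carry out precisely the Lyapunov-perturbation bookkeeping you defer and yield $\ell(a)=c\,a^2+c'a^4$ in~\eqref{eq.ell}), whereas you use a third-order Lagrange remainder and bound $\phi_i'''$ directly---these are the same estimate packaged differently.
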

\begin{proof}
	See Appendix~\ref{app.thirdOrder}.
\end{proof}
The third-derivatives of the functions $f_{x_i}(K)$ are utilized in the proof of Proposition~\ref{prop.thirdOrder}. It is also worth noting that unlike $\widebar{\nabla} f(k)$ and $\widetilde{\nabla}f(K)$, the unbiased gradient estimate $\widehat{\nabla} f(K)$ is independent of the parameter $r$.  Thus, Proposition~\ref{prop.thirdOrder} provides a quadratic upper bound on the estimation error in terms of $r$.

\vspace*{-2ex}
\subsection{Correlation between gradient and gradient estimate}

As mentioned earlier, one approach to analyzing convergence for the random search method in~\eqref{eq.RS} is to control the gradient estimation error $\widebar{\nabla} f(K) - \nabla f(K)$ by choosing a large number of samples $N$. For the one-point gradient estimation setting, this approach was taken in~\cite{fazgekakmes18} for the discrete-time LQR (and in~\cite{mohsoljovACC20} for the continuous-time LQR) and has led to an upper bound on the required number of samples for reaching $\epsilon$-accuracy that grows at least proportionally to~$1/\epsilon^4$.  Alternatively, our proof exploits the problem structure and shows that with a linear number of samples $N=\tilde{O}(n)$, where $n$ is the number of states, the gradient estimate $\widehat{\nabla} f(K)$ concentrates with {\em high probability\/} when projected to the direction of $\nabla f(K)$. In particular, in Propositions~\ref{prop.M1} and~\ref{prop.M2} we show that the following events occur with high probability for some positive scalars $\mu_1$, $\mu_2$, 
\begin{subequations} 
	\label{eq.megaEvents}
	\begin{align}\label{eq.megaEvent1}
	\sfM_1
	&\;\DefinedAs\;
	\left\{
	\inner{\widehat{\nabla} f(K)}{\nabla f(K)}
	\;\ge\;
	\mu_1\norm{\nabla f(K)}^2_F
	\right\}
	\\[0.cm]\label{eq.megaEvent2}
	\sfM_2
	&\;\DefinedAs\;
	\left\{
	\norm{\widehat{\nabla} f(K)}^2_F
	\;\le\;
	\mu_2\norm{\nabla f(K)}^2_F
	\right\}.
	\end{align}
\end{subequations}
To justify the definitions of these events, we first show that if they both take place then the unbiased estimate $\widehat{\nabla}f(K)$ can be used to decrease the objective error by a geometric factor.
\begin{myprop}\label{prop.linearConvG}[Approximate GD]
	If  the matrix $G\in\R^{m\times n}$ and the feedback gain $K\in\cSK(a)$ are such that
	\begin{subequations}\label{eq.gAs}
		\begin{align}\label{eq.gAs2}
		\inner{G}{\nabla f(K)}
		&\;\ge\;
		\mu_1
		\norm{\nabla f(K)}_F^2 
		\\[0.cm]\label{eq.gAs1}
		\norm{G}_F^2
		&\;\le\;
		\mu_2 
		\norm{\nabla f(K)}_F^2
		\end{align}
	\end{subequations}
	for some positive scalars $\mu_1$ and $\mu_2$, then  
	$
	K - \alpha  G  \in \cSK(a)
	$ for all $\alpha\in[0,\mu_1/(\mu_2L_f)]$, and
	\begin{align*}
	f(K-\alpha G) \, - \, f(K^\star)
	\;\le\;
	\gamma \left(f(K) \, - \, f(K^\star)\right)
	\end{align*}
	with 
	$
	\gamma
	=
	1-\mu_f \mu_1\alpha.
	$
	Here,  $L_f$ and $\mu_f$ are the smoothness and the PL parameters of the function $f$ over $\cSK(a)$.
\end{myprop}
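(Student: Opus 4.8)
The plan is to treat this as a standard descent-lemma argument married to the Polyak--{\L}ojasiewicz (PL) inequality~\eqref{eq.GradDom}, the only genuine complication being that we must first certify that the updated gain $K-\alpha G$ stays inside the sublevel set $\cSK(a)$ on which the constants $L_f$ and $\mu_f$ are valid. The driving inequality comes from the $L_f$-smoothness of $f$ over $\cSK(a)$ established in Lemma~\ref{lem.Lf}: whenever both $K$ and $K-\alpha G$ lie in $\cSK(a)$, the Descent Lemma~\cite[Eq.~(9.17)]{boyvan04} gives
\[
f(K-\alpha G)-f(K) \;\le\; -\,\alpha\inner{\nabla f(K)}{G} \,+\, \tfrac{L_f\alpha^2}{2}\,\norm{G}_F^2 .
\]
Substituting the two hypotheses~\eqref{eq.gAs2} and~\eqref{eq.gAs1} on $G$ converts this into
\[
f(K-\alpha G)-f(K) \;\le\; -\,\alpha\Big(\mu_1 \,-\, \tfrac{\alpha\mu_2 L_f}{2}\Big)\,\norm{\nabla f(K)}_F^2 .
\]

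Next I would dispose of the trivial case: if $K=K^\star$ then $\nabla f(K)=0$, so~\eqref{eq.gAs1} forces $G=0$ and both assertions hold with equality. Hence assume $K\neq K^\star$, so that $\norm{\nabla f(K)}_F>0$. For every $\alpha$ in the stated range $[0,\mu_1/(\mu_2 L_f)]$ one has $\tfrac{\alpha\mu_2 L_f}{2}\le\tfrac{\mu_1}{2}$, whence the parenthesized factor obeys $\mu_1-\tfrac{\alpha\mu_2 L_f}{2}\ge\tfrac{\mu_1}{2}>0$; in particular the right-hand side above is strictly negative for $\alpha>0$.

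The main obstacle, exactly as in the proof of Theorem~\ref{thm.main-dsc}, is that the displayed descent inequality presupposes $K-\alpha G\in\cSK(a)$, which is part of what we set out to prove. I would resolve this with the same continuity/contradiction device used there: set $K_\alpha\DefinedAs K-\alpha G$ and let $\beta_m$ be the largest $\beta$ for which $K_\alpha\in\cSK(a)$ for all $\alpha\in[0,\beta]$. Such a $\beta_m$ exists because $\cSK(a)$ is compact~\cite{toi85} and $f(K_\alpha)\to\infty$ as $K_\alpha\to\partial\cSK$. Suppose for contradiction that $\beta_m<\mu_1/(\mu_2 L_f)$; then $K_{\beta_m}$ cannot lie in the open interior of $\cSK(a)$ (otherwise a neighborhood argument would extend $\beta_m$), so by continuity $f(K_{\beta_m})=a$. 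But for every $\alpha\in(0,\beta_m]$ both endpoints lie in $\cSK(a)$, so the descent inequality applies and, by the strict negativity just established, yields $f(K_\alpha)<f(K)\le a$; evaluating at $\alpha=\beta_m$ gives $f(K_{\beta_m})<a$, a contradiction. Therefore $\beta_m\ge\mu_1/(\mu_2 L_f)$, which is precisely the invariance claim $K-\alpha G\in\cSK(a)$ for all $\alpha\in[0,\mu_1/(\mu_2 L_f)]$.

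Finally, with invariance secured, I would combine the descent inequality with the PL condition~\eqref{eq.GradDom}, namely $\norm{\nabla f(K)}_F^2\ge 2\mu_f\big(f(K)-f(K^\star)\big)$. Since the coefficient multiplying $\norm{\nabla f(K)}_F^2$ is nonpositive and $\alpha\big(\mu_1-\tfrac{\alpha\mu_2 L_f}{2}\big)\ge\tfrac{\alpha\mu_1}{2}$ on the admissible interval, replacing $\norm{\nabla f(K)}_F^2$ by its PL lower bound gives
\[
f(K-\alpha G)-f(K) \;\le\; -\,\alpha\,\mu_1\mu_f\big(f(K)-f(K^\star)\big),
\]
and adding $f(K)-f(K^\star)$ to both sides produces $f(K-\alpha G)-f(K^\star)\le(1-\mu_f\mu_1\alpha)\big(f(K)-f(K^\star)\big)$, i.e.\ the asserted contraction with $\gamma=1-\mu_f\mu_1\alpha$. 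The only steps requiring care are the uniform lower bound $\mu_1-\tfrac{\alpha\mu_2 L_f}{2}\ge\mu_1/2$, which both pins down the admissible stepsize interval and delivers the precise rate, and the boundary-blowup property of $f$ invoked in the invariance argument.
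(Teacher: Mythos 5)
Your proof is correct and follows essentially the same route as the paper's: the descent lemma over $\cSK(a)$ combined with hypotheses~\eqref{eq.gAs}, an invariance argument to keep the update in the sublevel set, and the PL condition~\eqref{eq.GradDom} to obtain the contraction factor $\gamma = 1-\mu_f\mu_1\alpha$. The only difference is presentational: where the paper tersely invokes continuity of $f$ for the invariance claim (establishing it on the larger interval $[0,2\mu_1/(\mu_2 L_f)]$ before restricting to $[0,\mu_1/(\mu_2 L_f)]$ for the rate), you spell it out with the $\beta_m$-contradiction device that the paper itself employs in the proof of Theorem~\ref{thm.main-dsc}, and you handle the trivial case $K=K^\star$ explicitly.
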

\begin{proof}
	See Appendix~\ref{app.approxGD}.
\end{proof}
\begin{myrem}
	The fastest convergence rate guaranteed by Proposition~\ref{prop.linearConvG},
	$
	\gamma
	=
	1- {\mu_f \mu_1^2}/({L_f\mu_2}),
	$
	is achieved with the stepsize $\alpha = \mu_1/(\mu_2L_f)$. This rate bound is tight in the sense that if $G= c\nabla f(K)$, for some $c>0$, we recover the standard convergence rate $\gamma = 1 - \mu_f/L_f$ of gradient descent. 
\end{myrem}
We next quantify the probability of the events $\sfM_1$ and $\sfM_2$.  In our proofs, we exploit modern non-asymptotic statistical analysis of the concentration of random variables around their average. While in Appendix~\ref{app.probabilistic} we set notation and provide basic definitions of key concepts, we refer the reader to a recent book~\cite{ver18} for a comprehensive discussion. Herein, we use $c$, $c'$, $c''$, etc.\ to denote positive absolute constants.

\subsubsection{Handling $\sfM_1$}
\label{subsec.varianceMu1}

We first exploit the problem structure to confine the dependence of $\widehat{\nabla}f(K)$ on the random initial conditions $x_i$ into a zero-mean random vector. In particular, for any $K\in\cSK$ and $x_0\in\R^n$, 
\begin{align*}
\nabla f(K)
\;=\;
E X,
~~~
\nabla f_{x_0}(K)
\;=\;
E X_{x_0}
\end{align*} 
where $E \DefinedAs 2(R K - B^T P(K))\in\R^{m\times n}$ is a fixed matrix,  
$
X
=
-\cA_K^{-1}(\Omega)
$, and
$
X_{x_0}
=
-\cA_K^{-1}(x_0x_0^T).
$
This allows us to represent the unbiased estimate $\widehat{\nabla} f(K)$ of the gradient as 
\begin{subequations}
	\begin{align}\label{eq.splitting}
	\widehat{\nabla} f(K)
	& \;=\;
	\dfrac{1}{N}
	\sum_{i \, = \, 1}^{N}
	\inner{E X_{x_i}}{U_i} U_i
	\;=\; 
	\widehat{\nabla}_1 
	\;+\;
	\widehat{\nabla}_2
	\\[0.cm]
	\label{eq.nabla_1}
	\widehat{\nabla}_1 
	&\; = \;
	\dfrac{1}{N}
	\sum_{i \, = \, 1}^{N}
	\inner{E (X_{x_i}-X)}{U_i} U_i
	\\[0.cm]
	\label{eq.nabla_2}
	\widehat{\nabla}_2 
	& \; = \;
	\dfrac{1}{N}
	\sum_{i \, = \, 1}^{N}
	\inner{\nabla f(K)}{U_i} U_i.
	\end{align}
\end{subequations}
Note that $\widehat{\nabla}_2$ does not depend on the initial conditions $x_i$. Moreover, from $\EX[X_{x_i}] = X$ and the independence of $X_{x_i}$ and $U_i$, we have
$
\EX[\widehat{\nabla}_1] = 0
$
and 
$
\EX[\widehat{\nabla}_2 ] = \nabla f(K). 
$

In Lemma~\ref{lem.Solt1}, we show that $\inner{\widehat{\nabla}_1}{\nabla f(K)}$ can be made arbitrary small with a large number of samples $N$. This allows us to analyze the probability of the event $\sfM_1$ in~\eqref{eq.megaEvents}.
\begin{mylem}\label{lem.Solt1}
	Let $U_1,\ldots, U_N\in\R^{m\times n}$ be i.i.d.\ random matrices with each $\vect(U_i)$ uniformly distributed on the sphere $\sqrt{mn} \, S^{mn-1}$ and let $X_1, \ldots, X_N\in\R^{n\times n}$ be i.i.d.\ random matrices distributed according to $\cM(xx^T)$. Here, $\cM$ is a linear operator and $x\in\R^n$ is a random vector whose entries are i.i.d., zero-mean, unit-variance, sub-Gaussian random variables with sub-Gaussian norm less than $\kappa$. 
	For any fixed matrix $E\in\R^{m\times n}$ and positive scalars $\delta$ and $\beta$, if 
	\begin{align}\label{eq.sample}
	N 
	\, \ge \,
	C \, (\beta^2 \kappa^2/ \delta)^2
	\left( \norm{\cM^*}_2 \, + \, \norm{\cM^*}_S \right)^2 n \log^6 \! {n}
	\end{align}
	then, with probability not smaller than $1-C'N^{-\beta}-4N\mre^{-\tfrac{n}{8}}$, 
	\begin{align*}
	\abs{\dfrac{1}{N} \sum_{i \, = \, 1}^N \inner{E\left(X_i-X\right)}{ U_i} \inner{ E X}{ U_i}}
	\, \le \, 
	\delta \norm{E X}_F\norm{E}_F
	\end{align*}
	where $X\DefinedAs\EX[X_1]=\cM(I)$.
\end{mylem}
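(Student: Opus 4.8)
The plan is to show that the summands
\[
Z_i \DefinedAs \inner{E(X_i - X)}{U_i}\,\inner{EX}{U_i}
\]
are i.i.d.\ with zero mean and that their empirical average concentrates. Centering is immediate: since $X_i$ and $U_i$ are independent and $\EX[X_i] = \cM(\EX[x_i x_i^T]) = \cM(I) = X$, conditioning on $U_i$ gives $\EX[\inner{E(X_i-X)}{U_i}\mid U_i] = 0$, hence $\EX[Z_i] = 0$ and the left-hand side equals $|\tfrac1N\sum_i (Z_i - \EX Z_i)|$. The key structural step is to expose the two very different sources of randomness inside $Z_i$. Using the adjoint identity $\inner{E\cM(M)}{U} = \inner{M}{\cM^*(E^T U)}$ with $M = x_i x_i^T - I$, the first factor becomes a centered quadratic form
\[
\inner{E(X_i-X)}{U_i} \;=\; x_i^T A_i x_i - \trace(A_i), \qquad A_i \DefinedAs \cM^*(E^T U_i),
\]
in the sub-Gaussian vector $x_i$, while the second factor $\inner{EX}{U_i}$ is a linear functional of $U_i$ only.

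I would condition on $U_1,\dots,U_N$ and treat $\tfrac1N\sum_i Z_i$ as a weighted sum of independent centered quadratic forms. By the Hanson--Wright inequality, conditioned on $U_i$ the quantity $x_i^T A_i x_i - \trace(A_i)$ is sub-exponential with sub-Gaussian proxy $\kappa^2 \norm{A_i}_F$ and sub-exponential proxy $\kappa^2 \norm{A_i}_2$; the definitions of the operator norms give $\norm{A_i}_F \le \norm{\cM^*}_2 \norm{E^T U_i}_F$ and $\norm{A_i}_2 \le \norm{\cM^*}_S \norm{E^T U_i}_2$, which is precisely how both $\norm{\cM^*}_2$ and $\norm{\cM^*}_S$ enter the final estimate. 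Multiplying by the scalars $\inner{EX}{U_i}$ (fixed given $U_i$) and applying a Bernstein inequality for sums of independent sub-exponential variables then bounds the conditional deviation in terms of the data-dependent scale factors $\tfrac1N\sum_i \inner{EX}{U_i}^2\norm{A_i}_F^2$ and $\max_i |\inner{EX}{U_i}|\,\norm{A_i}_2$.

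It remains to control these scale factors with high probability over the $U_i$ and $x_i$. Here $\inner{EX}{U_i}$ is sub-Gaussian with proxy $\norm{EX}_F$ (being a linear functional of the uniform $\vect(U_i)$), the factors $\norm{E^T U_i}_F$ and $\norm{E^T U_i}_2$ are bounded by $\norm{E}_F$ times the spectral norm $\norm{U_i}_2 = O(\sqrt n)$ (which explains the absence of $m$ from the sample bound), and the good event $\max_i \norm{x_i}^2 \lesssim n$ has probability at least $1 - 2N\mre^{-n/8}$ by sub-Gaussian concentration of $\norm{x_i}^2$ around $n$. Taking a union bound over the $N$ per-sample sub-exponential deviations yields the $4N\mre^{-n/8}$ term, while choosing the Bernstein deviation level equal to $\delta\norm{EX}_F\norm{E}_F$ and the associated truncation threshold at a level proportional to $\beta$ produces the polynomial failure probability $C'N^{-\beta}$ and the requirement $N \gtrsim (\beta^2\kappa^2/\delta)^2(\norm{\cM^*}_2 + \norm{\cM^*}_S)^2 n\log^6 n$; the logarithmic factors arise from converting the heavy product-type tails into uniform bounds over the $\tilde O(n)$ scale factors.

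I expect the main obstacle to be exactly this last interplay. Each $Z_i$ is a product of a sub-exponential quadratic form and a sub-Gaussian linear form, hence strictly heavier-tailed than sub-exponential, so a naive Bernstein bound applied directly to the $Z_i$ fails. The crux is to condition on the $U_i$ so that the heavy-tailed factor collapses to a genuine sub-exponential object, then re-introduce the $U_i$-randomness only through the scale factors and bound those uniformly across all $N = \tilde O(n)$ samples while simultaneously tracking the two operator norms of $\cM^*$. Matching the truncation level to the target failure probability $N^{-\beta}$ (which is what forces the $\beta$-dependence to appear to the fourth power) and keeping the $\norm{x_i}^2$-concentration bookkeeping clean is where most of the technical effort lies.
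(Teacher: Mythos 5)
Your overall route is viable but genuinely different from the paper's. You condition on $U_1,\ldots,U_N$, so that each summand collapses (via Hanson--Wright, Lemma~\ref{lem.HansonWright}) to a fixed multiple of a centered sub-exponential quadratic form in $x_i$, apply a weighted Bernstein inequality conditionally, and then re-introduce the $U_i$-randomness only through high-probability bounds on the scale factors. The paper never conditions and never invokes Bernstein for this lemma: it truncates each summand by the indicator $\one_{\sfD_i}$ of a good event on $U_i$, observes that the truncated summand is a product of two $\psi_1$ variables and hence has finite $\psi_{1/2}$-Orlicz norm, bounds the $\psi_{1/2}$-norm of the whole sum by Talagrand's inequality (Lemma~\ref{lem.talagrand}), and converts that into a tail bound via~\eqref{eq.pollard}. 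Both arguments pivot on the same structural identity $\inner{E(X_i-X)}{U_i} = x_i^T \cM^*(E^T U_i)\, x_i - \trace\left(\cM^*(E^T U_i)\right)$ and on Hanson--Wright; your version is more elementary (no $\psi_{1/2}$ machinery), at the price of having to manage data-dependent Bernstein parameters and a union bound over $N$ scale factors, which is exactly what the Orlicz-norm route packages away.

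Two concrete points need repair. First, the claim $\norm{U_i}_2 = O(\sqrt{n})$ is false in general: for $\vect(U_i)$ uniform on $\sqrt{mn}\, S^{mn-1}$ the spectral norm concentrates at the order $\sqrt{m}+\sqrt{n}$, so bounding $\norm{E^T U_i}_F$ and $\norm{E^T U_i}_2$ through $\norm{E}_F \norm{U_i}_2$ gives $(\sqrt{m}+\sqrt{n})\norm{E}_F$ and would degrade the sample requirement~\eqref{eq.sample} to $(m+n)\log^6 n$ rather than the stated $n \log^6 n$. The paper gets the $m$-free factor $\sqrt{n}\,\norm{E}_F$ by bounding these two quantities directly: $\norm{E^T U_i}_F \le 2\sqrt{n}\,\norm{E}_F$ with high probability by Lipschitz concentration on the sphere together with $\EX\left[\norm{E^T U_i}_F^2\right] = n \norm{E}_F^2$ (Lemma~\ref{lem.randMatrixUnifFro}), and $\norm{E^T U_i}_2 \le c'\left(\norm{E}_F + \sqrt{n}\,\norm{E}_2\right) \le c''\sqrt{n}\,\norm{E}_F$ with high probability (Lemma~\ref{lem.randomMatUniform}); you need these sharper bounds, not the spectral norm of $U_i$. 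Second, a bookkeeping error: the $4N\mre^{-n/8}$ term has nothing to do with $\max_i \norm{x_i}^2$ --- no bound on $\norm{x_i}$ is needed anywhere in this lemma, precisely because Hanson--Wright absorbs the $x_i$-randomness (norm bounds on $x_i$ enter only later, in the bias analysis of Theorem~\ref{thm.randomSearchFormal}). That term is the union bound over the $N$ per-sample events controlling $\norm{\cM^*(E^T U_i)}_2$ and $\norm{\cM^*(E^T U_i)}_F$, i.e., exactly the scale-factor events your own argument requires.
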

\begin{proof}
	See Appendix~\ref{app.varianceMu1}.
\end{proof}

In Lemma~\ref{lem.Solt2}, we show that $\inner{\widehat{\nabla}_2}{\nabla f(K)}$ concentrates with high probability around its average $\norm{\nabla f(K)}_F^2$. 
\begin{mylem}\label{lem.Solt2}
	Let  $U_1,\ldots, U_N\in\R^{m\times n}$ be i.i.d.\ random matrices with each $\vect(U_i)$ uniformly distributed on the sphere $\sqrt{mn} \, S^{mn-1}$. Then, for any $W\in\R^{m\times n}$ and $t \in (0, 1]$,
	\begin{align*}
	\bbP
	\left\{
	\dfrac{1}{N} 
	\, 
	\sum_{i \, = \, 1}^N \inner{W}{ U_i}^2
	\, < \, 
	(1 \, - \, t) \norm{W}_F^2
	\right\}
	\, \le \;
	2 \, \mre^{-c Nt^2}.
	\end{align*}
\end{mylem}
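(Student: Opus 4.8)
The plan is to reduce the statement to a standard concentration inequality for i.i.d.\ sub-exponential random variables and then apply Bernstein's inequality. By homogeneity of both sides in $W$ I may assume $\norm{W}_F=1$ (if $W=0$ the event is empty and the bound is trivial). Set $d\DefinedAs mn$, $w\DefinedAs\vect(W)$, and $u_i\DefinedAs\vect(U_i)$, so that each $u_i$ is uniformly distributed on $\sqrt{d}\,S^{d-1}$ and the scalars $\xi_i\DefinedAs\inner{W}{U_i}=w^Tu_i$ are i.i.d. First I would compute the mean of a single term: by rotational symmetry of the uniform measure on the sphere, $\EX[u_iu_i^T]$ is a scalar multiple of the identity whose trace equals $d$, hence $\EX[u_iu_i^T]=I_d$ and $\EX[\xi_i^2]=w^T\EX[u_iu_i^T]\,w=\norm{w}_2^2=1$. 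Thus the quantity $\tfrac1N\sum_i\xi_i^2$ has mean $1$, and the claimed event is exactly its lower tail below $1-t$.

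The core estimate is that the linear marginal $\xi_i=w^Tu_i$ of the uniform distribution on the sphere is sub-Gaussian with a \emph{dimension-free} sub-Gaussian norm, i.e.\ $\norm{\xi_i}_{\psi_2}\le C$ for an absolute constant $C$; this is a standard consequence of the concentration of measure on the sphere (see~\cite{ver18}). It follows that $\xi_i^2$ is sub-exponential with $\norm{\xi_i^2}_{\psi_1}=\norm{\xi_i}_{\psi_2}^2\le C^2$, and the centered variables $\xi_i^2-1$ satisfy $\norm{\xi_i^2-1}_{\psi_1}\le K$ for some absolute constant $K$ (centering preserves the sub-exponential norm up to a universal factor). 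Applying Bernstein's inequality for i.i.d.\ mean-zero sub-exponential random variables to $\tfrac1N\sum_{i=1}^N(\xi_i^2-1)$ yields, for every $s\ge0$,
\[
\bbP\!\left\{\Big|\tfrac1N\sum_{i=1}^N\xi_i^2-1\Big|\ge s\right\}\;\le\;2\,\mre^{-c_0 N\min(s^2/K^2,\,s/K)}.
\]
For $t\in(0,1]$ one has $\min(t^2,t)=t^2$, so choosing $s=t$ bounds the right-hand side by $2\,\mre^{-cNt^2}$ for an absolute constant $c$. Since the event $\{\tfrac1N\sum_i\inner{W}{U_i}^2<(1-t)\norm{W}_F^2\}=\{\tfrac1N\sum_i\xi_i^2<1-t\}$ is contained in the two-sided event above, the claimed bound follows.

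The only nontrivial ingredient is the dimension-independent sub-Gaussian bound $\norm{\xi_i}_{\psi_2}\le C$ for marginals of the uniform measure on the sphere; everything else is bookkeeping with the sub-exponential calculus. I expect this to be the main obstacle in the sense that it is the single place where one must invoke nonelementary high-dimensional probability. I would either cite it directly from~\cite{ver18}, or, for a self-contained argument, realize $u_i$ as $\sqrt{d}\,g/\norm{g}_2$ with $g\sim\mathcal{N}(0,I_d)$ (equivalently use that $\xi_i^2/d$ follows a $\mathrm{Beta}(1/2,(d-1)/2)$ law) and compare $\xi_i^2$ with the Gaussian square $g_1^2$, which gives the same sub-exponential tail with absolute constants.
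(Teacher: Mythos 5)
Your proposal is correct and follows essentially the same route as the paper's proof: bound the $\psi_2$-norm of the spherical marginal $\inner{W}{U_i}$, pass to the $\psi_1$-norm of its square via $\norm{\xi^2}_{\psi_1}=\norm{\xi}_{\psi_2}^2$, center, apply Bernstein's inequality, and use $\min\{t^2,t\}=t^2$ for $t\in(0,1]$. The only cosmetic differences are your initial normalization $\norm{W}_F=1$ and your explicit computation of $\EX[\inner{W}{U_i}^2]=\norm{W}_F^2$ by rotational symmetry, which the paper leaves implicit.
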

\begin{proof}
	See Appendix~\ref{app.varianceMu1}.
\end{proof}

In Proposition~\ref{prop.M1}, we use Lemmas~\ref{lem.Solt1} and~\ref{lem.Solt2} to address~$\sfM_1$.
\begin{myprop}\label{prop.M1}
	Under Assumption~\ref{ass.initialCond}, for any stabilizing feedback gain $K\in\cSK$ and positive scalar $\beta$, if
	\begin{align*}
	N 
	\, \ge \, 
	C_1 \, \frac{\beta^4 \kappa^4}{\lambda_{\min}^2 (X)}  
	\left(\norm{( \cA_K^*)^{-1}}_2 +  \norm{ ( \cA_K^*)^{-1}}_S\right)^2 
	{n} \log^6 \! {n}
	\end{align*}
	then the event $\sfM_1$ in~\eqref{eq.megaEvents} with $\mu_1\DefinedAs 1/4$ satisfies
	$
	\bbP(\sfM_1)
	\ge 
	1 - C_2N^{-\beta} - 4N\mre^{-\tfrac{n}{8}} - 2\mre^{-C_3 N}.
	$
\end{myprop}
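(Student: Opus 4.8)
The plan is to estimate $\inner{\widehat{\nabla}f(K)}{\nabla f(K)}$ by exploiting the decomposition $\widehat{\nabla}f(K)=\widehat{\nabla}_1+\widehat{\nabla}_2$ in~\eqref{eq.splitting} together with the two concentration lemmas. Taking the inner product of~\eqref{eq.splitting} with $\nabla f(K)$ gives
\[
\inner{\widehat{\nabla}f(K)}{\nabla f(K)}
\;=\;
\inner{\widehat{\nabla}_1}{\nabla f(K)}
\,+\,
\inner{\widehat{\nabla}_2}{\nabla f(K)},
\]
and I would bound the (possibly negative) cross term $\inner{\widehat{\nabla}_1}{\nabla f(K)}$ from below via Lemma~\ref{lem.Solt1} and the main term $\inner{\widehat{\nabla}_2}{\nabla f(K)}$ from below via Lemma~\ref{lem.Solt2}, then combine the two through a union bound.

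For the cross term, using $\nabla f(K)=EX$ with $E\DefinedAs 2(RK-B^TP(K))$ together with the definition~\eqref{eq.nabla_1}, I obtain
\[
\inner{\widehat{\nabla}_1}{\nabla f(K)}
\;=\;
\dfrac{1}{N}\sum_{i \, = \, 1}^{N}\inner{E(X_{x_i}-X)}{U_i}\inner{EX}{U_i},
\]
which is precisely the quantity controlled by Lemma~\ref{lem.Solt1} applied with the linear operator $\cM=-\cA_K^{-1}$; indeed $X_{x_i}=\cM(x_ix_i^T)$ and, since $\Omega=I$ under Assumption~\ref{ass.initialCond}, $X=\cM(I)=\EX[X_{x_i}]$. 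Selecting $\delta\DefinedAs\lambda_{\min}(X)/2$ and using $\norm{\cM^*}_2=\norm{(\cA_K^*)^{-1}}_2$, $\norm{\cM^*}_S=\norm{(\cA_K^*)^{-1}}_S$, the sample requirement~\eqref{eq.sample} reduces exactly to the hypothesis on $N$ with $C_1=4C$. Lemma~\ref{lem.Solt1} then yields, on an event of probability at least $1-C_2N^{-\beta}-4N\mre^{-n/8}$, the bound $\abs{\inner{\widehat{\nabla}_1}{\nabla f(K)}}\le\delta\norm{EX}_F\norm{E}_F$. To convert $\norm{E}_F$ into a multiple of $\norm{\nabla f(K)}_F$, I would invoke $X\succ0$ so that $\norm{EX}_F\ge\lambda_{\min}(X)\norm{E}_F$, whence $\norm{E}_F\le\norm{\nabla f(K)}_F/\lambda_{\min}(X)$ and therefore $\abs{\inner{\widehat{\nabla}_1}{\nabla f(K)}}\le(\delta/\lambda_{\min}(X))\norm{\nabla f(K)}_F^2=\tfrac{1}{2}\norm{\nabla f(K)}_F^2$.

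For the main term, the definition~\eqref{eq.nabla_2} gives $\inner{\widehat{\nabla}_2}{\nabla f(K)}=\tfrac{1}{N}\sum_i\inner{\nabla f(K)}{U_i}^2$, so Lemma~\ref{lem.Solt2} with $W=\nabla f(K)$ and $t=1/4$ provides, with probability at least $1-2\mre^{-C_3N}$ where $C_3=c/16$, the lower bound $\inner{\widehat{\nabla}_2}{\nabla f(K)}\ge\tfrac{3}{4}\norm{\nabla f(K)}_F^2$. Intersecting the two events and adding the estimates yields, with probability at least $1-C_2N^{-\beta}-4N\mre^{-n/8}-2\mre^{-C_3N}$,
\[
\inner{\widehat{\nabla}f(K)}{\nabla f(K)}
\;\ge\;
\tfrac{3}{4}\norm{\nabla f(K)}_F^2-\tfrac{1}{2}\norm{\nabla f(K)}_F^2
\;=\;
\tfrac{1}{4}\norm{\nabla f(K)}_F^2,
\]
which is exactly the event $\sfM_1$ with $\mu_1=1/4$.

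Since Lemmas~\ref{lem.Solt1} and~\ref{lem.Solt2} carry all the analytical weight, the proposition is essentially an assembly; the only care needed is the bookkeeping that identifies $\cM=-\cA_K^{-1}$ and $\Omega=I$ so that Lemma~\ref{lem.Solt1} applies, and the coupled choice $\delta=\lambda_{\min}(X)/2$, $t=1/4$ that balances the cross term against the main term so the sum clears the threshold $\mu_1=1/4$. The mildly delicate step is the inequality $\norm{E}_F\le\norm{EX}_F/\lambda_{\min}(X)$, which is the origin of the factor $\lambda_{\min}^{-2}(X)$ in the sample-complexity bound. I therefore expect no genuine obstacle at the level of the proposition itself; the real difficulty lives inside Lemma~\ref{lem.Solt1}, whose sub-Gaussian concentration for products of the form $\inner{E(X_i-X)}{U_i}\inner{EX}{U_i}$ is the technically demanding ingredient.
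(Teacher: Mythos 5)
Your proposal is correct and follows essentially the same route as the paper's proof: decompose $\widehat{\nabla} f(K)=\widehat{\nabla}_1+\widehat{\nabla}_2$ via~\eqref{eq.splitting}, control the cross term with Lemma~\ref{lem.Solt1} (using $\norm{EX}_F\ge\lambda_{\min}(X)\norm{E}_F$ to pass to $\norm{\nabla f(K)}_F^2$), control the main term with Lemma~\ref{lem.Solt2}, and combine via a union bound. The only difference is your choice of constants ($\delta=\lambda_{\min}(X)/2$, $t=1/4$ versus the paper's $\delta=\lambda_{\min}(X)/4$, $t=1/2$), which is immaterial since both splits sum to $\mu_1=1/4$.
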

\begin{proof}
	We use Lemma~\ref{lem.Solt1} with $\delta \DefinedAs \lambda_{\min}(X)/4$ to show that 
	\begin{subequations}
		\be
		\ba{rcl}
		\abs{\inner{\widehat{\nabla}_1}{ \nabla f(K)}}
		& \!\!\! \le \!\!\! &
		\delta \, \norm{E X}_F \, \norm{E}_F
\;\le\;
		\dfrac{1}{4} \, \norm{E X}_F^2
		\;=\;
		\dfrac{1}{4} \, \norm{\nabla f(K)}_F^2.
		\ea
		\label{eq.Proofnablatilde1a}
		\ee
		holds with probability not smaller than
		$1-C'N^{-\beta}-4N\mre^{-\tfrac{n}{8}}$.
		Furthermore, Lemma~\ref{lem.Solt2} with $t \DefinedAs {1}/{2}$ implies that
		\begin{align}\label{eq.Proofnablatilde1b}
		\inner{\widehat{\nabla}_2}{ \nabla f(K)}
		&\;\ge\;
		\dfrac{1}{2} \, \norm{\nabla f(K)}_F^2
		\end{align}
	\end{subequations}
	holds with probability not smaller than $1-2\mre^{-cN}$. Since $\widehat{\nabla} f(K) = \widehat{\nabla}_1 + \widehat{\nabla}_2$, we can use a union bound to combine~\eqref{eq.Proofnablatilde1a} and~\eqref{eq.Proofnablatilde1b}. This together with a triangle inequality completes the proof.
\end{proof}

\subsubsection{Handling $\sfM_2$}
\label{subsec.varianceMu2}

In Lemma~\ref{lem.Moh3}, we quantify a high probability upper bound on $\norm{\widehat{\nabla}_1}_F/\norm{\nabla f(K)}$. This lemma is analogous to Lemma~\ref{lem.Solt1} and it allows us to analyze the probability of the event $\sfM_2$ in~\eqref{eq.megaEvents}.
\begin{mylem}\label{lem.Moh3}
	Let $X_i$ and $U_i$ with $i=1,\ldots, N$ be random matrices defined in Lemma~\ref{lem.Solt1}, $X\DefinedAs\EX[X_1]$, and let $N\ge c_0 n$. Then, for any $E\in\R^{m\times n}$ and positive scalar $\beta$,   
	\begin{align*}
	\dfrac{1}{N} \, \norm{\sum_{i \, = \, 1}^N \inner{E\left(X_i-X\right)}{ U_i}  U_i}_F
	\;\le\;
	c_1\beta\, \kappa^2 (\norm{\cM^*}_2+\norm{\cM^*}_S)\norm{E}_F
	\sqrt{m n}
	\;
	\log n
	\end{align*}
	with probability not smaller than $1-c_2 (n^{-\beta} + N \mre^{-\frac{n}{8}})$.
\end{mylem}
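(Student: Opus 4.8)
The plan is to control the random matrix $\widehat\nabla_1=\frac1N\sum_i\inner{E(X_i-X)}{U_i}U_i$ through a variational characterization of its Frobenius norm followed by a covering argument. Using $\norm{M}_F=\sup_{\norm{V}_F\le 1}\inner{M}{V}$ and passing to a $\tfrac12$-net $\mathcal N$ of the Frobenius unit ball in $\R^{m\times n}$ — so that $\abs{\mathcal N}\le 5^{mn}$ and the supremum is recovered up to a factor $2$ — it suffices to produce a bound on the scalar
\[
S_V\;\DefinedAs\;\frac1N\sum_{i=1}^N\inner{E(X_i-X)}{U_i}\inner{V}{U_i}
\]
that is uniform over $V\in\mathcal N$ with high probability, and then to take a union bound over $\mathcal N$. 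This reduces the matrix statement to the same kind of scalar concentration that underlies Lemma~\ref{lem.Solt1}, except that the second factor is now an arbitrary unit-norm direction $V$ rather than the fixed matrix $EX$.

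For each fixed $V$ I would first condition on $\{U_i\}$. Writing $X_i-X=\cM(x_ix_i^T-I)$ and moving $\cM$ to its adjoint, the first factor of each summand becomes the centered quadratic form $\inner{E(X_i-X)}{U_i}=x_i^T\cM^*(E^TU_i)x_i-\trace\,\cM^*(E^TU_i)$ in the sub-Gaussian vector $x_i$. By the Hanson--Wright inequality this factor is, conditionally on $U_i$, sub-exponential with variance proxy $\kappa^4\norm{\cM^*(E^TU_i)}_F^2$ and tail scale $\kappa^2\norm{\cM^*(E^TU_i)}_2$; bounding these via $\norm{\cM^*(E^TU_i)}_F\le\norm{\cM^*}_2\norm{E^TU_i}_F$ and $\norm{\cM^*(E^TU_i)}_2\le\norm{\cM^*}_S\norm{E^TU_i}_2$ is exactly what introduces the two operator norms $\norm{\cM^*}_2$ and $\norm{\cM^*}_S$. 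Since the $x_i$ are independent across $i$ and $\inner{V}{U_i}$ is frozen once we condition on $U_i$, the terms of $S_V$ are independent, mean-zero, and sub-exponential, so Bernstein's inequality applies. Its variance proxy and maximal tail scale are then controlled using the concentration of $\norm{E^TU_i}_F$ (whose mean square equals $n\norm{E}_F^2$), of $\norm{E^TU_i}_2$, and of $\inner{V}{U_i}$, together with the event $\{\max_i\norm{x_i}^2\le 2n\}$, which holds with probability at least $1-N\mre^{-n/8}$ and accounts for that term in the stated failure probability.

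Finally I would pick the Bernstein deviation level proportional to $\beta\sqrt{mn}\,\log n$, so that after the union bound over the $5^{mn}$ net points the surviving failure probability is of order $n^{-\beta}$; the assumption $N\ge c_0n$ guarantees that the Gaussian branch of Bernstein's bound dominates at the $\sqrt{mn}$ scale, delivering the advertised estimate with factor $\kappa^2(\norm{\cM^*}_2+\norm{\cM^*}_S)\norm{E}_F\sqrt{mn}\,\log n$. The main obstacle is the simultaneous handling of the two independent sources of randomness — the quadratic dependence on the sub-Gaussian $x_i$ and the sphere-uniform $U_i$ — while keeping the dimensional penalty down to $\sqrt{mn}$ rather than a larger power. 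This is what forces one to exploit the spectral-induced norm $\norm{\cM^*}_S$ to tame the heavy (sub-exponential) branch in the net union bound while using $\norm{\cM^*}_2$ only for the variance, with the extra $\log n$ factor emerging from the maxima over the $N$ samples in the sub-exponential regime.
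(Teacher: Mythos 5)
Your reduction to a $\tfrac12$-net of the Frobenius unit ball is where the argument breaks. The summands $\inner{E(X_i-X)}{U_i}\inner{V}{U_i}$ are not sub-Gaussian: by Hanson--Wright, the first factor is only sub-exponential (conditionally on $U_i$), so Bernstein's bound for $S_V$ has the form $\exp\bigl(-c\min\{t^2/\sigma^2,\,t/K\}\bigr)$ with tail scale $K\simeq\max_i\abs{\inner{V}{U_i}}\cdot\kappa^2\sqrt{n}\,(\norm{\cM^*}_2+\norm{\cM^*}_S)\norm{E}_F$. A union bound over the $5^{mn}$ net points needs a per-point failure probability of order $5^{-mn}n^{-\beta}$, i.e.\ an exponent of order at least $mn$. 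At your deviation level the Gaussian branch indeed has exponent $\beta^2 Nm\log^2 n\gtrsim mn$; but Bernstein's exponent is the \emph{minimum} of the two branches, and uniformly over the net you cannot bound $\max_i\abs{\inner{V}{U_i}}$ by anything better than $\norm{U_i}_F=\sqrt{mn}$ (indeed, every $U_i/\sqrt{mn}$ has a net point within distance $\tfrac12$, so near-alignment occurs deterministically, and controlling $\inner{V}{U_i}$ ``by concentration'' simultaneously for $5^{mn}$ directions costs exactly the same $\sqrt{mn}$). With $K\simeq\sqrt{mn}\,\kappa^2\sqrt{n}(\norm{\cM^*}_2+\norm{\cM^*}_S)\norm{E}_F$ the sub-exponential branch gives $t/K\simeq\beta N\log n/\sqrt{n}\simeq\beta\sqrt{n}\log n$ when $N\simeq c_0 n$, which is polynomially smaller than $mn$ --- already for $m=1$ --- so $5^{mn}\exp(-c\,t/K)$ diverges and the union bound cannot be closed. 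Your claim that $N\ge c_0 n$ makes the Gaussian branch dominate is therefore incorrect: that branch dominates only for $t\lesssim\sigma^2/K$, and the required $t$ exceeds this threshold by a factor of roughly $m\sqrt{n}\log n$. This is the standard obstruction that makes naive net-plus-Bernstein arguments fail for sums with sub-exponential (or heavier-tailed) coefficients; repairing it would require truncation or chaining adapted to the $\psi_1$ tails, which your sketch does not provide.

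The paper avoids the issue by decoupling directions from coefficients instead of taking a supremum over directions: it writes $\sum_i\inner{E(X_i-X)}{U_i}\,U_i=Uu$ with $U\in\R^{mn\times N}$ the matrix whose columns are $\vect(U_i)$ and $u\in\R^N$ the vector of coefficients, and bounds $\norm{Uu}\le\norm{U}_2\norm{u}$. The spectral norm $\norm{U}_2\lesssim\sqrt{N}+\sqrt{mn}$ follows from Gaussian comparison (the only place a net is hidden, and there the matrix entries are sub-Gaussian, so the union bound goes through), while $\norm{u}^2=\sum_i u_i^2$ is a single scalar sum of independent heavy-tailed variables, handled by conditioning on the events $\sfD_i$ (which involve only the $U_i$, so Hanson--Wright still applies to the untouched $x_i$ randomness) and by Talagrand's inequality (Lemma~\ref{lem.talagrand}) in the $\psi_{1/2}$ Orlicz norm --- no uniformity over directions is ever required for the heavy-tailed part. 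A secondary confusion in your sketch: the event $\{\max_i\norm{x_i}^2\le 2n\}$ plays no role here; Hanson--Wright already integrates over $x_i$, and the $N\mre^{-n/8}$ term in the failure probability comes from the events $\sfD_i$ bounding $\norm{E^TU_i}_2$ and $\norm{E^TU_i}_F$, not from truncating $\norm{x_i}$ (conditioning on such a bound would moreover perturb the distribution of $x_i$ that Hanson--Wright requires).
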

\begin{proof}
	See Appendix~\ref{app.probabilistic}.
\end{proof}

In Lemma~\ref{lem.Moh2}, we quantify a high probability upper bound on $\norm{\widehat{\nabla}_2}_F/\norm{\nabla f(K)}$.
\begin{mylem}\label{lem.Moh2}
	Let $U_1, \ldots, U_N\in\R^{m\times n}$ be i.i.d.\ random matrices with $\vect(U_i)$ uniformly distributed on the sphere $\sqrt{mn} \, S^{mn-1}$ and let $N\ge C n$. Then, for any $W\in\R^{m\times n}$, 
	\begin{align*}
	\bbP 
	\bigg\{ 
	\dfrac{1}{N} \, \norm{\sum_{j \, = \, 1}^N \inner{W}{U_j} U_j}_F 
	\, > \,
	C'\sqrt{m}\norm{W}_F \bigg\}
	\;\le\;
	2N\mre^{-\frac{mn}{8}} \, + \, 2\mre^{-\hat{c}N}.
	\end{align*}
\end{mylem}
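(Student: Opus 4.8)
The plan is to reduce this matrix statement to a statement about the action of a random sample-covariance operator along a single fixed direction, and then to exploit that the relevant quantity concentrates around its (small) mean rather than around its worst-case value. Writing $w\DefinedAs\vect(W)$ and $u_j\DefinedAs\vect(U_j)\in\R^{mn}$, we have $\inner{W}{U_j}=w^Tu_j$ and, after vectorization,
\[
\vect\Big(\tfrac1N\sum_{j=1}^{N}\inner{W}{U_j}\,U_j\Big)\;=\;\tfrac1N\sum_{j=1}^{N}(w^Tu_j)\,u_j\;=\;M\,w,\qquad M\DefinedAs\tfrac1N\sum_{j=1}^{N}u_j u_j^T.
\]
Since the Frobenius norm equals the Euclidean norm of the vectorization, it suffices to bound $\norm{Mw}$, and by homogeneity I may take $\norm{w}=1$. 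Each $u_j$ is uniform on $\sqrt{mn}\,S^{mn-1}$, so $\norm{u_j}^2=mn$ and $\EX[u_ju_j^T]=I$, whence $\EX[Mw]=w$. The difficulty is that the target rate $\sqrt m$ is \emph{not} reachable through $\norm{Mw}\le\norm{M}_2$: with only $N=\tilde O(n)\ll mn$ samples the sample covariance $M$ has operator norm of order $mn/N\approx m$, while the triangle inequality $\norm{Mw}\le\tfrac1N\sum_j|w^Tu_j|\,\norm{u_j}$ only gives the weaker rate $\sqrt{mn}$. Both estimates discard the cancellation that is available because $w$ is a single fixed direction.

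To expose this cancellation I would use the orthogonal decomposition $u_j=a_jw+u_j^{\perp}$ with $a_j\DefinedAs w^Tu_j$ and $u_j^{\perp}\perp w$, which yields
\[
Mw\;=\;\Big(\tfrac1N\sum_{j=1}^{N}a_j^2\Big)w\;+\;\tfrac1N\sum_{j=1}^{N}a_j\,u_j^{\perp}\;\AsDefined\;S\,w+T,\qquad \norm{Mw}^2=S^2+\norm{T}^2,
\]
the last identity because $w\perp u_j^{\perp}$. The scalar part is easy: $a_j=w^Tu_j$ is sub-Gaussian with $\EX[a_j^2]=1$, so the $a_j^2$ are i.i.d.\ sub-exponential and a one-sided Bernstein inequality (the upper-tail analogue of Lemma~\ref{lem.Solt2}) gives $S\le 2$ with probability at least $1-2\mre^{-\hat c N}$; hence $\norm{Sw}\le 2$.

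The crux is the transversal part $T=\tfrac1N\sum_j a_j u_j^{\perp}$, a sum of i.i.d.\ mean-zero vectors in $w^{\perp}$. Using $\EX[u_1^{\perp}(u_1^{\perp})^T\mid a_1]=\tfrac{mn-a_1^2}{mn-1}\,I_{w^{\perp}}$ one computes $\EX\norm{T}^2=\tfrac1N\EX[a_1^2\norm{u_1^{\perp}}^2]\le\tfrac{mn}{N}\le\tfrac{m}{C}$, confirming that the correct order is $\sqrt m$; the remaining task is a high-probability bound at this scale. I would obtain it by treating $\norm{T}^2=\tfrac1{N^2}\sum_{j,k}a_ja_k\inner{u_j^{\perp}}{u_k^{\perp}}$ as a quadratic form in the independent vectors $u_1,\dots,u_N$ and applying a Hanson--Wright/Bernstein-type concentration. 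To supply the bounded coefficients these inequalities require for spherical vectors, I would first truncate the sub-Gaussian factors via the event $\{\max_j a_j^2\le mn/4\}$, which by the spherical tail bound $\bbP(a_j^2\ge mn/4)\le 2\mre^{-mn/8}$ and a union bound holds with probability at least $1-2N\mre^{-mn/8}$; this is exactly the origin of the $2N\mre^{-\frac{mn}{8}}$ term. On this event the quadratic form concentrates around its mean $\le mn/N$ with fluctuations of order $\sqrt{m/N}$, giving $\norm{T}^2\le C''m$ with probability at least $1-2\mre^{-\hat c N}$. Combining with the bound on $S$ through $\norm{Mw}^2=S^2+\norm{T}^2$ and a final union bound produces $\norm{Mw}\le C'\sqrt m$ with the stated failure probability $2N\mre^{-\frac{mn}{8}}+2\mre^{-\hat c N}$.

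The main obstacle is precisely this last step. Standard operator-norm and $\epsilon$-net arguments are off by a factor $\sqrt m$ in the undersampled regime $N\ll mn$, because a net-based supremum over all directions genuinely sees the order-$m$ top eigenvalue of $M$; one must instead work with the fixed-direction quantity $\norm{T}$ and prove that it concentrates around its \emph{small} mean $\Theta(\sqrt{mn/N})$. Establishing the quadratic-form concentration for vectors uniform on the sphere, rather than for genuinely i.i.d.\ sub-Gaussian coordinates, is the delicate point, and the truncation of the $a_j$'s is the device that reconciles it with the bounded-coefficient hypotheses of the Bernstein/Hanson--Wright machinery.
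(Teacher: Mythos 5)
Your skeleton is sound and genuinely different from the paper's: the vectorization to $\norm{Mw}$, the orthogonal split $Mw = Sw + T$ with $\norm{Mw}^2 = S^2 + \norm{T}^2$, the Bernstein bound $S \le 2$, and the variance computation $\EX\norm{T}^2 \le mn/N$ are all correct. The gap is the one step you defer, which is where the entire lemma lives: the bound $\norm{T}\le C''\sqrt{m}$ with failure probability $2\mre^{-\hat c N}$ is asserted, and the mechanism you propose for it (truncate $\max_j a_j^2\le mn/4$ to get bounded coefficients, then Bernstein/Hanson--Wright) provably cannot deliver it. Two concrete reasons. First, $\norm{T}^2=\frac{1}{N^2}\sum_{j,k}a_ja_k\inner{u_j^\perp}{u_k^\perp}$ is a degree-four functional (an order-two vector chaos) of the independent $u_j$'s, not a quadratic form $x^TAx$ with a fixed coefficient matrix and independent coordinates, so Lemma~\ref{lem.HansonWright} does not apply to it. Second, and more fundamentally, after your truncation a single summand $\frac1N a_ju_j^\perp$ can still have norm as large as $\frac{mn}{2N}\asymp m$, while the deviation you need, $t\asymp\sqrt m$, is of the same order as the standard deviation $\sigma\asymp\sqrt{mn/N}\asymp\sqrt m$. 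Any Bernstein-type inequality that sees only $(\sigma,b)$ gives exponent $c\min\{t^2/\sigma^2,\,t/b\}$, which here is $\min\{C'^2N/n,\;2C'N/(\sqrt m\,n)\}=O(1/\sqrt m)$ when $N\asymp n$ --- a constant (in fact vanishing in $m$), never the required $\hat cN$; and forcing $t/b\gtrsim N$ would require truncating at $|a_j|\lesssim 1/\sqrt n$, an event of essentially zero probability since $a_j$ has unit variance. An exponent of order $N$ at deviation scale $\asymp\sigma$ can only come from the high-dimensional spherical geometry, e.g.\ by conditioning on the $a_j$'s (given $a_j$, the vector $u_j^\perp$ is uniform on the sphere of radius $\sqrt{mn-a_j^2}$ in $w^\perp$), using the event $\{S\le 2\}$ --- not the truncation --- to control $\sum_j a_j^2$, and applying Lipschitz concentration on the product of spheres to the conditional law of $\norm{T}$; that route gives failure probability $\mre^{-cmN}$ and, incidentally, shows that neither the truncation nor your $2N\mre^{-mn/8}$ term is needed at all in this decomposition, so your attribution of that term to the truncation is also off.

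For comparison, the paper proves the lemma as a near-immediate corollary of Lemma~\ref{lem.Moh1}: with $V\DefinedAs[\vect(U_1)\cdots\vect(U_N)]$ and $w=\vect(W)$, it writes the sum as $V(V^Tw)$ and splits $\frac1N\norm{V(V^Tw)}\le\frac1N\norm{V}_2\norm{V^Tw}$; it bounds $\norm{V}_2\le 2C(\sqrt N+\sqrt{mn}+t)$ by comparison with a column-normalized Gaussian matrix (this normalization is the true origin of the $N\mre^{-mn/8}$ term) and bounds $\norm{V^Tw}\le C'\sqrt N\norm{w}$ by exactly the Bernstein estimate you use for $S$ (the proof of Lemma~\ref{lem.Solt2}). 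Taking $t=\sqrt{mn}$ and $N\ge Cn$ yields $\frac1N\norm{V}_2\norm{V^Tw}\lesssim(1+\sqrt{mn/N})\norm{w}\lesssim\sqrt m\,\norm{w}$. This also shows your blanket dismissal of operator-norm arguments is too sweeping: what fails is only the two-sided bound through $\norm{M}_2\approx m$; spending the order-$\sqrt{mn}$ operator norm of $V$ once, against the fixed-direction vector $V^Tw$ whose length concentrates at scale $\sqrt N\norm{w}$, recovers precisely the $\sqrt m$ rate. Your decomposition, once completed by conditional spherical concentration, would be a valid and arguably more geometric alternative, but as written the crucial estimate is missing and the tools you name would not supply it.
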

\begin{proof}
	See Appendix~\ref{app.probabilistic}.
\end{proof}

In Proposition~\ref{prop.M2}, we use Lemmas~\ref{lem.Moh3} and~\ref{lem.Moh2} to address~$\sfM_2$.
\begin{myprop}\label{prop.M2}
	Let Assumption~\ref{ass.initialCond} hold. Then, for any $K\in\cSK$, scalar $\beta>0$, and	 
	$ 
	N \ge C_4n,
	$
	the event $\sfM_2$ in~\eqref{eq.megaEvents} with 
	$
	\mu_2
	\DefinedAs
	C_5  \big( \beta \kappa^2 \, \dfrac{\norm{(\cA_K^*)^{-1}}_2+\norm{(\cA_K^*)^{-1}}_S}{\lambda_{\min}(X)} \sqrt{mn} \log n + \sqrt{m}\big)^2
	$
	satisfies
	\begin{align*}
	\bbP(\sfM_2)
	\;\ge\;
	1-C_6 (n^{-\beta} + N \mre^{-\frac{n}{8}} + \mre ^{-C_7 N}).
	\end{align*}
\end{myprop}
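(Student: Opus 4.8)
The plan is to mirror the proof of Proposition~\ref{prop.M1}, now combining the high-probability upper bounds of Lemmas~\ref{lem.Moh3} and~\ref{lem.Moh2} through the decomposition $\widehat{\nabla} f(K) = \widehat{\nabla}_1 + \widehat{\nabla}_2$ from~\eqref{eq.splitting} together with the triangle inequality $\norm{\widehat{\nabla} f(K)}_F \le \norm{\widehat{\nabla}_1}_F + \norm{\widehat{\nabla}_2}_F$. The two summands are controlled separately: $\widehat{\nabla}_1$ in~\eqref{eq.nabla_1} carries the dependence on the random initial conditions $x_i$ (through $X_{x_i} - X$) and is bounded by Lemma~\ref{lem.Moh3}, whereas $\widehat{\nabla}_2$ in~\eqref{eq.nabla_2} depends only on the directions $U_i$ and is bounded by Lemma~\ref{lem.Moh2}.

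First I would instantiate Lemma~\ref{lem.Moh3} with the linear operator $\cM = -\cA_K^{-1}$, so that $\cM^* = -(\cA_K^*)^{-1}$, $X_{x_i} = \cM(x_i x_i^T)$, and $X = \cM(I)$, and with $E \DefinedAs 2(RK - B^T P(K))$ as in Section~\ref{subsec.varianceMu1}. This yields, on an event of probability at least $1 - c_2(n^{-\beta} + N\mre^{-n/8})$,
\[
\norm{\widehat{\nabla}_1}_F \;\le\; c_1 \beta \kappa^2 \big( \norm{(\cA_K^*)^{-1}}_2 + \norm{(\cA_K^*)^{-1}}_S \big)\, \norm{E}_F \,\sqrt{mn}\,\log n.
\]
Next I would apply Lemma~\ref{lem.Moh2} with $W = \nabla f(K)$, which is legitimate because $\widehat{\nabla}_2 = \tfrac{1}{N}\sum_i \inner{\nabla f(K)}{U_i} U_i$, obtaining $\norm{\widehat{\nabla}_2}_F \le C'\sqrt{m}\,\norm{\nabla f(K)}_F$ on an event of probability at least $1 - 2N\mre^{-mn/8} - 2\mre^{-\hat{c}N}$, provided $N \ge Cn$.

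The one extra ingredient is to convert $\norm{E}_F$ into $\norm{\nabla f(K)}_F$. Since $\nabla f(K) = E X$ with $X = X(K) \succ 0$ for $K \in \cSK$, we have $E = \nabla f(K)\, X^{-1}$ and hence $\norm{E}_F \le \norm{\nabla f(K)}_F / \lambda_{\min}(X)$; substituting this into the bound on $\norm{\widehat{\nabla}_1}_F$ produces precisely the $\lambda_{\min}(X)$ appearing in the denominator of $\mu_2$. Adding the two bounds gives
\[
\norm{\widehat{\nabla} f(K)}_F \;\le\; \Big( c_1 \beta \kappa^2\, \tfrac{\norm{(\cA_K^*)^{-1}}_2 + \norm{(\cA_K^*)^{-1}}_S}{\lambda_{\min}(X)} \sqrt{mn}\,\log n \,+\, C'\sqrt{m} \Big)\norm{\nabla f(K)}_F,
\]
and squaring, while absorbing $c_1$ and $C'$ into a single constant $C_5$, yields exactly the event $\sfM_2$ with $\mu_2$ as stated. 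The probability bound then follows from a union bound over the two failure events, using $N\mre^{-mn/8} \le N\mre^{-n/8}$ (since $m \ge 1$) to absorb terms into the claimed form $1 - C_6(n^{-\beta} + N\mre^{-n/8} + \mre^{-C_7 N})$, and taking $N \ge C_4 n$ with $C_4 \DefinedAs \max\{c_0, C\}$ to meet the sample-size hypotheses of both lemmas.

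I expect this proposition itself to be essentially routine once Lemmas~\ref{lem.Moh3} and~\ref{lem.Moh2} are available, so the only genuine care is bookkeeping---consolidating absolute constants into $C_5$, $C_6$, $C_7$ and verifying the two probability tails combine as stated. The conceptually nontrivial work has already been offloaded to Lemma~\ref{lem.Moh3}, which must control the matrix-valued sum $\tfrac{1}{N}\sum_i \inner{E(X_i - X)}{U_i} U_i$ in Frobenius norm---a more delicate concentration statement than the scalar projection bound used for $\sfM_1$---so if there is a hard part it resides there rather than in the present proposition.
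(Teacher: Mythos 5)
Your proposal is correct and follows essentially the same route as the paper: decompose $\widehat{\nabla} f(K) = \widehat{\nabla}_1 + \widehat{\nabla}_2$, bound the two pieces via Lemmas~\ref{lem.Moh3} and~\ref{lem.Moh2} respectively, pass from $\norm{E}_F$ to $\norm{\nabla f(K)}_F/\lambda_{\min}(X)$ using $\nabla f(K) = EX$, and finish with the triangle inequality and a union bound. The only difference is that you spell out explicitly the instantiation $\cM = -\cA_K^{-1}$ and the conversion $\norm{E}_F \le \norm{\nabla f(K)}_F/\lambda_{\min}(X)$, which the paper leaves implicit inside its chained inequality.
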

\begin{proof}
	We use Lemma~\ref{lem.Moh3} to show that, with probability at least $1-c_2 (n^{-\beta} \,+\, N \mre^{-\frac{n}{8}})$, $\widehat{\nabla}_1$ satisfies
		\begin{multline*} \label{eq.Proofnablatilde1c}
		\norm{\widehat{\nabla}_1}_F
		\;\le\;
		c_1\beta \kappa^2 (\norm{( \cA_K^*)^{-1}}_2+\norm{(\cA_K^*)^{-1}}_S)\norm{E}_F\,\sqrt{m n}\,\log n
		\; \le
		\\[0.cm]
		c_1\beta \kappa^2 \, \dfrac{\norm{( \cA_K^* )^{-1}}_2+\norm{ ( \cA_K^* )^{-1}}_S}{\lambda_{\min}(X)}
		\, \norm{\nabla f(K)}_F \, \sqrt{m n} \,\log n.
		\end{multline*}
		Furthermore, we can use Lemma~\ref{lem.Moh2} to show that, with probability not smaller than $1-2N\mre^{-\frac{mn}{8}}- 2\mre^{-\hat{c}N}$, $\widehat{\nabla}_2$ satisfies
		\[
		\label{eq.Proofnablatilde1d}
		\norm{\widehat{\nabla}_2}_F
		\;\le\;
		C'\sqrt{m}\norm{\nabla f(K)}_F.
		\]
	Now, since $\widehat{\nabla} f(K) = \widehat{\nabla}_1 + \widehat{\nabla}_2$, we can use a union bound to combine the last two inequalities. This together with a triangle inequality completes the proof.
\end{proof}

	\vspace*{-2ex}
\section{Model-free control design}
\label{sec.rndSearch}

In this section, we prove a more formal version of Theorem~\ref{thm.rndSearchInformal}. 
\begin{mythm}\label{thm.randomSearchFormal}
	Consider the random search method~\eqref{eq.RS} that uses the  gradient estimates of Algorithm~\ref{alg.GE} for finding the optimal solution $K^\star$ of LQR problem~\eqref{eq.optProbK}. Let the initial condition $x_0$ obey Assumption~\ref{ass.initialCond} and let the simulation time $\tau$, the smoothing constant $r$, and the number of samples $N$  satisfy 
	\begin{align}
	\label{eq.sampleComplexity}
	\tau 
	\, \ge \,
	\theta'(a) \log \dfrac{1}{r\epsilon},
	\quad r<\min\{r(a),\theta''(a)\sqrt{\epsilon}\},
	\quad
	N
	\,\ge\, 
	c_1 (1  \,+\,  \beta^4 \kappa^4 \, \theta (a) \log^6 \! {n})\, n
	\end{align}
	for some  $\beta>0$ and a desired accuracy $\epsilon>0$.
	Then,  for any initial condition $K^0\in\cSK(a)$,~\eqref{eq.RS} with the constant stepsize $\alpha \le 1/(32\mu_2(a)L_f)$ achieves
	$
	f(K^k)-f(K^\star)
	\le
	\epsilon
	$
	with probability not smaller than $1-kp-2kN\mre^{-n}$ in at most
	\[
	k
	\;\le\;
	\left.
	\left(\log \dfrac{ f(K^0) \, - \, f(K^\star)}{\epsilon} \right)
	\middle /
	\left(
	\log \dfrac{1}{1 \, - \, \mu_f(a) \alpha/8} 
	\right)
	\right.
	\]
	iterations. Here,  $p\DefinedAs c_2 (n^{-\beta} + N^{-\beta}+ N \mre^{-\frac{n}{8}} + \mre ^{-c_3 N})$, $
	\mu_2\DefinedAs
	c_4\left( \sqrt{m} + \beta \kappa^2 \theta(a) \sqrt{mn}\log n \right)^2$,
	$c_1,\ldots,c_4$ are positive absolute constants, $\mu_f$ and $L_f$ are the PL and smoothness parameters of the function $f$ over the sublevel set $\cSK(a)$,  $\theta$, $\theta'$, $\theta''$ are positive functions that depend only on the parameters of the LQR problem, and $r(a)$ is given by~Lemma~\ref{lem.cSK2a}.
\end{mythm}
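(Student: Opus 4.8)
The plan is to reduce the analysis of the biased iteration~\eqref{eq.RS} to the deterministic \emph{Approximate GD} guarantee of Proposition~\ref{prop.linearConvG}. That proposition shows that any search direction $G$ satisfying the correlation conditions~\eqref{eq.gAs} relative to $\nabla f(K)$ produces a geometric decrease of the objective error while keeping the iterate in $\cSK(a)$. My task is therefore to certify that, with high probability and at every iteration for which $f(K^k)-f(K^\star)>\epsilon$, the actual estimate $\widebar{\nabla}f(K^k)$ returned by Algorithm~\ref{alg.GE} satisfies~\eqref{eq.gAs} with explicit constants $\mu_1$ and $\mu_2$, so that the stepsize $\alpha\le 1/(32\mu_2 L_f)$ falls within the admissible range of Proposition~\ref{prop.linearConvG} and yields the contraction factor $\gamma=1-\mu_f\alpha/8$.

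First I would bound the discrepancy between the biased estimate and the unbiased estimate $\widehat{\nabla}f(K)$. The constraint $r<r(a)$ together with Lemma~\ref{lem.cSK2a} guarantees $K\pm rU_i\in\cSK(2a)$, which is the prerequisite for the bias estimates. Applying the triangle inequality~\eqref{eq.megaTriangle} with Proposition~\ref{prop.finiteTime} (finite-simulation bias, decaying like $\mre^{-\kappa_2\tau}/r$) and Proposition~\ref{prop.thirdOrder} (smoothing bias, growing like $r^2$), and replacing the random factor $\max_i\norm{x_i}^2$ by its sub-Gaussian concentration bound $\max_i\norm{x_i}^2\lesssim \kappa^2 n$ (valid on a set of probability at least $1-2N\mre^{-n}$ per iteration), turns the discrepancy into a deterministic function of $(\tau,r,n)$. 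The choice $r<\theta''(a)\sqrt{\epsilon}$ makes the smoothing term $O(\epsilon)$, and the choice $\tau\ge\theta'(a)\log(1/(r\epsilon))$ makes the finite-time term $O(\epsilon)$; together they give $\norm{\widebar{\nabla}f(K)-\widehat{\nabla}f(K)}_F\le \tfrac18\sqrt{2\mu_f\epsilon}$.

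Next I would invoke the PL inequality~\eqref{eq.GradDom}: as long as $f(K^k)-f(K^\star)>\epsilon$ we have $\norm{\nabla f(K^k)}_F\ge\sqrt{2\mu_f\epsilon}$, so the discrepancy is at most $\tfrac18\norm{\nabla f(K^k)}_F$. Conditioned on the high-probability events $\sfM_1$ and $\sfM_2$ of Propositions~\ref{prop.M1} and~\ref{prop.M2} (which hold for $N=\tilde{O}(n)$), the unbiased estimate obeys $\inner{\widehat{\nabla}f(K)}{\nabla f(K)}\ge\tfrac14\norm{\nabla f(K)}_F^2$ and $\norm{\widehat{\nabla}f(K)}_F^2\le\mu_2\norm{\nabla f(K)}_F^2$. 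A Cauchy--Schwarz and triangle-inequality argument then transfers these to $\widebar{\nabla}f(K)$: the inner-product constant degrades from $\tfrac14$ to $\mu_1=\tfrac18$, while the norm constant inflates additively by $\tfrac18\norm{\nabla f}_F$, i.e.\ to $(\sqrt{\mu_2}+\tfrac18)^2$, which (up to absolute constants) is the $\mu_2$ reported in the statement. This verifies hypotheses~\eqref{eq.gAs}, so Proposition~\ref{prop.linearConvG} yields both $K^{k+1}\in\cSK(a)$ and $f(K^{k+1})-f(K^\star)\le(1-\mu_f\alpha/8)\,(f(K^k)-f(K^\star))$.

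Finally I would close the loop by induction on $k$, drawing a fresh independent batch $\{U_i,x_i\}$ at each iteration. The invariance $K^{k+1}\in\cSK(a)$ keeps the smoothness, PL, and operator-norm constants uniform, so Propositions~\ref{prop.M1}--\ref{prop.M2} apply conditionally on the filtration with per-iteration failure probability at most $p+2N\mre^{-n}$; a union bound over the at most $k$ iterations gives confidence $1-kp-2kN\mre^{-n}$. Geometric contraction by $\gamma$ until the error first drops below $\epsilon$ yields the count $k\le \log\!\big((f(K^0)-f(K^\star))/\epsilon\big)/\log\!\big(1/(1-\mu_f\alpha/8)\big)$. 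The main obstacle is the tight bookkeeping of the third paragraph: one must absorb the two deterministic bias sources \emph{and} the statistical fluctuation of $\widehat{\nabla}f$ simultaneously into the PL-guaranteed gradient floor $\sqrt{2\mu_f\epsilon}$, so that the correlation constants $\mu_1,\mu_2$ (and hence $\alpha$ and $\gamma$) come out \emph{independent of $\epsilon$} — this $\epsilon$-independence of the contraction factor is exactly what converts the $\log(1/(r\epsilon))$ simulation time and the geometric iteration count into the advertised $\log(1/\epsilon)$ scaling.
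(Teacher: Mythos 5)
Your proposal is correct and follows essentially the same route as the paper's own proof: control the bias $\|\widebar{\nabla}f-\widehat{\nabla}f\|_F$ via Propositions~\ref{prop.finiteTime} and~\ref{prop.thirdOrder} plus concentration of $\max_i\|x_i\|^2$, use the PL gradient floor $\sqrt{2\mu_f\epsilon}$ to absorb that bias, transfer the events $\sfM_1$, $\sfM_2$ of Propositions~\ref{prop.M1} and~\ref{prop.M2} from $\widehat{\nabla}f$ to $\widebar{\nabla}f$ with degraded constants, invoke Proposition~\ref{prop.linearConvG}, and close by induction with a union bound. The only cosmetic difference is that you absorb the bias with the single threshold $\tfrac18\|\nabla f\|_F$ (giving $(\sqrt{\mu_2}+\tfrac18)^2$ in place of the paper's $4\mu_2$), which is equivalent up to absolute constants.
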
 

\begin{proof}
	The proof combines Propositions~\ref{prop.finiteTime},~\ref{prop.thirdOrder},~\ref{prop.linearConvG},~\ref{prop.M1},~and~\ref{prop.M2}. We first show that  for any $r\le r(a)$ and $\tau>0$,
	\begin{align}\label{eq.mainResulttemp1}
	\norm{\widebar{\nabla}f(K) \, - \, \widehat{\nabla}f(K)}_F
	\;\le\;
	\sigma
	\end{align}
	with probability not smaller than $1- 2N\mre^{-n}$, where
	\begin{align*}
	\sigma
	\DefinedAs
	c_5(\kappa^2+1) 
	\!
	\left(\dfrac{n\sqrt{m} }{r}\kappa_1(2a)\mre^{-\kappa_2(2a)\tau}
	+
	\dfrac{r^2m^2n^{\frac{5}{2}}}{2}\ell(2a)\right).
	\end{align*}
	Here, $r(a)$, $\kappa_i(a)$, and $\ell(a)$ are positive functions that are given by~Lemma~\ref{lem.cSK2a}, Eq.~\eqref{eq.kappa12}, and Eq.~\eqref{eq.ell}, respectively.
	
	Under Assumption~\ref{ass.initialCond}, the  vector $v\sim\cD$ satisfies~\cite[Eq.~(3.3)]{ver18},
	$
	\bbP\left\{
	\norm{v}
	\le
	c_5(\kappa^2+1)\sqrt{n}	
	\right\}
	\ge
	1- 2\mre^{-n}.
	$
	Thus, for the random initial conditions $x_1,\ldots,x_N\sim\cD$, we can apply the union bound (Boole's inequality) to obtain
	\begin{align}\label{eq.boundonx_0}
	\bbP\left\{
	\max_i \norm{x_i}
	\;\le\;
	c_5(\kappa^2+1)\sqrt{n}	
	\right\}
	\;\ge\;
	1\, - \, 2N\mre^{-n}.
	\end{align}
	Now, we combine Propositions~\ref{prop.finiteTime} and~\ref{prop.thirdOrder} to write 
	\vspace*{-1ex}
	\begin{align*}
	\norm{\widebar{\nabla}f(K) - \widehat{\nabla}f(K)}_F
	\;\le\;
	\left(\dfrac{\sqrt{m n} }{r} \, \kappa_1(2a)\mre^{-\kappa_2(2a)\tau}
	\,+\,
	\dfrac{( r m n)^2}{2} \, \ell(2a)\right)\max_i\norm{x_i}^2
	\, \le \,
	\sigma.
	\end{align*}
	The first inequality is obtained by  combining Propositions~\ref{prop.finiteTime} and~\ref{prop.thirdOrder} through the use of the triangle inequality, and the second inequality follows from~\eqref{eq.boundonx_0}. This completes the proof of~\eqref{eq.mainResulttemp1}.


	Let  $\theta(a)$ be a uniform upper bound on
	\begin{align*}
	\dfrac{ \norm{ ( \cA_K^* )^{-1}}_2 + \norm{( \cA_K^* )^{-1}}_S }{ \lambda_{\min}(X)}
	\;\le\;
	\theta(a)
	\end{align*}
	for all $K\in\cSK(a)$; see Appendix~\ref{app.invLyapBound} for a discussion on $\theta(a)$. Since, the number of samples satisfies~\eqref{eq.sampleComplexity}, for any given $K\in\cSK(a)$, we can combine Propositions~\ref{prop.M1} and~\ref{prop.M2} with a union bound to show that
	\vspace*{-2ex}
	\begin{subequations}
		\label{eq.mainResulttemp0}
		\begin{align}\label{eq.mainResulttemp01}
		\inner{\widehat{\nabla}f(K)}{\nabla f(K)}
		&\;\ge\;
		\mu_1
		\norm{\nabla f(K)}_F^2 
		\\[0.cm]\label{eq.mainResulttemp02}
		\norm{\widehat{\nabla}f(K)}_F^2
		&\;\le\;
		\mu_2 
		\norm{\nabla f(K)}_F^2
		\end{align}
	\end{subequations}
	holds with probability not smaller than $1-p$, where
	$\mu_1 = 1/4$, and $\mu_2$ and $p$ are determined in the statement of the theorem.

	Without loss of generality, let us assume that the initial error satisfies $f(K^0)-f(K^\star)>\epsilon$. We next show that
	\begin{subequations}\label{eq.mainResulttemp2}
		\begin{align}
		\inner{\widebar{\nabla}f(K^0)}{\nabla f(K^0)}
		&\;\ge\;
		\dfrac{\mu_1}{2}
		\norm{\nabla f(K^0)}_F^2 
		\\[0.1cm]
		\norm{\widebar{\nabla}f(K^0)}_F^2
		&\;\le\;
		4\mu_2 
		\norm{\nabla f(K^0)}_F^2
		\end{align}
	\end{subequations}
	holds with probability not smaller than $1-p-2N\mre^{-n}$.
	
	Since the function $f$ is gradient dominant over the sublevel set $\cSK(a)$  with parameter $\mu_f$, combining $f(K^0)-f(K^\star)>\epsilon$ and~\eqref{eq.GradDom} yields $
		\norm{\nabla f(K^0)}_F
		\ge
		\sqrt{2\mu_f\epsilon}.
		$ Also, let the positive scalars $\theta'(a)$ and $\theta''(a)$ be such that for any pair of $\tau$ and $r$ satisfying
		$
		\tau 
		\, \ge \,
		\theta'(a) \log (1/(r\epsilon))$ and $r<\min\{r(a),\theta''(a)\sqrt{\epsilon}\}$,  the upper bound $\sigma$ in~\eqref{eq.mainResulttemp1} becomes smaller than $
		\sigma \le \sqrt{2\mu_f\epsilon} \,\min \, \{ \mu_1/2, \sqrt{\mu_2} \}.$
		The choice of $\theta'$ and $\theta''$ with the above property is straightforward using the definition of $\sigma$. Combining $\norm{\nabla f(K^0)}_F
		\ge
		\sqrt{2\mu_f\epsilon}$ and $\sigma \le \sqrt{2\mu_f\epsilon} \,\min \, \{ \mu_1/2, \sqrt{\mu_2} \}$ yields
		\begin{align}\label{eq.Knotbound_sub}
		\sigma 
		\;\le\;
		\norm{\nabla f(K^0)}_F \,\min \, \{ \mu_1/2, \sqrt{\mu_2} \}.	
		\end{align}
	Using the union bound, we have
	\begin{align*}
	\inner{\widebar{\nabla}f(K^0)}{\nabla f(K^0)}
	&\;=\;\inner{\widehat{\nabla}f(K^0)}{\nabla f(K^0)}
	\;+\;
	\inner{\widebar{\nabla}f(K^0) - \widehat{\nabla}f(K^0)}{\nabla f(K^0)}
	\\[0.cm]
	&
	\;
	\overset{(a)}{\ge}\;
	\mu_1
	\norm{\nabla f(K^0)}_F^2
	\;-\;
	\norm{\widebar{\nabla}f(K^0) - \widehat{\nabla}f(K^0)}_F\norm{\nabla f(K^0)}_F
	\\[0.cm]
	&
	\;
	\overset{(b)}{\ge}\;
	\mu_1 \norm{\nabla f(K^0)}_F^2 \,-\, \sigma
	\norm{\nabla f(K^0)}_F
	\;\;
	\overset{(c)}{\ge}\;\;
	\dfrac{\mu_1}{2}
	\norm{\nabla f(K^0)}_F^2
	\end{align*}
	with probability not smaller than $1-p-2N\mre^{-n}$. Here, $(a)$ follows from combining~\eqref{eq.mainResulttemp01} and the Cauchy-Schwartz inequality, $(b)$ follows from~\eqref{eq.mainResulttemp1}, and $(c)$ follows from~\eqref{eq.Knotbound_sub}.  Moreover,
	\begin{multline*}
	\norm{\widebar{\nabla}f(K^0)}_F
	\;\overset{(a)}{\le}\;
	\norm{\widehat{\nabla}f(K^0)}_F + \norm{\widebar{\nabla}f(K^0) - \widehat{\nabla}f(K^0)}_F
	\;\overset{(b)}{\le}\;
	\sqrt{\mu_2}
	\norm{\nabla f(K^0)}_F
	\,+\,\sigma
	\;\overset{(c)}{\le}\;
	2\sqrt{\mu_2}
	\norm{\nabla f(K^0)}_F
	\end{multline*}
	where $(a)$ follows from the triangle inequality, $(b)$  from~\eqref{eq.mainResulttemp1}, and $(c)$  from~\eqref{eq.Knotbound_sub}.  This completes the proof of~\eqref{eq.mainResulttemp2}.
	
	Inequality~\eqref{eq.mainResulttemp2} allows us to apply Proposition~\ref{prop.linearConvG} and obtain with probability not smaller than $1-p-2N\mre^{-n}$ that for the stepsize
	$\alpha \le \mu_1/(8\mu_2L_f)$, we have 
	$
	K^1 \in \cSK(a)
	$ 
	and also
	$
	f(K^1)-f(K^\star)
	\le
	\gamma \left(f(K^0)-f(K^\star)\right),
	$
	with 
	$
		\gamma
		\;=\;
		1-\mu_f\mu_1\alpha/2
		$, where  $L_f$ is the smoothness parameter of the function $f$ over $\cSK(a)$. Finally, using the union bound, we can repeat this procedure via induction  to obtain that for some
	\[
	k\;\le\;
	\left. \left(\log \dfrac{f(K^0)-f(K^\star)}{\epsilon}\right)
	\middle/
	 \left( \log \dfrac{1}{\gamma} \right)
	\right.
	\]
	the error satisfies
	\begin{align*}
	f(K^k)-f(K^\star)
	\;\le\;
	\gamma^k \left(f(K^0)-f(K^\star)\right)
	\;\le\;
	\epsilon
	\end{align*} 
	with probability not smaller than $1-kp-2kN\mre^{-n}$.  
\end{proof}
\begin{myrem}
For the failure probability in Theorem~\ref{thm.randomSearchFormal} to be negligible,  the problem dimension $n$ needs to be large. Moreover, to account for the conflicting term $N\mre^{-n/8}$ in the failure probability, we can require a crude exponential bound $N\le \mre^{n/16}$ on the sample size. We also note that although Theorem~\ref{thm.randomSearchFormal} only guarantees convergence in the objective value, similar to the proof of Theorem~\ref{thm.main-cts}, we can use Lemma~\ref{lem.errorRelation} that relates the error in optimization variable, $K$, and the error in the objective function, $f(K)$, to obtain convergence guarantees in the optimization variable as well.    
\end{myrem}
\begin{myrem}
	Theorem~\ref{thm.randomSearchFormal} requires the lower bound on the simulation time $\tau$ in~\eqref{eq.sampleComplexity} to ensure that, for any desired accuracy $\epsilon$, the smoothing constant $r$ satisfies $r \ge  (1/\epsilon) \, \mre^{-\tau/\theta'(a)}$. As we demonstrate in the proof, this requirement accounts for the bias that arises from a finite value of $\tau$. Since this form of bias can be readily controlled by increasing $\tau$, the above lower bound on $r$ does not contradict the upper bound $r=O(\sqrt{\epsilon})$ required by Theorem~\ref{thm.randomSearchFormal}. Finally, we note that letting $r \rightarrow 0$ can cause large bias in the presence of other sources of inaccuracy in the function approximation process.
\end{myrem}

	\vspace*{-2ex}
\section{Computational experiments}
\label{sec.examples}

We consider a mass-spring-damper system with $s$ masses, where we set all mass, spring, and damping constants to unity. In state-space representation~\eqref{eq.linsys1}, the state  $x = [\,p^T\,v^T\,]^T$ contains the position and velocity vectors and the dynamic and input matrices are given by
\[
A
\;=\;
\tbt{\phantom{-}0}{\phantom{-}I}{-T}{-T},
~
B 
\;=\;
\tbo{0}{I}
\]
where $0$ and $I$ are ${s\times s}$ zero and identity matrices, and $T$ is a Toeplitz matrix with $2$ on the main diagonal and $-1$ on the first super and sub-diagonals.

	\vspace*{-2ex}
\subsection{Known model}

To  compare the performance of gradient descent methods~\eqref{eq.GDK} and~\eqref{eq.GDY} on $K$ and $Y$, we solve the LQR problem with $Q=I+100\,\mre_1 \mre_1^T$, $R=I+1000\,\mre_4 \mre_4^T$, and $\Omega = I$ for $s\in\{10,\,20\}$ masses (i.e., $n = 2s$ state variables), where $\mre_i$ is the $i$th unit vector in the standard basis of $\bbR^n$. 

Figure~\ref{fig.cc1} illustrates the convergence curves for both algorithms with a stepsize selected using a backtracking procedure that guarantees stability of the closed-loop system. Both algorithms were initialized with $Y^0 = K^0 = 0$. Even though Fig.~\ref{fig.cc1} suggests that gradient decent/flow on $\cS_K$  converges faster than that on $\cS_Y$, this observation does not hold in general.

\begin{figure}[h]
	\centering
	\begin{tabular}{cccc}
		\hspace{-.6cm}
		\subfigure[]{\label{fig.tridraglg}}
		&&\hspace{-.6cm}
		\subfigure[]{\label{fig.triengylg}}
		&
		\\[-.5cm]\hspace{-.6cm}
		\begin{tabular}{c}
			\vspace{.25cm}
			\normalsize{\rotatebox{90}{ $
					\dfrac{f(K^k)-f(K^\star)}{f(K^0)-f(K^\star)}
					$}}
		\end{tabular}
		&\hspace{-.7cm}
		\begin{tabular}{c}
			\includegraphics[width=0.35\textwidth]{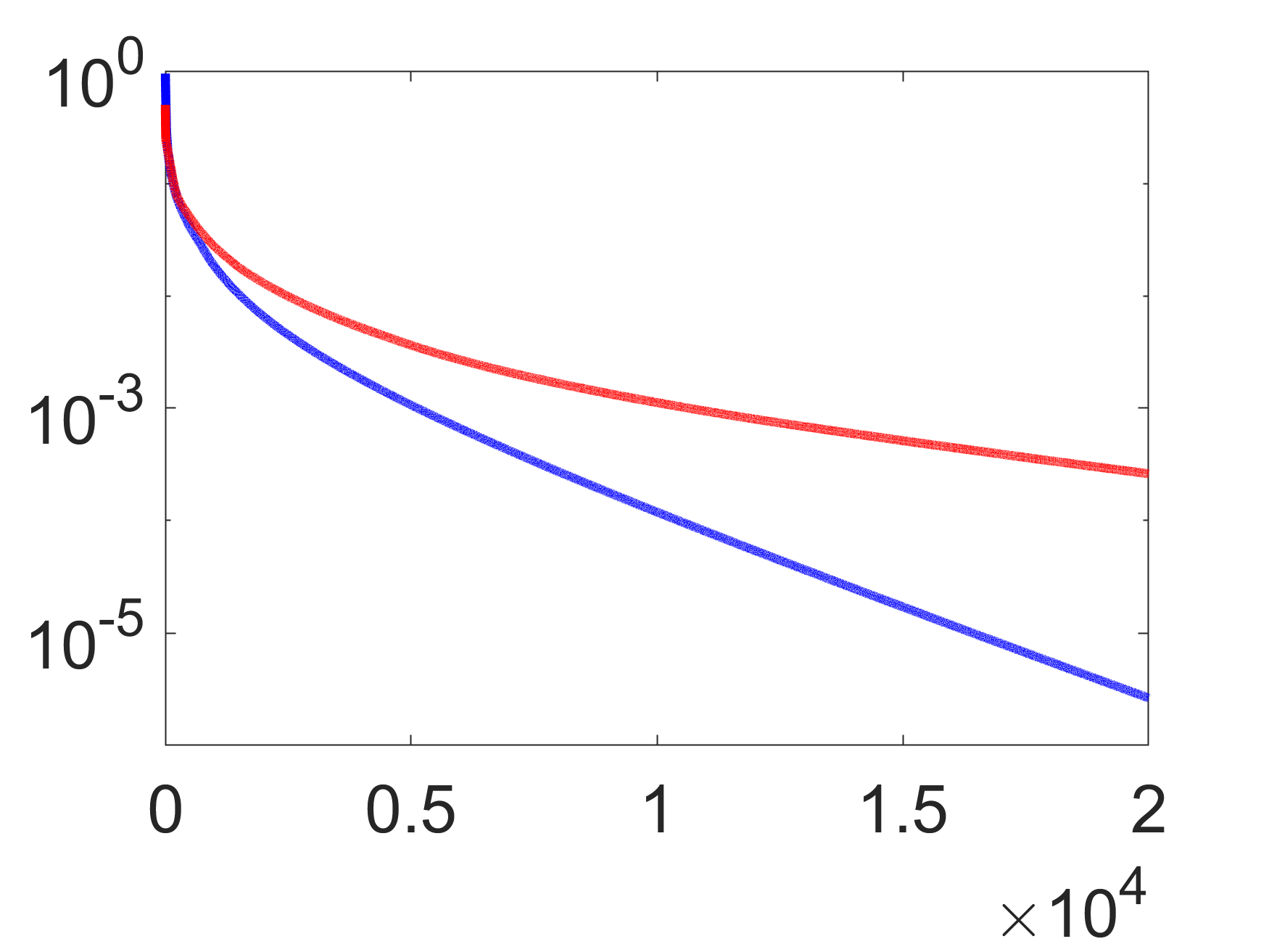}
			\\ { $k$}
		\end{tabular}
		\hspace{-.45cm}
		&\hspace{-.0cm}
		&\hspace{-.6cm}
		\begin{tabular}{c}
			\includegraphics[width=0.35\textwidth]{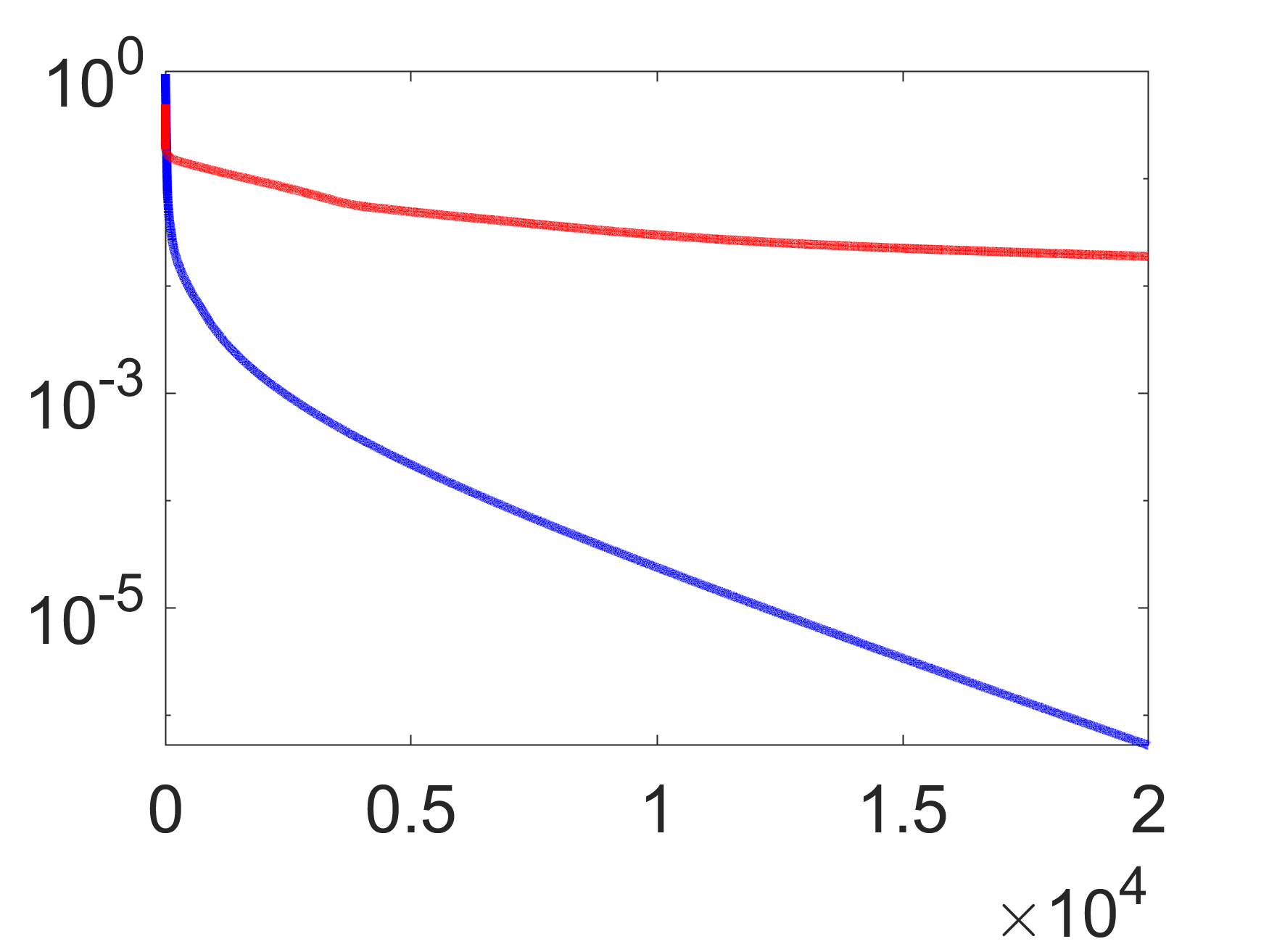}
			\\ {$k$}
		\end{tabular}
	\end{tabular}
	\vspace{-0.3cm}
	\caption{Convergence curves for gradient descent (blue) over the set $\cSK$, and gradient descent (red) over the set $\cSY$ with (a) $s = 10$ and (b) $s=20$ masses.}
	\label{fig.cc1}
\end{figure}

	\vspace*{-2ex}
\subsection{Unknown model}

To  illustrate our results on the accuracy of the  gradient estimation in Algorithm~\ref{alg.GE}  and the efficiency of our random search method, we consider the LQR problem with  $Q$ and $R$ equal to identity for $s=10$ masses (i.e., $n = 20$ state variables).  We also let the initial conditions $x_i$ in Algorithm~\ref{alg.GE} be standard normal and use  $N=n=2 s$ samples.

Figure~\ref{fig.tau} illustrates the dependence of the relative error $\norm{\widehat{\nabla}f(K) - \widebar{\nabla}f(K)}_F/\norm{\widehat{\nabla}f(K)}_F$ on the simulation time $\tau$ for $K=0$ and two values of the smoothing parameter $r=10^{-4}$ (blue) and $r=10^{-5}$ (red). We observe an exponential decrease in error for small values of $\tau$. In addition, the error does not pass a saturation level which is determined by $r$. We also see that, as $r$ decreases, this saturation level becomes smaller. These observations are in harmony with our theoretical developments; in particular, combining Propositions~\ref{prop.finiteTime} and~\ref{prop.thirdOrder} through the use of the triangle inequality yields
\begin{align*}
\norm{\widehat{\nabla}f(K) \, - \, \widebar{\nabla}f(K)}_F
\;\le\;
\left(\dfrac{\sqrt{m n}}{r} \, \kappa_1(2a) \, \mre^{-\kappa_2(2a)\tau}
\,+\,
\dfrac{r^2m^2n^2}{2} \, \ell(2a)\right) \max_i\norm{x_i}^2.
\end{align*}
This upper bound clearly captures the exponential dependence of the bias on the simulation time $\tau$ as well as the saturation level that depends quadratically on the smoothing parameter $r$.

In Fig.~\ref{fig.tauTrue}, we demonstrate the dependence of the total relative error $\norm{\nabla f(K) - \widebar{\nabla}f(K)}_F/\norm{\nabla f(K)}_F$ on the simulation time $\tau$ for two values of the smoothing parameter $r=10^{-4}$ (blue) and $r=10^{-5}$ (red), resulting from the use of $N=n$ samples. We observe that the distance between the approximate gradient and the true gradient is rather large. This is exactly why prior analysis of sample complexity and simulation time is subpar to our results. In contrast to the existing results which rely on the use of the estimation error shown in Fig.~\ref{fig.tauTrue}, our analysis shows that the simulated gradient $\widebar{\nabla}f(K)$ is close to the gradient estimate $\widehat{\nabla}f(K)$. While $\widehat{\nabla}f(K)$ is not close to the true gradient $\nabla f(K)$, it is highly correlated with it. This is sufficient for establishing convergence guarantees and it allows us to significantly improve upon existing results~\cite{fazgekakmes18,malpanbhakhabarwai19} in terms of sample complexity and simulation time reducing both to $O (\log \, (1/\epsilon))$.

Finally, Fig.~\ref{fig.RS} demonstrates linear convergence of the random search method~\eqref{eq.RS} with stepsize $\alpha=10^{-4}$, $r=10^{-5}$, and $\tau=200$ in Algorithm~\ref{alg.GE}, as established in Theorem~\ref{thm.randomSearchFormal}. In this experiment, we implemented Algorithm~\ref{alg.GE} using the $\mathrm{ode45}$ and $\mathrm{trapz}$ subroutines in MATLAB to numerically integrate the state/input penalties with the corresponding weight matrices $Q$ and $R$. However,  our theoretical results only account for an approximation error that arises from a finite simulation horizon.  Clearly,  employing empirical ODE solvers and numerical integration may introduce  additional errors in our gradient approximation that require further scrutiny.

\begin{figure*}[t]
	\centering
	\begin{tabular}{c@{\hspace{-0.2 cm}}c@{\hspace{0.2 cm}}c@{\hspace{-0.2 cm}}c c@{\hspace{0.2 cm}}c}
		&\subfigure[]{\label{fig.tau}}
		&&
		\subfigure[]{\label{fig.tauTrue}}
		&&
		\subfigure[]{\label{fig.RS}}
		\\[-.4cm]
		\begin{tabular}{c}
			\vspace{.25cm}
			\normalsize{\rotatebox{90}{ $
					\dfrac{\norm{\widehat{\nabla}f(K) - \widebar{\nabla}f(K)}_F}{\norm{\widehat{\nabla}f(K)}_F}
					$}}
		\end{tabular}
		&
		\begin{tabular}{c}
			\includegraphics[width=0.22\textwidth]{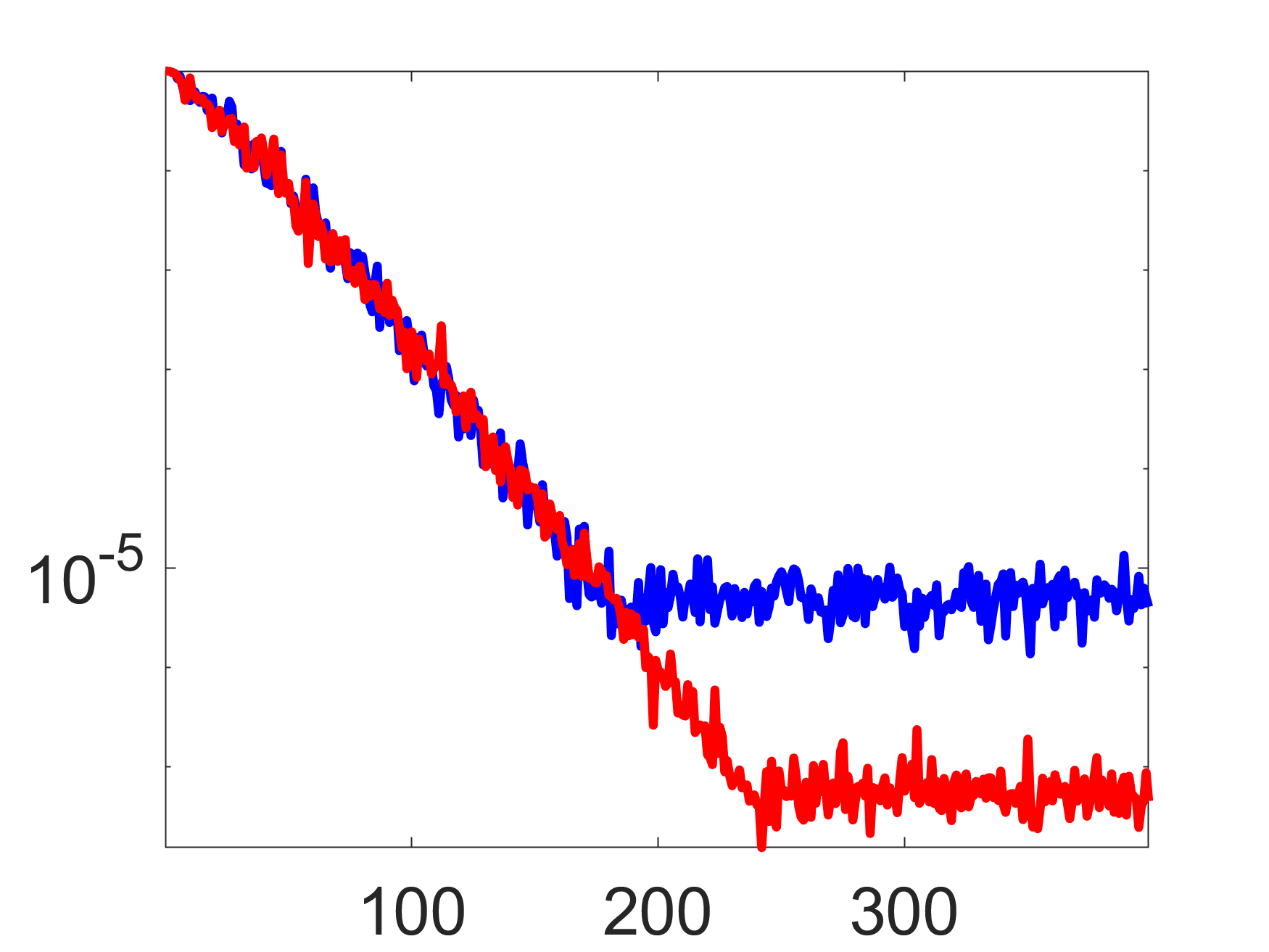}
			\\[-0.1 cm] { $\tau$}
		\end{tabular}
		&
		\begin{tabular}{c}
			\vspace{.25cm}
			\normalsize{\rotatebox{90}{ $
					\dfrac{\norm{\nabla f(K) - \widebar{\nabla}f(K)}_F}{\norm{\nabla f(K)}_F}
					$}}
		\end{tabular}
		&
		\begin{tabular}{c}
			\includegraphics[width=0.22\textwidth]{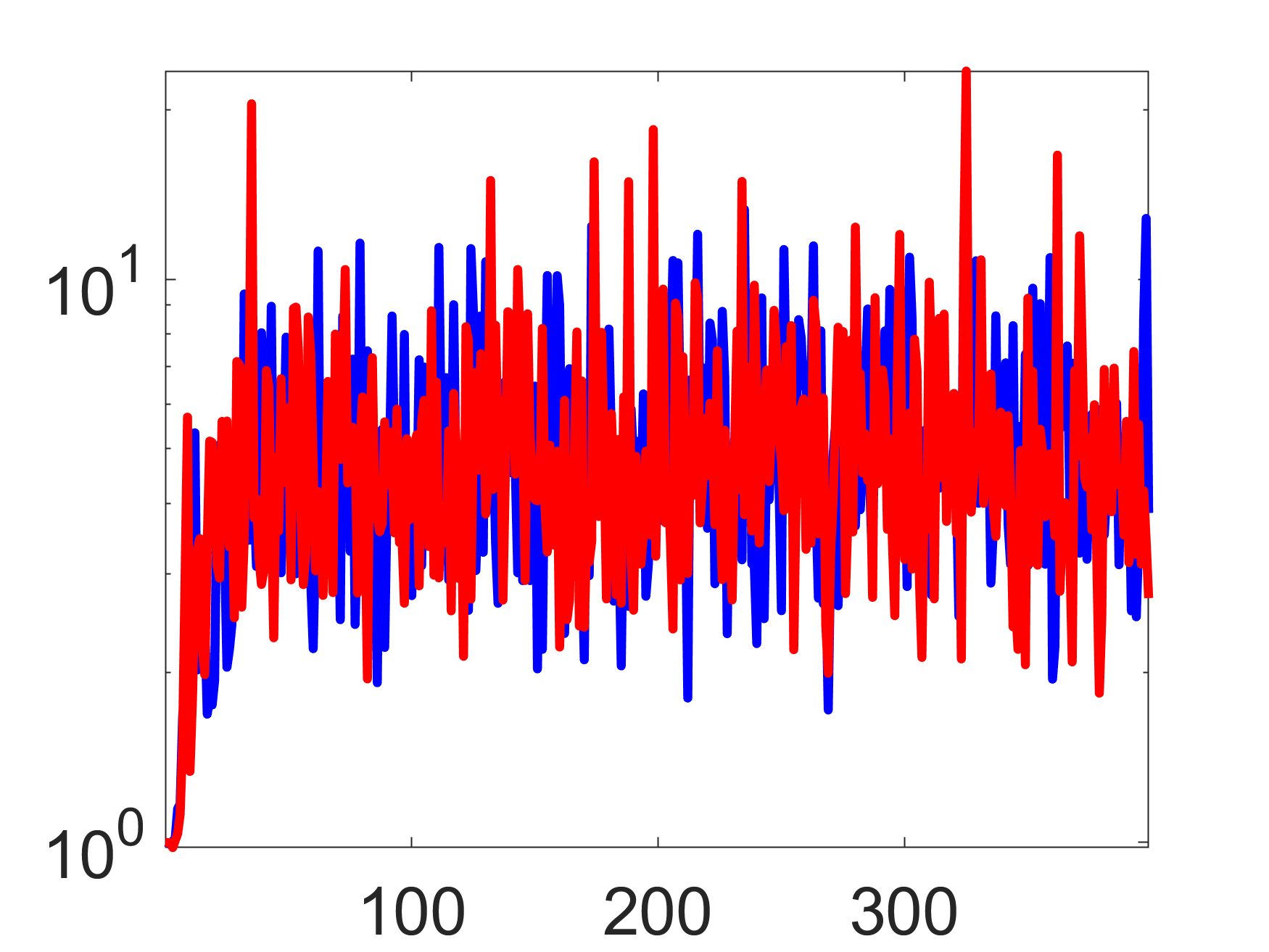}
			\\[-0.1 cm]  {$\tau$}
		\end{tabular}
		&
		\begin{tabular}{c}
			\vspace{.25cm}
			\normalsize{\rotatebox{90}{\small $
					\dfrac{f(K^k)-f(K^\star)}{f(K^0)-f(K^\star)}
					$}}
		\end{tabular}
		&
		\begin{tabular}{c}
			\includegraphics[width=0.22\textwidth]{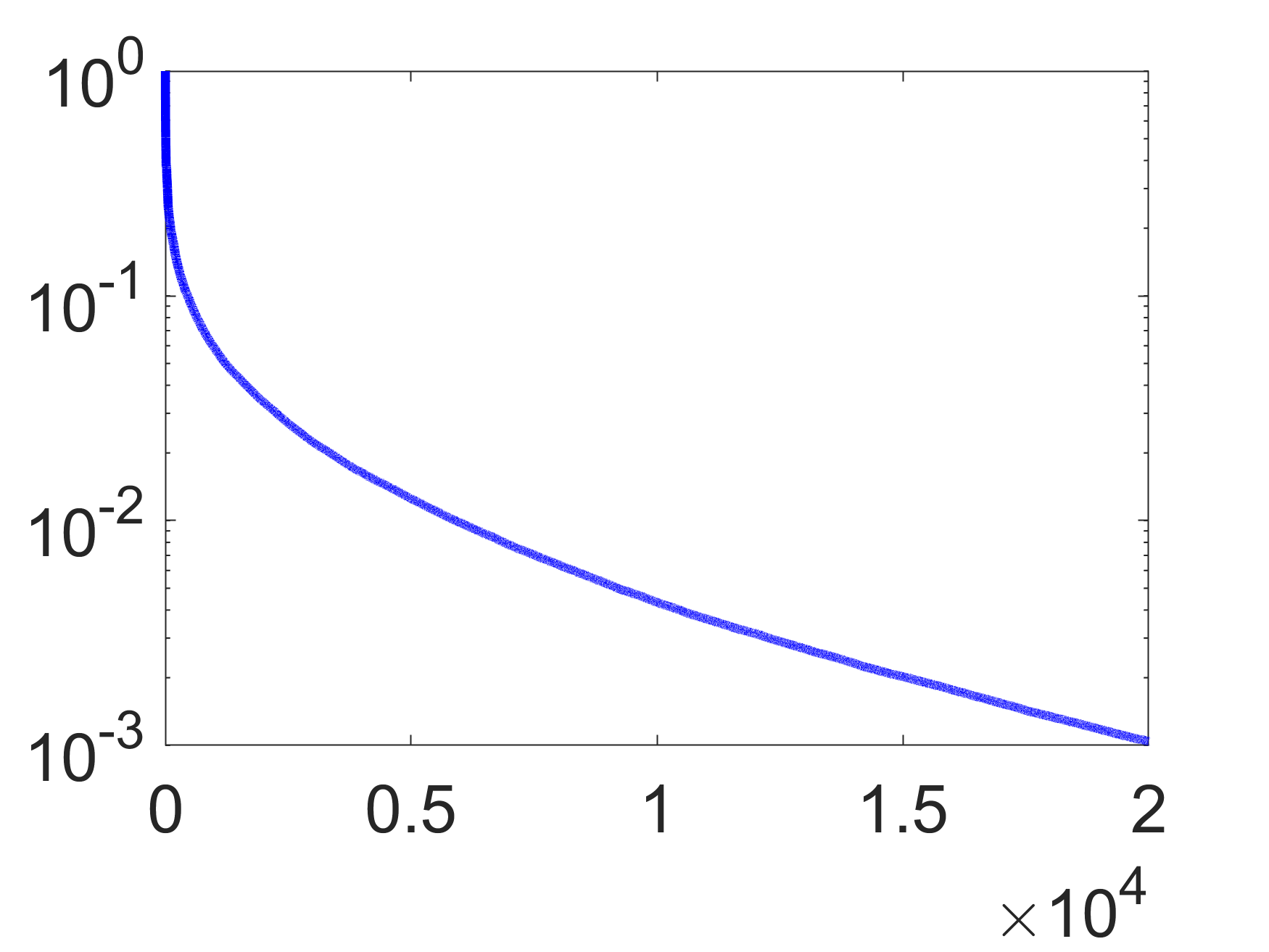}
			\\[-0.1 cm]  {$k$}
		\end{tabular}
		
	\end{tabular}
	\vspace{-0.5cm}
	\caption{(a) Bias in gradient estimation and (b) total error in gradient estimation as functions of the simulation time $\tau$. The blue and red curves correspond to two values of the smoothing parameter $r=10^{-4}$ and $r=10^{-5}$, respectively. (c) Convergence curve of the random search method~\eqref{eq.RS}.}
\end{figure*}

	\vspace*{-2ex}
\section{Concluding remarks}
\label{sec.conclusions}

We prove exponential/linear convergence of gradient flow/descent algorithms for solving the continuous-time LQR problem based on a nonconvex formulation that directly searches for the controller. A salient feature of our analysis is that we relate the gradient-flow dynamics associated with this nonconvex formulation to that of a convex reparameterization. This allows us to deduce convergence of the nonconvex approach from its convex counterpart. We also establish a bound on the sample complexity of the random search method for solving the continuous-time LQR problem that does not require the knowledge of system parameters. We have recently proved similar result for the discrete-time LQR problem~\cite{mohsoljovLCSS21}.

Our ongoing research directions include: (i) providing theoretical guarantees for the convergence of gradient-based methods for sparsity-promoting and structured control synthesis; and (ii) extension to nonlinear systems via successive linearization techniques.

\vspace*{-2ex}
\appendix

\vspace*{-2ex}
\subsection{Lack of convexity of  function $f$}
\label{app.non-convex-SK}

The function $f$ is nonconvex in general because its effective domain, namely, the set of stabilizing feedback gains $\cSK$ can be nonconvex. In particular, for $A=0$ and $B=-I$, the closed-loop $A$-matrix is given by $A-BK=K$. Now, let 
\begin{align}\label{eq.counterExample}
K_1 \;=\; \tbt{-1}{2-2\epsilon}{0}{-1},\qquad K_2 \;=\; \tbt{-1}{0}{2-2\epsilon}{-1},\qquad K_3 \;=\; \dfrac{K_1+K_2}{2} = \tbt{-1}{1-\epsilon}{1-\epsilon}{-1}
\end{align}
where  $0\le\epsilon\ll1$. It is straightforward to show that for $\epsilon>0$, the entire line-segment $\overline{K_1 K_2}$ lies in $\cSK$. However, if we let $\epsilon\rightarrow0$,  while the endpoints $K_{1}$ and $K_{2}$ converge to stabilizing gains, the middle point $K_3$ converges to the boundary of $\cSK$. Thus, $f(K_1)$ and $f(K_2)$ are bounded whereas $f(K_3)\rightarrow\infty$. This implies the existence of a point on the line-segment $\overline{K_1 K_2}$ for some $\epsilon\ll 1$ for which the function $f$ has negative curvature. For $\epsilon=0.1$, Fig.~\ref{fig.nonConvexExample} illustrates the value of the LQR objective function $f(K(\gamma))$ associated with the above example and the problem parameters $Q=R=\Omega=I$, where $K(\gamma)\DefinedAs \gamma {K_1}+(1-\gamma){K_2}$ is the line-segment $\overline{K_1 K_2}$.   We observe the negative curvature of $f$ around the middle point  $K_3$. Alternatively, we can  verify the negative curvature using the second-order term $\inner{J}{\nabla^2f(K);J}$ in the Taylor series expansion of $f(K+J)$ around $K$ given in Appendix~\ref{app.thirdOrder}. For the above example, letting  $J=(K_1-K_2)/\norm{K_1-K_2}$ yields the negative value $\inner{J}{\nabla^2f(K_3);J} = -135.27$.

\begin{figure}
	\centering
	\begin{tabular}{r@{\hspace{-0.5 cm}}l}
		\begin{tabular}{c}
			\vspace{.25cm}
			\normalsize{\rotatebox{90}{\small $
					f(K({\gamma}))
					$}}
		\end{tabular}	
	&\begin{tabular}{c}		
		\includegraphics[width=.25\textwidth]{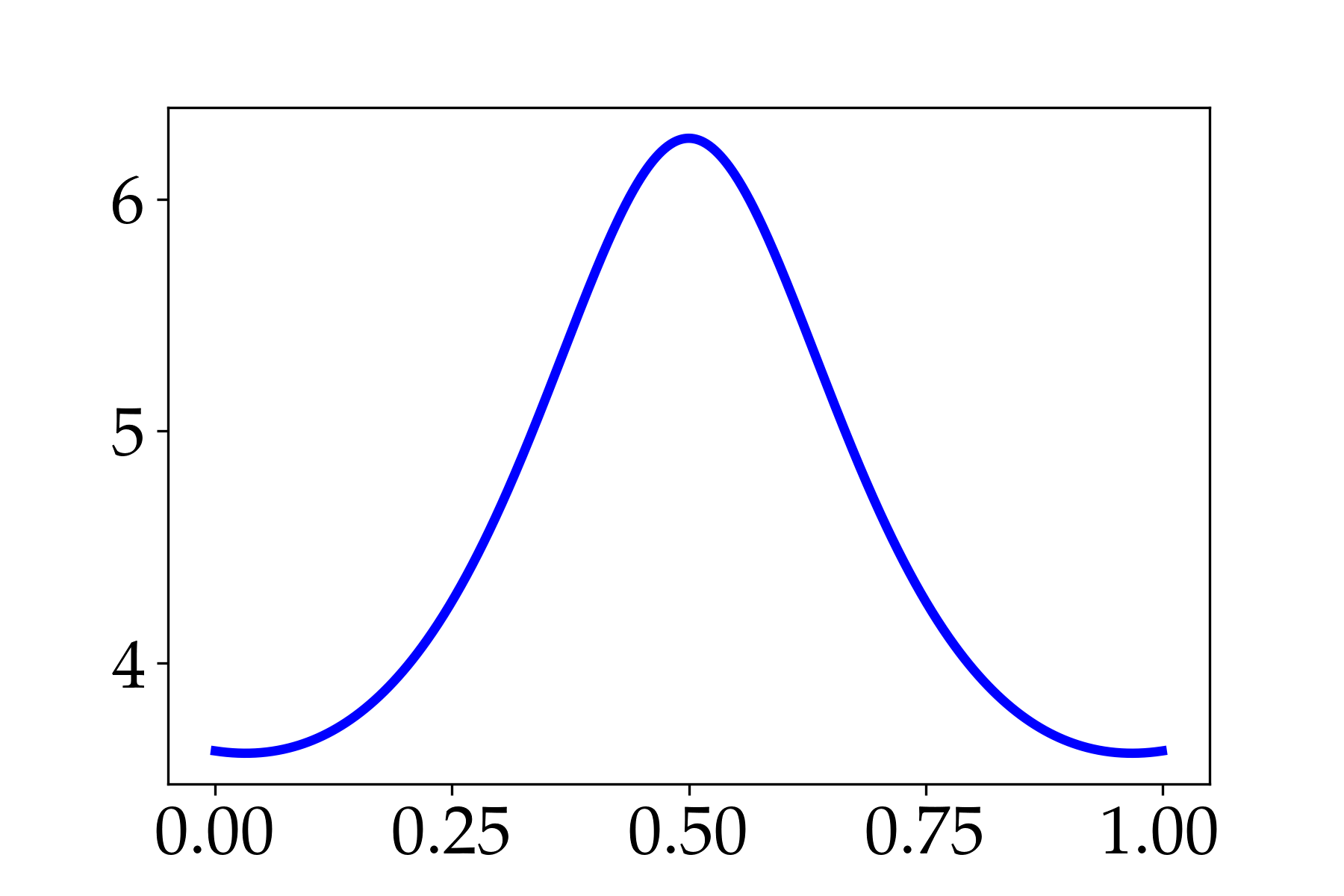}
		\\[-0.1 cm]
		{\small $\gamma$}
		
	\end{tabular}
\end{tabular}
	\caption{The LQR objective function $f(K(\gamma))$, where $K(\gamma)\DefinedAs \gamma {K_1}+(1-\gamma){K_2}$ is the line-segment between $K_1$ and $K_2$ in~\eqref{eq.counterExample} with $\epsilon=0.1$.}
	\label{fig.nonConvexExample}
\end{figure}
	
	\vspace*{-2ex}
\subsection{Invertibility of the linear map $\cA$}
\label{app.K0}

The invertibility of the map $\cA$ is equivalent to the matrices $A$ and $-A^T$ not having any common eigenvalues. If $\cA$ is non-invertible, we can use $K^0\in\cSK$ to introduce the change of variables $\hat{K}\DefinedAs K-K^0$ and $\hat{Y} \DefinedAs \hat{K} X$ and obtain
	$
	f(K) 
	= 
	\hat{h}(X,\hat{Y}) 
	\DefinedAs 
	\trace \, ( Q^0 X + X^{-1}\hat{Y}^TR\,\hat{Y} + 2 \hat{Y}^TR K^0 )
	$
for all $K\in\cSK$, where $Q^0\DefinedAs Q + (K^0)^TR\,K^0$. 
Moreover, $X$ and $\hat{Y}$ satisfy the affine relation
$
\cA_0(X) - \cB(\hat{Y}) + \Omega  = 0,
$
where $\cA_0(X)\DefinedAs (A - B K^0)X+X(A - B K^0)^T$. Since the matrix $A - B K^0$ is Hurwitz, the map $\cA_0$ is invertible. This allows us to write $X$ as an affine function of $\hat{Y}$, 
	$
	X(\hat{Y}) 
 	= 
	\cA_0^{-1}(\cB(\hat{Y}) - \Omega).
	$
Since the function $\hat{h}(\hat{Y})\DefinedAs\hat{h}(X(\hat{Y}),\hat{Y})$ has a similar form to $h(Y)$ except for the linear term $2 \, \trace \, (\hat{Y}^TR\,K^0)$, the smoothness and strong convexity of $h(Y)$ established in Proposition~\ref{prop.LMu} carry over to the function $\hat{h}(Y)$.

\vspace*{-2ex}
\subsection{Proof of Proposition~\ref{prop.LMu} }	
\label{app.convex-char}
The second-order term  in the Taylor series expansion of $h(Y+\tY)$ around $Y$ is given by~\cite[Lemma~2]{zarmohdhigeojovTAC20}
\begin{align}\label{eq.hessianY}
\inner {\tY}{\nabla^2 h(Y; \tY)}    
\; = \;
2\,\norm{R^{\frac{1}{2}} (\tY \,-\, K \tX ) X^{-\frac{1}{2}}}_F^2
\end{align}
where $\tX$ is the unique solution to 
	$\cA(\tX) = \cB(\tY).$ 
We show that this term is upper and lower bounded by $L\norm{\tY}_F^2$ and $\mu\norm{\tY}_F^2$, where $L$ and $\mu$ are given by~\eqref{eq.L} and \eqref{eq.muStrong}, respectively. The proof for the upper bound is borrowed from~\cite[Lemma~1]{zarmohdhigeojovTAC20}; we include it for completeness. We repeatedly use the bounds on the variables presented in Lemma~\ref{lem.bounds}; see Appendix~\ref{app.bounds}. 

\subsubsection*{Smoothness} For any $Y\in\cSY(a)$ and $\tilde{Y}$ with $\norm{\tilde{Y}}_F=1$, 
\begin{align*}
\inner {\tY}{\nabla^2h(Y; \tY)}    
&\;=\;
2 \norm{R^{\tfrac{1}{2}}(\tilde{Y} \,-\,K \tX)  X^{-\tfrac{1}{2}}}_F^2
\;\le\;
2 \norm{R}_2 \norm{X^{-1}}_2\norm{\tilde{Y} \,-\,K \cA^{-1} \cB(\tilde{Y})}_F^2
\\[0.cm]
\quad\quad\;
&\le\;
\dfrac{2 \,\norm{R}_2}{\lambda_{\min}(X)}  \left( \norm{\tY}_F \,+\, \norm{K}_2 \norm{\cA^{-1} \cB}_2 \norm{\tY}_F \right)^2 
\;\le\;
\dfrac{2 a \norm{R}_2}{\nu} \left( 1 \,+\, \dfrac{a \norm{\cA^{-1} \cB}_2}{\sqrt{\nu \lambda_{\min}(R)}} \right)^2 
\AsDefined \; 
L.
\end{align*}
Here, the first and second inequalities are obtained from the definition of the $2$-norm in conjunction with  the triangle inequality, and the third inequality follows from~\eqref{eq.uY} and~\eqref{eq.lX}. 
This completes the proof of smoothness.

\subsubsection*{Strong convexity} 
Using the positive definiteness of matrices $R$ and $X$, the second-order term~\eqref{eq.hessianY} can be lower bounded~by
\begin{align}\label{eq.quadformlower}
\inner {\tY}{\nabla^2 h(Y; \tY)}    
\; \ge \; 
\dfrac{2\lambda_{\min}(R) \norm{H}_F^2}{ \norm{X}_2}
\end{align}
where $H \DefinedAs\tY -K \tX$. Next, we show that
\begin{align}
\label{eq.lowerHtoTx}
\dfrac{\norm{H}_F}{\norm{\tX}_F}
\;\ge\;
\dfrac{\lambda_{\min}(\Omega)\lambda_{\min}(\Omega)}{a \,\norm{\cB}_2}.
\end{align}
We substitute $H + K \tX$ for $\tY$ in $\cA(\tX) = \cB(\tY)$ to obtain
\begin{align}
\label{eq.Wdef}
\Gamma
\;=\;
\cB(H)
\end{align}  
where 
$
\Gamma
\DefinedAs
\cA_K( \tX).
$
The closed-loop stability implies
$
\tX
=
\cA_K^{-1}(\Gamma)
$
and from Eq.~\eqref{eq.Wdef} we have
\begin{align}
\label{eq.Hlower}
\norm{H}_F
&
\;\ge\;
\dfrac{\norm{\Gamma}_F}{\norm{\cB}_2}.
\end{align}
This allows us to use Lemma~\ref{lem.ucAinv}, presented in Appendix~\ref{app.invLyapBound}, to write
	$
	a \norm{\Gamma}_F
	\ge 
	\lambda_{\min}(\Omega)\lambda_{\min}(Q) \norm{\tX}_F. 
	$
This inequality in conjunction with~\eqref{eq.Hlower} yield~\eqref{eq.lowerHtoTx}. Next, we derive an upper bound on $\norm{\tilde{Y}}_F$,
\begin{align}\label{eq.upperTY}
\norm{\tY}_F
\;=\;
\norm{H\,+\,K \tX}_F
\;\le\;
\norm{H}_F \,+\, \norm{K}_F \norm{\tX}_F
\;\le\;
\norm{H}_F \left( 1 \,+\,a^2 \eta \right)
\end{align}
where $\eta$ is given by~\eqref{eq.nuprime} and the second inequality follows from~\eqref{eq.uK} and~\eqref{eq.lowerHtoTx}.
Finally, inequalities~\eqref{eq.quadformlower} and~\eqref{eq.upperTY} yield
	\vspace*{-1ex}
\begin{align}
\dfrac{\inner {\tY}{\nabla^2 f(Y; \tY)} }{\norm{\tY}_F^2}
\;\ge\;
\dfrac{2 \lambda_{\min}(R) \norm{H}_F^2}{\norm{X}_2 \norm{\tY}_F^2}
\;\ge\;
\dfrac{2 \lambda_{\min}(R)}{\norm{X}_2(1 \, + \, a^2 \eta)^2}
\;\ge\;
\dfrac{2 \lambda_{\min}(R)\lambda_{\min}(Q)}{a (1\,+\,a^2 \eta)^2}
\; \AsDefined \;
\mu
\end{align}
where the last inequality follows from~\eqref{eq.uX}. 

	\vspace*{-2ex}
\subsection{Proofs for Section~\ref{sec.non-convex-char} }	
\label{app.non-convex-char}

\subsubsection*{Proof of Lemma~\ref{lem.comparison}}
The gradients are given by $\nabla f(K) = E X$ and $\nabla h(Y) = E + 2 B^T(P-W)$, where  $E\DefinedAs 2(R K - B^T P)$, $P$ is determined by~\eqref{eq.P}, and $W$ is the solution to~\eqref{eq.W}. Subtracting~\eqref{eq.W} from~\eqref{eq.lyapKP} yields
$
A^T (P-W) + (P-W)A 
	= - \tfrac{1}{2}\left(K^TE +  E^TK\right),
$
which in turn leads to
	\begin{align*}
	\norm{P-W}_F 
	\;\le\;
	\norm{\cA^{-1}}_2 \norm{K}_F \norm{E}_F
	\;\le\;
	\dfrac{ a \norm{\cA^{-1}}_2 \norm{E}_F}{\sqrt{\nu \lambda_{\min}(R)}}
	\end{align*}
where the second inequality follows from~\eqref{eq.uK} in Appendix~\ref{app.bounds}.
Thus, by applying the triangle inequality to $\nabla h(Y)$, we obtain
	\begin{align*}
\dfrac{\norm{\nabla h(Y)}_F }{ \norm{E}_F}
	\;\le\; 
	1 
	\,+\, 
	\dfrac{2 a \norm{\cA^{-1}}_2 \norm{B}_2}{\sqrt{\nu \lambda_{\min}(R)}}.
	\end{align*}
Moreover, using the lower bound~\eqref{eq.lX} on  $\lambda_{\min}(X)$, we have
	$
	\norm{\nabla f(K)}_F 
	= 
	\norm{E X}_F
	\ge 
	(\nu / a ) \norm{E}_F.
	$
Combining the last two inequalities completes the proof.
	
\subsubsection*{Proof of Lemma~\ref{lem.errorRelation}}
For any pair of stabilizing feedback gains $K$ and $\hK\DefinedAs K+\tK$, we have~\cite[Eq.~(2.10)]{toi85},
	$
	f(\hK) - f(K)
 	=
	\trace \big( \tK^T \big(R (K+\hK) - 2 B^T\hP\big) X \big),
	$
where $X=X(K)$ and $\hP=P(\hat{K})$ are given by~\eqref{eq.X} and~\eqref{eq.P}, respectively. 
Letting $\hat{K}=K^\star$ in this equation and using the optimality condition $B^T \hP =  R \hK$ completes the proof. 

\subsubsection*{Proof of Lemma~\ref{lem.Lf}}
We show that the second-order term  $\inner {\tK}{\nabla^2 f(K; \tK)}$ in the Taylor series expansion of $f(K+\tK)$ around $K$ is upper bounded by $L_f \norm{\tK}_F^2$ for all $K\in\cSK(a)$.
From~\cite[Eq.~(2.3)]{toimak87}, it follows 
\begin{align*}
\inner {\tK}{\nabla^2 f(K; \tK)}    
\;=\;
2\,\trace \, (\tK^T R \, \tK X - 2\tK^T B^T\tP X)
\end{align*}
where
$
\tP = ( \cA_K^*)^{-1}(C)
$
and  $C \DefinedAs \tK^T(B^TP - R K) + (B^TP - R K)^T\tK.$ Here, $X=X(K)$ and $P=P(K)$ are given by~\eqref{eq.X} and~\eqref{eq.P} respectively.
Thus, using basic properties of the matrix trace and the triangle inequality, we have 
\begin{align}\label{eq.tPtemp0}
\dfrac{\inner {\tK}{\nabla^2 f(K; \tK)}}{\norm{\tK}_F^2}
\,\le\,
2 \norm{X}_2\left(\norm{R}_2  +  \dfrac{2 \norm{B}_2 \norm{\tP}_F}{\norm{\tK}_F}\right).
\end{align}
Now, we use Lemma~\ref{lem.ucAinv} to upper bound the norm of $\tP$,
	$
	\norm{\tP}_F 
 	\le
	{a} \norm{C}_F / ( \lambda_{\min}(\Omega) \lambda_{\min}(Q) ).
	$
Moreover, from the definition of $C$, the triangle inequality, and the submultiplicative property of the $2$-norm, we have
	$
	\norm{C}_F
 	\le
	2\norm{\tK}_F(\norm{B}_2\norm{P}_2+\norm{R}_2\norm{K}_2)
	$
Combining the last two inequalities gives 
\begin{align*}
\dfrac{\norm{\tP}_F}{\norm{\tK}_F}
\;\le\;
\dfrac{2a}{\lambda_{\min}(\Omega)\lambda_{\min}(Q)} \, (\norm{B}_2\norm{P}_2+\norm{R}_2\norm{K}_2)
\end{align*}
which in conjunction with~\eqref{eq.tPtemp0} lead to
\begin{align*}
\dfrac{\inner {\tK}{\nabla^2 f(K; \tK)}}{\norm{\tK}_F^2}
\;\le\;
2\norm{X}_2\left(\norm{R}_2 \,+\,
\dfrac{4a}{\lambda_{\min}(\Omega)\lambda_{\min}(Q)} \, (\norm{B}^2_2\norm{P}_2+ \norm{B}_2\norm{R}_2\norm{K}_2)\right).
\end{align*}
Finally, we use the bounds provided in Appendix~\ref{app.bounds}  to obtain
\begin{align*}
\dfrac{\inner {\tK}{\nabla^2 f(K; \tK)}}{\norm{\tK}_F^2}
\;\le\; 
\dfrac{2a\norm{R}_2}{\lambda_{\min}(Q)} 
\;+\;
\dfrac{8 a^3 }{\lambda_{\min}^2(Q)\lambda_{\min}(\Omega)} 
\left(
\dfrac{\norm{B}_2^2}{\lambda_{\min}(\Omega)} 
 	\, + \,
\dfrac{\norm{B}_2\norm{R}_2}{\sqrt{\nu\lambda_{\min}(R)}}
\right)
\end{align*}
which completes the proof.

	\vspace*{-3ex}
\subsection{Proofs for Section~\ref{subsec.localBoundedness}}
\label{app.localBoundedness}
We first present two technical lemmas.

\begin{mylem}\label{lem.KYP_SG}
	Let $Z \succ 0$ and let the Hurwitz matrix $F$ satisfy
	\begin{align}\label{eq.SGtemp1}
	\tbt{\delta^2 I + F^T Z + Z F}{Z}{Z}{-I}
	\;\prec\;
	0.
	\end{align}
	Then $F + \delta \Delta$ is Hurwitz for all $\Delta$ with $\norm{\Delta}_2\le 1$. 
\end{mylem}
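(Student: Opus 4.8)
The plan is to exhibit a single Lyapunov certificate — the matrix $Z$ itself — that works simultaneously for every admissible perturbation. Concretely, I would show that $(F+\delta\Delta)^T Z + Z(F+\delta\Delta) \prec 0$ for all $\Delta$ with $\norm{\Delta}_2 \le 1$; since $Z \succ 0$, the standard Lyapunov theorem then immediately yields that $F + \delta\Delta$ is Hurwitz.

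First I would unpack the block inequality~\eqref{eq.SGtemp1}. Its $(2,2)$ block equals $-I \prec 0$, so a Schur-complement reduction with respect to that block shows that~\eqref{eq.SGtemp1} is equivalent to the strict matrix inequality
\[
\delta^2 I \;+\; F^T Z \;+\; Z F \;+\; Z^2 \;\prec\; 0 ,
\]
where the quadratic term arises as $-Z(-I)^{-1}Z = Z^2$. This scalar-free certificate is what I will feed into the Lyapunov estimate for the perturbed matrix.

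Next I would absorb the perturbation term by a completion of squares with a carefully balanced weight. Expanding the manifestly positive semidefinite product $(\delta^{-1/2}Z - \delta^{1/2}\Delta^T)(\delta^{-1/2}Z - \delta^{1/2}\Delta) \succeq 0$ gives $Z\Delta + \Delta^T Z \preceq \delta^{-1} Z^2 + \delta\,\Delta^T\Delta$; multiplying through by $\delta$ and using $\Delta^T\Delta \preceq \norm{\Delta}_2^2\, I \preceq I$ then produces
\[
\delta\,(Z\Delta + \Delta^T Z) \;\preceq\; Z^2 \;+\; \delta^2 I .
\]
The crucial feature is that the right-hand side matches \emph{exactly} the two terms $Z^2$ and $\delta^2 I$ appearing in the Schur-complemented hypothesis. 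Combining the two estimates yields
\[
(F+\delta\Delta)^T Z + Z(F+\delta\Delta)
\;=\; (F^T Z + Z F) + \delta\,(Z\Delta + \Delta^T Z)
\;\preceq\; F^T Z + Z F + Z^2 + \delta^2 I \;\prec\; 0 ,
\]
and an appeal to Lyapunov's theorem closes the argument.

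The only delicate point — and the step I would verify most carefully — is the completion-of-squares scaling. Any weight other than the balanced choice $t = 1/\delta$ leaves a residual positive-definite term and destroys the exact cancellation of $Z^2$ and $\delta^2 I$; it is precisely this balance that lets the strict inequality in the hypothesis propagate into the strict inequality needed for the conclusion. The Schur-complement reduction and the final Lyapunov invocation are routine, so essentially all of the content of the proof is concentrated in choosing this scaling correctly.
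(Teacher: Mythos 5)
Your proof is correct, but it takes a genuinely different route from the paper's. The paper argues through robust-control machinery: it observes that $F+\delta\Delta$ is Hurwitz precisely when the feedback interconnection of $\dot{x}=Fx+w+u$, $z=\delta x$ with $u=\Delta z$ is input--output stable, invokes the small-gain theorem to reduce this to the induced gain of the map $u\mapsto z$ being less than one, and then applies the KYP lemma to convert that norm bound into the LMI~\eqref{eq.SGtemp1}. You instead produce a single algebraic certificate: the Schur complement of~\eqref{eq.SGtemp1} with respect to the $-I$ block gives $\delta^2 I + F^TZ+ZF+Z^2\prec 0$, and a completion of squares absorbs the perturbation, after which Lyapunov's theorem applied to $V(x)=x^TZx$ finishes the argument. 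One small remark: the unweighted square $0\preceq(Z-\delta\Delta)^T(Z-\delta\Delta)$ already yields $\delta\,(Z\Delta+\Delta^TZ)\preceq Z^2+\delta^2\Delta^T\Delta\preceq Z^2+\delta^2 I$ directly, so the ``balanced weight'' $t=1/\delta$ you flag as delicate is not actually essential (and the unweighted version also avoids assuming $\delta>0$, making the degenerate case $\delta=0$ trivial). What your approach buys is elementarity and self-containment --- no appeal to small-gain or KYP results, and it exhibits $Z$ explicitly as a common quadratic Lyapunov function for the entire uncertainty ball $\norm{\Delta}_2\le 1$. What the paper's approach buys is that every step is an equivalence, so it situates the LMI as necessary and sufficient (in the small-gain sense) and connects the lemma to standard robust-control theory; for the one-directional claim actually stated, however, your argument fully suffices.
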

\begin{proof}
	The matrix $F+\delta\Delta$ is Hurwitz if and only if the linear map from $w$ to $x$ with the state-space realization
	$
	\{
	\dot{x} 
	= 
	F x + w + u,
	$
	$
	z
	=
	\delta x
	\}
	$
	in feedback with $ u = \Delta z$ is input-output stable. From the small-gain theorem~\cite[Theorem 8.2]{dulpag00}, this system is stable for all $\Delta$ in the unit ball if and only if the induced gain of the map $u \mapsto z$ with the state-space realization 
	$
	\{
	\dot{x} 
	=
	F x + u,
	$
	$
	z
	=
	\delta x
	\}
	$
is smaller than one. The KYP Lemma~\cite[Lemma 7.4]{dulpag00} implies that this norm condition is equivalent to~\eqref{eq.SGtemp1}. 
\end{proof}

\begin{mylem}\label{lem.localDisk}
	Let the matrices $F$, $X\succ 0$, and $\Omega\succ 0$ satisfy
	\begin{align}\label{eq.SGtemp2}
	F X \,+\, X F^T \, + \, \Omega 
	\;=\;
	0.
	\end{align} 
	Then the matrix $F + \Delta$ is Hurwitz for all $\Delta$ that satisfy
	$
	\norm{\Delta}_2
	< 
	{\lambda_{\min}(\Omega)}/( 2 \norm{X}_2 ).
	$
\end{mylem}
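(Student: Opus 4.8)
The plan is to deduce the result directly from the small-gain characterization in Lemma~\ref{lem.KYP_SG} by choosing the multiplier $Z$ in~\eqref{eq.SGtemp1} to be a scalar multiple of $X^{-1}$. First I would note that $\Omega\succ0$ together with~\eqref{eq.SGtemp2} already forces $F$ to be Hurwitz: writing~\eqref{eq.SGtemp2} as the Lyapunov inequality $F X + X F^T = -\Omega \prec 0$ with $X\succ0$ identifies $X$ as a positive-definite solution of a Lyapunov equation for $F^T$, so $F^T$ (hence $F$) is Hurwitz and the standing hypothesis of Lemma~\ref{lem.KYP_SG} is met. Then, given a perturbation $\Delta$ with $\norm{\Delta}_2<\lambda_{\min}(\Omega)/(2\norm{X}_2)$, I would fix a scalar $\delta$ with $\norm{\Delta}_2\le\delta<\lambda_{\min}(\Omega)/(2\norm{X}_2)$ and write $\Delta=\delta\Delta'$ with $\norm{\Delta'}_2\le1$, reducing the claim to verifying~\eqref{eq.SGtemp1} for this particular $\delta$ and some $Z\succ0$.

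The heart of the argument is the choice of $Z$. Taking the Schur complement of the $(2,2)$ block $-I$ in~\eqref{eq.SGtemp1}, that LMI is equivalent to $\delta^2 I + F^T Z + Z F + Z^2\prec0$. I would set $Z=\gamma X^{-1}$ for a scalar $\gamma>0$ to be chosen. Left- and right-multiplying~\eqref{eq.SGtemp2} by $X^{-1}$ gives $F^T X^{-1}+X^{-1}F=-X^{-1}\Omega X^{-1}$, so that $F^T Z + Z F=-\gamma\,X^{-1}\Omega X^{-1}$ and the Schur condition becomes
\[
\delta^2 I \,-\, \gamma\, X^{-1}\Omega X^{-1} \,+\, \gamma^2 X^{-2}\;\prec\;0.
\]
Using the bounds $X^{-1}\Omega X^{-1}\succeq\lambda_{\min}(\Omega)\,X^{-2}$ and $X^{-2}\succeq\norm{X}_2^{-2}I$, the left-hand side is dominated (for $\gamma<\lambda_{\min}(\Omega)$) by the scalar matrix $\big(\delta^2-\gamma(\lambda_{\min}(\Omega)-\gamma)/\norm{X}_2^2\big)I$.

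Finally, optimizing the quadratic $\gamma(\lambda_{\min}(\Omega)-\gamma)$ over $\gamma$ yields its maximum $\lambda_{\min}(\Omega)^2/4$ at $\gamma=\lambda_{\min}(\Omega)/2$. With this weight the bracketed scalar is $\delta^2-\lambda_{\min}^2(\Omega)/(4\norm{X}_2^2)$, which is strictly negative precisely under our standing assumption $\delta<\lambda_{\min}(\Omega)/(2\norm{X}_2)$; hence the Schur condition, and therefore~\eqref{eq.SGtemp1}, holds strictly. Lemma~\ref{lem.KYP_SG} then guarantees that $F+\delta\Delta'$ is Hurwitz for every $\norm{\Delta'}_2\le1$, and in particular $F+\Delta=F+\delta\Delta'$ is Hurwitz, which is the claim. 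I expect the only delicate points to be (i) pinning down the weight $\gamma=\lambda_{\min}(\Omega)/2$ that makes the threshold come out to exactly $\lambda_{\min}(\Omega)/(2\norm{X}_2)$, and (ii) preserving strict definiteness through the eigenvalue estimates, which is why I select $\delta$ strictly below the threshold rather than equal to it.
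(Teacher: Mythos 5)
Your proof is correct and follows essentially the same route as the paper: both verify the small-gain LMI of Lemma~\ref{lem.KYP_SG} via a Schur complement with a multiplier proportional to $X^{-1}$, and your optimized weight $\gamma=\lambda_{\min}(\Omega)/2$ is exactly the paper's choice $Z=(2\hat{X})^{-1}=\tfrac{\lambda_{\min}(\Omega)}{2}X^{-1}$. The only difference is presentational: you derive this $Z$ by optimizing over $\gamma$, whereas the paper posits it directly after normalizing $X$ by $\lambda_{\min}(\Omega)$.
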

\begin{proof}
	From~\eqref{eq.SGtemp2}, we obtain that $F$ is Hurwitz and
	$
	F \hat{X} + \hat{X} F^T + I 
	\preceq
	0
	$
where $\hat{X} \DefinedAs X/\lambda_{\min}(\Omega)$.  Multiplication of this inequality
	from both sides by $\hat{X}^{-1}$ and division by $2$ yields
	$
	Z F + F^T Z + 2\,Z^2 
	\preceq 
	0
	$
	where $Z\DefinedAs (2\hat{X})^{-1}$. For any positive scalar 
	$
	\delta
	<
	\lambda_{\min}(Z) 
	=
	{\lambda_{\min}(\Omega)}/( 2 \norm{X}_2 )
	$
	the last matricial inequality implies
	$
	\delta^2 I 
	+	
	Z F 
	+
	F^T Z 
	+ 
	Z^2 
	\prec
	0.
	$
	The result  follows from Lemma~\ref{lem.KYP_SG} by observing that the last inequality is equivalent to~\eqref{eq.SGtemp1} via the use of Schur complement.
\end{proof}

\subsubsection*{Proof of Proposition~\ref{prop.localdisk}}
For any feedback gain $\hat{K}$ such that $\norm{\hat{K}-K}_2 <\zeta$, the closed-loop matrix $A-B\hat{K}$ satisfies
	$
	\norm{A-B\hat{K} - (A-BK)}_2
 	\le
	\norm{K-\hat{K}}_2 \norm{B}_2
	<
	\zeta \norm{B}_2.
	$
This bound on the distance between the closed-loop matrices $A-BK$ and $A-B\hat{K}$  allows us to apply Lemma~\ref{lem.localDisk} with $F\DefinedAs A-BK$ and $X\DefinedAs X(K)$ to complete the proof. 

We next present a technical lemma. 
\begin{mylem}\label{lem.perturbX}
	For any $K\in\cSK$ and $\hat{K}\in\R^{m\times n}$ such that 
	$\norm{\hat{K}-K}_2 
	< 
	\delta
	$, 
with
	\begin{align*}
	\delta
	\, \DefinedAs \,
	\dfrac{1}{4\,\norm{B}_F}\,
	\min\left\{
	\dfrac{\lambda_{\min}(\Omega)}{\trace \, (X(K))},
	\,
	\dfrac{\lambda_{\min}(Q)}{\trace \, (P(K))}
	\right\}
	\end{align*}	
	the feedback gain matrix $\hat{K}\in\cSK$, and 
	\begin{subequations}
		\begin{align}
		\norm{X(\hat{K})\,- X(K)}_F 
		&\;\le\;
		\epsilon_1 \norm{\hat{K} - K}_2
		\label{eq.perX}
		\\[0.cm]
		\norm{P(\hat{K})\,- P(K)}_F 
		&\;\le\;
		\epsilon_2 \norm{\hat{K} - K}_2
		\label{eq.perP}
		\\[0.cm]
		\norm{\nabla f(\hat{K})\,- \nabla f(K)}_F 
		&\;\le\;
		\epsilon_3 \norm{\hat{K} - K}_2
		\label{eq.perNab}
		\\[0.cm]
		|f(\hat{K})\,- f(K)|~~
		&\;\le\;
		\epsilon_4 \norm{\hat{K} - K}_2
		\label{eq.perf}
		\end{align}
	\end{subequations}
	where $X(K)$ and $P(K)$ are given by~\eqref{eq.X} and~\eqref{eq.P}, respectively. Furthermore, the parameters $\epsilon_i$ which only depend on $K$ and problem data are given by
		$
	\epsilon_1
	\DefinedAs
	{\norm{X(K)}_2}/{\delta},
	$
	$
	\epsilon_2
	\DefinedAs
	{2\,\trace(P)}
	(2\,\norm{P}_2\norm{B}_F
	+
	(\delta + 2\norm{K}_2)\norm{R}_F )/{\lambda_{\min}(Q)},
	$
	$
	\epsilon_4
	\DefinedAs
	\epsilon_2
	\norm{\Omega}_F,
	$
	$
	\epsilon_3
	\DefinedAs
	2  
	( \epsilon_1\norm{K}_2 + 2\norm{X(K)}_2 ) \norm{R}_F
	+
	2 \epsilon_1 
	( \norm{P(K)}_2 + 2\epsilon_2\norm{X(K)}_2 ) \norm{B}_F.
	$
\end{mylem}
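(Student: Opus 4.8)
The plan is to prove the stabilization claim first and then derive the four Lipschitz-type bounds in the order $X\to P\to\nabla f\to f$, since each estimate feeds into the next. To show $\hat{K}\in\cSK$, I would invoke Proposition~\ref{prop.localdisk}: its stabilizing radius is $\zeta=\lambda_{\min}(\Omega)/(2\norm{B}_2\norm{X(K)}_2)$, and because $X(K)\succ0$ gives $\norm{X(K)}_2\le\trace\,(X(K))$ while $\norm{B}_2\le\norm{B}_F$, the prescribed $\delta$ satisfies $\delta\le\zeta$; hence every $\hat{K}$ in the open disk of radius $\delta$ is stabilizing. The same inequality lets me run a continuity/bootstrap argument along the segment $K_s\DefinedAs K+s(\hat{K}-K)$, $s\in[0,1]$, which stays inside $\cSK$.

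For the bound on $X$, I would subtract the two Lyapunov equations~\eqref{eq.lyapKX} for $\hat{K}$ and $K$ \emph{in the order that isolates the unperturbed solution}: writing $A-B\hat{K}=(A-BK)-B\tilde{K}$ with $\tilde{K}\DefinedAs\hat{K}-K$ yields $\cA_{\hat{K}}(\tilde{X})=B\tilde{K}X+X\tilde{K}^TB^T$ for $\tilde{X}\DefinedAs X(\hat{K})-X(K)$. Since $\hat{K}$ is stabilizing, $\cA_{\hat{K}}$ is invertible, so $\tilde{X}=\cA_{\hat{K}}^{-1}(B\tilde{K}X+X\tilde{K}^TB^T)$. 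The crux is a uniform bound on $\norm{\cA_{\hat{K}}^{-1}}$ over the disk, and this is where the choice of $\delta$ pays off: bounding $\norm{B\tilde{K}X+X\tilde{K}^TB^T}_2\le 2\norm{B}_F\trace\,(X)\,\norm{\tilde{K}}_2\le\tfrac12\lambda_{\min}(\Omega)$ shows, via the integral representation $X(\hat{K})=\int_0^\infty \mre^{(A-B\hat{K})t}\Omega\,\mre^{(A-B\hat{K})^Tt}\mrd t$ and a comparison of the noise matrices, that $X(\hat{K})\preceq 2X(K)$ throughout the disk, which in turn controls $\cA_{\hat{K}}^{-1}$ and delivers the claimed $\epsilon_1=\norm{X(K)}_2/\delta$; a clean way to package the linear dependence on $\norm{\tilde{K}}_2$ is to integrate $\frac{\mrd}{\mrd s}X(K_s)=\cA_{K_s}^{-1}(B\tilde{K}X(K_s)+X(K_s)\tilde{K}^TB^T)$ over $s\in[0,1]$ using this uniform bound.

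The bound on $P$ follows the identical template applied to~\eqref{eq.lyapKP}: subtracting the adjoint Lyapunov equations gives $\cA_{\hat{K}}^*(\tilde{P})=-(\tilde{K}^TRK+K^TR\tilde{K}+\tilde{K}^TR\tilde{K})+\tilde{K}^TB^TP+PB\tilde{K}$, and the second branch of $\delta$ (the one with $\trace\,(P(K))$ and $\lambda_{\min}(Q)$) is exactly what guarantees $P(\hat{K})\preceq 2P(K)$ and the uniform bound on $(\cA_{\hat{K}}^*)^{-1}$; collecting the right-hand side with $\norm{\tilde{K}}_2<\delta$ used to absorb the quadratic term $\tilde{K}^TR\tilde{K}$ produces $\epsilon_2$ with its factors $2\norm{P}_2\norm{B}_F$ and $(\delta+2\norm{K}_2)\norm{R}_F$. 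With $\tilde{X}$ and $\tilde{P}$ in hand the gradient estimate is purely algebraic: expanding $\nabla f=2(RK-B^TP)X$ from~\eqref{eq.nablaK} gives $\nabla f(\hat{K})-\nabla f(K)=2R(\tilde{K}X+K\tilde{X}+\tilde{K}\tilde{X})-2B^T(\tilde{P}X+P\tilde{X}+\tilde{P}\tilde{X})$, and applying $\norm{\tilde{X}}_F\le\epsilon_1\norm{\tilde{K}}_2$, $\norm{\tilde{P}}_F\le\epsilon_2\norm{\tilde{K}}_2$ together with $\norm{\tilde{K}}_2<\delta$ to dominate the quadratic terms assembles $\epsilon_3$. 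Finally, the bound on $f$ is the slickest step, using the duality identity $f(K)=\trace\,((Q+K^TRK)X(K))=\trace\,(P(K)\,\Omega)$ (immediate from the adjoint relation between~\eqref{eq.lyapKX} and~\eqref{eq.lyapKP}); hence $|f(\hat{K})-f(K)|=|\trace\,(\tilde{P}\,\Omega)|\le\norm{\tilde{P}}_F\norm{\Omega}_F\le\epsilon_2\norm{\Omega}_F\,\norm{\tilde{K}}_2$, which is precisely $\epsilon_4=\epsilon_2\norm{\Omega}_F$.

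I expect the main obstacle to be the self-referential nature of the $X$ and $P$ estimates: subtracting the Lyapunov equations leaves the \emph{perturbed} solution entangled with the perturbed operator, so the bound on $\norm{\cA_{\hat{K}}^{-1}}$ (resp.\ $\norm{(\cA_{\hat{K}}^*)^{-1}}$) cannot be taken for granted but must be established uniformly over the whole disk. Making this rigorous requires the bootstrap/continuity argument along $K_s$, and the precise threshold $\delta$ in the statement is engineered exactly so that the perturbation of the effective noise matrix stays below $\tfrac12\lambda_{\min}(\Omega)$ (resp.\ $\tfrac12\lambda_{\min}(Q)$), keeping $X(\hat{K})$ and $P(\hat{K})$ within a factor two of their base values — the single quantitative fact that drives every constant $\epsilon_i$.
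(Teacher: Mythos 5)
Your overall architecture matches the paper's: stabilization via Proposition~\ref{prop.localdisk} (the same norm comparisons showing $\delta\le\zeta$), then the chain $X\to P\to\nabla f\to f$, with the expansion of $\nabla f(\hat{K})-\nabla f(K)$ and the identity $f(K)=\trace\,(P(K)\,\Omega)$ handled exactly as in the paper. Your treatment of \eqref{eq.perX} is a correct variant of the paper's argument: the paper keeps the \emph{unperturbed} operator on the left, writing $\cA_K(\tX)=B\tK\hat{X}+\hat{X}(B\tK)^T$, bounds $\norm{\cA_K^{-1}}_2\le\trace\,(X)/\lambda_{\min}(\Omega)$ via Lemma~\ref{lem.normLyap}, and absorbs the $\tX$-dependent part of the right-hand side (at most $\tfrac12\norm{\tX}_F$ by the choice of $\delta$) into the left; you instead keep the \emph{perturbed} operator, $\cA_{\hat{K}}(\tX)=B\tK X+X\tK^TB^T$, and control $\norm{\cA_{\hat{K}}^{-1}}_2$ through the comparison $X(\hat{K})\preceq 2X(K)$. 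Your comparison is valid precisely because the source $\Omega$ of the $X$-equation does not depend on $K$, so the only perturbation enters through $B\tK$, which is exactly what $\delta$ controls; both routes deliver the same $\epsilon_1$.

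The genuine gap is in your $P$-step. The claim that the second branch of $\delta$ guarantees $P(\hat{K})\preceq 2P(K)$ is false: unlike $\Omega$, the source $Q+K^TRK$ of the $P$-equation moves with $K$, and its perturbation $\tK^TRK+K^TR\tK+\tK^TR\tK$ involves $\norm{R}$ and $\norm{K}$, which $\delta$ (built only from $\norm{B}_F$, $\trace\,(P)$, $\lambda_{\min}(Q)$) does not control, so the ``source perturbation below $\tfrac12\lambda_{\min}(Q)$'' mechanism breaks. Concretely, for the scalar problem $A=-1$, $B=Q=R=\Omega=1$, $K=0$, one has $P(K)=1/2$ and $\delta=1/2$, yet $\hat{K}=-0.49$ gives $P(\hat{K})=\frac{1+0.49^2}{2\cdot 0.51}\approx 1.216>1=2P(K)$; in particular $\trace\,(P(\hat{K}))\le 2\,\trace\,(P(K))$ also fails, so the bound $\norm{(\cA_{\hat{K}}^*)^{-1}}_2\le 2\,\trace\,(P)/\lambda_{\min}(Q)$ that your route needs is unavailable. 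The paper sidesteps this entirely: it keeps $\cA_K^*$ acting on $\tP$ (so Lemma~\ref{lem.normLyap} bounds its inverse by $\trace\,(P)/\lambda_{\min}(Q)$ with no circularity), places $\hat{P}$ on the right-hand side, and absorbs the $\tP$-dependent terms---which enter only through $B\tK$ and hence \emph{are} controlled by $\delta$---into the left-hand side, which is what produces the stated $\epsilon_2$. You could salvage your architecture via the adjoint identity $\norm{(\cA_{\hat{K}}^*)^{-1}}_2=\norm{\cA_{\hat{K}}^{-1}}_2\le 2\,\trace\,(X)/\lambda_{\min}(\Omega)$, using your own $X$-comparison, but that yields a constant different from the stated $\epsilon_2$, and since $\epsilon_3$ and $\epsilon_4$ are built from $\epsilon_2$, this would not prove the lemma with the constants as stated.
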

\begin{proof}
	Note that $\delta\le \zeta$, where $\zeta$ is given in~Proposition~\ref{prop.localdisk}. Thus, we can use Proposition~\ref{prop.localdisk} to show that $\hat{K}\in\cSK$. We next prove~\eqref{eq.perX}. For $K$ and $\hat{K}\in\cSK$, we can represent $X = X(K)$ and $\hat{X} = X(\hat{K})$ as the positive definite solutions to
	\begin{subequations}
		\begin{align}
		\label{eq.lyapKtemp1}
		(A \,-\, B K) X \;+\; X (A \,-\, B K)^T +\; \Omega  
		&\; = \;
		0
		\\[-0.1cm]
		(A \,-\, B \hat{K}) \hat{X} \;+\; \hat{X} (A \,-\, B \hat{K})^T +\; \Omega  
		&\; = \;
		0.
		\label{eq.lyapKtemp2}
		\end{align}
	\end{subequations}
	Subtracting~\eqref{eq.lyapKtemp1} from~\eqref{eq.lyapKtemp2} and rearranging terms yield
	\begin{align*}
	(A \,-\, B K) \tX \;+\; \tX (A \,-\, B K)^T
	\;=\;
	B \tK \hat{X} +  \hat{X} ( B\tK)^T
	\end{align*}
	where $\tX \DefinedAs \hat{X}-X$ and $\tK \DefinedAs \hat{K} - K$. Now, we use Lemma~\ref{lem.normLyap}, presented in Appendix~\ref{app.invLyapBound}, with $F \DefinedAs A-BK$  to upper bound the norm of $\tX = \cF(-B \tK \hat{X} -  \hat{X} ( B\tK)^T)$,  where the linear map $\cF$ is defined in~\eqref{eq.cF}, as follows
	\begin{align}
	{\norm{\tX}_F}
	&\;\le\;
	\norm{\cF}_2  \norm{B \tK \hat{X} +  \hat{X} ( B\tK)^T}_F
	\;\le\;
	\dfrac{\trace(X)}{\lambda_{\min}(\Omega)}  \norm{B \tK \hat{X} +  \hat{X} ( B\tK)^T}_F
	\nonumber\\[0.cm]
	&\;\le\;
	\dfrac{2 \, \trace(X) \norm{B}_F\norm{ \tK}_2}{\lambda_{\min}(\Omega)}    \left(\norm{X}_2+\norm{\tilde{X}}_2\right) 
	\;\le\;
	\dfrac{2 \, \trace(X) \norm{B}_F\norm{ \tK}_2   \norm{X}_2}{\lambda_{\min}(\Omega)}   \, + \, \dfrac{1}{2} \, \norm{\tilde{X}}_F.
	\label{eq.perturbTemp1}
	\end{align}
	Here, the second inequality follows from Lemma~\ref{lem.normLyap}, the third inequality follows from a combination of the sub-multiplicative property of the Frobenius norm and the triangle inequality, and the  last inequality follows from $\norm{\tK}\le \delta$ and $\norm{\tX}_2\le \norm{\tX}_F$. 
	Rearranging the terms in~\eqref{eq.perturbTemp1} completes the proof of~\eqref{eq.perX}.  
	
	We next prove~\eqref{eq.perP}. Similar to the proof of~\eqref{eq.perX}, subtracting the Lyapunov equation~\eqref{eq.lyapKP} from that of $\hat{P}=P(\hat{K})$ yields
	$
	(A - B K)^T \tP + \tP (A - B K)
	=
	W
	$
where $\tP \DefinedAs \hat{P}-P$ and
	$
	W
	\DefinedAs
	(B\tK)^T \hat{P} +  \hat{P} B\tK
	-
	\tK^TR\,\tK 
	-
	\tK^TR\,K - K^TR\,\tK.
	$
	This allows us to use Lemma~\ref{lem.normLyap}, presented in Appendix~\ref{app.invLyapBound}, with $F \DefinedAs (A-BK)^T$  to upper bound the norm of $\tP = \cF(-W)$,  where the linear map $\cF$ is defined in~\eqref{eq.cF}, as follows
	\begin{align*}
	{\norm{\tP}_F}
	\;\le\;
	\norm{\cF}_2  \norm{W}_F
	\;\le\; 
	\dfrac{\trace(\cF(Q+K^TRK))}{\lambda_{\min}(Q+K^TRK)} \, \norm{W}_F
	\;=\;
	\dfrac{\trace(P)}{\lambda_{\min}(Q+K^TRK)}  \, \norm{W}_F
	\;\le\;
	\dfrac{\trace(P)}{\lambda_{\min}(Q)}  \, \norm{W}_F.
	\end{align*}
	Here, the second inequality follows from Lemma~\ref{lem.normLyap}. This inequality in conjunction with applying the triangle inequality to the definition of $W$ yield
	\begin{multline*}
	\norm{\tP}_F
	\;\le\;
	\dfrac{\trace(P)}{\lambda_{\min}(Q)}\left(
	\norm{(B\tK)^T \tP +  \tP\,  B\tK}F
	\;+\;
	\norm{(B\tK)^T P +  P\,  B\tK
		-
		\tK^TR\,\tK -\, \tK^TR\,K -\, K^TR\,\tK}_F\right)
	\,\le
	\\[0.cm]
	\dfrac{\norm{\tP}_F}{2}
	\;+\; \dfrac{\trace(P)}{\lambda_{\min}(Q)}
	\Big(
	2\norm{P}_2\norm{B}_F
	\,+\,
	(\delta+2\norm{K}_2)\norm{R}_F
	\Big)
	\norm{\tK}_2.
	\end{multline*}
	The second inequality is obtained by bounding the two terms on the left-hand side using basic properties of norm, where, for the first term, 
	$
	\norm{\tK}_2 \le \delta \le {\lambda_{\min}(Q)}/(4\norm{B}_F \, \trace \, (P(K)))
	$
and, for the second term, $\norm{\tK}_2\le\delta$.
	Rearranging the terms in above completes the proof of~\eqref{eq.perP}. 	
	
	We next prove~\eqref{eq.perNab}. It is straightforward to show that the gradient~\eqref{eq.nablaK} satisfies
	\begin{align*}
	\tilde{\nabla}
	\;\DefinedAs\; 
	\nabla f(\hat{K}) - \nabla f(K)
	\;=\;
	2 R ( \tK X + K \tX + \tK \tX )  
	\,-\, 	
	2 B^T (\tP X + P \tX + \tP \tX )
	\end{align*}
where $P\DefinedAs P(K)$ and $\tP\DefinedAs \hat{P} - P$. The triangle inequality in conjunction with $\norm{\tX}_F\le\epsilon_1\norm{\tK}_2$, $\norm{\tP}_F\le\epsilon_2\norm{\tK}_2$, and $\norm{\tK}_2<\delta$, yield
	$
	{\norm{\tilde{\nabla}}_F}/{\norm{\tK}_2}
	\le 
	2  
	\norm{R}_F \, (\norm{X}_2 + \epsilon_1(\norm{K}_2+\delta) ) 
	+
	2  
	\norm{B}_F \, (\epsilon_2\norm{X}_2 + \epsilon_1(\norm{P}_2+\epsilon_2\delta)).
	$
	Rearranging terms completes the proof of~\eqref{eq.perNab}.

	Finally, we prove~\eqref{eq.perf}. Using the definitions of  $f(K)$ in~\eqref{eq.f} and $P(K)$ in~\eqref{eq.P}, it is easy to verify that 
	$
	f(K) 
	=
	\trace \, (P(K) \Omega).
	$
	Application of the Cauchy-Schwartz inequality yields
	$
	| f(\hK) - f(K) |
	=
	| \trace \, (\tP \Omega )|
	\le
	\norm{\tP}_F\norm{\Omega}_F,
	$
which completes the proof. 
\end{proof}

\subsubsection*{Proof of Lemma~\ref{lem.cSK2a}}
For any $K\in\cSK(a)$, we can use the bounds provided in Appendix~\ref{app.bounds} to show that 
	$ 
	c_1 / a \le \delta
	$
and 
	$
	\epsilon_4
	\le
	c_2 a^2,
	$
where $\delta$ and $\epsilon_4$ are given in Lemma~\ref{lem.perturbX} and each $c_i$ is a positive constant that depends on the problem data. Now, Lemma~\ref{lem.perturbX} implies 
	$
	f(K+r(a)U)-f(K)
	\le
	\epsilon_4 r(a) \norm{U}_2
	\le
	a
	$
where
	$
	r(a)
	\DefinedAs
	\min \{ c_1,1/c_2 \} / ( a  \sqrt{m n} ).
	$	
This inequality together with $f(K) \le a$ complete the proof. 

	\vspace*{-2ex}
\subsection{Proof of Proposition~\ref{prop.finiteTime}}
\label{app.finiteTime}

We first present two technical lemmas.

\begin{mylem}\label{lem.matExpBound}
	Let the matrices $F$, $X\succ 0$, and $\Omega\succ 0$ satisfy
	$
	F X + X F^T + \Omega 
	= 
	0.
	$
	Then, for any $t\ge0$, 
	\[
	\norm{\mre^{Ft}}_2^2
	\;\le\;
	( \norm{X}_2 / \lambda_{\min}(X) ) 
	\,
	\mre^{- ( \lambda_{\min}(\Omega) / \norm{X}_2) \, t}.
	\]
\end{mylem}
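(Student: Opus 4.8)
The plan is to reduce the matrix-exponential bound to a scalar differential inequality for a quadratic Lyapunov function built from $X^{-1}$ rather than $X$. First I would note that since $\Omega\succ0$ and the Lyapunov equation $FX+XF^T+\Omega=0$ admits a positive definite solution $X$, the matrix $F$ is Hurwitz, so $\mre^{Ft}$ decays and all quantities below are finite. Set $\alpha\DefinedAs\lambda_{\min}(\Omega)/\norm{X}_2$.

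Fix an arbitrary $v\in\R^n$, let $x(t)\DefinedAs\mre^{Ft}v$ (so $\dot x=Fx$), and consider $\mathcal V(t)\DefinedAs x(t)^TX^{-1}x(t)$. Conjugating the Lyapunov equation by $X^{-1}$ gives $X^{-1}F+F^TX^{-1}=-X^{-1}\Omega X^{-1}$, whence
\[
\dot{\mathcal V}\;=\;x^T\!\left(F^TX^{-1}+X^{-1}F\right)x\;=\;-\,x^TX^{-1}\Omega X^{-1}x.
\]
Using $\Omega\succeq\lambda_{\min}(\Omega)I$ together with the matrix inequality $X^{-2}\succeq X^{-1}/\norm{X}_2$ (equivalent, after conjugation by $X^{1/2}$, to $X^{-1}\succeq I/\norm{X}_2$), I obtain
\[
x^TX^{-1}\Omega X^{-1}x\;\ge\;\lambda_{\min}(\Omega)\,x^TX^{-2}x\;\ge\;\frac{\lambda_{\min}(\Omega)}{\norm{X}_2}\,x^TX^{-1}x,
\]
so that $\dot{\mathcal V}\le-\alpha\,\mathcal V$.

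The comparison lemma~\cite[Lemma~3.4]{kha96} then yields $\mathcal V(t)\le\mre^{-\alpha t}\,\mathcal V(0)$. Finally I would sandwich $\mathcal V$ using $I/\norm{X}_2\preceq X^{-1}\preceq I/\lambda_{\min}(X)$: the lower bound gives $\mathcal V(t)\ge\norm{x(t)}^2/\norm{X}_2$ and the upper bound gives $\mathcal V(0)\le\norm{v}^2/\lambda_{\min}(X)$. Combining these,
\[
\norm{\mre^{Ft}v}^2\;\le\;\frac{\norm{X}_2}{\lambda_{\min}(X)}\,\mre^{-\alpha t}\,\norm{v}^2,
\]
and taking the supremum over $\norm{v}=1$ gives the claimed bound. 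The only non-routine step is recognizing that $X^{-1}$ is the right weight for the Lyapunov function, which converts the Lyapunov equation into a clean exponential decay estimate; the remaining matrix inequalities are standard consequences of $\lambda_{\min}(X)I\preceq X\preceq\norm{X}_2I$.
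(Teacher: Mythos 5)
Your proof is correct, but it is the dual of the argument in the paper. The paper takes the weight $X$ itself as the Lyapunov certificate and runs the \emph{adjoint} dynamics: for $\dot{x}=F^Tx$ and $V(x)=x^TXx$, the Lyapunov equation gives $\dot{V}=-x^T\Omega x\le -\bigl(\lambda_{\min}(\Omega)/\norm{X}_2\bigr)V$ with no matrix inversion, and then recovers $\norm{\mre^{Ft}}_2$ by choosing $x_0$ to be the normalized left singular vector of $\mre^{Ft}$ associated with its largest singular value (using $x^T(t)=x_0^T\mre^{Ft}$). You instead keep the primal dynamics $\dot{x}=Fx$ and pay for it by conjugating the Lyapunov equation, so that $X^{-1}$ becomes the weight; this costs you the two extra order inequalities $X^{-1}\Omega X^{-1}\succeq\lambda_{\min}(\Omega)X^{-2}$ and $X^{-2}\succeq X^{-1}/\norm{X}_2$, both of which you justify correctly. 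What your route buys is that no singular-vector trick is needed: the bound holds for every initial vector $v$ and you simply take the supremum over $\norm{v}=1$. What the paper's route buys is that it never forms $X^{-1}$ and the dissipation inequality is immediate from the equation as given. Both arguments invoke the same comparison lemma and produce identical constants $\norm{X}_2/\lambda_{\min}(X)$ and decay rate $\lambda_{\min}(\Omega)/\norm{X}_2$, so the two proofs are equally sharp.
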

\begin{proof}
	The function $V(x)\DefinedAs x^T X x$ is a Lyapunov function for 
	$
	\dot{x} = F^Tx
	$ 
because
	$
	\dot{V}(x)
	=
	{-x^T \Omega x}
	\le
	-c V(x),
	$
where $c\DefinedAs{\lambda_{\min}(\Omega)}/{\norm{X}_2}$. For any initial condition $x_0$, this inequality together with the comparison lemma~\cite[Lemma~3.4]{kha96} yield 
	$
	V(x(t)) 
	\le
	V(x_0)\, \mre^{-c t}.
	$
	Noting that $x^T(t)=x_0^T\mre^{Ft}$, we let $x_0$ be the normalized left singular vector associated with the maximum singular value of $\mre^{Ft}$ to obtain
	\begin{align*}
	\norm{\mre^{F t}}_2^2
	\;=\;
	\norm{x(t)}^2
	\;\le\;
	\dfrac{V(x(t))}{\lambda_{\min}(X)}
	\;\le\;
	\dfrac{V(x_0)}{\lambda_{\min}(X)} \, \mre^{-c t}
	\end{align*} 
which along with $V(x_0)\le \norm {X}_2$ complete the proof.
\end{proof}
	
Lemma~\ref{lem.finiteTruncation} establishes an exponentially decaying upper bound on the difference between $f_{x_0}(K)$ and $f_{x_0,\tau}(K)$ over any sublevel set $\cSK(a)$ of the LQR objective function $f(K)$.
\begin{mylem}\label{lem.finiteTruncation}
	For any $K\in\cSK(a)$ and $v\in\R^n$,
	$
	|f_{v}(K)-f_{v,\tau}(K)|
	\le
	\norm{v}^2 \kappa_1(a) \mre^{-\kappa_2(a)\tau},
	$
	where the positive functions $\kappa_1(a)$ and $\kappa_2(a)$, given by~\eqref{eq.kappa12}, depend on problem data.
\end{mylem}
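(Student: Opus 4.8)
The plan is to write the difference as the tail of the infinite-horizon integral and then bound the integrand using the decay of the closed-loop matrix exponential supplied by Lemma~\ref{lem.matExpBound}. Under the feedback law $u=-Kx$ with $x(0)=v$, the closed-loop trajectory is $x(t)=\mre^{(A-BK)t}v$ and the running cost is $x^T(t)(Q+K^TRK)x(t)$, so
\begin{align*}
f_v(K) \,-\, f_{v,\tau}(K)
\;=\;
\int_\tau^\infty v^T \mre^{(A-BK)^Tt}\,(Q+K^TRK)\,\mre^{(A-BK)t}\,v\,\mrd t.
\end{align*}
Bounding the quadratic form by $\norm{Q+K^TRK}_2\,\norm{\mre^{(A-BK)t}}_2^2\,\norm{v}^2$ reduces the problem to integrating $\norm{\mre^{(A-BK)t}}_2^2$ over $[\tau,\infty)$.

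Next I would invoke Lemma~\ref{lem.matExpBound} with $F\DefinedAs A-BK$, $X\DefinedAs X(K)$, and $\Omega$, which is legitimate because $X(K)$ solves the Lyapunov equation~\eqref{eq.lyapKX}. This gives $\norm{\mre^{(A-BK)t}}_2^2 \le (\norm{X(K)}_2/\lambda_{\min}(X(K)))\,\mre^{-(\lambda_{\min}(\Omega)/\norm{X(K)}_2)t}$, and integrating the tail yields
\begin{align*}
\int_\tau^\infty \norm{\mre^{(A-BK)t}}_2^2\,\mrd t
\;\le\;
\dfrac{\norm{X(K)}_2^2}{\lambda_{\min}(X(K))\,\lambda_{\min}(\Omega)}\,\mre^{-(\lambda_{\min}(\Omega)/\norm{X(K)}_2)\tau}.
\end{align*}
Combining this with the quadratic-form bound leaves a prefactor proportional to $\norm{Q+K^TRK}_2\,\norm{X(K)}_2^2/(\lambda_{\min}(X(K))\lambda_{\min}(\Omega))$ and an exponential rate $\lambda_{\min}(\Omega)/\norm{X(K)}_2$.

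Finally I would make these $K$-dependent quantities uniform over the sublevel set using the bounds in Lemma~\ref{lem.bounds} (Appendix~\ref{app.bounds}): the lower bound $\lambda_{\min}(X(K))\ge\nu/a$, the upper bound $\norm{X(K)}_2\le a/\lambda_{\min}(Q)$, and the gain bound $\norm{K}_2\le a/\sqrt{\nu\lambda_{\min}(R)}$, which controls $\norm{Q+K^TRK}_2\le\norm{Q}_2+a^2\norm{R}_2/(\nu\lambda_{\min}(R))$. Substituting these gives the rate $\kappa_2(a)\DefinedAs \lambda_{\min}(\Omega)\lambda_{\min}(Q)/a$ (inversely proportional to $a$, since $\mre^{-(\lambda_{\min}(\Omega)/\norm{X(K)}_2)\tau}\le\mre^{-\kappa_2(a)\tau}$) and a prefactor $\kappa_1(a)$ that is the product of a degree-$2$ factor in $a$ (from $\norm{Q+K^TRK}_2$) and a degree-$3$ factor (from $\norm{X(K)}_2^2/\lambda_{\min}(X(K))\le a^3/(\nu\lambda_{\min}^2(Q))$), i.e.\ a degree-$5$ polynomial in $a$, matching~\eqref{eq.kappa12}. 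The argument presents no real obstacle; the only care needed is that $Q+K^TRK$ depends on $K$, so the gain bound valid uniformly over $\cSK(a)$ is essential to collapse everything into explicit functions of $a$ and the problem data.
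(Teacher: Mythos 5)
Your proposal is correct and follows essentially the same route as the paper's proof: both express the truncation error as a tail integral, bound $\norm{\mre^{(A-BK)t}}_2^2$ via Lemma~\ref{lem.matExpBound} applied with $F=A-BK$ and $X=X(K)$, integrate over $[\tau,\infty)$, and then uniformize the $K$-dependent constants over $\cSK(a)$ using Lemma~\ref{lem.bounds}, arriving at the same $\kappa_2(a)=\lambda_{\min}(\Omega)\lambda_{\min}(Q)/a$ and a degree-$5$ prefactor. The only cosmetic difference is that you bound the scalar integrand directly with $\norm{Q+K^TRK}_2$, whereas the paper first bounds $\norm{X_v-X_{v,\tau}}_F$ and then applies Cauchy--Schwarz with $\norm{Q}_F+\norm{R}_2\norm{K}_F^2$; your spectral-norm constants are no larger, so the stated bound with the paper's $\kappa_1(a)$ in~\eqref{eq.kappa12} holds a fortiori.
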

\begin{proof}
	Since $x(t) = \mre^{(A-BK) t} v$ is the solution to~\eqref{eq.linsys1} with $u=-Kx$ and the initial condition $x(0)=v$, it is easy to verify that 
	$
		f_{v,\tau}(K)
		=
		\trace \left( (Q + K^TR\,K)\, X_{v,\tau}(K) \right)
		$
and
		$
		f_{v}(K)
		=
		\trace \left( (Q \,+\, K^TR\,K)\, X_{v}(K) \right),
		$
		where
		\begin{align*}
		X_{v,\tau}(K)
		\; \DefinedAs \; 
		\int_{0}^{\tau} \mre^{(A - BK) t}\,vv^T \mre^{(A - BK)^T t} \, \mrd t
		\end{align*}
	and $
	X_v
	\DefinedAs
	X_{v,\infty}.
	$
	Using the triangle inequality, we have
	\begin{align}
	\norm{X_{v}(K) - X_{v,\tau}(K)}_F
         \, \le \,
	\norm{v}^2\int_{\tau}^{\infty} \norm{\mre^{(A - BK) t}}_2^2 \; \mrd t.
	\label{eq.finiteTruncTemp1}
	\end{align}
Equation~\eqref{eq.lyapKX} allows us to use Lemma~\ref{lem.matExpBound} with $F\DefinedAs A-BK$, $X\DefinedAs X(K)$ to upper bound $\norm{\mre^{(A-B K)t}}_2$, 
	$
	{\lambda_{\min}(X)} \norm{\mre^{(A-B K)t}}_2^2
	\le
	{\norm{X}_2}\,\mre^{- ( \lambda_{\min}(\Omega) / \norm{X}_2 ) \, t}.
	$
	Integrating this inequality over $[\tau,\infty]$ in conjunction with~\eqref{eq.finiteTruncTemp1} yield
	\begin{align}\label{eq.gapXtau}
	\norm{X_{v}(K) - X_{v,\tau(K)}}_F
	&\;\le\;
	\norm{v}^2\kappa'_1\, \mre^{-\kappa'_2\tau}
	\end{align}
	where
	$
	\kappa'_1
	\DefinedAs
	{\norm{X(K)}_2^2}/( \lambda_{\min}(\Omega)\lambda_{\min}(X(K)) )
	$	
	and
	$
	\kappa'_2
	\DefinedAs
	{\lambda_{\min}(\Omega)} / {\norm{X(K)}_2}.
	$
Furthermore,
	\begin{multline*}
	|f_v(K)-f_{v,\tau}(K)|
	\;=\;
	\left|\trace \left( (Q + K^TR\,K)\, (X_v - X_{v,\tau}) \right)\right|
	\;\le\;
		\\[0.cm]
	(\norm{Q}_F+\norm{R}_2\norm{K}_F^2) \norm{X_v - X_{v,\tau}}_F
	\;\le\;
	\norm{v}^2(\norm{Q}_F+\norm{R}_2\norm{K}_F^2) \, \kappa'_1 \mre^{-\kappa'_2\tau}
	\end{multline*}
	where we use the Cauchy-Schwartz and triangle inequalities for the first inequality and~\eqref{eq.gapXtau} for the second inequality. Combining this result with the bounds on the variables provided in Lemma~\ref{lem.bounds} completes the proof with
	\begin{subequations}\label{eq.kappa12}
		\begin{align}\label{eq.kappa1}
		\kappa_1(a)
		&\;\DefinedAs\;
		\left(\norm{Q}_F+\dfrac{a^2\norm{R}_2}{\nu\lambda_{\min}(R)}\right)
		\dfrac{a^3}{\nu\lambda_{\min}(\Omega)\lambda_{\min}^2(Q)}
		\\[0.cm]\label{eq.kappa2}
		\kappa_2(a)
		&\;\DefinedAs\;
		{\lambda_{\min}(\Omega)\lambda_{\min}(Q)}/{a}
		\end{align}
	\end{subequations}
	where the constant $\nu$ is given by~\eqref{eq.nuu}.
\end{proof}

\subsubsection*{Proof of Proposition~\ref{prop.finiteTime}}
Since $K\in \cSK(a)$ and $r\le r(a)$, Lemma~\ref{lem.cSK2a} implies that $K\pm r U_i \in \cSK(2a)$. Thus, $f_{x_i}(K\pm r U_i)$ is well defined for $i=1,\ldots,N$, and 	
	\begin{align*}
	\widetilde{\nabla} f(K) - \widebar{\nabla} f(K)
	\,=\,
	\dfrac{1}{2 r N}	\Big ( \sum_{i} \big ( f_{x_i}(K+r U_i)  - f_{x_i,\tau} (K+ r U_i) \big ) U_i
	-
	\sum_{i} \big( f_{x_i}(K-r U_i) - f_{x_i,\tau} (K- r U_i) \big ) U_i\Big ).	
	\end{align*}
Furthermore, since $K\pm r U_i \in \cSK(2a)$, we can use triangle inequality and apply Lemma~\ref{lem.finiteTruncation}, $2N$ times, to bound each term individually and obtain
	\begin{align*}
	\norm {\widetilde{\nabla}f(K) -\widebar{\nabla}f(K)}_F
	\;\le\;
	( \sqrt{m n} / r ) \max_i \norm{x_i}^2 \kappa_1(2a)\mre^{-\kappa_2(2a) \tau}
	\end{align*}
where we used $\norm{U_i}_F = \sqrt{mn}$. This completes the proof.

	\vspace*{-2ex}
\subsection{Proof of Proposition~\ref{prop.thirdOrder}}
\label{app.thirdOrder}

We first establish bounds on the smoothness parameter of $\nabla f(K)$. For $J\in\R^{m\times n}$, $v\in\R^n$, and $f_v(K)$ given by~\eqref{eq.f_x}, 
let 
	$
	j_v(K)   
	\DefinedAs 
	\inner {J}{\nabla^2 f_v(K; J)},
	$
denote the second-order term in the Taylor series expansion of $f_v(K+J)$ around $K$. Following similar arguments as in~\cite[Eq.~(2.3)]{toimak87} leads to
	$
	j_v(K)
	= 
	2 \, \trace \, (J^T( R J - 2 B^TD ) X_v),
	$
where $X_v$ and $D$ are the solutions to
\begin{subequations}\label{eq.Gstemp_12}
	\begin{align}\label{eq.GStemp1}
	\cA_K(X_v) 
	&\;=\;
	-vv^T
	\\[0.cm]\label{eq.GStemp2}
	\cA_K^*(D)
	&\;=\;
	J^T(B^TP -R\,K) + (B^TP-R\,K)^TJ
	\end{align}
\end{subequations}
and $P$ is given by~\eqref{eq.P}. The following lemma provides an analytical expression for the gradient $\nabla j_v(K)$.

\begin{mylem}\label{lem.hesnabla}
	For any $v\in\R^n$ and $K\in\cSK$,
	$
	\nabla j_v(K)
	=
	4
	\big(
	B^TW_1X_v 
	+
	(R J - B^TD)W_2 
	+
	(R K -B^TP )W_3
	\big),
	$
where $W_i$ are the solutions to the linear equations
	\begin{subequations}\label{eq.GStemp_456}
		\begin{align}\label{eq.GStemp4}
		\cA_K^*(W_1)
		&\;=\;
		J^TR\,J-J^TB^TD - DBJ
		\\[0.cm]\label{eq.GStemp5}
		\cA_K(W_2)
		&\;=\;
		BJX_v + X_vJ^TB^T
		\\[0.cm]\label{eq.GStemp6}
		\cA_K(W_3)
		&\;=\;
		BJ\,W_2 + W_2J^TB^T.
		\end{align}
	\end{subequations}
\end{mylem}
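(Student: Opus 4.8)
The plan is to compute the gradient directly from its defining property: $\nabla j_v(K)$ is the unique matrix for which $\inner{\nabla j_v(K)}{\tK} = \left.\tfrac{\mrd}{\mrd t}\, j_v(K + t\tK)\right|_{t=0}$ for every direction $\tK \in \R^{m\times n}$. I would therefore differentiate the explicit expression $j_v(K) = 2\,\trace(J^T(RJ - 2B^TD)X_v)$ along $\tK$, treating $X_v = X_v(K)$, $D = D(K)$, and $P = P(K)$ as $K$-dependent while the Hessian direction $J$ stays fixed. Denoting the directional derivative along $\tK$ by $\delta(\cdot)$ and using $\delta\cA_K(\cdot) = -(B\tK(\cdot) + (\cdot)\tK^TB^T)$ together with its adjoint analogue, the sensitivities follow by differentiating the Lyapunov equations that define the three quantities: from~\eqref{eq.GStemp1} one gets $\cA_K(\delta X_v) = B\tK X_v + X_v\tK^TB^T$; from~\eqref{eq.lyapKP} one gets $\cA_K^*(\delta P) = \tK^T(B^TP - RK) + (B^TP - RK)^T\tK$; and from~\eqref{eq.GStemp2} one gets a formula for $\cA_K^*(\delta D)$ that feeds in $\delta P$ through $\delta M = B^T\delta P - R\tK$. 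All three sensitivities are symmetric, which lets me freely symmetrize the non-symmetric factors appearing next to them inside the trace.

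The key mechanism is that each inverse Lyapunov solve is absorbed into exactly one of the auxiliary matrices $W_1, W_2, W_3$ via the adjoint identity $\trace(\cA_K^*(U)\,Z) = \trace(U\,\cA_K(Z))$ for symmetric $U, Z$. Splitting $\delta j_v$ into the part coming from $\delta X_v$ and the part coming from $\delta D$, I proceed as follows. For the $\delta X_v$ term I substitute $\delta X_v = \cA_K^{-1}(B\tK X_v + X_v\tK^TB^T)$ and move $\cA_K^{-1}$ onto the symmetrized factor $\tfrac{1}{2}(J^T(RJ - 2B^TD) + (RJ - 2B^TD)^TJ) = J^TRJ - J^TB^TD - DBJ$; this is precisely the right-hand side of~\eqref{eq.GStemp4}, so applying $(\cA_K^*)^{-1}$ yields $W_1$ and produces the term $4B^TW_1X_v$. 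For the $\delta D$ term I first replace the symmetrized factor $\tfrac{1}{2}(X_vJ^TB^T + BJX_v) = \tfrac{1}{2}\cA_K(W_2)$, which introduces $W_2$ from~\eqref{eq.GStemp5}, and then substitute the formula for $\cA_K^*(\delta D)$. The pieces of $\cA_K^*(\delta D)$ that are linear in $\tK$ directly, together with the $-R\tK$ piece of $\delta M$, combine after cyclic and transpose trace manipulations into $4(RJ - B^TD)W_2$. Finally, the remaining $B^T\delta P$ piece produces the factor $BJW_2 + W_2J^TB^T = \cA_K(W_3)$ by~\eqref{eq.GStemp6}; absorbing $\cA_K^{-1}$ via the adjoint identity and substituting the expression for $\cA_K^*(\delta P)$ yields the last term $4(RK - B^TP)W_3$. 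Reading off the coefficient of $\tK$ then gives the claimed formula.

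The main obstacle is the bookkeeping around $\delta D$: unlike $\delta X_v$, its sensitivity is not self-contained but chains through $\delta P$, so the computation nests two inverse Lyapunov solves and I must carefully disentangle the two independent directions---the fixed Hessian direction $J$ and the differentiation direction $\tK$---so that all $J$-dependence collapses into $W_1, W_2, W_3$ while all $\tK$-dependence is exposed as a single linear inner product. The recurring technical steps are the symmetrization of non-symmetric matrices next to the (symmetric) sensitivities and the repeated use of the adjointness of $\cA_K$ and $\cA_K^*$; these are routine individually but must be applied consistently so that the three groups of terms assemble into exactly the auxiliary equations~\eqref{eq.GStemp_456}. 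I would organize the write-up by first recording the three sensitivity equations as a preliminary step, and then treating the $W_1$, $W_2$, and $W_3$ contributions in turn.
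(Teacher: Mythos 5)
Your proposal is correct and follows essentially the same route as the paper's proof: perturb the defining Lyapunov equations for $X_v$, $D$, and $P$ to obtain the three sensitivity equations, then use the adjoint identity $\trace(\cA_K^*(U)Z)=\trace(U\cA_K(Z))$ together with symmetrization to absorb each inverse Lyapunov solve into $W_1$, $W_2$, and $W_3$, with the $\delta D$ contribution chaining through $\delta P$ exactly as the paper does. The grouping of terms (the $\delta X_v$ part yielding $4B^TW_1X_v$, the directly-$\tK$-linear and $-R\tK$ pieces yielding $4(RJ-B^TD)W_2$, and the $B^T\delta P$ piece passing through $W_2$ and Eq.~\eqref{eq.GStemp6} to yield $4(RK-B^TP)W_3$) coincides with the paper's derivation.
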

\begin{proof}
We expand $j_v(K+\epsilon\tK)$ around $K$ and to obtain
	$
	j_v(K+\epsilon\tK)-j_v(K)
	 =
	2 \,\epsilon\,\trace \, ( J^T(R\,J-2B^TD)\tX_v) 
	- 
	4\,\epsilon\, \trace \, ( J^TB^T\tilde{D}X_v)
 	+
	o(\epsilon).
	$
Here, $o(\epsilon)$ denotes higher-order terms in $\epsilon$, whereas $\tX_v$, $\tilde{D}$, and $\tP$ are obtained by perturbing Eqs.~\eqref{eq.GStemp1},~\eqref{eq.GStemp2}, and~\eqref{eq.lyapKP}, respectively,
\begin{subequations}
	\begin{align}\label{eq.GStemp7}
	\!\!
	\cA_K(\tX_v) 
	& \;=\;
	B\tK X_v + X_v \tK^TB^T
	\\[0.cm]	\label{eq.GStemp8}
	\!\!
	\cA_K^*(\tilde{D})
	& \; = \;
	\tK^TB^TD + DB\tK \; +\;
	\cA_K^*(\tilde{D})
	\; =\;  
	J^T(B^T\tP-R\tK) + (B^T\tP - R\tK)^TJ
	\\
	\label{eq.GStemp9}
	\!\!
	\cA_K^*(\tP)
	&\;=\;
	 \tK^TB^TP + P B \tK - K^TR\tK - \tK^TRK.
	\end{align}
\end{subequations}
Applying the adjoint identity on Eqs.~\eqref{eq.GStemp7} and~\eqref{eq.GStemp8} yields
	$
	j_v(K+\epsilon\tK) - j_v(K)
	\approx
	2 \epsilon\,\trace \, ((B\tK X_v + X_v \tK^TB^T)W_1 )
	- 
	2 \epsilon\, \trace \, ( (\tK^TB^TD + DB\tK + J^T(B^T\tP-R\tK) 
	+
	(B^T\tP - R\tK)^TJ)W_2 )
	=
	4 \epsilon\,\trace \, (\tK^TB^TW_1X_v )
	-
	4 \epsilon\, \trace \, ( \tK^T(B^TD-RJ)W_2 )  - 4 \epsilon\,\trace \, (W_2J^TB^T\tP ),
	$
where we have neglected $o (\epsilon)$ terms, and $W_1$ and $W_2$ are given by~\eqref{eq.GStemp4} and~\eqref{eq.GStemp5}, respectively. Moreover, the adjoint identity applied to~\eqref{eq.GStemp9} allows us to simplify the last term~as,
	$
	2 \, \trace \, (W_2J^TB^T\tP )
	=
	\trace \, ((\tK^TB^TP + P B \tK - K^TR\tK - \tK^TRK) W_3 ),
	$
where $W_3$ is given by~\eqref{eq.GStemp6}.
Finally, this yields
	$
j(K+\epsilon\tK)-j(K)
	\approx
	4\epsilon
	\,  
	\trace \, (\tK^T ((RK -B^TP )W_3
	+ 
	B^TW_1X_v 
	+
	(RJ - B^TD)W_2 ) ).
	$
	\end{proof}

We next establish a bound on $\norm{\nabla j_v(K)}_F$.
\begin{mylem}\label{eq.hesnablaBound}
	Let $K, K'\in\R^{m\times n}$ be such that the line segment $K+t(K'-K)$ with $t\in[0,1]$ belongs to $\cSK(a)$ and let $J\in\R^{m\times n}$ and $v\in\R^n$ be fixed. Then, the function $j_v(K)$ satisfies
	$
	|j_v(K_1)-j_v(K_2)|
	\le
	\ell(a) \norm{J}_F^2 \norm{v}^2  \norm{K_1-K_2}_F,
	$
	where $l(a)$ is a positive function given by
	\begin{align}\label{eq.ell}
	\ell(a)
	\, \DefinedAs \,
	c a^2 \, + \, c'a^4
	\end{align}
	and $c$, $c'$ are positive scalars that depend only on problem data. 
\end{mylem}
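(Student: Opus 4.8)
The plan is to bound the Lipschitz constant of $j_v$ by a uniform bound on the norm of its gradient over the segment. Since the entire segment $K_t \DefinedAs K_2 + t(K_1-K_2)$, $t\in[0,1]$, lies in $\cSK(a)$, the closed-loop operators $\cA_{K_t}$ are invertible along it and the gradient formula of Lemma~\ref{lem.hesnabla} applies at every $K_t$. The fundamental theorem of calculus together with the Cauchy--Schwarz inequality then give
\[
|j_v(K_1) - j_v(K_2)| \;=\; \left| \int_0^1 \inner{\nabla j_v(K_t)}{K_1 - K_2}\,\mrd t \right| \;\le\; \left( \max_{t \in [0,1]} \norm{\nabla j_v(K_t)}_F \right) \norm{K_1 - K_2}_F,
\]
so it suffices to prove the uniform bound $\norm{\nabla j_v(K)}_F \le \ell(a)\,\norm{J}_F^2\,\norm{v}^2$ for every $K \in \cSK(a)$.

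To obtain this bound I would estimate the Frobenius norm of each of the three summands in the expression for $\nabla j_v(K)$ in Lemma~\ref{lem.hesnabla}, which in turn requires bounding the auxiliary matrices $X_v$, $D$, $P$, and $W_1,W_2,W_3$ defined through the Lyapunov equations~\eqref{eq.GStemp_12} and~\eqref{eq.GStemp_456}. Each of these matrices is the image of its right-hand side under $\cA_K^{-1}$ or $(\cA_K^*)^{-1}$, so I would repeatedly invoke the operator-norm bound on these inverse Lyapunov operators from Lemma~\ref{lem.ucAinv} (which scales linearly in $a$) together with the uniform bounds on $\norm{X(K)}_2$, $\norm{P(K)}_2$, and $\norm{K}_F$ over $\cSK(a)$ from Lemma~\ref{lem.bounds}. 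Combining these with submultiplicativity of the spectral and Frobenius norms and the triangle inequality yields, in order, $\norm{X_v}_F \lesssim a\norm{v}^2$, $\norm{D}_F \lesssim a^2\norm{J}_F$, $\norm{W_1}_F \lesssim a^3\norm{J}_F^2$, $\norm{W_2}_F \lesssim a^2\norm{J}_F\norm{v}^2$, and $\norm{W_3}_F \lesssim a^3\norm{J}_F^2\norm{v}^2$, where each $\lesssim$ hides constants depending only on the problem data.

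Feeding these estimates back into the three summands of $\nabla j_v(K)$ and again using submultiplicativity produces a bound of the form $\norm{\nabla j_v(K)}_F \le (\text{polynomial in } a)\,\norm{J}_F^2\,\norm{v}^2$ whose $a$-powers range between two and four: the degree-two contributions come from the pieces of the right-hand sides that carry no $\cA_K^{-1}$-induced growth in their lowest-order part (e.g.\ the $J^TRJ$ piece of~\eqref{eq.GStemp4} and the $RJ$ piece of the second summand), while the degree-four contributions come from the fully nested chains (the third summand $(RK-B^TP)W_3$ is purely degree four). Using the elementary inequality $a^3 \le a^2 + a^4$, valid for all $a>0$, to absorb the degree-three terms, the bound collapses to exactly the form $ca^2 + c'a^4$ claimed in~\eqref{eq.ell}. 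The main obstacle is purely bookkeeping: the matrices $W_1,W_2,W_3$ sit at the end of a chain of Lyapunov solves (with $W_3$ depending on $W_2$, and both $W_1,W_2$ on $D$ and $X_v$), so one must carefully propagate the powers of $a$ as well as the correct powers of $\norm{J}_F$ and $\norm{v}$ through each nested equation, and keep the Frobenius-versus-spectral norm conversions consistent so that the final constants collapse to precisely the two surviving even powers of $a$.
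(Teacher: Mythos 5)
Your proposal is correct and follows essentially the same route as the paper: a uniform bound $\norm{\nabla j_v(K)}_F \le \ell(a)\norm{J}_F^2\norm{v}^2$ over $\cSK(a)$, obtained by propagating Lemma~\ref{lem.ucAinv} and the bounds of Lemma~\ref{lem.bounds} through the nested Lyapunov equations for $X_v$, $D$, and $W_1,W_2,W_3$, and then the Lipschitz estimate along the segment. The only differences are cosmetic: you make the fundamental-theorem-of-calculus step explicit (the paper leaves it implicit), and your shorthand $\norm{W_1}_F\lesssim a^3\norm{J}_F^2$ drops the lower-order term $c_3 a\norm{J}_F^2$ that the paper retains, but your later accounting of the degree-two contributions and the absorption $a^3\le a^2+a^4$ show you track this correctly.
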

\begin{proof}
	We show that the gradient $\nabla j_v(K)$ given by Lemma~\ref{lem.hesnabla} is upper bounded by $\norm{\nabla j_v(K)}_F\le \ell(a) \norm{J}_F^2 \norm{v}^2$. Applying Lemma~\ref{lem.ucAinv} on~\eqref{eq.Gstemp_12}, the bounds in Lemma~\ref{lem.bounds}, and the triangle inequality, we have
	$
	\norm{X_v}_F
	\le
	c_1 a \norm{v}^2
	$
and
	$
	\norm{D}_F
	\le
	c_2 a^2
	\norm{J}_F,
	$
where $c_1$ and $c_2$ are positive constants that depend on problem data. We can use the same technique to bound the norms of $W_i$ in Eq.~\eqref{eq.GStemp_456},
	$
	\norm{W_1}_F
	\le
	(c_3a + c_4 a^3 )\norm{J}^2_F,
	$
	$
	\norm{W_2}_F
	\le
	c_5 a^2 \norm{v}^2 \norm{J}_F,
	$
	$
	\norm{W_3}_F
	\le
	c_6 a^3 \norm{v}^2 \norm{J}^2_F,
	$
where $c_3,\ldots,c_6$ are positive constants that depend on problem data. Combining these bounds with the Cauchy-Schwartz and  triangle inequalities applied to $\nabla f_v(K)$  completes the proof.
\end{proof}

\subsubsection*{Proof of Proposition~\ref{prop.thirdOrder}}
Since $r\le r(a)$, Lemma~\ref{lem.cSK2a} implies that $K\pm sU\in\cSK(2a)$ for all $s\le r$. Also, the mean-value theorem implies that, for any $U\in\R^{m\times n}$ and $v\in\R^n$, 
	\begin{align*}
	f_{v}(K\pm rU) 
	\;=\;
	f_v(K)
	\; \pm \; 
	r\inner{\nabla f_v(K)}{U}
	\; + \;
	\dfrac{r^2}{2} \inner{U}{\nabla^2 f_v(K\pm\, s_{\pm}\,U;U)}
	\end{align*}
where $s_{\pm}\in[0,r]$ are constants that depend on $K$ and $U$.  Now, if $\norm{U}_F=\sqrt{mn}$, the above identity yields
\begin{align*}
	\dfrac{1}{2r}{\left(f_{v}(K+rU)- f_v(K-rU)\right)}
	-
	\inner{\nabla f_v(K)}{U} 
	\,
	&\;=\;
	\dfrac{r}{4} \, (  \inner{U}{\nabla^2 f_v(K+ s_{+} U;U)} - \inner{U}{\nabla^2 f_v(K- s_{-} U;U)}  )
	\\[0.15cm]
	&\;\le\;
	\dfrac{r}{4} \, (s_++s_-) \norm{U}_F^3 \, \ell(2a) \, \norm{v}^2
	\, \le \, 
	\dfrac{r^2}{2}mn\sqrt{mn} \, \ell(2a) \, \norm{v}^2
\end{align*}
where the first inequality follows from  Lemma~\ref{eq.hesnablaBound}. Combining this inequality with the triangle inquality applied to the definition of $\widehat{\nabla}f(K)-\widetilde{\nabla}f(K)$  completes the proof.

	\vspace*{-2ex}
\subsection{Proof of Proposition~\ref{prop.linearConvG}}
\label{app.approxGD}
From inequality~\eqref{eq.gAs2}, it follows that $G$ is a descent direction of the function $f(K)$. Thus, we can use the descent lemma~\cite[Eq.~(9.17)]{boyvan04} to show that  $K^+ \DefinedAs K - \alpha G$ satisfies
\begin{align}\label{eq.desLem}
	\!
	f(K^+) \, - \, f(K)
	\, \le \,
	( L_f \alpha^2 / 2 ) \, \norm{ G}_F^2
	\, - \, 
	\alpha \inner{\nabla f(K)}{G}
\end{align}
for any $\alpha$ for which the line segment between $K^+$ and $K$ lies in $\cSK(a)$. Using~\eqref{eq.gAs}, for any $\alpha \in [0, 2\mu_1/(\mu_2L_f) ]$, we have
\begin{align}\label{eq.desLemTemp1}
	( L_f\alpha^2 / 2) \, \norm{ G}_F^2 \, - \, \alpha\inner{\nabla f(K)}{G}
	\; \le\;
	( \alpha \, ( L_f \mu_2 \alpha -  2 \mu_1 ) / 2 ) \, \norm{\nabla f(K)}_F^2
	\;\le\;
	0
\end{align} 
and the right-hand side of inequality~\eqref{eq.desLem} is nonpositive for $\alpha\in[0,2\mu_1/(\mu_2L_f)]$. Thus, we can use the continuity of the function $f(K)$ along with inequalities~\eqref{eq.desLem} and~\eqref{eq.desLemTemp1} to conclude that $K^+\in\cSK(a)$ for all  $\alpha \in [0,2\mu_1/(\mu_2L_f)]$, and 
	$
	f(K^+) - f(K)
	\le
	( \alpha \, ( L_f \mu_2 \alpha -  2 \mu_1 ) / 2 ) \, \norm{\nabla f(K)}_F^2.
	$
Combining this inequality with the PL condition~\eqref{eq.GradDom}, it follows that, for any $\alpha\in[0,c_1/(c_2 L_f)]$, 
	\begin{align*}
	f(K^+) - f(K)
	\;\le\;
	- (\mu_1\alpha / 2 ) \, \norm{\nabla f(K)}_F^2
	\le
	- \mu_f \mu_1\alpha \, (f(K)-f(K^\star)).
	\end{align*}
Subtracting $f(K^\star)$ and rearranging terms complete the proof.

	\vspace*{-2ex}
\subsection{Proofs of Section~\ref{subsec.varianceMu1}}
\label{app.varianceMu1}
We first present two technical results. Lemma~\ref{lem.randomMatUniform} extends~{\cite[Theorem 3.2]{rudver13}} on the norm of Gaussian matrices  presented in Appendix~\ref{app.probabilistic} to random matrices with uniform distribution on the sphere $\sqrt{mn} \, S^{mn-1}$.
\begin{mylem}\label{lem.randomMatUniform}
	Let $E\in\R^{m\times n}$ be a fixed matrix and let $U\in\R^{m\times n}$ be a random matrix with $\vect(U)$ uniformly distributed on the sphere $\sqrt{mn} \, S^{mn-1}$. Then, for any $s \ge 1$ and $t \ge 1$, we have
	$\bbP(\sfB)\le 2\mre^{-s^2q-t^2n}
	+
	\mre^{-mn/8}
	$, 
	where
	$
	\sfB
	\DefinedAs
	\left\{\norm{E^TU}_2>c'\left(s\norm{E}_F+t\sqrt{n}\norm{E}_2\right)\right\},
	$
and  $q \DefinedAs \norm{E}_F^2/{\norm{E}_2^2} $ is the stable rank of ${E}$.
\end{mylem}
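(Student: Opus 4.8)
The plan is to transfer the known Gaussian bound to the uniform-on-sphere setting through the standard representation of a uniform random vector on a sphere as a normalized Gaussian vector. Let $G\in\R^{m\times n}$ be a random matrix whose entries are i.i.d.\ standard normal, so that $\vect(G)$ is standard Gaussian on $\R^{mn}$. Since the direction $\vect(G)/\norm{G}_F$ of a standard Gaussian vector is uniformly distributed on the unit sphere $S^{mn-1}$ and is independent of its magnitude $\norm{G}_F$, the vector $\sqrt{mn}\,\vect(G)/\norm{G}_F$ is uniformly distributed on $\sqrt{mn}\,S^{mn-1}$. As the probability $\bbP(\sfB)$ depends only on the distribution of $U$, I may therefore assume without loss of generality that $U=\sqrt{mn}\,G/\norm{G}_F$, which gives the key identity
\begin{align*}
\norm{E^T U}_2
\;=\;
\sqrt{mn}\,\dfrac{\norm{E^T G}_2}{\norm{G}_F}.
\end{align*}

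Next I would isolate the good event on which the random normalization factor $\norm{G}_F/\sqrt{mn}$ is bounded below by an absolute constant. Fix $\alpha\DefinedAs(1-1/\sqrt{2})^{1/2}\in(0,1)$. Because $\norm{G}_F^2$ is a $\chi^2$ random variable with $mn$ degrees of freedom, the Laurent--Massart lower-tail bound yields
\begin{align*}
\bbP\left\{\norm{G}_F<\alpha\sqrt{mn}\right\}
\;=\;
\bbP\left\{\norm{G}_F^2<\alpha^2 mn\right\}
\;\le\;
\mre^{-mn/8},
\end{align*}
where the specific value of $\alpha$ is chosen precisely so that the resulting exponent equals $mn/8$. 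On the complementary event $\{\norm{G}_F\ge\alpha\sqrt{mn}\}$ the identity above gives $\norm{E^T U}_2\le\alpha^{-1}\norm{E^T G}_2$.

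Now I would invoke the Gaussian version of the result (Theorem~3.2 of~\cite{rudver13}, as stated in Appendix~\ref{app.probabilistic}): there is an absolute constant $C$ such that, for all $s\ge1$ and $t\ge1$,
\begin{align*}
\bbP\left\{\norm{E^T G}_2>C\left(s\norm{E}_F+t\sqrt{n}\norm{E}_2\right)\right\}
\;\le\;
2\,\mre^{-s^2 q-t^2 n},
\end{align*}
with $q=\norm{E}_F^2/\norm{E}_2^2$. Setting $c'\DefinedAs C/\alpha$, the inequality $\norm{E^T U}_2\le\alpha^{-1}\norm{E^T G}_2$ valid on $\{\norm{G}_F\ge\alpha\sqrt{mn}\}$ shows that $\sfB$ can occur only if either the Gaussian operator norm exceeds $C(s\norm{E}_F+t\sqrt{n}\norm{E}_2)$ or $\norm{G}_F<\alpha\sqrt{mn}$; that is,
\begin{align*}
\sfB
\;\subseteq\;
\left\{\norm{E^T G}_2>C\left(s\norm{E}_F+t\sqrt{n}\norm{E}_2\right)\right\}
\,\cup\,
\left\{\norm{G}_F<\alpha\sqrt{mn}\right\}.
\end{align*}

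A union bound over the two events on the right-hand side then gives $\bbP(\sfB)\le 2\,\mre^{-s^2 q-t^2 n}+\mre^{-mn/8}$, as claimed. The reduction and the union bound are entirely routine; the only points requiring care are the bookkeeping that keeps $c'$ an absolute constant (it is simply $C/\alpha$) and the calibration of the $\chi^2$ lower tail so that the normalization failure contributes exactly the factor $\mre^{-mn/8}$. I do not expect any genuine obstacle here, since all the analytic content is already packaged in the Gaussian statement that the lemma is designed to extend.
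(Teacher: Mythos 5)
Your proof is correct and follows essentially the same route as the paper's: both represent $U$ as the normalized Gaussian matrix $\sqrt{mn}\,G/\norm{G}_F$, isolate the bad event that $\norm{G}_F$ drops below a constant multiple of $\sqrt{mn}$, apply the Gaussian bound of Lemma~\ref{lem.randomMatrix} on the complement, and finish with a union bound. The only cosmetic difference is the tool for the lower tail of $\norm{G}_F$ — you use the Laurent--Massart $\chi^2$ bound with threshold $(1-1/\sqrt{2})^{1/2}\sqrt{mn}$, while the paper uses Gaussian Lipschitz concentration with threshold $\sqrt{mn}/2$ — and both calibrations yield the same $\mre^{-mn/8}$ term.
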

\begin{proof}
	 For a matrix  $G$ with i.i.d.\ standard normal entries, we have
	$
	\norm{E^TU}_2\sim\sqrt{mn}\norm{E^TG}_2/{\norm{G}_F}.
	$ 
	Let the constant $\kappa$ be the $\psi_2$-norm of the standard normal random variable and let us define two auxiliary events,
	$
	\sfC_1
	\DefinedAs
	\{
	\sqrt{mn}> 2\norm{G}_F
	\}
	$
and		
	$
	\sfC_0
	\DefinedAs
	\{
	\sqrt{mn} \, \norm{E^TG}_2>2c\kappa^2\norm{G}_F\left(s\norm{E}_F+t\sqrt{n}\norm{E}_2\right)
	\}.
	$	
For $c'\DefinedAs 2c\kappa^2$, we have
	$
	\bbP(\sfB)
	=
	\bbP(\sfC_0)
	\le
	\bbP(\sfC_1\cup \sfA)
	\le
	\bbP(\sfC_1) + \bbP(\sfA),
	$
where the event $\sfA$ is given by Lemma~\ref{lem.randomMatrix}. Here, the first inequality follows from $\sfC_0\subset \sfC_1\cup \sfA$ and the second follows from the union bound.
	Now, since $\norm{\cdot}_F$ is Lipschitz continuous with parameter $1$, from the concentration of Lipschitz functions of standard normal Gaussian vectors~\cite[Theorem 5.2.2]{ver18}, it follows that
	$\bbP(C_1)\le\mre^{- mn / 8}$. This in conjunction with Lemma~\ref{lem.randomMatrix} complete the proof.  
\end{proof}
\begin{mylem}\label{lem.randMatrixUnifFro}
	In the setting of Lemma~\ref{lem.randomMatUniform},  we have
	$
	\bbP
	\left\{\norm{{E}^T{U}}_F>2\sqrt{n} \, \norm{{E}}_F\right\}
	\le
	\mre^{-n/2}.
	$
\end{mylem}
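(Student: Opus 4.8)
The plan is to reduce the statement to a concentration estimate for a ratio of Gaussian quadratic forms. As in the proof of Lemma~\ref{lem.randomMatUniform}, I would start from the distributional identity $\norm{E^TU}_F \sim \sqrt{mn}\,\norm{E^TG}_F/\norm{G}_F$, where $G\in\R^{m\times n}$ has i.i.d.\ standard normal entries; writing $g \DefinedAs \vect(G)$, this rewrites the target event as $\{\norm{E^TU}_F^2 > 4n\norm{E}_F^2\}$. A direct computation using $\EX[\vect(U)\vect(U)^T]=I$ gives $\EX[\norm{E^TU}_F^2]=n\norm{E}_F^2$, so the threshold $2\sqrt n\,\norm{E}_F$ is exactly twice the root-mean-square value; the content of the lemma is thus a one-sided deviation bound at twice the mean scale.

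Clearing the random denominator, the event becomes $\{\,m\norm{E^TG}_F^2 > 4\norm{E}_F^2\norm{G}_F^2\,\}$, i.e.\ $\{g^TMg>0\}$ with $M \DefinedAs I_n\otimes\!\big(mEE^T - 4\norm{E}_F^2 I_m\big)$. The eigenvalues of $M$ are $\lambda_k \DefinedAs m\sigma_k^2 - 4\norm{E}_F^2$, each of multiplicity $n$, where $\sigma_k$ are the singular values of $E$; since $\sum_k\sigma_k^2=\norm{E}_F^2$ one has $\EX[g^TMg]=n\sum_k\lambda_k=-3mn\norm{E}_F^2<0$. Diagonalizing, $g^TMg = \sum_k\lambda_k\,\chi^2_{n,k}$ is a sum of independent scaled chi-squares, so I would bound the upper tail through the Chernoff/MGF inequality $\bbP\{g^TMg>0\}\le\prod_k(1-2t\lambda_k)^{-n/2}$ and optimize over the admissible $t>0$ (those with $2t\max_k\lambda_k<1$).

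The main obstacle is obtaining the clean constant $\mre^{-n/2}$ rather than merely $\mre^{-cn}$. A naive two-event split -- bounding $\norm{E^TG}_F$ from above and $\norm{G}_F$ from below separately -- fails, because the factor-of-two slack is entirely consumed once numerator and denominator are decoupled, and it is precisely their strong positive correlation that saves the estimate (indeed, for $m\le 4$ every $\lambda_k\le 0$ and the event has probability zero). To pin the constant I would instead choose the Chernoff parameter explicitly, e.g.\ $t = 1/(4m\norm{E}_F^2)$, reducing the exponent to $\tfrac n2\sum_k\ln\!\big(1+\tfrac{2}{m}-\tfrac{\sigma_k^2}{2\norm{E}_F^2}\big)$; because each summand is concave in $\sigma_k^2$, this sum is Schur-concave and is therefore minimized over the simplex $\{\sigma_k^2\ge0,\ \sum_k\sigma_k^2=\norm{E}_F^2\}$ at a vertex, i.e.\ at a rank-one $E$. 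A direct check that the rank-one value of this sum is at least $1$ for every $m\ge5$ then yields $\bbP\{g^TMg>0\}\le\mre^{-n/2}$, completing the proof. Equivalently, one may invoke spherical Lipschitz concentration~\cite[Theorem~5.1.4]{ver18} for the $\norm{E}_2$-Lipschitz map $U\mapsto\norm{E^TU}_F$, whose deviation exponent scales with the stable rank $q=\norm{E}_F^2/\norm{E}_2^2\ge1$ and is hence at least of order $n$.
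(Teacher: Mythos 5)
Your proposal is correct, and its core mechanism is genuinely different from the paper's. The paper proves this lemma in three lines via exactly the route you only gesture at in your closing sentence: writing $\norm{E^TU}_F=\norm{(I\otimes E^T)\vect(U)}$ shows that $U\mapsto\norm{E^TU}_F$ is Lipschitz with constant $\norm{I\otimes E^T}_2=\norm{E}_2$, so concentration of Lipschitz functions on the sphere~\cite[Theorem~5.1.4]{ver18} gives $\bbP\{\norm{E^TU}_F>\sqrt{\EX[\norm{E^TU}_F^2]}+t\}\le\mre^{-t^2/(2\norm{E}_2^2)}$; combined with the same second-moment identity $\EX[\norm{E^TU}_F^2]=n\norm{E}_F^2$ that you compute, the choice $t=\sqrt{n}\,\norm{E}_F$ and the stable-rank inequality $\norm{E}_F\ge\norm{E}_2$ yield $\mre^{-n/2}$. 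You correctly identify the delicate point of that short route: a generic citation only guarantees an exponent $-cn$ for an unspecified absolute constant $c$, so the paper's clean constant $1/2$ rests on the cited inequality holding with that constant. Your Chernoff argument removes this dependence entirely and is sound in all its steps: the reduction of the event to $\{g^TMg>0\}$ with $M=I_n\otimes(mEE^T-4\norm{E}_F^2I_m)$, the decomposition into independent scaled $\chi^2_n$ variables, the admissibility of $t=1/(4m\norm{E}_F^2)$ (since $1-2t\lambda_k\ge\tfrac12+\tfrac2m>0$), the Schur-concavity (equivalently, concavity-minimized-at-an-extreme-point) reduction to rank-one $E$, and the trivial disposal of $m\le4$. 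The remaining ``direct check'' is indeed elementary and uniform in $m$: using $\ln(1+x)\ge x-x^2/2$ one gets $\ln(\tfrac12+\tfrac2m)+(m-1)\ln(1+\tfrac2m)\ge 2-\ln 2-6/m^2>1$ for all $m\ge5$. What your route buys is a fully self-contained proof with explicit constants and in fact a strictly stronger bound (exponent roughly $0.62\,n$, attained at the worst case $m=5$); what it costs is length and a case split in $m$. Your observation that naively decoupling $\norm{E^TG}_F$ and $\norm{G}_F$ cannot work---the positive correlation between numerator and denominator is what saves the factor of two---is also correct, and it explains why the paper works intrinsically on the sphere rather than through the Gaussian quotient.
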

\begin{proof}
	We begin by observing that
	$
	\norm{{E}^T{U}}_F
	=
	\norm{\vect({E}^T{U})}_F
	=
	\norm{\left({I}\otimes{E}^T\right)\vect(U)}_F,
	$
	where $\otimes$ denotes the Kronecker product.
	Thus, it is easy to verify that $\norm{{E}^T{U}}_F$ is a Lipschitz continuous function of ${U}$ with parameter $\norm{{I}\otimes{E}^T}_2=\norm{{E}}_2$. Now, from the concentration of Lipschitz functions of uniform random variables on the sphere $\sqrt{mn} \, S^{mn-1}$~\cite[Theorem 5.1.4]{ver18}, for all $t>0$, we have
	$
	\bbP
	\,
	\big\{\norm{{E}^T{U}}_F > \sqrt{\EX\left[\norm{{E}^T{U}}_F^2\right]} + t \big\}
	\le
	\mre^{-{t^2}/( 2\norm{E}_2^2)}.
	$
Now, since 
	$
	\EX \, [\norm{E^T U}^2_F ] 
	=
	\EX \, [ \norm{({I}\otimes{E}^T ) \, \vect(U)}_F^2 ]
	=
	\EX \, [\trace \, ((I\otimes E^T)\vect(U)\vect(U)^T(I\otimes E) ) ]
	=
	\trace \, ( (I\otimes E^T)(I\otimes E) )
	=
	n \norm{E}_F^2,
	$
we can rewrite the last inequality for $t=\sqrt{n}\norm{E}_F$ to obtain
	\[
	\bbP \, \{ \norm{E^TU}_F > 2\sqrt{n} \, \norm{E}_F \}
	\, \le \,
	\mre^{-n \norm{E}_F^2 / (2\norm{E}_2^2)}
	\, \le \,
	\mre^{- {n}/{2}}
	\]
	where the last inequality follows from
	$
	\norm{E}_F \ge \norm{E}_2.
	$
\end{proof}

\subsubsection*{Proof of Lemma~\ref{lem.Solt1}}
We define the auxiliary events
	\begin{align*}
\sfD_i
	\;\DefinedAs\;
	\{ 
	\norm{\cM^*(E^TU_i)}_2	\le c \sqrt{n} \, \norm{M^*}_S\norm{E}_F
	\}
	\cap
	\{
	\norm{\cM^*(E^TU_i)}_F
	\;\le\; 2\sqrt{n}
	\norm{M^*}_2\norm{E}_F
	\}
	\end{align*}
for $i=1,\ldots,N$. Since $\norm{\cM^*(E^TU_i)}_2\le \norm{\cM^*}_S \norm{E^TU_i}_2$ and $\norm{\cM^*(E^TU_i)}_F\le \norm{\cM^*}_2 \norm{E^TU_i}_F$, we have
	$
	\bbP(\sfD_i)
	\ge
	\{
	\norm{E^TU_i}_2
	\le c\sqrt{n}
	\norm{E}_F
	\}
	\cap
	\{
	\norm{E^TU_i}_F
	\le 2\sqrt{n}
	\norm{E}_F
	\}.
	$
Applying Lemmas~\ref{lem.randomMatUniform} and~\ref{lem.randMatrixUnifFro} to the right-hand side of the above events together with the union bound yield
	$
	\bbP(\sfD_i^c)
	\le
	2\mre^{-n}
	+
	\mre^{-{mn}/{8}}
	+
	\mre^{-{n}/{2}}
	\le
	4 \mre^{-{n}/{8}},
	$
where $\sfD_i^c$ is the complement of $\sfD_i$. This in turn implies 
\begin{align}\label{eq.mProof6}
\bbP(\sfD^c)
\;=\;
\bbP(\bigcup_{i \, = \, 1}^N \sfD_i^c)
\;\le\;
\sum_{i \, = \, 1}^N\bbP(\sfD_i^c)
\;\le\;
4 N \mre^{-\tfrac{n}{8}}
\end{align}
where  $\ds\sfD \DefinedAs \cap_{i} \, \sfD_i$. We can now use the conditioning identity to bound the failure probability,
	\begin{align}\nonumber
	\bbP\{|a|> b\}
	&\;=\;
	\bbP\left\{|a|> b\,\big|\, \sfD\right\}\bbP(\sfD) + \bbP\left\{|a|> b\,\big|\, \sfD^c\right\}\bbP(\sfD^c)
	\\[0.cm] \nonumber
	&\;\le\;
	\bbP\left\{|a|> b\,\big|\, \sfD\right\}\bbP(\sfD) +\bbP(\sfD^c)
	\\[0.cm] \nonumber
	&\;=\;
	\bbP\left\{|a\,\one_{\sfD}|\,> b\right\} \,+\, \bbP(\sfD^c)
	\\[0.cm] \label{eq.mProof1}
	&\;\le\;
	\bbP\left\{|a\,\one_{\sfD}|> b\right\} \,+\, 4N\mre^{-{n}/{8}}
	\end{align}
	where 
	$
	a
	\DefinedAs
	(1 / N )\sum_{i} \inner{E\left(X_i-X\right)}{ U_i} \inner{ E X}{ U_i},
	$
	$
	b
	\DefinedAs
	\delta \norm{E X}_F\norm{E}_F
	$, 
and $\one_{\sfD}$ is the indicator function of  $\sfD$. 
It is now easy to verify that 
	$
	\bbP\{|a\,\one_\sfD|> b\}
	\le
	\bbP\left\{ \abs{Y}> b \right\},
	$
where 
	$
	Y
	\DefinedAs
	( 1 / N )\sum_{i} Y_i,
	$
	$
	Y_i
	\DefinedAs
	\inner{E (X_i-X )}{ U_i} \inner{ E X}{ U_i} \one_{\sfD_i}.
	$
The rest of the proof uses the $\psi_{{1}/{2}}$-norm of $Y$ to establish an upper bound on
$
\bbP\left\{|Y|> b\right\}
$. 

Since $Y_i$ are linear in the zero-mean random variables $X_i-X$, we have 
$\EX[Y_i|U_i]=0$. Thus, the law of total expectation yields
	$
	\EX[Y_i]
 	=
	\EX\left[\EX[Y_i|U_i]\right]
	=	
	0.
	$

Therefore, Lemma~\ref{lem.talagrand} implies
	\begin{align}\label{eq.mProof7}
	\norm{Y }_{\psi_{{1}/{2}}}
	\le
	(c'/\sqrt{N}) (\log N ) \max_i \norm{Y_i}_{\psi_{{1}/{2}}}.
	\end{align}
	Now, using the standard properties of the $\psi_\alpha$-norm, we have
	\begin{align}
	\norm{Y_i}_{\psi_{{1}/{2}}}
	&\;\le\;
	c^{\prime\prime}
	\norm{\inner{E\left(X_i-X\right)}{ U_i}\one_{\sfD_i}}_{\psi_1} \norm{\inner{ E X}{ U_i}}_{\psi_1}
	\;\le\; 
	c^{\prime\prime\prime}
	\norm{\inner{E\left(X_i-X\right)}{ U_i}\one_{\sfD_i}}_{\psi_1} \norm{ E X}_F
	\label{eq.mProof3}
	\end{align}
where the second inequality follows from~\cite[Theorem~3.4.6]{ver18},
\begin{align}\label{eq.unifMarg}
\norm{\inner{ E X}{ U_i}}_{\psi_1} 
\;\le\;
\norm{\inner{ E X}{ U_i}}_{\psi_2}
\;\le\;
c_0\norm{ E X}_F.
\end{align}
We can now use 
	$
	\langle {E (X_i-X )},{ U_i} \rangle
	=
	\langle {X_i-X},{ E^TU_i} \rangle
	=
	\langle {\cM(x_ix_i^T)},{ E^TU_i} \rangle
	-
	\langle {\cM (I)},{ E^TU_i} \rangle
	= 
	x_i^T\cM^{*}(E^TU_i) x_i -
	\trace \, (\cM^{*}(E^TU_i))
	$
to bound the right-hand side of~\eqref{eq.mProof3}. This identity allows us to use the Hanson-Write inequality (Lemma~\ref{lem.HansonWright}) to upper bound the conditional probability
\begin{align*}
\bbP\left\{
\abs{\inner{E\left(X_i-X\right)}{ U_i}}
> t 
\;\big|\;
U_i
\right\}
\;\le\;
2\mre^{-\hat{c}\min\{\tfrac{t^2}{\kappa^4\norm{\cM^{*}(E^TU_i)}_F^2},\,\tfrac{t}{\kappa^2\norm{\cM^{*}(E^TU_i)}_2}\}}.
\end{align*}
Thus, we have 
\begin{align*}
\bbP\left\{
\abs{\inner{E\left(X_i-X\right)}{ U_i}\one_{\sfD_i}} 
>
t
\right\}
&\;=\;
\EX_{U_i}\left[
\one_{\sfD_i} 
\EX_{x_i}\left[
\one_{\{\abs{\inner{E\left(X_i-X\right)}{ U_i}}> t\}}
\right]
\right]
\\[0.cm]
&\;=\;
\EX_{U_i}\left[
\one_{\sfD_i} 
\bbP\left\{
\abs{\inner{E\left(X_i-X\right)}{ U_i}}
> t 
\;\big|\;
U_i
\right\}
\right]
\\[0.cm]
&\;\le\;
\EX_{U_i}\left[
\one_{\sfD_i} 
2\mre^{-\hat{c}\min\{\tfrac{t^2}{\kappa^4\norm{\cM^*(E^TU_i)}_F^2}\,\tfrac{t}{\kappa^2\norm{\cM^*(E^TU_i)}_2}\}}
\right]
\\[-0.1cm]
&\;\le\;
2\mre^{-\hat{c}\min\{\tfrac{t^2}{4n\kappa^4\norm{\cM^*}_2^2\norm{E}_F^2}\,\tfrac{t}{c\sqrt{n}\kappa^2\norm{\cM^*}_S\norm{E}_F}\}}
\end{align*}
where the definition of $\sfD_i$ was used to obtain the last inequality. The above tail bound implies~\cite[Lemma~11]{soljavlee19}
\begin{align}\label{eq.SolProoftemp1}
\norm{\inner{E\left(X_i-X\right)}{ U_i}\one_{\sfD_i}}_{\psi_{1}}
\;\le\;
\tilde{c}\kappa^2\sqrt{n}(\norm{\cM^*}_2+\norm{\cM^*}_S)\norm{E}_F.
\end{align} 
Using~\eqref{eq.sample}, it is easy to obtain the lower bound on the number of samples, 
	$
	N 
	\ge  
	C' \, (\beta^2 \kappa^2/ \delta)^2
	( \norm{\cM^*}_2 \, + \, \norm{\cM^*}_S )^2 \, n \, \log^6 \! {N} .
	$
We can now combine~\eqref{eq.mProof7},~\eqref{eq.SolProoftemp1} and~\eqref{eq.mProof3} to obtain
\begin{align*}
\norm{Y}_{\psi_{{1}/{2}}}
&\, \le \,
C'\kappa^2 \dfrac{\sqrt{n}\log N}{\sqrt{N}}
(\norm{\cM^*}_2+\norm{\cM^*}_S)\norm{E}_F \norm{EX}_F
	\; \le \;
\dfrac{\delta}{\beta^2\log^2 \! N} \, \norm{E}_F \norm{EX}_F
\end{align*}
where the last inequality follows from the above lower bound on $N$.
 Combining this inequality and~\eqref{eq.pollard} with
$t\DefinedAs\delta\norm{E}_F\norm{EX}_F/\norm{Y}_{\psi_{{1}/{2}}}$
yields
	$
	\bbP
	\{
	\abs{Y} > \delta \norm{E}_F \norm{EX}_F
	\}
	\le
	{1}/{N^\beta},
	$ 
which completes the proof. 

\subsubsection*{Proof of Lemma~\ref{lem.Solt2}}
The marginals of a uniform random variable have bounded sub-Gaussian norm (see inequality~\eqref{eq.unifMarg}). Thus,~\cite[Lemma 2.7.6]{ver18} implies
	$
	\norm{\inner{W}{U_i}^2}_{\psi_1}
	=
	\norm{\inner{W}{U_i}}_{\psi_2}^2
	\le	
	\hat{c}\norm{W}_F^2,
	$
which together with the triangle inequality yield
	$
	\norm{\inner{W}{U_i}^2-\norm{W}_F^2}_{\psi_1}
 	\le
	c'\norm{W}_F^2.
	$
Now since $\inner{W}{U_i}^2-\norm{W}_F^2$ are zero-mean and independent, we can apply the Bernstein inequality (Lemma~\ref{lem.Bern}) to obtain
\begin{align}\label{eq.unifRow}
\bbP
\left\{
\abs{\dfrac{1}{N}\sum_{i \, = \, 1}^N \inner{W}{ U_i}^2 -\norm{W}_F^2}
>
t \norm{W}_F^2
\right\}
	\, \le \;
2\mre^{-c N \! \min\{t^2,t\}}
\end{align}
which together with the triangle inequality complete the proof.

	\vspace*{-2ex}
\subsection{Proofs for Section~\ref{subsec.varianceMu2} and probabilistic toolbox}
\label{app.probabilistic}
We first present a technical lemma.
\begin{mylem}\label{lem.Moh1}
	Let $v_1,\ldots,v_N\in\R^{d}$ be i.i.d.\ random vectors uniformly distributed on the sphere $\sqrt{d} \, S^{d-1}$ and let $a\in\R^d$ be a fixed vector. Then, for any $t\ge 0$, we have
	\begin{align*}
	\bbP \left\{ \dfrac{1}{N} \norm{\sum_{j \, = \, 1}^{N} \inner{a}{v_j} v_j} \,>\, (c+c \dfrac{\sqrt{d}+t}{\sqrt{N}})\norm{a} \right\}
	\;\le\;
	2\mre^{-t^2}+N\mre^{-{d}/{8}}+ 2\mre^{-\hat{c}N}.
	\end{align*}
\end{mylem}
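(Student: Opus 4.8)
The plan is to use a Gaussian comparison together with a submultiplicative splitting of the random sum, in the spirit of the proof of Lemma~\ref{lem.randomMatUniform}. First I would realize each $v_j$ as $v_j = \sqrt{d}\,g_j/\norm{g_j}$, where $g_1,\dots,g_N\in\R^d$ are i.i.d.\ standard normal vectors; collecting them as the rows of $G\in\R^{N\times d}$ and writing $D\DefinedAs \diag(\sqrt{d}/\norm{g_j})$, the quantity of interest becomes
\begin{align*}
\sum_{j \, = \, 1}^N \inner{a}{v_j} v_j \;=\; \Big(\sum_{j \, = \, 1}^N v_j v_j^T\Big) a \;=\; G^T D^2 G\, a .
\end{align*}
Submultiplicativity of the spectral norm then gives $\norm{G^T D^2 G\,a}\le \norm{G}_2\,\norm{D^2}_2\,\norm{G a}$, which reduces the problem to controlling three scalar quantities: the diagonal factor $\norm{D^2}_2=\max_j d/\norm{g_j}^2$, the operator norm $\norm{G}_2$, and $\norm{Ga}$.

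Next I would bound each factor on a high-probability event. For the diagonal factor I introduce the good event $\cE\DefinedAs\bigcap_j\{\,2\norm{g_j}\ge\sqrt{d}\,\}$; by the Gaussian concentration of the $1$-Lipschitz map $g\mapsto\norm{g}$ around its mean (this is exactly the bound on the event $\sfC_1$ in the proof of Lemma~\ref{lem.randomMatUniform}), each complementary event has probability at most $\mre^{-d/8}$, so a union bound yields $\bbP(\cE^c)\le N\mre^{-d/8}$ and on $\cE$ we have $\norm{D^2}_2\le 4$. For the operator norm I would invoke the standard tail bound for Gaussian matrices, $\norm{G}_2\le \sqrt N+\sqrt d+\sqrt2\,t$ with probability at least $1-2\mre^{-t^2}$~\cite{ver18}. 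Finally, since the entries of $Ga/\norm{a}$ are i.i.d.\ standard normal, $\norm{Ga}/\norm{a}$ is a $1$-Lipschitz function of a standard normal vector with mean at most $\sqrt N$; choosing the deviation equal to $\sqrt N$ gives $\norm{Ga}\le 2\sqrt N\,\norm{a}$ with probability at least $1-2\mre^{-\hat c N}$ for $\hat c=1/2$. Intersecting the three events via a union bound, on an event of probability at least $1-2\mre^{-t^2}-N\mre^{-d/8}-2\mre^{-\hat c N}$ I obtain
\begin{align*}
\frac1N\,\norm{\sum_{j \, = \, 1}^N\inner{a}{v_j}v_j}
\;\le\;
\frac{4}{N}\,(\sqrt N+\sqrt d+\sqrt2\,t)\,(2\sqrt N)\,\norm{a}
\;=\;
8\Big(1+\tfrac{\sqrt d+\sqrt2\,t}{\sqrt N}\Big)\norm{a},
\end{align*}
which is of the claimed form after absorbing the numerical constants into $c$.

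The main obstacle is the data-dependent diagonal $D$: because $v_j$ couples the radial magnitude $\norm{g_j}$ with the Gaussian direction $g_j$, the sum $G^TD^2G\,a$ is not a clean Gaussian quadratic form and cannot be handled by a direct net/Bernstein argument without bookkeeping of the normalizations. Restricting to $\cE$ is precisely what decouples these two contributions, replacing the awkward diagonal by the uniform bound $\norm{D^2}_2\le4$; this is the step that produces the $N\mre^{-d/8}$ term, while the Gaussian operator-norm and chi-norm tails produce the $2\mre^{-t^2}$ and $2\mre^{-\hat c N}$ terms, respectively. The only care needed elsewhere is in fixing the chi-norm deviation at the scale $\sqrt N$ (rather than keeping it as a free parameter), which yields the clean product $(\sqrt N+\sqrt d+t)\cdot\sqrt N$ and hence the linear dependence on $(\sqrt d+t)/\sqrt N$ with no residual $(d+t^2)/N$ term.
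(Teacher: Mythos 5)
Your proposal is correct and follows essentially the same route as the paper's proof: both write $\sum_j \inner{a}{v_j}v_j = VV^Ta$, realize the spherical vectors as normalized Gaussians, and split the norm into an operator-norm factor (controlled by the Gaussian matrix bound together with the event $\min_j \norm{g_j}\ge \sqrt{d}/2$, producing the $2\mre^{-t^2}$ and $N\mre^{-d/8}$ terms) and a vector-norm factor at scale $\sqrt{N}\norm{a}$ (producing the $2\mre^{-\hat{c}N}$ term). The only cosmetic difference is where the diagonal normalization is absorbed and how the vector factor is handled: you bound the purely Gaussian quantity $\norm{Ga}$ by Lipschitz concentration with $\norm{D^2}_2\le 4$ kept separate, whereas the paper folds the normalization into $\norm{V}_2\le 2\norm{G}_2$ and bounds the spherical quantity $\norm{V^Ta}$ via the Bernstein-type inequality already established in the proof of Lemma~\ref{lem.Solt2}.
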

\begin{proof}
	It is easy to verify that
	$
	\sum_{j} \inner{a}{v_j} v_j
	=
	V v,
	$
	where $V \DefinedAs [v_1\,\cdots\, v_n]\in\R^{d\times N}$ is the random matrix with the $j$th column given by $v_j$ and $v \DefinedAs V^T a \in \R^N$. 
	Thus,   
	$
	\norm{\sum_{j} \inner{a}{v_j} v_j}
	=
	\norm{V v}
	\le
	\norm{V}_2 \norm{v}.
	$
	Now, let $G\in\R^{d\times N}$ be  a random matrix with i.i.d.\ standard normal Gaussian entries and let $\hat{G}\in\R^{d\times N}$ be a matrix obtained by normalizing the columns of $G$ as $\hat{G_j} \DefinedAs \sqrt{d} \, G_j/\norm{G_j}$, where $G_j$ and $\hat{G}_j$ are the $j$th columns of $G$ and $\hat{G}$, respectively. From the concentration of norm of Gaussian vectors~\cite[Theorem 5.2.2]{ver18}, we have $\norm{G_j}\ge \sqrt{d} / 2$ with probability not smaller than $1-\mre^{-{d}/{8}}$. This in conjunction with a union bound yield
	$
	\norm{\hat{G}}_2 \le 2\norm{G}_2
	$
	with probability not smaller than $1-N\mre^{-{d}/{8}}$.
	Furthermore, from the concentration of Gaussian matrices~\cite[Theorem~4.4.5]{ver18}, we have
	$
	\norm{G}_2 \le C(\sqrt{N} + \sqrt{d} + t)
	$
	with probability not smaller than $1-2\mre^{-t^2}$. 
By combining this inequality with the above upper bound on $\norm{\hat{G}}_2$, and using
	$V \sim \hat{G}$ in conjunction with a union bound, we obtain
	\begin{align}\label{eq.normBoundUniformTemp2}
	\norm{V}_2 \; \le \; 2C(\sqrt{N} \, + \, \sqrt{d} \, + \, t)
	\end{align} 
	with probability not smaller than $1-2\mre^{-t^2}-N\mre^{-{d}/{8}}$.
	Moreover, using~\eqref{eq.unifRow} in the proof of Lemma~\ref{lem.Solt2}, gives
	$
	\norm{v} \le C'\sqrt{N}\norm{a}
	$
	with probability not smaller than $1-2\mre^{-\hat{c}N}$. Combining this inequality with~\eqref{eq.normBoundUniformTemp2} and employing a union bound complete the proof. 
\end{proof}

\subsubsection*{Proof of Lemma~\ref{lem.Moh3}}
We begin by noting that
\begin{align}\label{eq.mProof4}
\norm{\sum_{i=1}^N \inner{E\left(X_i-X\right)}{ U_i}  U_i}_F
\;=\;
\norm{Uu}
\;\le\;
\norm{U}_2\norm{u}
\end{align}
where $U\in\R^{mn\times N}$ is a matrix with the $i$th column $\vect({U_i})$ and $u\in\R^N$ is a vector with the $i$th entry $\inner{E\left(X_i-X\right)}{ U_i}$. Using~\eqref{eq.normBoundUniformTemp2} in the proof of Lemma~\ref{lem.Moh1}, for $s\ge 0$, we have
\begin{align}\label{eq.mProof5}
\norm{U}_2 \; \le \; c(\sqrt{N} \, + \, \sqrt{mn} \, + \, s)
\end{align} 
with probability not smaller than $1-2\mre^{-s^2}-N\mre^{-{mn}/{8}}$. To bound the norm of $u$, we use similar arguments as in the proof of Lemma~\ref{lem.Solt1}. In particular, let $\sfD_i$ be defined as above and let $\ds\sfD \DefinedAs \cap_{i} \, \sfD_i$. Then for any $b\ge 0$, 
	\begin{align} 
	\label{eq.mProof7-8}
	\bbP\{\norm{u}> b\}
	&\;\le\;
	\bbP\left\{\norm{u \one_{\sfD}}> b\right\} \,+\, 4N\mre^{-{n}/{8}}
	\end{align}
	where $\one_{\sfD}$ is the indicator function of  $\sfD$; cf.~\eqref{eq.mProof1}.
	Moreover, it is straightforward to verify that 
	$
	\norm{u\one_{\sfD}}
	\le
	\norm{z},
	$
where the entries of $z\in\R^N$ are given $z_i=u_i\one_{\sfD_i}$. Since
	$
	\norm{\norm{z}^2}_{\psi_{1/2}}
	=
	\norm{\sum_{i} z_i^2 }_{\psi_{1/2}},
	$
we have
\begin{align*}
\norm{\sum_{i \, = \, 1}^N z_i^2 }_{\psi_{1/2}}
&\;\overset{(a)}{\le}\;
\norm{\sum_{i \, = \, 1}^N z_i^2 \, - \, \EX[z_i^2] }_{\psi_{1/2}}
	\, + \,
N\norm{\EX[z_1^2]}_{\psi_{1/2}}
\\[-0.1cm]
&\;\overset{(b)}{\le}\;
\bar{c}_1\norm{z_1^2}_{\psi_{1/2}} \sqrt{N} \log N  
	\, + \,
\bar{c}_2 N \norm{z_1}_{\psi_{1}}^2
\\[0.cm]
&\;\overset{(c)}{\le}\;
\bar{c}_3 N   \norm{z_1}_{\psi_{1}}^2
\\[0.cm]
&\;\overset{(d)}{\le}\;
\bar{c}_4 N \kappa^4 n (\norm{\cM^*}_2+\norm{\cM^*}_S)^2\norm{E}_F^2.
\end{align*}
Here, $(a)$ follows from the triangle inequality, $(b)$ follows from combination of Lemma~\ref{lem.talagrand}, applied to the first term, and 
$
\EX[z_1^2]
\le
\tilde{c}_0\norm{z_1}_{\psi_{1}}^2
$~(e.g., see~\cite[Proposition~2.7.1]{ver18})
applied to the second term, $(c)$ follows from $
\norm{z_1^2}_{\psi_{1/2}}
\le
\tilde{c}_1
\norm{z_1}_{\psi_{1}}^2
$, and $(d)$ follows from~\eqref{eq.SolProoftemp1}.
This allows us to use~\eqref{eq.pollard} with $\xi = \norm{z}^2$ and $t = r^2$ to obtain
	$
	\bbP
	\{
	\norm{z} 
	> 
	r \sqrt{n N }\kappa^2 (\norm{\cM^*}_2+\norm{\cM^*}_S)\norm{E}_F
	\}
	\le
	\bar{c}_5 \mre^{-r},
	$
for all $r>0$.
Combining this inequality with~\eqref{eq.mProof7-8} yield
\begin{align*}
	\bbP	\left\{
	\norm{u} 
	\, > \,
	r \sqrt{n N }\kappa^2 (\norm{\cM^*}_2 \, + \, \norm{\cM^*}_S)\norm{E}_F
	\right\}
	\, \le \; 
	\bar{c}_5\, \mre^{-r}
	\, + \,
	4N\mre^{-{n}/{8}}.
\end{align*}
Finally, substituting $r=\beta\log n$ in the last inequality and letting $s=\sqrt{mn}$ in~\eqref{eq.mProof5} yield
	\begin{multline*}
	\bbP
	\left\{
	\dfrac{1}{N} \norm{\sum_{i} \inner{E\left(X_i-X\right)}{ U_i}  U_i}_F
	>
	c_1\beta\sqrt{m n}\log n \kappa^2 (\norm{\cM^*}_2+\norm{\cM^*}_S)\norm{E}_F
	\right\}
	\;\le\;
	\\[0.cm]
	c_0 n^{-\beta} \,+\, 2\mre^{-mn} + N\mre^{-{mn}/{8}} + 4N\mre^{-{n}/{8}}
	\;\le\;
	c_2 (n^{-\beta} + N \mre^{-{n}/{8}})
	\end{multline*}
where we used inequality~\eqref{eq.mProof4}, $N\ge c_0 n$, and applied the union bound. This completes the proof. 


\subsubsection*{Proof of Lemma~\ref{lem.Moh2}}
This result is obtained by applying Lemma~\ref{lem.Moh1} to the vectors $\vect(U_i)$ and setting $t=\sqrt{mn}$.

\subsubsection*{Probabilistic toolbox}

In this subsection, we summarize known technical results which are useful in establishing bounds on the correlation between the gradient estimate and the true gradient. Herein, we use $c$, $c'$, and $c_i$ to denote positive absolute constants.   
For any positive scalar $\alpha$, the $\psi_\alpha$-norm of a random variable $\xi$ is given by~\cite[Section~4.1]{ledtal13},
	$
	\norm{\xi}_{\psi_\alpha} 
	\DefinedAs
	\inf_t 
	\, 
	\{ t>0 \, | \EX \, [\psi_\alpha ({|\xi|}/{t} ) ] \le 1\},
	$
where $\psi_\alpha(x)\DefinedAs \mre^{x^\alpha}-1$ (linear near the origin when $0<\alpha<1$ in order for $\psi_\alpha$ to be convex) is an Orlicz function. Finiteness of the $\psi_\alpha$-norm implies the tail bound
\begin{align}\label{eq.pollard}
\bbP\left\{
|\xi| \, > \, t \norm{\xi}_{\psi_\alpha}
\right\}
\;\le\;
c_\alpha \mre^{-t^\alpha}
~
\mbox{for all}~t \, \ge \, 0
\end{align}
where $c_\alpha$ is an absolute constant that depends on $\alpha$; e.g., see~\cite[Section~2.3]{pol15} for a proof. The random variable $\xi$ is called sub-Gaussian if its distribution is dominated by that of a normal random variable. This condition is equivalent to
$
\norm{\xi}_{\psi_2}
<
\infty. 
$
The random variable $\xi$ is sub-exponential if $
\norm{\xi}_{\psi_1}
<
\infty. 
$
It is also well-known that for any random variables $\xi$ and $\xi'$ and any positive scalar $\alpha$, 
	$
	\norm{\xi\,\xi'}_{\psi_{\alpha}}
	\le
	\hat{c}_{\alpha}
	\norm{\xi}_{\psi_{2\alpha}}\norm{\xi'}_{\psi_{2\alpha}}
	$
and the above inequality becomes equality with $c_\alpha=1$ if $\alpha\ge1$.
\begin{mylem}[Bernstein inequality~{\cite[Corollary 2.8.3]{ver18}}]\label{lem.Bern}
	Let $\xi_1,\ldots,\xi_N$ be independent, zero-mean, sub-exponential random variables with $\kappa\ge\norm{\xi_i}_{\psi_1}$. Then, for any scalar $t\ge0$, 
	$
	\bbP
	\{
	\abs{ (1/N) \sum_{i} \xi_i}
	>t
	\}
	\le
	2 \mre^{-c N\min\{{t^2}/{\kappa^2},{t}/{\kappa}\}}.
	$
\end{mylem}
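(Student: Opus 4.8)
The statement is the standard Bernstein inequality for sums of independent, zero-mean sub-exponential random variables, and the plan is to follow the classical Chernoff-bound (exponential Markov) argument driven by a moment generating function (MGF) estimate. The first step is to record the key sub-exponential MGF bound: if $\xi$ is zero-mean with $\norm{\xi}_{\psi_1}\le\kappa$, then there exist absolute constants $C$ and $c_0$ such that
\[
\EX[\mre^{\lambda\xi}]
\;\le\;
\mre^{C\lambda^2\kappa^2}
\qquad\text{for all}\quad
\abs{\lambda}\,\le\,c_0/\kappa .
\]
I would derive this directly from the Orlicz-norm definition: the hypothesis $\norm{\xi}_{\psi_1}\le\kappa$ together with the tail bound~\eqref{eq.pollard} yields the moment estimates $\EX[\abs{\xi}^p]\le (C'\kappa\,p)^p$ for every integer $p\ge1$, obtained by integrating the tail. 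Taylor-expanding the exponential, using $\EX[\xi]=0$ to cancel the linear term, and inserting these moment bounds leaves a series $\sum_{p\ge2}\abs{\lambda}^p\EX[\abs{\xi}^p]/p!$ that is dominated by a convergent geometric series once $\abs{\lambda}\le c_0/\kappa$; summing it produces the claimed quadratic exponent.

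The second step is to assemble the tail bound from the MGF estimate. By independence of the $\xi_i$, for any $0\le\lambda\le c_0/\kappa$,
\[
\bbP\Big\{\tfrac{1}{N}\textstyle\sum_{i}\xi_i > t\Big\}
\;\le\;
\mre^{-\lambda N t}\,\EX\big[\mre^{\lambda\sum_i\xi_i}\big]
\;=\;
\mre^{-\lambda N t}\prod_{i}\EX[\mre^{\lambda\xi_i}]
\;\le\;
\mre^{-\lambda N t + C N\lambda^2\kappa^2}.
\]
I then minimize the exponent $-\lambda N t + C N\lambda^2\kappa^2$ over the admissible range $\lambda\in[0,c_0/\kappa]$. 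The unconstrained minimizer is $\lambda^\star\DefinedAs t/(2C\kappa^2)$. When $\lambda^\star\le c_0/\kappa$, that is for $t\lesssim\kappa$, substituting $\lambda^\star$ gives an exponent proportional to $-Nt^2/\kappa^2$; when $\lambda^\star>c_0/\kappa$, i.e.\ for the large-$t$ regime, the minimum over the feasible set is attained at the boundary $\lambda=c_0/\kappa$, which yields an exponent proportional to $-Nt/\kappa$. Taking the worse of the two recovers the factor $\min\{t^2/\kappa^2,\,t/\kappa\}$ in the exponent, with a single absolute constant $c$ after relabeling.

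The final step is to handle the lower tail by applying the same argument to the random variables $-\xi_i$, which are again zero-mean and sub-exponential with the same $\psi_1$-norm, giving an identical bound on $\bbP\{\tfrac1N\sum_i\xi_i<-t\}$; a union bound over the two one-sided events produces the leading factor of $2$. The main obstacle is the first step, namely the passage from the Orlicz $\psi_1$-norm to the MGF control with \emph{explicit} absolute constants: one must verify convergence of the exponential series uniformly over $\abs{\lambda}\le c_0/\kappa$ and track the constants carefully enough that the two regimes separate cleanly into the $t^2/\kappa^2$ and $t/\kappa$ terms. Everything after that is a routine optimization over a single scalar $\lambda$.
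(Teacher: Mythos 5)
Your proposal is correct: the paper states this lemma without proof, citing it directly as Corollary~2.8.3 of Vershynin's book, and your MGF-plus-Chernoff argument (sub-exponential moment bounds from the $\psi_1$-norm, exponential Markov with optimization of $\lambda$ over the constrained range $|\lambda|\le c_0/\kappa$ to produce the two regimes $t^2/\kappa^2$ and $t/\kappa$, and a union bound over the two tails) is precisely the standard proof given in that reference. Nothing further is needed.
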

\begin{mylem}[Hanson-Wright inequality~{\cite[Theorem 1.1]{rudver13}}]
	\label{lem.HansonWright}
	Let $A\in\R^{N\times N}$ be a fixed matrix and let $x\in\R^N$ be a random vector with independent entries that satisfy $\EX[x_i]=0$, $\EX[x_{i}^2]=1$, and $\norm{x_i}_{\psi_2}\le \kappa$. Then, for any nonnegative scalar $t$, we have
	$
	\bbP
	\{
	\abs {x^T A\, x - \EX[x^TA\,x]}
	>
	t
	\}
	\le
	2 \mre^{- c \min \{t^2/(\kappa^4\norm{A}_F^2), t/( \kappa^2\norm{A}_2) \}}.
	$
\end{mylem}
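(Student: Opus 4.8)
The plan is to bound the tail of the centered quadratic form by splitting it into its diagonal and off-diagonal parts and controlling the moment generating function (MGF) of each separately. Writing
\[
x^T A\,x \,-\, \EX[x^T A\,x]
\;=\;
S_{\mathrm{d}} \,+\, S_{\mathrm{o}},
\quad
S_{\mathrm{d}} \DefinedAs \sum_{i} A_{ii}\,(x_i^2 - \EX[x_i^2]),
\quad
S_{\mathrm{o}} \DefinedAs \sum_{i \, \neq \, j} A_{ij}\,x_i x_j,
\]
I would establish the desired tail bound for $S_{\mathrm{d}}$ and $S_{\mathrm{o}}$ individually and then combine them through the union bound, using the inclusion $\{|S_{\mathrm{d}}+S_{\mathrm{o}}|>t\}\subset\{|S_{\mathrm{d}}|>t/2\}\cup\{|S_{\mathrm{o}}|>t/2\}$.

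For the diagonal term, each summand $A_{ii}(x_i^2-\EX[x_i^2])$ is centered and sub-exponential with $\norm{A_{ii}(x_i^2-\EX[x_i^2])}_{\psi_1}\le c\,\kappa^2\abs{A_{ii}}$, so that Bernstein's inequality (Lemma~\ref{lem.Bern}) yields a tail of exactly the claimed form once we note $\sum_i A_{ii}^2\le\norm{A}_F^2$ and $\max_i\abs{A_{ii}}\le\norm{A}_2$.

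The \textbf{off-diagonal term $S_{\mathrm{o}}$ is the main obstacle}, and I would handle it via the Chernoff bound $\bbP\{S_{\mathrm{o}}>t/2\}\le \mre^{-\lambda t/2}\,\EX\,\mre^{\lambda S_{\mathrm{o}}}$ together with a careful MGF estimate. First I would \emph{decouple}: introducing an independent copy $x'$ of $x$, a standard symmetrization/convexity argument shows the MGF of $S_{\mathrm{o}}$ is dominated (up to an absolute constant rescaling of $\lambda$) by that of the decoupled chaos $\sum_{i,j}A_{ij}\,x_i x_j'$. Conditioning on $x'$, the sum $\sum_i x_i (A x')_i$ is sub-Gaussian with variance proxy $c\,\kappa^2\norm{A x'}^2$, so its conditional MGF is at most $\mre^{c\,\kappa^2\lambda^2\norm{A x'}^2}$; taking expectation over $x'$ reduces everything to bounding $\EX_{x'}\,\mre^{c\,\kappa^2\lambda^2\norm{A x'}^2}$. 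Finally, by the sub-Gaussian comparison one may pass to the Gaussian case $x'=g\sim\mathcal{N}(0,I)$ and diagonalize through the SVD $A=U\Sigma V^T$, whereupon $\norm{A g}^2=\sum_k\sigma_k^2 g_k^2$ has a factorized MGF that is finite precisely when $\lambda^2\kappa^2\norm{A}_2^2$ lies below an absolute constant; this exposes the dependence on $\sum_k\sigma_k^2=\norm{A}_F^2$ and $\max_k\sigma_k=\norm{A}_2$.

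With these MGF bounds in hand, optimizing the Chernoff exponent over the admissible range of $\lambda$ produces the two-regime exponent $\min\{t^2/(\kappa^4\norm{A}_F^2),\,t/(\kappa^2\norm{A}_2)\}$, and combining the diagonal and off-diagonal tails via the union bound completes the argument. The delicate points are the justification of decoupling and the sub-Gaussian-to-Gaussian comparison for the chaos MGF; once these are secured, the remaining Chernoff optimization is routine.
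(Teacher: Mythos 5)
The paper never proves Lemma~\ref{lem.HansonWright}: it imports the result verbatim from \cite[Theorem~1.1]{rudver13} and uses it as a black-box tool in Appendix~\ref{app.probabilistic}, so there is no internal proof to compare against. Your sketch is, in outline, a reconstruction of Rudelson and Vershynin's own argument --- the diagonal/off-diagonal decomposition, Bernstein for the diagonal sum, and a Chernoff bound for the off-diagonal chaos via MGF decoupling (cf.\ \cite[Theorem~6.1.1]{ver18}), replacement of sub-Gaussian variables by Gaussians, and diagonalization through the SVD --- and that outline is faithful and correct. What the citation buys the paper is brevity; what your route buys is self-containedness, at the cost of having to establish the two auxiliary facts you flag as delicate, which is precisely the content of the cited reference.

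Two steps need repair before the sketch becomes a proof. First, the diagonal bound does not follow from the paper's Lemma~\ref{lem.Bern}, which assumes a single uniform $\psi_1$-bound on all summands: applied to $\xi_i=A_{ii}(x_i^2-1)$ it yields the exponent $c\min\{t^2/(N\kappa^4\max_i A_{ii}^2),\ t/(\kappa^2\max_i\abs{A_{ii}})\}$, and the first term can be a factor of $N$ smaller than the required $t^2/(\kappa^4\sum_i A_{ii}^2)$ (take $A_{11}=1$ and all other entries zero); you need the weighted Bernstein inequality, e.g.\ \cite[Theorem~2.8.1]{ver18}, which carries $\sum_i\norm{\xi_i}_{\psi_1}^2$ in the sub-Gaussian regime. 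Second, after integrating out $x$ you arrive at $\EX_{x'}\exp(c\kappa^2\lambda^2\norm{Ax'}^2)$ and assert that a ``sub-Gaussian comparison'' lets you replace $x'$ by a Gaussian $g$; no such generic comparison exists, since $x'\mapsto\exp(a\norm{Ax'}^2)$ is not controlled by any one-sided MGF domination. The standard fixes are either (i) the Rudelson--Vershynin ordering: replace $x$ and then $x'$ by Gaussians one at a time \emph{before} integrating, each time using only $\EX\exp(\lambda Z)\le\exp(C\kappa^2\lambda^2)=\EX_g\exp(\sqrt{2C}\kappa\lambda g)$ for a centered sub-Gaussian $Z$, which leads to the decoupled Gaussian chaos $g^TAg'$; or (ii) linearization of the square by an auxiliary Gaussian, $\exp(a\norm{Ax'}^2)=\EX_g\exp\bigl(\sqrt{2a}\inner{A^Tg}{x'}\bigr)$, followed by Fubini and the sub-Gaussian MGF bound in $x'$, which produces $\EX_g\exp(C a\kappa^2\norm{A^Tg}^2)$. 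With either repair, your Chernoff optimization over $\abs{\lambda}\le c/(\kappa^2\norm{A}_2)$ does give the two-regime exponent, so the overall approach is sound.
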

\begin{mylem}[Norms of random matrices~{\cite[Theorem 3.2]{rudver13}}]
	\label{lem.randomMatrix} 
	Let $E\in\R^{m\times n}$ be a fixed matrix and let $G\in\R^{m\times n}$ be a random matrix with independent entries that satisfy $\EX[G_{ij}]=0$, $\EX[G_{ij}^2]=1$, and $\norm{G_{ij}}_{\psi_2}\le \kappa$. Then, for any scalars $s,t\ge 1$, $\bbP(\sfA)\le 2\mre^{-s^2q-t^2n} $, where $q \DefinedAs \norm{E}_F^2/{\norm{E}_2^2} $ is the stable rank of ${E}$ and
	$
	\sfA
	\DefinedAs
	\{ \norm{E^TG}_2>c\kappa^2\left(s\norm{E}_F+t\sqrt{n}\norm{E}_2\right) \}.
	$
\end{mylem}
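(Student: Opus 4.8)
The plan is to reduce the operator-norm bound to a \emph{one-sided} $\epsilon$-net argument in which the range direction is handled by a concentration of the Euclidean norm (via Hanson--Wright) rather than by a second net. This is precisely what produces the sharp stable-rank scaling $s\norm{E}_F$ instead of the cruder $s\sqrt{n}\,\norm{E}_2$ that a two-net bilinear argument would give, since netting both spheres loses the averaging over the range direction that makes $\EX\norm{E^Tw}\approx\norm{E}_F$.

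First I would write $\norm{E^TG}_2 = \sup_{v\in S^{n-1}}\norm{E^TGv}$ and fix $v$. Because the rows of $G$ are independent, the vector $w\DefinedAs Gv\in\R^m$ has independent, mean-zero entries with $\EX[w_i^2]=\norm{v}^2=1$ and $\norm{w_i}_{\psi_2}\le c\kappa$. Writing $\norm{E^Tw}^2 = w^T(EE^T)w$ with $\EX[w^T EE^Tw]=\trace(EE^T)=\norm{E}_F^2$, I would apply the Hanson--Wright inequality (Lemma~\ref{lem.HansonWright}) with $A\DefinedAs EE^T$. Using $\norm{EE^T}_F\le\norm{E}_2\norm{E}_F$, $\norm{EE^T}_2=\norm{E}_2^2$, and $\kappa\gtrsim 1$, both tail exponents in Hanson--Wright are bounded below by $c\tau^2/(\kappa^4\norm{E}_2^2)$ for the relevant range, so squaring converts the quadratic-form deviation into the one-sided norm bound $\bbP\{\norm{E^Tw}>\norm{E}_F+\tau\}\le 2\mre^{-c\tau^2/(\kappa^4\norm{E}_2^2)}$.

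Next I would discretize: let $\mathcal{N}$ be a $1/2$-net of $S^{n-1}$ with $|\mathcal{N}|\le 5^n$, so that $\norm{E^TG}_2\le 2\max_{v\in\mathcal{N}}\norm{E^TGv}$. A union bound over $\mathcal{N}$ then gives
\begin{align*}
\bbP\Big\{\max_{v\in\mathcal{N}}\norm{E^TGv}>\norm{E}_F+\tau\Big\}
\;\le\;
2\cdot 5^n\,\mre^{-c\tau^2/(\kappa^4\norm{E}_2^2)}.
\end{align*}
Choosing $\tau^2 = C\kappa^4\norm{E}_2^2\,(n+s^2q+t^2n)$ makes the exponent dominate the net entropy $n\log 5$ and leaves a residual $s^2q+t^2n$, so the failure probability is at most $2\mre^{-s^2q-t^2n}$. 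Unwinding $\tau\le C'\kappa^2\norm{E}_2(\sqrt{n}+s\sqrt{q}+t\sqrt{n})$ and invoking the identity $\norm{E}_2\sqrt{q}=\norm{E}_F$ together with $s,t\ge1$, I would absorb the $\norm{E}_F$, $\norm{E}_2\sqrt{n}$, and factor-of-two terms into the single bound $c\kappa^2(s\norm{E}_F+t\sqrt{n}\,\norm{E}_2)$, which is exactly the complement of the event $\sfA$.

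The main obstacle is the single-direction concentration step: turning the two-sided Hanson--Wright control of the quadratic form $w^T EE^Tw$ into a clean one-sided deviation for $\norm{E^Tw}$ at the scale $\kappa^2\norm{E}_2$, and tracking how the two free parameters split, with $s$ coupling to the stable rank $q=\norm{E}_F^2/\norm{E}_2^2$ through the net entropy and $t$ coupling to the ambient dimension $n$. Everything after that is a routine net-plus-union-bound argument.
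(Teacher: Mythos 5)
The paper offers no proof of this lemma at all---it is imported verbatim from Rudelson--Vershynin \cite[Theorem 3.2]{rudver13}---and your proposal is a correct reconstruction of essentially the proof given in that source: the one-sided concentration $\bbP\left\{\norm{E^Tw}>\norm{E}_F+\tau\right\}\le 2\mre^{-c\tau^2/(\kappa^4\norm{E}_2^2)}$ for $w=Gv$ derived from Hanson--Wright (their Theorem 2.1), followed by a $1/2$-net over $S^{n-1}$ in the domain variable only, with $\tau$ chosen so that the exponent swallows the $5^n$ entropy and leaves $s^2q+t^2n$. Your case split showing that both Hanson--Wright exponents reduce to $c\,\tau^2/(\kappa^4\norm{E}_2^2)$ (via $\norm{EE^T}_F\le\norm{E}_2\norm{E}_F$, $\norm{EE^T}_2=\norm{E}_2^2$, and $\kappa\gtrsim 1$ from the unit-variance assumption) is exactly the step packaged in the cited concentration theorem, so nothing is missing.
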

	
The next lemma provides us with an upper bound on the $\psi_{\alpha}$-norm of sum of random variables that is by Talagrand. It is a straightforward consequence of combining~\cite[Theorem~6.21]{ledtal13} and~\cite[Lemma~2.2.2]{vaawel96}; see e.g.~\cite[Theorem~8.4]{mawig15} for a formal argument.
\begin{mylem}\label{lem.talagrand}
	For any scalar $\alpha\in(0,1]$, there exists a constant $C_{\alpha}$ such that for any sequence of independent random variables $\xi_1,\ldots,\xi_N$ we have
	$
	\norm{\sum_i \xi_i - \EX\big[\sum_i \xi_i\big]}_{\psi_{\alpha}}
	\le
	C_{\alpha} ( \max_i \norm{\xi_i}_{\psi_{\alpha}} ) \sqrt{N}\log N .
	$
\end{mylem}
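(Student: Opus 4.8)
The plan is to deduce the bound from two standard facts about Orlicz (quasi-)norms: a Hoffmann--J\o rgensen/Talagrand splitting for sums of independent mean-zero variables, and a maximal inequality for $\psi_\alpha$-norms. First I would use the positive homogeneity of $\norm{\cdot}_{\psi_\alpha}$ to normalize so that $\max_i \norm{\xi_i}_{\psi_\alpha} = 1$, and replace each $\xi_i$ by its centered version $\bar\xi_i \DefinedAs \xi_i - \EX[\xi_i]$. Since $\abs{\EX[\xi_i]} \le \EX\abs{\xi_i}$ and the $L^1$-norm is dominated by the $\psi_\alpha$-norm, while the $\psi_\alpha$-norm of a constant is proportional to its absolute value, centering inflates $\max_i\norm{\xi_i}_{\psi_\alpha}$ by at most an absolute factor depending on $\alpha$. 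Thus it suffices to bound $\norm{\sum_i \bar\xi_i}_{\psi_\alpha}$ for independent, mean-zero $\bar\xi_i$ with $\norm{\bar\xi_i}_{\psi_\alpha}$ uniformly bounded.

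Next I would invoke \cite[Theorem~6.21]{ledtal13}, which for a convex Young function (here the convexified $\psi_\alpha$) bounds the Orlicz norm of a sum of independent mean-zero variables, after a standard symmetrization, by
\[
\left\|\sum_i \bar\xi_i\right\|_{\psi_\alpha} \;\le\; C\left(\EX\left|\sum_i \bar\xi_i\right| \,+\, \left\|\max_i \abs{\bar\xi_i}\right\|_{\psi_\alpha}\right).
\]
The first term sits at the $L^2$ scale: by Jensen and independence, $\EX\abs{\sum_i \bar\xi_i} \le (\sum_i \mathrm{Var}(\xi_i))^{1/2} \le \sqrt{N}\max_i \norm{\bar\xi_i}_2 \le c\sqrt{N}$, since $\norm{\cdot}_2$ is dominated by $\norm{\cdot}_{\psi_\alpha}$. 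For the second term I would apply the maximal inequality of \cite[Lemma~2.2.2]{vaawel96}, namely $\norm{\max_{i\le N}\abs{\bar\xi_i}}_{\psi_\alpha} \le c\,\psi_\alpha^{-1}(N)\max_i\norm{\bar\xi_i}_{\psi_\alpha}$, together with $\psi_\alpha^{-1}(N) \asymp (\log N)^{1/\alpha}$, which itself follows from the tail bound~\eqref{eq.pollard} and a union bound over the $N$ summands. This produces a maximal contribution of order $(\log N)^{1/\alpha}$.

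Adding the two estimates gives $\norm{\sum_i\bar\xi_i}_{\psi_\alpha} \le C_\alpha\big(\sqrt{N} + (\log N)^{1/\alpha}\big)$; since both terms are dominated by $\sqrt{N}\log N$ for every $N\ge 2$, the stated bound follows once the normalization is undone. I expect the main obstacle, and the reason the result is credited to Talagrand, to be the regime $\alpha<1$ --- which is exactly the case $\alpha=\tfrac12$ used throughout Section~\ref{subsec.varianceMu1} --- where $\psi_\alpha$ fails to be convex near the origin. There one must first replace $\psi_\alpha$ by its convex minorant (linearized near $0$), verify that the induced quasi-norm obeys the triangle inequality only up to a multiplicative constant $C_\alpha$, and confirm that both the splitting inequality and the maximal inequality persist in this quasi-normed setting. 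Carefully tracking these constants through the two estimates is precisely what yields the $\alpha$-dependent factor $C_\alpha$ in the final bound.
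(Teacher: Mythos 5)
Your proposal is correct and takes essentially the same route as the paper: the paper's entire proof is the citation of \cite[Theorem~6.21]{ledtal13} combined with the maximal inequality \cite[Lemma~2.2.2]{vaawel96} (pointing to \cite[Theorem~8.4]{mawig15} for the formal write-up), which is exactly the splitting $\norm{\sum_i \bar\xi_i}_{\psi_\alpha} \le C\big(\EX\big|\sum_i \bar\xi_i\big| + \norm{\max_i |\bar\xi_i|}_{\psi_\alpha}\big)$ and the two estimates ($\sqrt{N}$ via variances, $(\log N)^{1/\alpha}$ via the maximal inequality) that you carry out. Your additional bookkeeping---centering, the convexification of $\psi_\alpha$ for $\alpha<1$, and absorbing $\sqrt{N}+(\log N)^{1/\alpha}$ into $C_\alpha\sqrt{N}\log N$---is precisely the detail the paper delegates to those references.
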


	\vspace*{-2ex}
\subsection{Bounds on optimization variables}\label{app.bounds}

Building on~\cite{toi85}, in Lemma~\ref{lem.bounds} we provide useful bounds on $K$, $X=X(K)$, $P = P(K)$, and $Y = K X (K)$.
\begin{mylem}\label{lem.bounds}
	Over the sublevel set $\cSK(a)$ of the LQR objective function $f(K)$, we have
	\begin{subequations}
		\label{eq.bounds}
		\begin{align}
		\label{eq.uX}
		\trace \, (X) 
		&
		\; \le \; 
		{a}/{\lambda_{\min}(Q)}
		\\[-0.1cm]
		\label{eq.uY}
		\norm{Y}_F
		&
		\; \le \; 
		{a}/{ \sqrt{\lambda_{\min}(R) \lambda_{\min}(Q)}}
		\\[-0.1cm]
		\label{eq.lX}
		{\nu}/{a}
		&
		\; \le \; 
		\lambda_{\min}(X)
		\\[-0.1cm]
		\label{eq.uK}
		\norm{K}_F
		&
		\; \le \; 
		{a}/{\sqrt{\nu \lambda_{\min}(R)}}
		\\[-0.1cm]
		\label{eq.uP}
		\trace \, (P)
		&
		\;\le\;
		{a}/{\lambda_{\min}(\Omega)}
		\end{align}
	\end{subequations}
	where the constant $\nu$ is given by~\eqref{eq.nuu}.
\end{mylem}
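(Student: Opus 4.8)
The plan is to derive all five bounds from the two Lyapunov equations~\eqref{eq.lyapKX} and~\eqref{eq.lyapKP} together with the single scalar hypothesis $f(K)\le a$, processing the inequalities in an order that avoids any circular dependence on $\lambda_{\min}(X)$. First I would record the two equivalent expressions for the cost, $f(K)=\trace((Q+K^TRK)X)=\trace(P\,\Omega)$, where the second identity follows by substituting $Q+K^TRK=-((A-BK)^TP+P(A-BK))$ from~\eqref{eq.lyapKP} and $\Omega=-((A-BK)X+X(A-BK)^T)$ from~\eqref{eq.lyapKX} and invoking the cyclic property of the trace. The upper bounds~\eqref{eq.uX} and~\eqref{eq.uP} are then immediate: since $X,P\succ0$, $K^TRK\succeq0$, $Q\succeq\lambda_{\min}(Q)I$, and $\Omega\succeq\lambda_{\min}(\Omega)I$, we get $\lambda_{\min}(Q)\trace(X)\le\trace(QX)\le f(K)\le a$ and $\lambda_{\min}(\Omega)\trace(P)\le\trace(P\,\Omega)=f(K)\le a$. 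In the same vein, dropping the state-penalty term yields the \emph{energy inequality} $\norm{R^{1/2}KX^{1/2}}_F^2=\trace(K^TRKX)\le f(K)\le a$, which --- crucially --- does not involve $\lambda_{\min}(X)$ and therefore plays the role of the circularity-breaking estimate below.

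Next, for~\eqref{eq.uY} I would write $Y=KX=(KX^{1/2})X^{1/2}$ and apply submultiplicativity, $\norm{Y}_F\le\norm{KX^{1/2}}_F\,\norm{X^{1/2}}_2$. The first factor is controlled by the energy inequality through $\norm{KX^{1/2}}_F\le\norm{R^{-1/2}}_2\,\norm{R^{1/2}KX^{1/2}}_F\le\sqrt{a/\lambda_{\min}(R)}$, and the second by~\eqref{eq.uX} through $\norm{X^{1/2}}_2=\sqrt{\norm{X}_2}\le\sqrt{\trace(X)}\le\sqrt{a/\lambda_{\min}(Q)}$, giving exactly~\eqref{eq.uY}.

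The heart of the argument, and the step I expect to be the main obstacle, is the lower bound~\eqref{eq.lX} on $\lambda_{\min}(X)$, since it must reproduce the precise constant $\nu$ in~\eqref{eq.nuu} while every other bound is elementary. The idea is to estimate the \emph{weighted} closed-loop matrix $(A-BK)X^{1/2}$ rather than $A-BK$ itself, so that the state and control parts separate along the $Q$- and $R$-weightings. Splitting $(A-BK)X^{1/2}=AX^{1/2}-B(KX^{1/2})$ and using $\norm{X^{1/2}}_2\le\sqrt{a/\lambda_{\min}(Q)}$ together with the energy inequality yields $\norm{(A-BK)X^{1/2}}_2\le\sqrt{a}\,\big(\norm{A}_2/\sqrt{\lambda_{\min}(Q)}+\norm{B}_2/\sqrt{\lambda_{\min}(R)}\big)=\sqrt{a}\,\lambda_{\min}(\Omega)/(2\sqrt{\nu})$, which is precisely where the form of $\nu$ enters. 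To convert this into a lower bound on $\lambda_{\min}(X)$ I would rewrite~\eqref{eq.lyapKX} in the scalar form $w^T\Omega w=-2\,w^T(A-BK)Xw$ (valid for every $w$ because $X$ is symmetric) and evaluate it at the unit eigenvector $w$ of $X$ associated with $\lambda_{\min}(X)$. Cauchy--Schwarz then gives $|w^T(A-BK)Xw|\le\norm{(A-BK)X^{1/2}}_2\,\sqrt{w^TXw}=\norm{(A-BK)X^{1/2}}_2\,\sqrt{\lambda_{\min}(X)}$, whereas the left-hand side is at least $\tfrac12\lambda_{\min}(\Omega)$; combining these with the weighted bound produces $\sqrt{\lambda_{\min}(X)}\ge\lambda_{\min}(\Omega)/(2\norm{(A-BK)X^{1/2}}_2)\ge\sqrt{\nu/a}$, i.e.~\eqref{eq.lX}.

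Finally, bound~\eqref{eq.uK} follows from~\eqref{eq.lX} and a second use of the running cost: since $\trace(K^TRKX)\ge\lambda_{\min}(R)\,\lambda_{\min}(X)\,\norm{K}_F^2$, we obtain $\norm{K}_F^2\le a/(\lambda_{\min}(R)\,\lambda_{\min}(X))\le a^2/(\nu\,\lambda_{\min}(R))$, which is~\eqref{eq.uK}. The only genuinely delicate point throughout is the non-circular routing of~\eqref{eq.lX}: the naive estimate $\lambda_{\min}(X)\ge\lambda_{\min}(\Omega)/(2\norm{A-BK}_2)$ forces a bound on $\norm{K}$, which in turn relies on~\eqref{eq.lX}; replacing $\norm{A-BK}_2$ by the weighted quantity $\norm{(A-BK)X^{1/2}}_2$ and feeding in the $\lambda_{\min}(X)$-free energy inequality is exactly what severs this loop and delivers the stated constant.
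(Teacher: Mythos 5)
Your proposal is correct and follows essentially the same route as the paper: the same energy inequality $\trace(K^TRKX)\le a$ breaks the circularity, and your weighted matrix $(A-BK)X^{1/2}$ is identical to the paper's $D=AX^{1/2}-BYX^{-1/2}$ (since $Y=KX$), with the same eigenvector-plus-Cauchy--Schwarz argument producing the constant $\nu$. The only differences are cosmetic (you work with $K$, $KX^{1/2}$ where the paper uses $Y$, $YX^{-1/2}$, and you derive $\trace(P\,\Omega)=f(K)$ explicitly, which the paper also invokes for~\eqref{eq.uP}).
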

\subsubsection*{Proof}
For $K\in\cSK(a)$, we have
	\be
	\trace \, (Q X + Y^T R\, Y X^{-1} ) \; \le \; a
	\label{eq.f-sub-a}
	\ee 
which along with $\trace \, (Q X ) \geq \lambda_{\min}(Q) \norm{X^{1/2}}_F^2$ yield~\eqref{eq.uX}. To establish~\eqref{eq.uY}, we combine~\eqref{eq.f-sub-a} with 
	\[
	\trace \, (R\, Y X^{-1} Y^T ) \; \geq \; \lambda_{\min} (R) \norm{YX^{-1/2}}_F^2
	\] 
to obtain 
	$
	\norm{YX^{-1/2}}_F^2 
	\le 
	a/\lambda_{\min}(R).
	$ 
Thus,
$
\norm{Y}_F^2
\le
a \norm{X}_2/\lambda_{\min}(R).
$
This inequality along with~\eqref{eq.uX} give~\eqref{eq.uY}. To show~\eqref{eq.lX}, let $v$ be the normalized eigenvector corresponding to the smallest eigenvalue of $X$. Multiplication of Eq.~\eqref{eq.lyapunovXY} from the left and the right by $v^T$ and $v$, respectively, gives
	$
	v^T (DX^{{1}/{2}} + X^{{1}/{2}}D^T ) \,v 
	= 
	\sqrt{\lambda_{\min}(X)} 
	\, 
	v^T(D + D^T)\,v 
	=
	- v^T \Omega\, v,
	$	
where $D\DefinedAs A X^{1/2} - B Y X^{-1/2}$. Thus,
\begin{align}
\label{eq.lambda-min-XX}
\lambda_{\min}(X)
\;=\;
\dfrac{(v^T \Omega \, v)^2}{(v^T(D + D^T)\,v)^2}
\;\ge\;
\dfrac{\lambda_{\min}^2 (\Omega)}{4\,\norm{D}_2^2}
\end{align}
where we applied the Cauchy-Schwarz inequality on the denominator.  Using the triangle inequality and submultiplicative property of the $2$-norm, we can upper bound $\norm{D}_2$,
	\be
	\ba{rcl}
	\norm{D}_2 
	& \!\!\! \le \!\!\! &
	\norm{A}_2 \norm{X^{1/2}}_2 + \norm{B}_2 \norm{Y X^{-1/2}}_2
	\; \le \;
	\sqrt{a} \, (\norm{A}_2 /\sqrt{\lambda_{\min}(Q)} + \norm{B}_2/\sqrt{\lambda_{\min}(R)} )
	\ea
	\label{eq.bound-DD}
	\ee
where the last inequality follows from~\eqref{eq.uX} and the upper bound on 
	$
	\norm{YX^{-1/2}}_F^2. 
	$  	
Inequality~\eqref{eq.lX}, with $\nu$ given by~\eqref{eq.nuu}, follows from combining~\eqref{eq.lambda-min-XX} and~\eqref{eq.bound-DD}. To show~\eqref{eq.uK}, we use the upper bound on
	$
	\norm{YX^{-1/2}}_F^2, 
	$  
which is equivalent to $\norm{K X^{{1}/{2}}}_F^2 \le a/\lambda_{\min}(R)$, to obtain
	$
	\norm{K}^2_F 
	\le
	{a}/{\lambda_{\min}(R)\lambda_{\min}(X)}
	\le
	{a^2}/(\nu \lambda_{\min}(R)).
	$
Here, the second inequality follows from~\eqref{eq.lX}. Finally, to prove~\eqref{eq.uP}, note that the definitions of $f(K)$ in~\eqref{eq.f} and $P$ in~\eqref{eq.P} imply
	$
	f(K) 
	=
	\trace \, (P \, \Omega).
	$
Thus, from $f(K)\le a$, we have
	$
	\trace \, (P)
	\le
	{a}/{\lambda_{\min}(\Omega)},
	$
which completes the proof. 

	\vspace*{-2ex}
\subsection{A bound on the norm of the inverse Lyapunov operator}
\label{app.invLyapBound}

Lemma~\ref{lem.normLyap} provides an upper bound on the norm of the inverse Lyapunov operator for stable LTI systems. 
\begin{mylem}\label{lem.normLyap}
	For any Hurwitz matrix  $F\in\R^{n\times n}$, the linear map $\cF$: $\bbS^n \rightarrow \bbS^n$ 
	\begin{align}\label{eq.cF}
	\cF(W)
	\;\DefinedAs\;
	\int_0^{\infty} \mre^{F t} \, W\,\mre^{F^Tt} \,\mrd t
	\end{align}
	is well defined and, for any $\Omega \succ 0$,
	\begin{align}
	\label{eq.normLyap}
	\norm{\cF}_2
	\;\le\; 
	\trace\,(\cF(I))
	\;\le\;
	{\trace\,(\cF(\Omega))}/{\lambda_{\min}(\Omega)}.
	\end{align}
\end{mylem}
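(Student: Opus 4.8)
The plan is to check that $\cF$ is well defined and then prove the two inequalities separately, the whole argument resting on the positivity (order-preserving property) of $\cF$ together with the elementary fact that the spectral norm of a positive semidefinite matrix is dominated by its trace. For well-definedness, since $F$ is Hurwitz there exist constants $M\ge1$ and $\alpha>0$ with $\norm{\mre^{Ft}}_2\le M\mre^{-\alpha t}$, so that $\norm{\mre^{Ft}W\mre^{F^Tt}}_F\le M^2\mre^{-2\alpha t}\norm{W}_F$ and the integrand in~\eqref{eq.cF} is absolutely integrable; hence $\cF(W)$ is a well-defined symmetric matrix for every $W\in\bbS^n$. (Equivalently, $X=\cF(W)$ is the unique solution of $FX+XF^T+W=0$, obtained by differentiating $\mre^{Ft}W\mre^{F^Tt}$ and integrating.)

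For the first inequality I would pass the Frobenius norm inside the integral, use submultiplicativity of the spectral and Frobenius norms together with $\norm{\mre^{F^Tt}}_2=\norm{\mre^{Ft}}_2$, and obtain
\begin{align*}
\norm{\cF(W)}_F
\;\le\;
\int_0^\infty \norm{\mre^{Ft}}_2\,\norm{W}_F\,\norm{\mre^{F^Tt}}_2\,\mrd t
\;=\;
\norm{W}_F\int_0^\infty \norm{\mre^{Ft}}_2^2\,\mrd t,
\end{align*}
so that $\norm{\cF}_2\le\int_0^\infty\norm{\mre^{Ft}}_2^2\,\mrd t$. The key step is then to note that $\norm{\mre^{Ft}}_2^2$ is the largest eigenvalue of the positive semidefinite matrix $\mre^{Ft}\mre^{F^Tt}$ and is therefore bounded above by $\trace(\mre^{Ft}\mre^{F^Tt})$. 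Integrating this pointwise bound and interchanging trace and integral yields $\int_0^\infty\norm{\mre^{Ft}}_2^2\,\mrd t\le\trace(\cF(I))$, which is the desired bound.

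For the second inequality I would use that $\cF$ is order-preserving: whenever $W\succeq0$ the integrand $\mre^{Ft}W\mre^{F^Tt}$ is positive semidefinite for every $t$, so $\cF(W)\succeq0$, and by linearity $W_1\preceq W_2$ implies $\cF(W_1)\preceq\cF(W_2)$. Applying this to $I\preceq\lambda_{\min}^{-1}(\Omega)\,\Omega$, which holds because $\Omega\succ0$, gives $\cF(I)\preceq\lambda_{\min}^{-1}(\Omega)\,\cF(\Omega)$, and taking the trace (which preserves the Loewner order, since the trace of a positive semidefinite matrix is nonnegative) produces $\trace(\cF(I))\le\trace(\cF(\Omega))/\lambda_{\min}(\Omega)$.

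There is no deep difficulty here; the only points needing care are the spectral-to-trace bound $\norm{\mre^{Ft}}_2^2\le\trace(\mre^{Ft}\mre^{F^Tt})$ and the observation that the operator norm $\norm{\cF}_2$, defined as a supremum over all $W\in\bbS^n$, is controlled by the \emph{scalar} $\trace(\cF(I))$ rather than by a matrix quantity. Keeping the Frobenius/spectral norm compatibilities straight when moving the norm inside the integral is the one place where a norm mismatch could creep in.
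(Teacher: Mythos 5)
Your proof is correct and follows essentially the same route as the paper: pass the norm inside the integral, use submultiplicativity to get $\norm{\cF}_2\le\int_0^\infty\norm{\mre^{Ft}}^2\,\mrd t$, identify this with $\trace\,(\cF(I))$, and then obtain the second inequality from monotonicity of $\cF$ applied to $\lambda_{\min}(\Omega)\,I\preceq\Omega$ followed by taking the trace. The only cosmetic difference is that you bound the exponential factors in the spectral norm and invoke $\norm{\mre^{Ft}}_2^2\le\trace\,(\mre^{Ft}\mre^{F^Tt})$, whereas the paper uses the Frobenius norm, for which this relation holds with equality.
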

\begin{proof}
	Using the triangle inequality and the sub-multiplicative property of the Frobenius norm, we can write
	\begin{align}
	\norm{\cF(W)}_F
	\;\le\;
	\int_0^{\infty} \norm{\mre^{F\,t} \, W\,\mre^{F^Tt}}_F\, \mrd t 
	\;\le\;
	\norm{W}_F \int_0^{\infty} \norm{\mre^{F\,t}}_F^2 \,  \mrd t
	\;=\;
	\norm{W}_F \, \trace\left(\cF(I)\right).
	\end{align}
	Thus,
	$
	\norm{\cF}_2
	=
	\max_{\norm{W}_F \, = \, 1}
	\norm{\cF(W)}_F
	\le
	\trace\left(\cF(I)\right),
	$
which proves the first inequality in~\eqref{eq.normLyap}. To show the second inequality, we use the monotonicity of the linear map $\mathcal{F}$, i.e., for any symmetric matrices $W_1$ and $W_2$ with $W_1\preceq W_2$, we have
	$\cF(W_1)\preceq\cF(W_2)$. In particular, $\lambda_{\min}(\Omega) I \preceq \Omega $ implies $\lambda_{\min}(\Omega) \cF(I)\preceq\cF(\Omega)$ which yields 
	$
	\lambda_{\min}(\Omega)\,\trace(\cF(I))
	\le
	\trace(\cF(\Omega))
	$
and completes the proof.
\end{proof}

We next use Lemma~\ref{lem.normLyap} to establish a bound on the norm of the inverse of the closed-loop Lyapunov operator $\cA_K$ over the sublevel sets of the LQR objective function $f(K)$.
\begin{mylem}\label{lem.ucAinv}
	For any $K\in\cSK(a)$, the closed-loop Lyapunov operators $\cA_K$ given by~\eqref{eq.LyapOper} satisfies
	$
	\norm{\cA_K^{-1}}_2
	=
	\norm{( \cA_K^*)^{-1}}_2
	\le
	{a}/{\lambda_{\min}(\Omega)\lambda_{\min}(Q)}.
	$
\end{mylem}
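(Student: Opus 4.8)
The plan is to identify the inverse of the closed-loop Lyapunov operator with the integral operator $\cF$ of Lemma~\ref{lem.normLyap}, and then recognize that feeding $\Omega$ into that operator reproduces exactly the matrix $X(K)$ whose trace is already controlled. Concretely, since $K\in\cSK(a)\subset\cSK$, the matrix $F\DefinedAs A-BK$ is Hurwitz, and the unique solution of $\cA_K(X)=FX+XF^T=-W$ is $X=\int_0^\infty \mre^{Ft}\,W\,\mre^{F^Tt}\,\mrd t=\cF(W)$ with $\cF$ as in~\eqref{eq.cF}. Hence $\cA_K^{-1}=-\cF$ and therefore $\norm{\cA_K^{-1}}_2=\norm{\cF}_2$.

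With this identification in hand, I would invoke the bound~\eqref{eq.normLyap} from Lemma~\ref{lem.normLyap}, which gives $\norm{\cF}_2\le \trace(\cF(I))\le \trace(\cF(\Omega))/\lambda_{\min}(\Omega)$. The key observation is that $\cF(\Omega)=\int_0^\infty \mre^{(A-BK)t}\,\Omega\,\mre^{(A-BK)^Tt}\,\mrd t = X(K)$, precisely the integral in~\eqref{eq.X}. Consequently $\trace(\cF(\Omega))=\trace(X(K))$, and the bound~\eqref{eq.uX} from Lemma~\ref{lem.bounds} supplies $\trace(X(K))\le a/\lambda_{\min}(Q)$. Chaining these gives
\[
\norm{\cA_K^{-1}}_2
\;=\;
\norm{\cF}_2
\;\le\;
\frac{\trace(X(K))}{\lambda_{\min}(\Omega)}
\;\le\;
\frac{a}{\lambda_{\min}(\Omega)\,\lambda_{\min}(Q)},
\]
which is the desired estimate.

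For the claimed equality $\norm{\cA_K^{-1}}_2=\norm{(\cA_K^*)^{-1}}_2$, I would simply appeal to the general fact that any linear operator on a finite-dimensional inner-product space and its adjoint share the same induced operator norm, together with $(\cA_K^*)^{-1}=(\cA_K^{-1})^*$; this makes the bound for the adjoint automatic. (Alternatively, one can repeat the argument verbatim with $F$ replaced by $(A-BK)^T$, which is also Hurwitz.) I do not anticipate a genuine obstacle here: the entire proof is an assembly of already-established facts, and the only nontrivial step is the clean recognition that $\cF(\Omega)=X(K)$, which lets the trace bound from Lemma~\ref{lem.bounds} close the argument without any further computation.
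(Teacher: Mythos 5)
Your proposal is correct and follows essentially the same route as the paper: both identify $\cA_K^{-1}$ (up to sign) with the integral operator $\cF$ of Lemma~\ref{lem.normLyap} for $F=A-BK$, invoke the bound $\norm{\cF}_2\le\trace(\cF(\Omega))/\lambda_{\min}(\Omega)$ with $\cF(\Omega)=X(K)$, and close with the trace bound~\eqref{eq.uX} from Lemma~\ref{lem.bounds}. Your explicit justification of $\norm{\cA_K^{-1}}_2=\norm{(\cA_K^*)^{-1}}_2$ via the norm-preservation of adjoints merely spells out what the paper leaves implicit.
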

\begin{proof}
	Applying Lemma~\ref{lem.normLyap} with $F=A-BK$ yields
	$
	\norm{\cA_K^{-1}}_2
	=
	\norm{( \cA_K^*)^{-1}}_2
	\le
	{\trace(X)}/{\lambda_{\min}(\Omega)}.
	$
	Combining this inequality with~\eqref{eq.uX} completes the proof. 
\end{proof}

\subsubsection*{Parameter $\theta(a)$ in Theorem~\ref{thm.randomSearchFormal}}
As  discussed in the proof, over any sublevel set $\cSK(a)$ of the function $f (K)$, we require the function $\theta$ in Theorem~\ref{thm.randomSearchFormal} to satisfy
	$
	( \norm{(\cA_K^*)^{-1}}_2+\norm{(\cA_K^*)^{-1}}_S)/\lambda_{\min}(X)
	\le
\theta(a)
	$
for all $K\in\cSK(a)$. Clearly, Lemma~\ref{lem.ucAinv} in conjunction with Lemma~\ref{lem.bounds} can be used to obtain
	$
	\norm{( \cA_K^* )^{-1}}_2
	\le
	{a}/( \lambda_{\min}(Q)\lambda_{\min}\Omega )
	$
and	
	$
	\lambda^{-1}_{\min}(X)
	\le
	a/\nu,
	$
where $\nu$ is given by~\eqref{eq.nuu}. The existence of $\theta(a)$, follows from the fact that there is a scalar $M(n)>0$ such that $\norm{\cA}_S\le M \norm{\cA}_2$ for all linear operators $\cA$: $\bbS^n\rightarrow \bbS^n $.

{\renewcommand{\baselinestretch}{1.35}
\vspace*{-2ex}

}


\begin{thebibliography}{10}
\providecommand{\url}[1]{#1}
\csname url@rmstyle\endcsname
\providecommand{\newblock}{\relax}
\providecommand{\bibinfo}[2]{#2}
\providecommand\BIBentrySTDinterwordspacing{\spaceskip=0pt\relax}
\providecommand\BIBentryALTinterwordstretchfactor{4}
\providecommand\BIBentryALTinterwordspacing{\spaceskip=\fontdimen2\font plus
\BIBentryALTinterwordstretchfactor\fontdimen3\font minus
  \fontdimen4\font\relax}
\providecommand\BIBforeignlanguage[2]{{%
\expandafter\ifx\csname l@#1\endcsname\relax
\typeout{** WARNING: IEEEtran.bst: No hyphenation pattern has been}%
\typeout{** loaded for the language `#1'. Using the pattern for}%
\typeout{** the default language instead.}%
\else
\language=\csname l@#1\endcsname
\fi
#2}}

\bibitem{nagkahfealev18}
A.~Nagabandi, G.~Kahn, R.~Fearing, and S.~Levine, ``Neural network dynamics for
  model-based deep reinforcement learning with model-free fine-tuning,'' in
  \emph{IEEE Int Conf. Robot. Autom.}, 2018, pp. 7559--7566.

\bibitem{mnikavsilgraantwierie13}
V.~Mnih, K.~Kavukcuoglu, D.~Silver, A.~Graves, I.~Antonoglou, D.~Wierstra, and
  M.~Riedmiller, ``Playing {A}tari with deep reinforcement learning,'' 2013,
  arXiv:1312.5602.

\bibitem{deamanmatrectu17}
S.~Dean, H.~Mania, N.~Matni, B.~Recht, and S.~Tu, ``On the sample complexity of
  the linear quadratic regulator,'' \emph{Found. Comput. Math.}, pp. 1--47,
  2017.

\bibitem{simmantujorrec18}
M.~Simchowitz, H.~Mania, S.~Tu, M.~I. Jordan, and B.~Recht, ``Learning without
  mixing: Towards a sharp analysis of linear system identification,'' in
  \emph{Proc. Mach. Learn. Res.}, 2018, pp. 439--–473.

\bibitem{ber11}
D.~Bertsekas, ``Approximate policy iteration: A survey and some new methods,''
  \emph{J. Control Theory Appl.}, vol.~9, no.~3, pp. 310--335, 2011.

\bibitem{abblazsze18}
Y.~Abbasi-Yadkori, N.~Lazic, and C.~Szepesv{\'a}ri, ``Model-free linear
  quadratic control via reduction to expert prediction,'' in \emph{Proc. Mach.
  Learn. Res.}, vol.~89, 2019, pp. 3108--3117.

\bibitem{andmoo90}
B.~Anderson and J.~Moore, \emph{Optimal Control; Linear Quadratic
  Methods}.\hskip 1em plus 0.5em minus 0.4em\relax New York, NY: Prentice Hall,
  1990.

\bibitem{ack80}
J.~Ackermann, ``Parameter space design of robust control systems,'' \emph{IEEE
  Trans. Automat. Control}, vol.~25, no.~6, pp. 1058--1072, 1980.

\bibitem{ferbalboyelg92}
E.~Feron, V.~Balakrishnan, S.~Boyd, and L.~{El Ghaoui}, ``Numerical methods for
  {$H_2$} related problems,'' in \emph{Proceedings of the 1992 American Control
  Conference}, 1992, pp. 2921--2922.

\bibitem{dulpag00}
G.~E. Dullerud and F.~Paganini, \emph{{A course in robust control theory: a
  convex approach}}.\hskip 1em plus 0.5em minus 0.4em\relax New York:
  Springer-Verlag, 2000.

\bibitem{manguyrec18}
H.~Mania, A.~Guy, and B.~Recht, ``Simple random search of static linear
  policies is competitive for reinforcement learning,'' in \emph{NeurIPS},
  vol.~31, 2018.

\bibitem{fazgekakmes18}
M.~Fazel, R.~Ge, S.~M. Kakade, and M.~Mesbahi, ``Global convergence of policy
  gradient methods for the linear quadratic regulator,'' in \emph{Proc. Int'l
  Conf. Machine Learning}, 2018, pp. 1467--1476.

\bibitem{malpanbhakhabarwai19}
D.~Malik, A.~Panajady, K.~Bhatia, K.~Khamaru, P.~L. Bartlett, and M.~J.
  Wainwright, ``Derivative-free methods for policy optimization: {G}uarantees
  for linear-quadratic systems,'' \emph{J. Mach. Learn. Res.}, vol.~51, p.
  1–51, 2019.

\bibitem{janhudul20}
J.~P. Jansch-Porto, B.~Hu, and G.~E. Dullerud, ``Convergence guarantees of
  policy optimization methods for {M}arkovian jump linear systems,'' in
  \emph{Proceedings of the American Control Conference}, 2020.

\bibitem{zhahubas20}
K.~Zhang, B.~Hu, and T.~Ba\c{s}ar, ``Policy optimization for {$\mathcal{H}_2$}
  linear control with {$\mathcal{H}_\infty$} robustness guarantee: Implicit
  regularization and global convergence,'' in \emph{Learning for Dynamics and
  Control}, vol. 120, 2020.

\bibitem{furzhekam20}
L.~Furieri, Y.~Zheng, and M.~Kamgarpour, ``Learning the globally optimal
  distributed {LQ} regulator,'' in \emph{Learning for Dynamics and Control},
  2020, pp. 287--297.

\bibitem{fatpol20}
I.~Fatkhullin and B.~Polyak, ``Optimizing static linear feedback: gradient
  method,'' 2020, arXiv:2004.09875.

\bibitem{kle68}
D.~Kleinman, ``On an iterative technique for {R}iccati equation computations,''
  \emph{IEEE Trans. Automat. Control}, vol.~13, no.~1, pp. 114--115, 1968.

\bibitem{bitlauwil12}
S.~Bittanti, A.~J. Laub, and J.~C. Willems, \emph{The Riccati Equation}.\hskip
  1em plus 0.5em minus 0.4em\relax Berlin, Germany: Springer-Verlag, 2012.

\bibitem{perger94}
P.~L.~D. {Peres} and J.~C. {Geromel}, ``An alternate numerical solution to the
  linear quadratic problem,'' \emph{IEEE Trans. Automat. Control}, vol.~39,
  no.~1, pp. 198--202, 1994.

\bibitem{balvan03}
V.~Balakrishnan and L.~Vandenberghe, ``Semidefinite programming duality and
  linear time-invariant systems,'' \emph{IEEE Trans. Automat. Control},
  vol.~48, no.~1, pp. 30--41, 2003.

\bibitem{bumesfazmes19}
J.~Bu, A.~Mesbahi, M.~Fazel, and M.~Mesbahi, ``{LQR} through the lens of first
  order methods: Discrete-time case,'' 2019, arXiv:1907.08921.

\bibitem{levath70}
W.~S. Levine and M.~Athans, ``On the determination of the optimal constant
  output feedback gains for linear multivariable systems,'' \emph{IEEE Trans.
  Automat. Control}, vol.~15, no.~1, pp. 44--48, 1970.

\bibitem{linfarjovTAC11al}
F.~Lin, M.~Fardad, and M.~R. Jovanovi\'c, ``Augmented {L}agrangian approach to
  design of structured optimal state feedback gains,'' \emph{IEEE Trans.
  Automat. Control}, vol.~56, no.~12, pp. 2923--2929, 2011.

\bibitem{farlinjovACC11}
M.~Fardad, F.~Lin, and M.~R. Jovanovi\'c, ``Sparsity-promoting optimal control
  for a class of distributed systems,'' in \emph{Proceedings of the 2011
  American Control Conference}, 2011, pp. 2050--2055.

\bibitem{linfarjovTAC13admm}
F.~Lin, M.~Fardad, and M.~R. Jovanovi\'c, ``Design of optimal sparse feedback
  gains via the alternating direction method of multipliers,'' \emph{IEEE
  Trans. Automat. Control}, vol.~58, no.~9, pp. 2426--2431, 2013.

\bibitem{jovdhiEJC16}
M.~R. Jovanovi\'c and N.~K. Dhingra, ``Controller architectures: tradeoffs
  between performance and structure,'' \emph{Eur. J. Control}, vol.~30, pp.
  76--91, 2016.

\bibitem{polkhlshc13}
B.~Polyak, M.~Khlebnikov, and P.~Shcherbakov, ``An {LMI} approach to structured
  sparse feedback design in linear control systems,'' in \emph{Proceedings of
  the 2013 European Control Conference}, 2013, pp. 833--838.

\bibitem{dhijovluoCDC14}
N.~K. Dhingra, M.~R. Jovanovi\'c, and Z.~Q. Luo, ``An {ADMM} algorithm for
  optimal sensor and actuator selection,'' in \emph{Proceedings of the 53rd
  IEEE Conference on Decision and Control}, 2014, pp. 4039--4044.

\bibitem{zarmohdhigeojovTAC20}
A.~Zare, H.~Mohammadi, N.~K. Dhingra, T.~T. Georgiou, and M.~R. Jovanovi\'c,
  ``Proximal algorithms for large-scale statistical modeling and
  sensor/actuator selection,'' \emph{IEEE Trans. Automat. Control}, vol.~65,
  no.~8, pp. 3441--3456, 2020.

\bibitem{rec19}
B.~Recht, ``A tour of reinforcement learning: The view from continuous
  control,'' \emph{Annu. Rev. Control Robot. Auton. Syst.}, vol.~2, pp.
  253--279, 2019.

\bibitem{toi85}
H.~T. Toivonen, ``A globally convergent algorithm for the optimal constant
  output feedback problem,'' \emph{Int. J. Control}, vol.~41, no.~6, pp.
  1589--1599, 1985.

\bibitem{rausac97}
T.~Rautert and E.~W. Sachs, ``Computational design of optimal output feedback
  controllers,'' \emph{SIAM J. Optim}, vol.~7, no.~3, pp. 837--852, 1997.

\bibitem{vanvid85}
A.~Vannelli and M.~Vidyasagar, ``Maximal {L}yapunov functions and domains of
  attraction for autonomous nonlinear systems,'' \emph{Automatica}, vol.~21,
  no.~1, pp. 69 -- 80, 1985.

\bibitem{kha96}
H.~K. Khalil, \emph{Nonlinear Systems}.\hskip 1em plus 0.5em minus 0.4em\relax
  New York: Prentice Hall, 1996.

\bibitem{karnutsch16}
H.~Karimi, J.~Nutini, and M.~Schmidt, ``Linear convergence of gradient and
  proximal-gradient methods under the {Polyak-{\L}ojasiewicz} condition,'' in
  \emph{In European Conference on Machine Learning}, 2016, pp. 795--811.

\bibitem{boyvan04}
S.~Boyd and L.~Vandenberghe, \emph{Convex optimization}.\hskip 1em plus 0.5em
  minus 0.4em\relax Cambridge University Press, 2004.

\bibitem{ama98}
S.-I. Amari, ``Natural gradient works efficiently in learning,'' \emph{Neural
  {C}omput.}, vol.~10, no.~2, pp. 251--276, 1998.

\bibitem{mohsoljovACC20}
H.~Mohammadi, M.~Soltanolkotabi, and M.~R. Jovanovi\'c, ``Random search for
  learning the linear quadratic regulator,'' in \emph{Proceedings of the 2020
  American Control Conference}, 2020, pp. 4798--4803.

\bibitem{ver18}
R.~Vershynin, \emph{High-dimensional probability: An introduction with
  applications in data science}.\hskip 1em plus 0.5em minus 0.4em\relax
  Cambridge University Press, 2018.

\bibitem{mohsoljovLCSS21}
H.~Mohammadi, M.~Soltanolkotabi, and M.~R. Jovanovi\'c, ``On the linear
  convergence of random search for discrete-time {LQR},'' \emph{IEEE Control
  Syst. Lett.}, vol.~5, no.~3, pp. 989--994, July 2021.

\bibitem{toimak87}
H.~T. Toivonen and P.~M. M{\"a}kil{\"a}, ``Newton's method for solving
  parametric linear quadratic control problems,'' \emph{Int. J. Control},
  vol.~46, no.~3, pp. 897--911, 1987.

\bibitem{rudver13}
M.~Rudelson and R.~Vershynin, ``{H}anson-{W}right inequality and sub-{G}aussian
  concentration,'' \emph{Electron. Commun. Probab.}, vol.~18, 2013.

\bibitem{soljavlee19}
M.~{Soltanolkotabi}, A.~{Javanmard}, and J.~D. {Lee}, ``Theoretical insights
  into the optimization landscape of over-parameterized shallow neural
  networks,'' \emph{IEEE Trans. Inf. Theory}, vol.~65, no.~2, pp. 742--769,
  2019.

\bibitem{ledtal13}
M.~Ledoux and M.~Talagrand, \emph{Probability in Banach Spaces: isoperimetry
  and processes}.\hskip 1em plus 0.5em minus 0.4em\relax Springer Science \&
  Business Media, 2013.

\bibitem{pol15}
\BIBentryALTinterwordspacing
D.~Pollard, ``Mini empirical,'' 2015. [Online]. Available:
  \url{http://www.stat.yale.edu/~pollard/Books/Mini/}
\BIBentrySTDinterwordspacing

\bibitem{vaawel96}
A.~W. Vaart and J.~A. Wellner, \emph{Weak convergence and empirical processes:
  with applications to statistics}.\hskip 1em plus 0.5em minus 0.4em\relax
  Springer, 1996.

\bibitem{mawig15}
T.~Ma and A.~Wigderson, ``{S}um-of-{S}quares lower bounds for sparse {PCA},''
  in \emph{Advances in Neural Information Processing Systems}, 2015, pp.
  1612--1620.

\end{thebibliography}
\end{document}